\def\asItem#1{\item[\hypertarget{a:#1}{(#1)}]}
\def\as#1{({\hyperlink{a:#1}{#1}})}
\def\smallsection#1{\smallskip\noindent\textbf{#1}.}
\newtheorem{theo}{Theorem}
\newtheorem{prop}{Proposition}[section]
\newtheorem{defi}[prop]{Definition}
\newtheorem{lemm}[prop]{Lemma}
\newtheorem{corr}[prop]{Corollary}
\numberwithin{equation}{section}
\DeclareMathOperator{\comp}{comp}
\DeclareMathOperator{\even}{even}
\DeclareMathOperator{\Ell}{ell}
\DeclareMathOperator{\HS}{HS}
\DeclareMathOperator{\Imag}{Im}
\DeclareMathOperator{\loc}{loc}
\DeclareMathOperator{\Op}{Op}
\DeclareMathOperator{\PSL}{PSL}
\DeclareMathOperator{\Real}{Re}
\DeclareMathOperator{\SL}{SL}
\DeclareMathOperator{\supp}{supp}
\DeclareMathOperator{\Vol}{dvol}
\DeclareMathOperator{\WF}{WF}
\DeclareMathOperator{\Tr}{Tr}
\def\WFh{\WF_h}
\def\plM{\partial\overline M}
\newcommand{\Sc}{S^{\comp}}
\newcommand{\mc}{\mathcal}
\newcommand{\rr}{\mathbb{R}}
\newcommand{\nn}{\mathbb{N}}
\newcommand{\cc}{\mathbb{C}}
\newcommand{\hh}{\mathbb{H}}
\newcommand{\zz}{\mathbb{Z}}
\newcommand{\la}{\lambda}
\newcommand{\eps}{\varepsilon}
\newcommand{\pl}{\partial}
\newcommand{\x}{\times}
\newcommand{\til}{\widetilde}
\newcommand{\bbar}{\overline}
\newcommand{\cjd}{\rangle}
\newcommand{\cjg}{\langle}
\newcommand{\demi}{\frac{1}{2}}
\newcommand{\ndemi}{\frac{n}{2}}
\newcommand{\indic}{\operatorname{1\hskip-2.75pt\relax l}}
\title[Microlocal limits of plane waves and Eisenstein functions]%
{Microlocal limits of plane waves\\
and Eisenstein functions}
\author{Semyon Dyatlov}
\email{dyatlov@math.berkeley.edu}
\address{Department of Mathematics, Evans Hall, University of California,
Berkeley, CA 94720, USA}
\author{Colin Guillarmou}
\email{cguillar@dma.ens.fr}
\address{DMA, U.M.R. 8553 CNRS, \'Ecole Normale Superieure, 45 rue d'Ulm,
75230 Paris cedex 05, France}
\begin{document}

\begin{abstract}
We study microlocal limits of plane waves on noncompact Riemannian
manifolds $(M,g)$ which are either Euclidean or asymptotically
hyperbolic with curvature $-1$ near infinity. The plane waves
$E(z,\xi)$ are functions on $M$ parametrized by the square root of
energy $z$ and the direction of the wave, $\xi$, interpreted as a
point at infinity.  If the trapped set $K$ for the geodesic flow has
Liouville measure zero, we show that, as $z\to +\infty$, $E(z,\xi)$
microlocally converges to a measure $\mu_\xi$, in average on energy
intervals of fixed size, $[z,z+1]$, and in $\xi$. We express the rate
of convergence to the limit in terms of the classical escape rate of
the geodesic flow and its maximal expansion rate~--- when the flow is
Axiom~A on the trapped set, this yields a negative power of $z$. As an
application, we obtain Weyl type asymptotic expansions for local
traces of spectral projectors with a remainder controlled in terms of
the classical escape rate.
\end{abstract}



\maketitle

\addtocounter{section}{1}
\addcontentsline{toc}{section}{1. Introduction}

For a compact Riemannian manifold $(M,g)$ of dimension $d$ whose
geodesic flow is ergodic with respect to the Liouville measure
$\mu_L$, \emph{quantum ergodicity} (QE) of eigenfunctions~\cite{sch,z1,cdv} states
that any orthonormal basis $(e_j)_{j\in \mathbb N}$ of eigenfunctions of the Laplacian
with eigenvalues $z_j^2$, has a density one subsequence $(e_{j_k})$
that converges microlocally to $\mu_L$ in the following
sense: for each symbol $a\in C^\infty(T^*M)$ of order zero,
\begin{equation}
  \label{e:qe}
\langle \Op_{h_{j_k}}(a) e_{j_k},e_{j_k}\rangle_{L^2(M)}\to {1\over\mu_L(S^*M)}\int_{S^*M}a\,d\mu_L.
\end{equation}
Here $S^*M$ stands for the unit cotangent bundle, $\Op_h(a)$ denotes
the pseudodifferential operator obtained by quantizing $a$ (see
Section~\ref{s:prelim.notation}), and we put $h_j=z_j^{-1}$.
The proof uses the following integrated form of quantum ergodicity~\cite{h-m-r}:
\begin{equation}
  \label{e:qe-integrated}
h^{d-1}\sum_{h^{-1}\leq z_j\leq h^{-1}+1}\bigg| \cjg \Op_h(a) e_j,e_j\cjd_{L^2}-
\frac{1}{\mu_L(S^*M)}\int_{S^*M}a\,d\mu_L \bigg| \to 0\quad \textrm{ as }
h\to 0.
\end{equation}
See Appendix~\ref{s:qe} for a short self-contained proof of this result
using the methods of this paper.

In the present paper, we consider a non-compact complete Riemannian
manifold $(M,g)$ and show that generalized eigenfunctions of the
Laplacian on $M$ known in scattering theory as~\emph{distorted plane
waves} or~\emph{Eisenstein functions}, converge microlocally on
average, similarly to~\eqref{e:qe-integrated}, with the limiting
measure $\mu_\xi$ depending on the direction of the plane wave
$\xi$~-- see Theorem~\ref{t:convergence}. We also give estimates on
the rate of convergence in terms of classical quantities defined from
the geodesic flow on $M$~-- see Theorem~\ref{t:remainder}.

Our microlocal convergence of plane waves is similar in spirit to the
QE results~\eqref{e:qe} and~\eqref{e:qe-integrated}.  However, unlike
the case of QE where \emph{ergodicity} of the geodesic flow is
essential, our result is based on a different phenomenon, roughly
described as \emph{dispersion} of plane waves. This difference
manifests itself in the proofs as follows: instead of averaging an
observable along the geodesic flow as in the standard proof of quantum
ergodicity, we propagate it. See Section~\ref{s:outline} for an
outline of the proofs of Theorems~\ref{t:convergence}
and~\ref{t:remainder}.

\smallsection{Geometric assumptions near infinity}
The manifold $M$ has dimension $d=n+1$. For our results to hold, we
need to make several assumptions on the geometry of $(M,g)$ near
infinity and on the spectral decomposition of its Laplacian
$\Delta$. They are listed in Section~\ref{s:general} and we check in
Sections~\ref{s:euclidean} and~\ref{s:ah} that they are satisfied in
each of the following two cases:
%
%
\begin{enumerate}
  \item
there exists a compact set $K_0\subset M$ such that $(M\setminus
K_0,g)$ is isometric to $\mathbb R^{n+1}\setminus B(0,R_0)$ with the
Euclidean metric $g_0$ for some $R_0>0$; here $B(0,R_0)$ denotes the ball
centered at $0$ of radius $R_0$,
  \item
$(M,g)$ is an \emph{asymptotically hyperbolic} manifold in the sense
that it admits a smooth compactification $\overline M$ and there
exists a smooth boundary defining function $x$ such that in a collar
neighbourhood of the boundary $\plM$, the metric has the form
\begin{equation}
  \label{e:k-1}
g={dx^2+h(x)\over x^2}.
\end{equation}
where $h(x)$ is a smooth 1-parameter family of metrics on
$\plM$ for $x\in[0,\eps)$. We further assume that $g$ has
sectional curvature $-1$ in a neighbourhood of $\plM$.
\end{enumerate}
%
%
In case~(1), we call $(M,g)$ \emph{Euclidean near infinity}, while in
case~(2), we call it \emph{hyperbolic near infinity}. Case~(2) in
particular includes convex co-compact hyperbolic quotients
$\Gamma\backslash \mathbb H^{n+1}$~-- see Appendix~\ref{s:k-1}.  Other
possible geometries are discussed in Section~\ref{s:other-geometry}.

\smallsection{Distorted plane waves/Eisenstein functions}
Let $\Delta$ be the (nonnegative) Laplace--Beltrami operator on $M$.
In the study of the relation between classical dynamics and high
energy behavior it is natural to use the semiclassically rescaled
operator $h^2\Delta$, with $h>0$ small parameter tending to zero.

The operator $h^2\Delta$ has continuous
spectrum on a half-line $[c_0h^2,\infty)$ (here $c_0$ is $0$ for the Euclidean
and $n^2/4$ for the hyperbolic case) and possibly a finite number of eigenvalues in
$(0,c_0h^2)$. The continuous spectrum is parametrized by
\emph{distorted plane waves} (or \emph{Eisenstein functions} in the hyperbolic case)
$E_h(\lambda,\xi)\in C^\infty(M)$, satisfying for $\lambda\in \mathbb R$,
\begin{equation}
  \label{e:e-eq-h}
(h^2(\Delta-c_0)-\lambda^2)E_h(\lambda,\xi)=0.
\end{equation}
Because of the $h$-rescaling, the effective spectral parameter is $\la/h$.
Here $\xi$ lies on the boundary $\plM$ of a compactification
$\overline M$ of $M$. We can think of an element of $\plM$ as the
direction of escape to infinity for a non-trapped geodesic; then $\xi$
is the direction of the outgoing part of the plane wave
$E_h(\lambda,\xi)$ at infinity.

For instance, in the case of manifolds Euclidean near infinity, $c_0=0$,
$\plM=\mathbb S^n$ is the sphere, and for $m\in M\setminus K_0\simeq \rr^{n+1}\setminus B(0,R_0)$,
$$
E_h(\lambda,\xi;m)=e^{{i\lambda\over h}\xi\cdot m}+E_{\mathrm{inc}},
$$
where $E_{\mathrm{inc}}$  
is incoming in the sense that there exists $f\in C^\infty(S^n)$ such that 
$[E_{\mathrm{inc}}(\lambda,\xi;m)
-|m|^{-\ndemi}e^{- i\frac{\lambda}{h}|m|}f(\frac{m}{|m|})]|_{M\setminus K_0}\in L^2$, or equivalently $E_{\mathrm{inc}}$ lies in the
image of $C_0^\infty(\mathbb R^{n+1})$ under the free incoming
resolvent $(h^2\Delta-(\la-i0)^2)^{-1}$ of the Laplacian on 
$\mathbb R^{n+1}$). These conditions provide a unique characterization
of $ E_h ( \lambda, \xi ) $. We can also write
$E_h(\lambda,\xi)=E(\lambda/h,\xi)$, where $E(z,\xi)$ is the
nonsemiclassical plane wave, and rewrite the results below in terms of
the parameter $z$, as in the abstract.

We will freely use the notions of semiclassical analysis as found for
example in~\cite{e-z}, and reviewed in Section~\ref{s:prelim}.  We
denote elements of the cotangent bundle $T^*M$ by $(m,\nu)$, where
$m\in M$ and $\nu\in T^*_mM$.  The semiclassical principal symbol of
$h^2\Delta$ is equal to $p(m,\nu)=|\nu|_g^2$, where $|\nu|_g$ is the
length of $\nu\in T^*_mM$ with respect to the metric $g$. Therefore,
the plane wave $E_h$ should be concentrated on the unit cotangent
bundle (see~\cite[Theorem~5.3]{e-z})
$$
S^*M:=\{(m,\nu)\in T^*M\mid |\nu|_g=1\}.
$$
If $g^t:T^*M\to T^*M$ denotes the geodesic flow, then the Hamiltonian
flow of $p$ is $e^{tH_p}=g^{2t}$. 

\smallsection{Semiclassical limits of $E_h$ when the trapped set has measure zero}
In scattering theory trajectories which never escape to infinity play
a special role as they can be observed only indirectly in asymptotics
of plane waves. The \emph{incoming tail} (resp. \emph{outgoing tail})
$\Gamma_-\subset S^*M$ (resp. $\Gamma_+\subset S^*M$) of the flow is
defined as follows: a point $(m,\nu)$ lies in $\Gamma_-$
(resp. $\Gamma_+$) if and only if the geodesic $g^t(m,\nu)$ stays in
some compact set for $t\geq 0$ (resp. $t\leq 0$).  The \emph{trapped
set} $K:=\Gamma_+\cap \Gamma_-$ is the set of points $(m,\nu)$ such
that the geodesic $g^t(m,\nu)$ lies entirely in some compact subset of
$S^*M$.

Our first result states that if $\mu_L(K)=0$, then plane waves
$E_h(\la,\xi)$ converge on average to some measures supported on the
closure of the set of trajectories converging to $\xi$ in $\bbar{M}$:
%
%
\begin{theo}
  \label{t:convergence}
Let $(M,g)$ be a Riemannian manifold satisfying the 
assumptions of Section~\ref{s:general} and suppose that the trapped set
has Liouville measure $\mu_L(K)=0$. For  Lebesgue almost every $\xi\in \plM$, there exists a Radon measure
$\mu_\xi$ on $S^*M$ such that for each compactly supported
$h$-semiclassical pseudodifferential operator $A\in\Psi^0(M)$, we have as $h\to 0$,
\begin{equation}
  \label{e:convergence-1}
h^{-1}\bigg\|\langle AE_h(\lambda,\xi),E_h(\lambda,\xi)\rangle_{L^2(M)}
-\int_{S^*M}\sigma(A)\,d\mu_\xi\bigg\|_{L^1_{\xi,\lambda}(\plM\times[1,1+h])}\to 0.
\end{equation}
where $\sigma(A)$ is the semiclassical principal symbol of $A$ as defined in~\cite[Theorem~14.1]{e-z}.
The measure $\mu_\xi$ has support  
\begin{equation}
  \label{e:supp-mu-xi}
\supp(\mu_{\xi})\subset \bbar{\{(m,\nu)\in S^*M\mid \lim_{t\to +\infty}g^{t}(m,\nu)=\xi\}},
\end{equation}
and disintegrates the Liouville measure in the sense that there exists a smooth measure $d\xi$ on $\plM$ such that,
if $\mu_L$ is the Liouville measure generated by $\sqrt{p}=|\nu|_g$ on $S^*M$, then
\begin{equation}
  \label{e:total-liouville}
\int_{\plM}\mu_\xi\,d\xi=\mu_L.
\end{equation}
\end{theo}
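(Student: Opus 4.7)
The plan is to propagate the observable $A$ along the geodesic flow, exploiting the fact that $E_h(\lambda,\xi)$ is a generalized eigenfunction of $h^2\Delta$. Let $U(t)=e^{-ith\Delta}$ be the semiclassical Schr\"odinger propagator; since $h^2\Delta E_h=\lambda^2 E_h$, we have $U(t)E_h=e^{-it\lambda^2/h}E_h$, so
\[
\langle A E_h, E_h\rangle=\langle U(-t)^{-1} A\, U(-t) E_h, E_h\rangle
\]
for every $t\in\mathbb R$. By Egorov's theorem at fixed $t$, $U(-t)^{-1} A\, U(-t)=\Op_h(\sigma(A)\circ g^{-2t})+O_t(h)$ (using $e^{sH_p}=g^{2s}$), hence for any $T>0$,
\[
\langle A E_h, E_h\rangle=\langle \Op_h(\sigma(A)\circ g^{-2T})E_h, E_h\rangle+O_T(h).
\]
The propagated symbol is supported on $g^{2T}(\supp\sigma(A))$: under the forward flow over time $2T$, points of $\supp\sigma(A)\setminus\Gamma_-$ are pushed near infinity in their forward asymptotic direction, while points in $\supp\sigma(A)\cap\Gamma_-$ remain in a compact set.

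Every orbit in $\Gamma_-$ has $\omega$-limit contained in $K$, so combined with the flow-invariance of $\mu_L$ the hypothesis $\mu_L(K)=0$ yields $\mu_L(\Gamma_-)=0$, allowing one to excise a neighborhood of $\Gamma_-\cap\supp\sigma(A)$ of arbitrarily small Liouville measure. For $T$ large, the remaining part of the symbol is supported near infinity, where the explicit form of $E_h$ provided by Section~\ref{s:general} applies: in case~\textup{(1)}, $E_h=e^{i\lambda\xi\cdot m/h}+E_{\mathrm{inc}}$ with $E_{\mathrm{inc}}$ purely incoming at infinity; an analogous Legendrian decomposition holds in case~\textup{(2)}. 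The defining property of $E_h$ is that its outgoing component at infinity concentrates only in the direction $\xi$, while its incoming component is supported in every direction $\omega\in\plM$. Expanding $\langle\Op_h(a_T)E_h,E_h\rangle$ against this decomposition splits it into diagonal pairings (incoming with incoming, outgoing with outgoing), which yield smooth contributions, and off-diagonal pairings carrying phases of the form $e^{\pm 2i\lambda r/h}$, which oscillate rapidly in $\lambda$ and vanish in $L^1_\lambda([1,1+h])$ by non-stationary phase~--- this is precisely where the $L^1$-averaging in~\eqref{e:convergence-1} plays its role. The diagonal limit is independent of $T$ by flow invariance and defines a Radon measure $\mu_\xi$; the support restriction~\eqref{e:supp-mu-xi} follows because a nonzero contribution requires the trajectory through $\supp\sigma(A)$ to match the unique outgoing direction $\xi$ of $E_h$, i.e.\ to have forward asymptote $\xi$.

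For~\eqref{e:total-liouville} I would use the spectral representation of $h^2\Delta-c_0h^2$ via the Poisson operator $\xi\mapsto E_h(\lambda,\xi)$: there exist a canonical measure $d\xi$ on $\plM$ and a constant $c_{n,h}(\lambda)$ such that the derivative of the spectral projector at energy $\lambda^2$ has Schwartz kernel $c_{n,h}(\lambda)\int_{\plM}E_h(\lambda,\xi)\otimes\overline{E_h(\lambda,\xi)}\,d\xi$. Integrating $\langle A E_h,E_h\rangle$ over $\xi$ then yields a constant multiple of $\Tr(A\cdot dE_\lambda/d\lambda)$, whose leading semiclassical asymptotic is $h^{-n}\int_{S^*M}\sigma(A)\,d\mu_L$ by the Weyl law; matching constants produces~\eqref{e:total-liouville}.

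The main obstacle is balancing three competing scales: the Egorov time $T$ (large enough to push $\supp\sigma(A)\setminus\Gamma_-$ into the asymptotic region yet short enough for the remainder $O_T(h)$ to be negligible), the uniform escape rate off $\Gamma_-$, and the width $h$ of the $\lambda$-average (wide enough to kill the off-diagonal phases at infinity, narrow enough to preserve the diagonal limit). Deriving $\mu_L(\Gamma_-)=0$ from $\mu_L(K)=0$ within the geometric framework of Section~\ref{s:general}, and executing the asymptotic matching of Legendrian distributions at infinity uniformly in $\xi$, are the most delicate technical steps.
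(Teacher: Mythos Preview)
Your overall architecture---propagate $A$ by Egorov for a large fixed time $t$, split the result into a piece supported near infinity and a piece that stays compact, then exploit the explicit outgoing structure of $E_h$ near infinity---is the paper's. Your deduction of $\mu_L(\Gamma_\pm)=0$ and your sketch of~\eqref{e:total-liouville} via the spectral representation are also essentially correct. But you misidentify the two central mechanisms, and each is a genuine gap.

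First, your treatment of the incoming part does not work. You propose to pair the propagated observable against an explicit ``incoming Legendrian'' and kill cross-terms by $\lambda$-averaging. In the decomposition $E_h=\chi_0 E_h^0+E_h^1$ of~\eqref{e:e-h-decomposition}, the piece $E_h^0=e^{i\lambda\phi_\xi/h}b^0$ is indeed explicit, but $E_h^1=-R_h(\lambda)F_h$ involves the full incoming resolvent, which in trapping geometries is not polynomially bounded in $h$; there is no usable oscillatory-integral structure on $E_h^1$, so your stationary/non-stationary phase analysis of the ``off-diagonal'' pairings cannot be carried out. What replaces it is a wavefront-set argument: splitting $A^{-t}=A^{-t}\varphi+A^{-t}(1-\varphi)$ with $\varphi\in C_0^\infty(M)$, one shows (Lemma~\ref{l:geometry-1}) that every point of $\WFh(A^{-t}(1-\varphi))$ directly escapes forward and never meets the set $W_\xi$ of~\eqref{e:w-xi}; by assumption~\as{A6} on $\WFh(\widetilde E_h^1)$, where $\widetilde E_h^1:=E_h^1/(1+\|E_h\|_{L^2(\{x\ge\varepsilon_0\})})$, this forces $A^{-t}(1-\varphi)E_h^1=\mathcal O(h^\infty(1+\|E_h\|))$. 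Thus $E_h$ can be replaced by the explicit $\chi_0 E_h^0$ in the escaping term with no $\lambda$-averaging whatsoever.

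Second, the role of the $L^1_{\xi,\lambda}$ averaging is not to damp oscillations $e^{\pm 2i\lambda r/h}$ but to supply a priori bounds on $E_h$, without which the $\mathcal O(h^\infty\|E_h\|^2)$ remainders above and the trapped term $\langle A^{-t}\varphi\, E_h,E_h\rangle$ are uncontrolled. The spectral formula~\as{A2} yields $h^{-1}\|B\chi E_h\|^2_{L^2_{m,\xi,\lambda}}\le C h^n\|B\chi\Pi_{[1,1+h]}\|_{\HS}^2$ (Proposition~\ref{l:avg-bdd}), and the Hilbert--Schmidt norm on the right is bounded by $\|\sigma(B)\|_{L^2(S^*M)}+\mathcal O(h)$ (Lemma~\ref{l:h-s-estimate}). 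Applied with $B=A^{-t}\varphi$ together with Cauchy--Schwarz in $(\xi,\lambda)$, this bounds the trapped term in $L^1_{\xi,\lambda}$ by $C\|(\sigma(A)\circ g^{-t})\varphi\|_{L^2(S^*M)}+\mathcal O_t(h)$, which tends to $0$ as $t\to\infty$ by $\mu_L(\Gamma_+)=0$ and dominated convergence. Your excision of a small-measure neighborhood of $\Gamma_-$ does not substitute for this step, because on the excised set $E_h$ is still not bounded pointwise in $(\xi,\lambda)$.
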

%
%
The limiting measure $\mu_\xi$ is defined in
Section~\ref{s:general.limiting}. Implicit
in~\eqref{e:total-liouville} is the statement that for any bounded
Borel $S\subset S^*M$, we have $\mu_\xi(S)\in L^1_\xi(\plM)$. In Lemma \ref{cccgroup}, we show that for hyperbolic manifolds $\mu_{\xi}$ is well defined for all $\xi\in\plM$ and it is likely that the same is true when the curvature of $g$ is negative near the trapped set, but we believe that this does not hold in the general setting of Theorem~\ref{t:convergence}.

In the case when $\WFh(A)\cap\Gamma_-=\emptyset$ (in particular when
$g$ is non-trapping), we actually have a full expansion of $\langle
AE_h,E_h\rangle$ in powers of $h$, with remainders bounded in
$L^1_{\xi,\lambda}(\plM\times [1,1+h])$~--
see~\eqref{e:full-expansion}. It is likely that for $K=\emptyset$,
this can be strengthened to uniform convergence in $\xi,\lambda$,
using nontrapping estimates on the resolvent.

The now standard argument of Colin de Verdi\`ere and Zelditch (see for
example the proof of~\cite[Theorem~15.5]{e-z}) shows that there exists
a family of Borel sets $\mathcal A(h)\subset \plM\times[1,1+h]$ such
that the ratio of the measure of $\mathcal A(h)$ to the measure of the
whole $\plM\times[1,1+h]$ goes to 1 as $h\to 0$, and for each
$A\in\Psi^0(M)$ as in Theorem~\ref{t:convergence} with $\sigma(A)$
independent of $h$,
\begin{equation}
  \label{e:sh-convergence}
\langle AE_h(\lambda,\xi),E_h(\lambda,\xi)\rangle_{L^2(M)}\to\int_{S^*M}
\sigma(A)\,d\mu_\xi\text{ uniformly in }(\lambda,\xi)\in \mathcal A(h).
\end{equation}
This statement can be viewed as an analogue of the quantum ergodicity fact~\eqref{e:qe},
though as explained above, it is produced by a different phenomenon.

\smallsection{Estimates for the remainder}
We next provide a quantitative version of Theorem~\ref{t:convergence},
namely an estimate of the left-hand side of~\eqref{e:convergence-1}.
We define the set $\mathcal T(t)$ of geodesics trapped for time $t>0$ as follows: 
let $K_0$ be a compact geodesically convex subset of $M$ 
(in the sense of~\eqref{geodconvex}) containing a neighborhood of the
trapped set $K$, then (see also Section~\ref{s:proofs-2})
\begin{equation}
  \label{e:T-t}
\mathcal T(t):=\{(m,\nu)\in S^*M\mid m\in K_0,\ \pi( g^t(m,\nu))\in K_0\}.
\end{equation}
where $\pi: T^*M\to M$ is the projection map. 
A quantity which will appear frequently with
some parameter $\Lambda>0$ is the following interpolated measure
\begin{equation}
  \label{defofrh}
r(h,\Lambda):= \sup_{0\leq\theta\leq 1}h^{1-\theta}\mu_L\big(\mathcal T\big(\theta\Lambda^{-1}|\log h|\big)\big),
\end{equation}
where $h>0$ is small. This converges to $0$ as $h\to 0$ when
$\mu_{L}(K)=0$ and it interpolates between $h$ (when $\theta=0$) and
the Liouville measure of the set of geodesics that remain trapped for
time $\Lambda^{-1}|\log h|$ (when $\theta=1$). When the measure
$\mu_L(\mathcal T(t))$ decays exponentially in $t$, as
in~\eqref{e:pressure-estimate}, $r(h,\Lambda)$ can be replaced by
simply $\mathcal O(h)+\mu_L(\mathcal T(\Lambda^{-1}|\log h|))$. 
The~$\mathcal O(h)$ term here is natural since one can
add an operator in $h\Psi^0(M)$ to $A$, which will change
$\langle AE_h,E_h\rangle$ by $\mathcal O(h)$, but will not change
$\sigma(A)$ (which is only defined invariantly modulo $\mathcal O(h)$).

We next define the \emph{maximal expansion rate} as follows (see also~\eqref{e:lambda-max}):
\begin{equation}
  \label{lamax}
\Lambda_{\max}:=\limsup_{|t|\to +\infty}{1\over |t|}\log \sup_{(m,\nu)\in \mathcal T(t)}
\|dg^t(m,\nu)\|.
\end{equation}
We can estimate the left-hand side of~\eqref{e:convergence-1} in terms
of the (interpolated) measure of the set of all trajectories trapped
for the Ehrenfest time, see~\eqref{e:ehrenfest-time} for the definition of
the latter. If we pair with a test function in $\xi$
instead of taking the $L^1_\xi$ norm, then the estimate becomes stronger,
corresponding to the set of all trajectories trapped for \emph{twice}
the Ehrenfest time:
%
%
\begin{theo}
  \label{t:remainder}
Let $(M,g)$ be as in Theorem~\ref{t:convergence}.
Take $\Lambda_0>\Lambda_{\max}$. Then for each compactly
supported $h$-semiclassical pseudodifferential operator $A\in \Psi^0(M)$ 
and for each $f\in
C^\infty(\plM)$, 
\begin{align}
  \label{e:convergence-2}
h^{-1}\bigg\|\langle AE_h,E_h\rangle
-\int_{S^*M} \sigma(A)\,d\mu_\xi\bigg\|_{L^1_{\xi,\lambda}(\plM\times
[1,1+h])}
&=\mc{O}(r(h,2\Lambda_0)),\\
  \label{e:convergence-3}
h^{-1}\bigg\|\int_{\plM}f(\xi)\bigg(\langle A E_h,E_h\rangle
-\int_{S^*M}\sigma(A)\,d\mu_\xi\bigg)\,d\xi\bigg\|_{L^1_\lambda([1,1+h])}
&= \mc{O}(r(h,\Lambda_0)).
\end{align}
\end{theo}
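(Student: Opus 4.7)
My plan is to use the \emph{propagation} approach mentioned in the introduction. Set $P := h^2(\Delta - c_0)$ and $U(t) := e^{-itP/h}$; since $PE_h(\lambda,\xi) = \lambda^2 E_h(\lambda,\xi)$, we get the identity
\[
\langle AE_h, E_h\rangle = \langle U(-t)AU(t)E_h, E_h\rangle \qquad \text{for all } t \in \rr,
\]
which I would exploit for $t$ ranging up to an Ehrenfest-scale time $T = \theta(2\Lambda_0)^{-1}|\log h|$, with $\theta \in [0,1]$ eventually optimized. Since $\Lambda_0 > \Lambda_{\max}$, Egorov's theorem on this scale gives $U(-T)AU(T) = \Op_h(a \circ g^{2T}) + R_{T,h}$ with $\|R_{T,h}\|_{L^2 \to L^2} = \mathcal{O}(h^{1-\theta})$ and principal symbol $a_{2T} := a \circ g^{2T}$ whose derivatives remain controlled on this time scale.

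Next, I would fix a geodesically convex compact $K_0$ containing $\supp A$ and a neighborhood of the trapped set $K$, and split $a_{2T} = a_{2T}^{\text{esc}} + a_{2T}^{\text{trap}}$ via a smooth cutoff, with $a_{2T}^{\text{trap}}$ supported in $\{m \in K_0\}$ and $a_{2T}^{\text{esc}}$ in $\{m \notin K_0\}$. By geodesic convexity of $K_0$ together with $\supp a \subset K_0$, any $(m,\nu) \in \supp a_{2T}^{\text{trap}}$ has its full trajectory on $[0,2T]$ inside $K_0$, so $\supp a_{2T}^{\text{trap}} \subset \mathcal{T}(2T)$ as in \eqref{e:T-t}. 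For $a_{2T}^{\text{esc}}$, supported in the asymptotic region, I would use the explicit asymptotic form of $E_h$ (free plane waves in the Euclidean case, Poisson-type boundary expansions in the hyperbolic case) to show
\[
\langle \Op(a_{2T}^{\text{esc}}) E_h, E_h\rangle = \int a_{2T}^{\text{esc}}\, d\mu_\xi + \mathcal{O}(h^\infty),
\]
uniformly in $\xi,\lambda$. This relies on the construction of $\mu_\xi$ in Section~\ref{s:general.limiting} combined with invariance of $\mu_\xi$ under the geodesic flow (a consequence of the propagation identity applied in the limit).

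For the trapped part, \eqref{e:total-liouville} and Fubini handle the classical side:
\[
\int_{\plM}\Big|\int a_{2T}^{\text{trap}}\, d\mu_\xi\Big|\, d\xi \leq \int |a_{2T}^{\text{trap}}|\, d\mu_L \leq C\mu_L(\mathcal{T}(2T)).
\]
On the quantum side, for \eqref{e:convergence-3} I would use the spectral identity $\int f(\xi) E_h(\lambda,\xi) \otimes \overline{E_h(\lambda,\xi)}\, d\xi = c\,\Pi_f(\lambda)$ to rewrite $\int f(\xi)\langle \Op(a_{2T}^{\text{trap}})E_h, E_h\rangle\, d\xi$ as the trace $c\,\Tr(\Op(a_{2T}^{\text{trap}})\Pi_f(\lambda))$, which a standard Weyl-type estimate controls by $C\|f\|_\infty \mu_L(\mathcal{T}(2T))$. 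For \eqref{e:convergence-2}, lacking the test function, I would use duality $\int |F|\,d\xi = \sup_{|g|\leq 1}|\int gF\,d\xi|$ together with sharp G\r{a}rding to split $\Op(a_{2T}^{\text{trap}})$ into nonnegative pieces and reduce to a similar trace bound; this step is an $L^1 \to L^2$ Cauchy–Schwarz in disguise and effectively halves the usable propagation time, producing the worse exponent $2\Lambda_0$ in place of $\Lambda_0$.

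Combining these pieces, integrating over $\lambda \in [1,1+h]$ (gaining the factor $h$ that cancels the $h^{-1}$ prefactor), and optimizing over $\theta \in [0,1]$, I expect the claimed bounds $\mathcal{O}(r(h,\Lambda_0))$ and $\mathcal{O}(r(h,2\Lambda_0))$; the \emph{product} form $h^{1-\theta}\mu_L(\mathcal{T}(\cdot))$ in the definition of $r$ appears because the Egorov remainder $R_{T,h}$ inherits the microlocal support of $a_{2T}$, so its contribution is bounded by the trapping measure times its operator norm $h^{1-\theta}$ rather than added independently. The main technical hurdle will be the sharp handling of the escaping contribution uniformly in $\xi,\lambda$: the Euclidean and hyperbolic cases require different tools (stationary phase on the free resolvent versus Poisson transform boundary asymptotics), and the remainder there must be negligible compared to the trapping measure $\mu_L(\mathcal{T}(2T))$ for the whole estimate to close with the stated exponents.
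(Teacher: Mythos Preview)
Your overall architecture---propagate, split into escaping and trapped pieces, estimate each---matches the paper, but two steps do not close as written.

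First, the claim that
\[
\langle \Op(a_{2T}^{\mathrm{esc}}) E_h, E_h\rangle = \int a_{2T}^{\mathrm{esc}}\, d\mu_\xi + \mathcal O(h^\infty)
\]
\emph{uniformly} in $\xi,\lambda$ is too strong on two counts. The full plane wave decomposes as $E_h=\chi_0E_h^0+E_h^1$, and $E_h^1$ is not small in the asymptotic region: one only knows (assumption~\as{A6}) that the \emph{normalized} function $\widetilde E_h^1=E_h^1/(1+\|E_h\|_{L^2(K_0)})$ has a controlled wavefront set. Showing that $E_h^1$ does not contribute to the escaping pairing requires the phase--space geometry of Lemma~\ref{l:geometry-2} (points in $\WFh(a_{2T}^{\mathrm{esc}})$ escape forward without ever hitting $W_\xi$), and even then the error is $\mathcal O(h^\infty(1+\|E_h\|_{L^2(K_0)}^2))$, not $\mathcal O(h^\infty)$; the extra factor must be removed \emph{in average} via~\eqref{e:avg-bdd}. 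Moreover, once you are left with $E_h^0$, the Lagrangian calculus of Proposition~\ref{l:lagrangian-mul} gives only an $\mathcal O(h^{1-2\rho})$ subprincipal error, not $\mathcal O(h^\infty)$; these subprincipal terms are precisely what generate $r(h,\Lambda)$.

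Second, your explanation of where $r(h,\Lambda)$ comes from is not the mechanism in the paper. The Egorov remainder $R_{T,h}$ is microsupported on the \emph{whole} propagated wavefront set $g^{2T}(\WFh(A))$, not just on $\mathcal T(2T)$, so it does not automatically pick up a factor of $\mu_L(\mathcal T(2T))$. In the paper the interpolated quantity arises differently: the propagation is done iteratively, $A\mapsto (\varphi U(t_0))^jA(U(-t_0)\varphi)^j$, and at each intermediate time $jt_0$ one peels off a piece that has just escaped (Proposition~\ref{l:analysis2-main}). The $j$th escaped piece lies in $\Psi^{\comp}_{\rho_j}$ with $\rho_j\sim jt_0/t_e\cdot\rho_e$ and is microsupported in a set of Liouville measure $\mu_L(\mathcal T(jt_0))$; the subprincipal Lagrangian error for that piece is $h^{1-2\rho_j}\mu_L(\mathcal T(jt_0))$, and the $\sup_\theta$ in the definition~\eqref{defofrh} of $r$ bounds every such term simultaneously (the remaining geometric sum in $j$ converges). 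A single choice of $T$ and $\theta$, as in your sketch, cannot reproduce this. The doubling from $2\Lambda_0$ to $\Lambda_0$ when you pair with $f(\xi)$ is also not a Cauchy--Schwarz artifact: it comes from Lemma~\ref{l:trace-1}, which expresses $\int f(\xi)\,\chi_0E_h^0\otimes\chi_0E_h^0\,d\xi$ as a \emph{finite-time} integral of Schr\"odinger propagators, after which trace cyclicity lets you split $j=j_1+j_2$ with each half at most the Ehrenfest index.
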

%
%
The proof of Theorem~\ref{t:remainder} actually gives an expansion
of $\langle AE_h,E_h\rangle$ in powers of $h$, with remainder
$\mu_L(\mathcal T(\Lambda^{-1}|\log h|))$ instead of $r(h,\Lambda)$~--
see~\eqref{e:analysis2-main} and the proofs of
Propositions~\ref{l:analysis2-4} and~\ref{l:trace-2}. This
full expansion is cumbersome to write down, therefore we only do it
for the trace estimates~\eqref{TraPI} below.

\smallsection{Remainder in terms of pressure} 
When the trapped set $K$ has Liouville measure $0$ and is uniformly partially hyperbolic in the
sense of Appendix~\ref{ap:escape}, we estimate using \cite{Yo}
\begin{equation}
  \label{e:pressure-estimate}
\mu_L(\mathcal T(t))=\mc{O}(e^{t(P(J^u)+\varepsilon)}),
\end{equation}
for each $\varepsilon>0$,
where $P(J^u)\leq 0$ is the topological pressure of the unstable
Jacobian~-- see Appendix~\ref{s:escaperate-1}. When $K$ is a hyperbolic
basic set (Axiom~A flow), then $P(J^u)<0$ by \cite{BoRu}, and the
remainders in~\eqref{e:convergence-2} and~\eqref{e:convergence-3} are
then polynomial in $h$:
$$
r(h,\Lambda)=\mc{O}(h+h^{-(P(J^u)+\varepsilon)/\Lambda}),
$$
and one can get rid of $\varepsilon$ here by slightly changing
$\Lambda_0$.  In the special case where $g$ has constant sectional
curvature $-1$ near $K$, the bounds in~\eqref{e:convergence-2} and
\eqref{e:convergence-3} become respectively,  $\mathcal O(h+h^{(n-\delta)/2-})$ and
$\mathcal O(h+h^{n-\delta-})$, where $K$ has Hausdorff
dimension $\dim_{H}(K)=2\delta+1$.%
\footnote{The notation $\mathcal O(h^{-\alpha-})$ means $\mathcal O(h^{-\alpha-\varepsilon})$ for any fixed $\varepsilon>0$.}
See Appendix~\ref{s:escaperate-2} for details.

In all cases, if $K$ is nonempty, then it has Minkowski dimension
at least 1; since $g^{t/2}(\mathcal T(t))$ contains an $e^{-\Lambda_0
t/2}$ sized neighborhood of $K$, there exists $c>0$ such that for all small $h>0$
\begin{equation}
  \label{e:lower}
\mu_L\big(\mathcal T((2\Lambda_0)^{-1}|\log h|)\big)\geq c h^{n/2},\
\mu_L\big(\mathcal T(\Lambda_0^{-1}|\log h|)\big)\geq c h^n.
\end{equation}

\smallsection{Local Weyl asymptotics for spectral projectors}
It is possible to express the spectral measure of $h^2\Delta$ in terms
of the distorted plane waves (see ~\eqref{e:spectrum-eis-2}), and
using~\eqref{e:convergence-3}, we obtain an expansion in powers of $h$
for local traces of spectral projectors up to an explicit
remainder. We only write it here for the case where the flow is
partially uniformly hyperbolic with $P(J^u)<0$, but a more general
result with the Liouville measure of $\mathcal T(\Lambda_0^{-1}|\log
h|)$ holds~-- see Theorem~\ref{asympofs_A} in
Section~\ref{s:proofs-3}. In the theorem below, we use a quantization procedure
$\Op_h$ on $M$ mapping compactly supported symbols to compactly
supported operators; alternatively, one can consider operators of the form
$\chi \Op_h(a)\chi$, where $\Op_h$ is any quantization procedure,
$a(x,\xi)$ is compactly supported in the $x$ variable, and $\chi\in C_0^\infty(M)$
is equal to 1 near $\pi(\supp a)$.
%
%
\begin{theo}
\label{cor:trace_est}
Let $(M,g)$ be as in Theorem~\ref{t:convergence}, let
$\Lambda_0>\Lambda_{\max}$ and assume that the trapped set $K$ is
uniformly partially hyperbolic with $\mu_L(K)=0$ and that the
topological pressure $P(J^u)$ of the unstable Jacobian on $K$ is negative.   Then there
exist differential operators%
\footnote{In this paper, the symbols $L_j$ will denote different
operators in different propositions.}
$L_j$ of order $2j$ on $T^*M$, with
$L_0=1$, such that for each compactly supported zeroth order classical symbol $a$, we have for each $s>0$ and $N\in \nn$
\begin{equation}
  \label{TraPI} 
\Tr({\rm Op}_h(a)\indic_{[0,s]}(h^2\Delta))=(2\pi h)^{-n-1}\sum_{j=0}^{N}
h^j \int\limits_{|\nu|_g^2\leq s}L_ja\, d\mu_\omega
+h^{-n}\mc{O}\big(h^{-\frac{P(J^u)}{\Lambda_0}}+h^{N}\big)
\end{equation}
where $\mu_\omega$ is the standard volume form
on $T^*M$ and $\indic_{[0,s]}(h^2\Delta)$ denotes the spectral projector of
$h^2\Delta$ onto the frequency window $[0,s]$. The remainder is
uniform in $s$ when $s$ varies in a compact subset of $(0,\infty)$.
\end{theo}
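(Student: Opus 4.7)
The plan is to reduce the trace estimate to an application of the $\xi$-integrated asymptotic \eqref{e:convergence-3}, upgraded to a full asymptotic expansion in powers of $h$. First I would invoke the spectral representation of $\indic_{[0,s]}(h^2\Delta)$ in terms of distorted plane waves referenced in the paper as \eqref{e:spectrum-eis-2}: after subtracting the finite-rank projector onto $L^2$-eigenvalues below $c_0 h^2$ (which contributes $\mc{O}(1)=\mc{O}(h^{-n}\cdot h^N)$ to the trace), one has
$$\Tr(\Op_h(a)\indic_{[0,s]}(h^2\Delta))=c_n h^{-n-1}\int_0^{\sqrt{s-c_0 h^2}}\lambda^{n-1}\int_{\plM}\langle\Op_h(a) E_h(\lambda,\xi),E_h(\lambda,\xi)\rangle\,d\xi\,d\lambda+\mc{O}(h^\infty),$$
for an explicit constant $c_n$. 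This reduces the proof to a precise $\lambda$-integrated version of \eqref{e:convergence-3}.

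Next I would use the sharpening of Theorem~\ref{t:remainder} mentioned in the paper, which yields not merely convergence but a full asymptotic expansion pointwise in $\lambda$: there exist differential operators $\tilde L_j$ of order $2j$ on $T^*M$, with $\tilde L_0=\mathrm{id}$, such that
$$\int_{\plM}\langle\Op_h(a)E_h(\lambda,\xi),E_h(\lambda,\xi)\rangle\,d\xi=\sum_{j=0}^{N}h^j\int_{S^*_\lambda M}\tilde L_j(a)\,d\mu_L+h^{N+1}\mc{O}\bigl(\mu_L(\mc{T}(\Lambda_0^{-1}|\log h|))\bigr).$$
The leading coefficient is identified using the disintegration \eqref{e:total-liouville}: $\int_{\plM}\int\sigma(A)\,d\mu_\xi\,d\xi=\int_{S^*M}\sigma(A)\,d\mu_L$. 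The subleading coefficients arise from the subprincipal contributions in the symbol-transport scheme underlying the proof of Theorem~\ref{t:remainder}, iterated order by order in $h$.

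I would then substitute this expansion into the trace formula and integrate in $\lambda$. Polar coordinates on $T^*M$, in which $d\mu_\omega$ factors as $\lambda^{n-1}\,d\lambda$ against the Liouville measure on the energy shell, convert each angular integral into an integral over the cotangent ball $\{|\nu|_g^2\leq s\}$; absorbing the constant $c_n$ into invariantly defined operators $L_j$ of order $2j$ on $T^*M$ (with $L_0=1$) yields the main sum $(2\pi h)^{-n-1}\sum_{j=0}^N h^j\int_{|\nu|_g^2\leq s}L_j(a)\,d\mu_\omega$. The remainder becomes
$$h^{-n-1}\cdot h^{N+1}\cdot\mc{O}(\mu_L(\mc{T}(\Lambda_0^{-1}|\log h|)))=\mc{O}\bigl(h^{N-n}\cdot h^{-P(J^u)/\Lambda_0-\varepsilon}\bigr)$$
by the pressure bound \eqref{e:pressure-estimate}; a slight enlargement of $\Lambda_0$ absorbs $\varepsilon$, and combining with the $h^{N-n}$ tail gives the stated form $h^{-n}\bigl(h^{-P(J^u)/\Lambda_0}+h^N\bigr)$. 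Uniformity in $s$ on compact subsets of $(0,\infty)$ is clear from the construction.

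The principal obstacle is establishing the full asymptotic expansion of the second step with the sharp trapping error $\mu_L(\mc{T}(\Lambda_0^{-1}|\log h|))$ rather than the weaker interpolated quantity $r(h,\Lambda_0)$. This requires unwinding the propagation argument behind Theorem~\ref{t:remainder}: one must transport the symbol of $A$ along the geodesic flow up to the Ehrenfest time $\Lambda_0^{-1}|\log h|$, extract at each order $h^j$ an \emph{invariantly} defined differential operator $\tilde L_j$ independent of the chosen propagation scheme, and confine the residual error to a neighborhood of the orbits trapped up to that time. A secondary but more routine issue is matching the normalization constant $c_n$ and the $\lambda^{n-1}$ density to the standard Weyl coefficient $(2\pi h)^{-n-1}$, which can be fixed by comparison with the Euclidean Weyl formula in the non-trapping case.
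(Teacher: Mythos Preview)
Your approach has a genuine gap in the second step. The pointwise-in-$\lambda$ expansion
\[
\int_{\plM}\langle\Op_h(a)E_h(\lambda,\xi),E_h(\lambda,\xi)\rangle\,d\xi=\sum_{j=0}^{N}h^j\int_{S^*_\lambda M}\tilde L_j(a)\,d\mu_L+\text{(trapping remainder)}
\]
is not available from the methods of the paper, and the paper is careful never to claim it. All the estimates on $\langle AE_h,E_h\rangle$ (Propositions~\ref{l:analysis2-3}, \ref{l:analysis2-main}, and the trace bound~\eqref{e:convergence-3}) are in $L^1_\lambda$ over a window of size $h$, not pointwise in $\lambda$. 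The reason is structural: the only control on the size of $E_h$ comes from the Hilbert--Schmidt bound~\eqref{e:hs-est}, which bounds $\|E_h\|_{L^2(K_0)}$ only on average in $\lambda$. A pointwise bound would require a polynomial bound on the cutoff resolvent $R_h(\lambda)$, which the paper explicitly does not assume (see the discussion after~\eqref{e:sh-convergence}). So your $\lambda$-integration step, which treats the integrand as pointwise controlled, cannot be justified.

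The paper's proof (Theorem~\ref{asympofs_A}) is organized precisely to circumvent this. One splits $\indic_{[0,s]}(P(h))=\psi(P(h))+\varphi_s(P(h))$ with $\psi$ smooth and compactly supported; the $\psi$-piece is handled by Helffer--Sj\"ostrand functional calculus. For the sharp-cutoff piece $\sigma_{A,h}(s)=\Tr(A\varphi_s(P(h)))$ one runs a Tauberian argument: convolve $\partial_s\sigma_{A,h}$ with $\theta_h(s)=h^{-1}\theta(s/h)$ where $\hat\theta$ is supported near $0$, so that $\sigma'_{A,h}\star\theta_h$ is a short-time propagator trace computable by Lemma~\ref{l:robert-trace}. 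The difference $\sigma_{A,h}-\sigma_{A,h}\star\theta_h$ is then estimated via Corollary~\ref{estiminhbox}, which decomposes $\sigma_{A,h}=S_h+Q_h$: the part $S_h$ coming from the explicit Lagrangian piece $E_h^0$ is smooth in $s$ with symbol-type derivative bounds, while $Q_h$ is the $L^1_\lambda$-remainder and is only controlled through its increments $|Q_h(s+u)-Q_h(s)|\leq Ch^{-n}\mu_L(\mathcal T(\Lambda_0^{-1}|\log h|))$ for $u\in[0,h]$. It is exactly this Tauberian step that converts the averaged information into a pointwise-in-$s$ statement.

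A secondary point: your remainder bookkeeping is off. The trapping error is not $h^{N+1}\mu_L(\mathcal T)$ but $\mu_L(\mathcal T)$ itself, independent of $N$; it is the floor below which the expansion cannot be pushed, and the $h^N$ in the final remainder comes separately from truncating the smooth expansion. This is why the remainder in~\eqref{TraPI} reads $h^{-n}\big(h^{-P(J^u)/\Lambda_0}+h^N\big)$ rather than a product of the two.
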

%
%
In particular, if $g$ has constant sectional curvature $-1$ near $K$
and the Hausdorff dimension of $K$ is given by $2\delta+1$,
then the remainder in~\eqref{TraPI} becomes $\mathcal O(h^{-\delta-})$,
for $N$ large enough. 

\smallsection{Applications}
In a separate paper~\cite{DyGu}, we show that
Theorem~\ref{cor:trace_est} implies new asymptotics for the spectral
shift function (or scattering phase) with remainders in terms of
$P(J^u)$ when the trapped set has Liouville measure $0$ and the
manifold is Euclidean near infinity with uniformly partially
hyperbolic geodesic flow near $K$.

\smallsection{Previous works}
Let us briefly discuss the history of Quantum Ergodicity (QE) and
explain its relation to the present paper. The original QE statement
was proved by Shnirelman~\cite{sch}, Zelditch~\cite{z1}, and Colin de
Verdi\`ere~\cite{cdv} in the microlocal case, by
Helffer--Martinez--Robert~\cite{h-m-r} in the semiclassical case (with
the integrated estimate using an $\mathcal O(h)$ spectral window like
in the present paper, rather than the $\mathcal O(1)$ window used in
the microlocal case), and by G\'erard--Leichtnam~\cite{g-l} and
Zelditch--Zworski~\cite{z-z} for manifolds with boundary (ergodic
billiards). Quantum ergodicity for boundary values and restrictions of
eigenfunctions to hypersurfaces was studied by
Hassell--Zelditch~\cite{HaZe}, Burq~\cite{Burq},
Toth--Zelditch~\cite{t-z10,t-z11}, and by
Dyatlov--Zworski~\cite{DyZw}.

The first result on noncompact manifolds, namely for embedded
eigenvalues and Eisenstein functions on surfaces with cusps, was
proved by Zelditch~\cite{z2}. For the special case of arithmetic
hyperbolic surfaces, a stronger statement of Quantum Unique Ergodicity
(QUE), saying that the whole sequence of eigenstates microlocally
converges to the Liouville measure, was proved by
Lindenstrauss~\cite{Lin} and Soundararajan~\cite{sound} for
Hecke--Maass forms and by Luo--Sarnak~\cite{LuoSarnak} and
Jakobson~\cite{Jakobson} for Eisenstein functions. For further
information on the topic, the reader is directed to the recent
reviews~\cite{n,sar,z3}.

As remarked above, our result differs from the above works in that it
uses dispersion of plane waves instead of the ergodicity of the
geodesic flow. This dispersion phenomenon was used to study microlocal
limits of plane waves on convex co-compact hyperbolic quotients
satisfying $\delta<n/2$ by Guillarmou--Naud in~\cite{g-n}, and on
surfaces with cusps at complex energies by Dyatlov~\cite{eiscusp}.
Both~\cite{g-n} and~\cite{eiscusp} guarantee microlocal convergence of
the Eisenstein functions that is uniform in $\lambda$ and $\xi$,
rather than the (weaker) $L^1_{\lambda,\xi}$ estimates of the current
paper; these statements are formally similar to QUE, while our
statement is formally similar to QE. In~\cite{g-n}, uniform in
$\lambda$ and $\xi$ estimates are possible because Lagrangian states,
when propagated by the Schr\"odinger group $U(t)=e^{ith\Delta/2}$, would disperse
faster than they fail to be approximated semiclassically, a phenomenon
similar to the one studied by Nonnenmacher--Zworski~\cite{n-z}. In
fact, it is plausible that the result of~\cite{g-n} is true when the
condition $\delta<n/2$ is replaced by the negative pressure condition
of~\cite{n-z}. As for~\cite{eiscusp}, the energy being away from the
real line makes the measure corresponding to $E_h$ exponentially
increasing, rather than invariant, along the flow; 
in a way, this paper relies on damping of plane waves
rather than dispersion.

We see that the uniform convergence in~\cite{g-n} and~\cite{eiscusp}
is possible because one has better control on the propagated
Lagrangian states. Such better control is directly related to having a
polynomial bound on the scattering resolvent. In the less restricted
situation of our paper, however, it is not clear if such a bound would
hold; therefore, we need to average in $\lambda$ and $\xi$ to pass to
trace (or, strictly speaking, Hilbert--Schmidt norm) estimates, just
as in the proof of Quantum Ergodicity.

The expansions for local traces of the spectral measure as in
Theorem~\ref{cor:trace_est} were studied by Robert--Tamura~\cite{RoTa}
for nontrapping perturbations of the Euclidean space, yielding a full
expansion in powers of $h$ in that setting.

\section{Outline of the proofs}
  \label{s:outline}

In this section, we explain the ideas of the proofs of
Theorems~\ref{t:convergence} and~\ref{t:remainder}, in the case of
manifolds Euclidean near infinity. We also describe the structure of
the paper.

We start with Theorem~\ref{t:convergence}. Take $t>0$; we will use
$\lim_{t\to+\infty}\lim_{h\to 0}$ limits, therefore remainders that
decay in $h$ with constants depending on $t$ will be negligible.
Since $E_h$ is a generalized eigenfunction of the
Laplacian~\eqref{e:e-eq-h}, we have for $\la>0$
\begin{equation}
  \label{e:e-propagation}
E_h(\lambda,\xi)=e^{-it\lambda^2/(2h)}U(t)E_h(\lambda,\xi).
\end{equation}
Here $U(t)=e^{ith\Delta/2}$ is the semiclassical Schr\"odinger
propagator, quantizing the geodesic flow $g^t$. Since $E_h$ does not
lie in $L^2(M)$, we cannot apply the operator $U(t)$ to it; however,
\eqref{e:e-propagation} can be made rigorous, with an $\mathcal
O(h^\infty)$ error, by using appropriate cutoffs~-- see
Lemma~\ref{l:key}. We will not write these cutoffs here for the
sake of brevity.

Take a compactly supported and compactly microlocalized
semiclassical pseudodifferential operator $A$ on $M$; then
by~\eqref{e:e-propagation},
\begin{equation}
  \label{e:a-dec}
\langle AE_h,E_h\rangle=\langle AU(t)E_h,U(t)E_h\rangle=
\langle A^{-t} E_h,E_h\rangle,
\end{equation}
where $A^{-t}:=U(-t)AU(t)$ is a pseudodifferential operator with
principal symbol $\sigma(A)\circ g^{-t}$. (It is not compactly
supported, but we ignore this issue here.) We now use the following
decomposition of plane waves (see~\eqref{paramEheucl}): for a fixed $\la>0$,
\begin{equation}
  \label{decomp_of_E_h}
E_h=\chi_0 E_h^0+E_h^1,\quad 
E_h^0(m)=e^{{i\lambda\over h}\xi\cdot m},\
E_h^1=-R_h(\lambda)F_h,\
F_h:=(h^2\Delta-\lambda^2)\chi_0E_h^0.
\end{equation}
Here $E_h^0$ is the outgoing part of the plane wave, defined in a certain neighborhood
of infinity and solving~\eqref{e:e-eq-h} there, while $\chi_0$ is a cutoff
function equal to $1$ near infinity and supported inside the domain of $E_h^0$;
then
$$
F_h=[h^2\Delta,\chi_0]E_h^0
$$
is compactly supported and we can apply to it the semiclassical
incoming resolvent $R_h(\lambda)$ defined by $R_h(\la):=\lim_{\eps\to 0+}(h^2\Delta-(\la-i\eps)^2)^{-1}$ 
when acting on compactly supported functions, where $(h^2\Delta-z)^{-1}$ is bounded on $L^2$ for
$z\notin [0,\infty)$~-- see~\eqref{e:r-h-eucl} for the definition
in the Euclidean case and~\eqref{e:r-h-hyp} for a similar description in the hyperbolic case.
For $\lambda=1+\mathcal O(h)$, the function $F_h$ is microlocalized inside
the set
$$
W_\xi:=\{(m,\nu)\mid m\in\supp(d\chi_0),\ \nu=\xi\}\subset S^*M.
$$
In general, we cannot expect the resolvent $R_h(\lambda)$ to be
polynomially bounded in $h$, and thus cannot determine the wavefront
set of $E_h^1$. However, we will show the following weaker propagation
of singularities statement: the function
$$
\widetilde E_h^1(\lambda,\xi):={E_h^1(\lambda,\xi)\over 1+\|E_h(\lambda,\xi)\|_{L^2(K_0)}},
$$
where $K_0\subset M$ is a sufficiently large compact set, is
polynomially bounded in $h$ and for each $(m,\nu)\in \WFh(\widetilde
E_h^1)$, the geodesic $g^t(m,\nu)$ is either trapped as $t\to +\infty$
or passes through $W_\xi$ for some $t\geq 0$. For the case of
manifolds Euclidean near infinity, this follows directly
from the explicit formula for the scattering resolvent on the free
Euclidean space; for manifolds hyperbolic near infinity, we use the
microlocal properties of the resolvent established in~\cite{v}. See
assumption~\as{A6} in Section~\ref{s:general.analytic},
Section~\ref{s:euclidean.analysis}, and Proposition~\ref{WFE} for
details.

If $A$ and $1-\chi_0$ are both supported in the ball of radius $R$,
let $\varphi\in C_0^\infty(M)$ be independent of $t$ and equal to 1 in the
ball of radius $R+1$. Then we write
$$
A^{-t}=A_0^{-t}+A_1^{-t},\
A_0^{-t}:=A^{-t}\varphi,\
A_1^{-t}:=A^{-t}(1-\varphi).
$$
Now, each $(m,\nu)\in \WFh(A_1^{-t})$ has the following properties:
$|m|\geq R+1$, and for $(m',\nu')=g^{-t}(m,\nu)$, $|m'|\leq R$.
(See Figure~\ref{f:general-explanation}.)
Therefore, the geodesic $g^s(m,\nu)$ escapes to infinity
for $s\geq 0$ and never passes through $W_\xi$; it follows
from the discussion of the wavefront set of $\widetilde E_h^1$
in the previous paragraph that
$$
\|A^{-t}_1E^1_h\|_{L^2}=\mathcal O(h^\infty(1+\|E_h\|_{L^2(K_0)})).
$$ 
Therefore, we can write
\begin{equation}
  \label{e:a-dec2}
\langle AE_h,E_h\rangle=\langle A_1^{-t}\chi_0E_h^0,\chi_0E_h^0\rangle
+\langle A^{-t}_0E_h,E_h\rangle+\mathcal O(h^\infty(1+\|E_h\|_{L^2(K_0)}^2)).
\end{equation}
The first term on the right-hand side is explicit, as we have a formula
for $E_h^0$; we can calculate for Lebesgue almost every $\xi$ and $\lambda=1+\mathcal O(h)$,
\begin{equation}
  \label{e:e-0-limit}
\lim_{t\to +\infty}\lim_{h\to 0}\langle A_1^{-t}\chi_0 E_h^0(\lambda,\xi),\chi_0 E_h^0(\lambda,\xi)\rangle
=\int_{S^*M} a\,d\mu_\xi.
\end{equation}
%
%
\begin{figure}
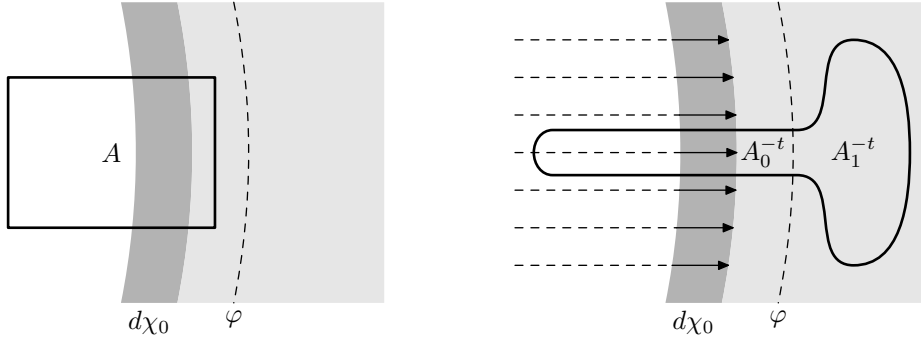

\includegraphics{qeefun.7}
\qquad\qquad
\includegraphics{qeefun.8}
\caption{A phase space picture of the main argument.
The right side of each picture represents infinity;
$\chi_0=1$ in the lighter shaded region and $d\chi_0$
is supported in the darker shaded region, while $\varphi=1$
to the left of the vertical dashed line. The horizontal dashed
lines on the right represent the wavefront set of $\widetilde E^1_h$;
they terminate at the solid arrows, which denote the set $W_\xi$.}
\label{f:general-explanation}
\end{figure}
%
%
It then remains to estimate the second and third terms on average
in $\lambda$ and $\xi$. For this, we use the relation~\eqref{e:spectrum-eis-2}
of distorted plane waves to the spectral measure of the Laplacian
to get for any bounded compactly supported pseudodifferential operator $B$,
\begin{equation}
  \label{e:tr-bd}
h^{-1}\|BE_h(\lambda,\xi)\|^2_{L^2_{m,\xi,\lambda}(M\times \plM\times [1,1+h])}
\leq Ch^n\|B\indic_{[1,(1+h)^2]}(h^2\Delta)\|^2_{\HS}.
\end{equation}
Here $\HS$ denotes the Hilbert--Schmidt norm.  One can estimate the
right-hand side of~\eqref{e:tr-bd} uniformly in $h$~-- see
Lemma~\ref{l:h-s-estimate} and the proof of
Proposition~\ref{l:avg-bdd}. Then $h^{-1}\|E_h\|^2_{L^2(K_0)}$, when
integrated over $\lambda\in [1,1+h]$ and $\xi$, is bounded uniformly
in $h$; this removes the third term on the right-hand side
of~\eqref{e:a-dec2}.

Finally, the average in $\lambda,\xi$ of the second term on the
right-hand side of~\eqref{e:a-dec2} can be bounded, modulo an
$\mathcal O_t(h)$ remainder, by the $L^2$ norm
$\|\sigma(A^{-t}_0)\|_{L^2(S^*M)}$ of the restriction of the principal
symbol of $A^{-t}_0$ to the energy surface $S^*M$, with respect to the
Liouville measure. Now, $\sigma(A^{-t}_0)=(\sigma(A)\circ
g^{-t})\varphi$ converges to zero as $t\to +\infty$ at any point which
is not trapped in the backwards direction.  Since we assumed
$\mu_L(K)=0$, by the dominated convergence theorem
$\|\sigma(A^{-t}_0)\|_{L^2(S^*M)}$ converges to zero as $t\to
+\infty$; this finishes the proof of Theorem~\ref{t:convergence}.

For the estimate~\eqref{e:convergence-2} in Theorem~\ref{t:remainder},
we need to take $t$ up to the Ehrenfest time:
\begin{equation}
  \label{e:ehrenfest-time}
t=t_e:=\Lambda_0^{-1}\log(1/h)/2,
\end{equation}
 replacing the $\lim_{t\to
+\infty} \lim_{h\to 0}$ limit in the argument of
Theorem~\ref{t:convergence} by just the $\lim_{h\to 0}$ limit, but
with $t$ depending on $h$.  The operator $A^{-t}$ is then still
pseudodifferential, though in a mildly exotic class. To avoid a
quantization procedure uniform at infinity, we give an iterative
argument, propagating $A$ for a fixed time for $\sim\log(1/h)$ steps,
applying $t$-independent cutoffs and removing the microlocally
negligible terms at each step. The proof then works as before, with
the term~$\langle A^{-t}_0 E_h,E_h\rangle$ bounded by the Liouville
measure of the microsupport of $A^{-t}_0$ (see Definition~\ref{d:microlocal-vanishing}), which depends
on $h$ and is contained in $g^t(\mathcal T(t))$, where $\mathcal T(t)$
is defined in~\eqref{e:T-t}; this proves~\eqref{e:convergence-2}. The
interpolated quantity $r(h,\Lambda)$ from~\eqref{defofrh} appears
because of the subprincipal terms in~\eqref{e:e-0-limit}.

For~\eqref{e:convergence-3}, we have to propagate to twice the
Ehrenfest time: $t=2t_e$.  The operator $A^{-t}$ is not
pseudodifferential, but we can use~\eqref{e:e-propagation} to write
\begin{equation}
  \label{e:a-t-0}
\langle A^{-t}_0 E_h,E_h\rangle=\langle U(-t/2)AU(t/2)\cdot U(t/2)\varphi U(-t/2)E_h,E_h\rangle.
\end{equation}
The operators $U(-t/2)AU(t/2)$ and $U(t/2)\varphi U(-t/2)$ are both
pseudodifferential in a mildly exotic class; multiplying them, we get
a pseudodifferential operator whose full symbol is supported
inside~$g^{t/2}(\mathcal T(t))$, and thus~\eqref{e:a-t-0} can be
estimated by the Liouville measure of this set, giving the
remainder~\eqref{e:convergence-3}.

A problem arises when trying to get a rate of convergence
in~\eqref{e:e-0-limit} for $t$ up to twice the Ehrenfest time. We are
unable to propagate the Lagrangian state $E_h^0(\lambda,\xi)$
pointwise in $\xi$ and $\lambda$ for time $t$, therefore we do not get
an $L^1_\xi$ estimate in~\eqref{e:convergence-3}.  However, for $f\in
C^\infty(\plM)$ we can still approximate the integral
\begin{equation}
  \label{e:e-0-int}
\int_{\plM} f(\xi)\langle A^{-t}_1\chi_0 E_h^0, \chi_0 E_h^0\rangle\,d\xi
\end{equation}
as follows. Define the operator
$$
\Pi^0_f(\lambda):=\int_{\plM} f(\xi) (\chi_0 E_h^0(\lambda,\xi))\otimes (\chi_0 E_h^0(\lambda,\xi))\,d\xi.
$$
Here $\otimes$ denotes the Hilbert tensor product; that is, if $u,v\in C^\infty(M)$, then
$u\otimes v$ is the operator with the Schwartz kernel
\begin{equation}
  \label{e:tensor-product}
K_{u\otimes v}(m,m')=u(m)\overline{v(m')}.
\end{equation}
We can show that if $\widetilde X$ is a pseudodifferential operator with compactly supported Schwartz kernel and
microlocalized in a compact subset of $T^*M$, satisfying certain conditions, then
$\widetilde X \Pi^0_f\widetilde X^*$ is a Fourier integral operator associated
to the canonical relation
$$
\{(m,\nu;m',\nu')\mid (m,\nu)\in S^*M,\
(m',\nu')=g^s(m,\nu)\text{ for some }s\in (-T_0,T_0)\},
$$
for a fixed $T_0>0$ depending on $\widetilde X$. (For comparison, for the spectral measure of
$h^2\Delta$ we would have to formally take all possible values of $s$,
which would destroy any hope on microlocally approximating it when the
geodesic flow is chaotic. The use of $E_h^0$ instead of $E_h$ here puts us in a `nontrapping' situation
$M=\mathbb R^n$,
where the cutoff $\widetilde X$ restricts the range of times $s$ we have to consider.) We can then write
$$
\widetilde X \Pi^0_f(\lambda)\widetilde X^*=(2\pi h)^n\int_{-T_0}^{T_0} e^{-i\lambda^2 s/(2h)}U(s)B_s\,ds,
$$
where $B_s$ is a smooth family of pseudodifferential operators, compactly
supported in $s\in (-T_0,T_0)$~-- see Lemma~\ref{l:trace-1}. We then write the integral~\eqref{e:e-0-int} as
$$
\begin{gathered}
\Tr(U(-t)AU(t)(1-\varphi) \Pi^0_f(\lambda))=\Tr\int_{-T_0}^{T_0} e^{-i\lambda^2 s/(2h)} U(-t)AU(t)(1-\varphi)U(s)B_s\,ds\\
=\Tr\int_{-T_0}^{T_0}e^{-i\lambda^2 s/(2h)} U(-t/2)AU(t/2)\cdot U(t/2)(1-\varphi)U(s)B_sU(-s-t/2)\cdot U(s)\,ds.
\end{gathered}
$$
The operators $U(-t/2)AU(t/2)$ and $U(t/2)(1-\varphi)U(s)B_sU(-s-t/2)$ are
pseudodifferential in a mildly exotic class; thus their product is
also pseudodifferential and (bearing in mind that $s$ varies in a
bounded set) one gets a microlocal expansion for~\eqref{e:e-0-int} through
a local trace formula for Schr\"odinger propagators~-- see Lemma~\ref{l:robert-trace}
and Proposition~\ref{l:trace-2}.

\subsection{Other possible geometric assumptions}
\label{s:other-geometry}

Our results should be true for asymptotically hyperbolic manifolds
without the constant curvature assumption near infinity. The main
difficulty here is constructing a good semiclassical parametrix for
the Eisenstein function $E_h(\la,\xi)$ near $\xi\in \plM$; this can be
done by WKB approximation, and the phase is a Busemann function
$\phi_\xi(m)$ near $\xi$, however one would need a good understanding
of the regularity of $\phi_\xi(m)$ as $m\to \xi$. This is in a way
related to the high-frequency parametrix of \cite{MeSBVa} in the
non-trapping setting.  For asymptotically Euclidean or
asymptotically conic ends, this might be more complicated as we would
need a parametrix of $E_h(\la,\xi)$ in a large neighbourhood of
$\xi\in \plM$, essentially in a region with closure containing a ball
of radius $\pi/2$ in $\plM$. In particular, the Lagrangian supporting
the semiclassical parametrix of $E_h(\la,\xi)$ would not a priori be
projectable far from $\xi$, which would make the construction more
technical. We leave these questions for future research.

The convergence result in Theorem~\ref{t:convergence} should be true
in the case where $M$ has a boundary, for instance $M=\rr^{n+1}\setminus
\Omega$ with $\Omega$ a piecewise smooth obstacle. In fact, it should
be straightforward to check that the method of proof applies when
combined with the idea of \cite{z-z}, based on the fact that the
region in phase space near the boundary where the dynamics is
complicated is of Liouville measure $0$ (since we assume
$\mu_L(K)=0$). To get a good remainder in that setting would be more
involved since one would need to care about the amount of mass of
plane waves staying in the regions near the boundary where the
dynamics is complicated, as we propagate up to Ehrenfest time. A
reasonable case to start with is that of strictly convex obstacles.

\subsection{Structure of the paper}

In Section~\ref{s:prelim}, we review certain notions of semiclassical
analysis and derive several technical lemmata; in particular, in
Section~\ref{s:prelim.lagrangian}, we review the local theory of
semiclassical Lagrangian distributions and Fourier integral operators
and in Section~\ref{s:prelim.propagator} we study microlocal
properties of Schr\"odinger propagators, including the
Hilbert--Schmidt norm bound (Lemma~\ref{l:h-s-estimate}).  In
Section~\ref{s:general}, we formulate the general assumptions on the
studied manifolds and derive some immediate corollaries;
Section~\ref{s:general.geometry} contains the geometric assumptions
and the definition of the trapped set and
Section~\ref{s:general.analytic} contains the analytic assumptions on
distorted plane waves. In Section~\ref{s:general.limiting} we
construct the limiting measures $\mu_\xi$ and in
Section~\ref{s:general.averaged} we prove averaged estimates on
Eisenstein functions.

In Section~\ref{s:proofs}, we give the proofs of our main theorems.
Section~\ref{s:proofs-1} contains the proof of
Theorem~\ref{t:convergence}, Section~\ref{s:proofs-2} contains the
proof of the estimate~\eqref{e:convergence-2} in
Theorem~\ref{t:remainder}, while Section~\ref{s:proofs-3} contains the
proof of the estimate~\eqref{e:convergence-3} in
Theorem~\ref{t:remainder}. Section~\ref{s:proofs-3} also contains the
Tauberian argument proving an expansion of the local trace of a
spectral projector (Theorem~\ref{asympofs_A}).
Sections~\ref{s:euclidean} and~\ref{s:ah} study the Euclidean and
hyperbolic near infinity manifolds, respectively, and show that the
general assumptions of Section~\ref{s:general} are satisfied in these
cases.

Appendix~\ref{s:k-1} provides a formula for the limiting measures in
the case of a convex co-compact hyperbolic quotient, which generalizes
the limiting measure of~\cite{g-n} to the case $\delta\geq n/2$.
Appendix~\ref{s:escaperate} discusses the classical escape rate, in
particular explaining~\eqref{e:pressure-estimate}.
Appendix~\ref{s:ehrenfest} gives a self-contained proof of Egorov's
theorem up to the Ehrenfest time (Proposition~\ref{l:ehrenfest}).
Finally, Appendix~\ref{s:qe} contains a short proof of (a special case
of) quantum ergodicity in the semiclassical setting, which is simpler
than that of~\cite{h-m-r} because it does not rely on~\cite{d-g,PeRo}.

\section{Semiclassical preliminaries}
  \label{s:prelim}

In this section, we review the methods of semiclassical analysis
needed for our argument. Most of the constructions listed below are
standard: pseudodifferential operators, wavefront sets, local theory
of Fourier integral operators, and Egorov's theorem. However,
Section~\ref{s:prelim.propagator} contains the propagation result
for generalized eigenfunctions (Lemma~\ref{l:key})
and a Hilbert--Schmidt norm estimate in an $\mathcal O(h)$ spectral
window (Lemma~\ref{l:h-s-estimate}), which the authors were
unable to find in previous literature.


\subsection{Notation}
  \label{s:prelim.notation}

In this subsection, we briefly review certain notation used in
semiclassical analysis. The reader is referred to~\cite{e-z}
(especially Chapter~14 on semiclassical calculus on manifolds)
or~\cite{d-s} for a detailed introduction to the subject.

\smallsection{The phase space}
Let $M$ be a $d$-dimensional manifold without boundary. We denote
elements of the cotangent bundle
$T^*M$ by $(m,\nu)$, where $\nu\in T^*_mM$.
Following~\cite[Section~2]{v}, consider the fiber-radial
compactification $\overline T^*M$ of $T^*M$,
with the boundary definining function given by $\langle\nu\rangle^{-1}$
for any smooth inner product on the fibers of $T^*M$.
 The boundary
$\partial \overline T^*M$, called the \emph{fiber infinity}, is diffeomorphic to the cosphere bundle $S^*M$
over $M$.%
\footnote{Unlike~\cite{v}, we do not use the notation $S^*M$ for fiber
infinity~--- we reserve it for the unit cotangent bundle $\{|\nu|_g=1\}\subset
T^*M$.}

Except in Propositions~\ref{l:elliptic} and~\ref{WFE}, we will use compactly microlocalized operators, for which the
fiber-radial compactification is not necessary. 

\smallsection{Symbol classes}
For $k\in \mathbb R$ and $\rho\in [0,1/2)$, consider the
symbol class $S^k_\rho(M)$ defined as follows: a smooth function
$a(m,\nu;h)$ on $T^*M\times [0,h_0)$ lies in $S^k_\rho(M)$ if and only
if for each compact set $K\subset M$ and each multiindices $\alpha,\beta$,
there exists a constant $C_{\alpha\beta K}$ such that
\begin{equation}\label{e:sym-rho}
\sup_{m\in K,\ \nu\in T^*_m M}|\partial^\alpha_m \partial^\beta_\nu a(m,\nu;h)|
\leq C_{\alpha\beta K} h^{-\rho(|\alpha|+|\beta|)}\langle\nu\rangle^{k-|\beta|}.
\end{equation}
These classes are independent of the choice of coordinates on $M$.
Note that we do not fix the
behaviour of the symbols as $m\to\infty$. The important special case
is $\rho=0$, which includes the classical symbols studied
in~\cite{v}. The class $S^k_0(M)$, denoted simply by $S^k(M)$, would
be sufficient for the convergence Theorem~\ref{t:convergence}. The classes
$S^k_\rho$ with $\rho>0$ will be important for obtaining the remainder
estimate of Theorem~\ref{t:remainder}; these classes arise when propagating symbols in
$S^k_0$ for short logarithmic times, as in Proposition~\ref{l:ehrenfest}.

Since plane waves are microlocalized on the cosphere bundle, away from
the fiber infinity, we will most often work with the classes
$\Sc_\rho$, consisting of compactly supported functions
satisfying~\eqref{e:sym-rho}; we have $S^{\comp}_\rho\subset S^k_\rho$
for all $k$.

\smallsection{Pseudodifferential operators}
Following~\cite[Section~14.2]{e-z}, we can define the algebra
$\Psi^k_\rho(M)$ of pseudodifferential operators with symbols
in~$S^k_\rho(M)$. (The properties of the symbol classes $S^k_\rho$
required for the construction of~\cite[Section~14.2]{e-z} are derived
as in~\cite[Section~4.4]{e-z}; see also~\cite[Chapter~18]{ho3} or~\cite[Chapter~3]{gr-s}.) As
before, denote $\Psi^k=\Psi^k_0$. Since our symbols can grow
arbitrarily fast as $m\to\infty$, we do not make any a priori
assumptions on the behavior of elements of $\Psi^k_\rho$ near the
infinity in $M$. However, we require that all operators
$A\in\Psi^k(M)$ be \emph{properly supported}; namely, the
restriction of each of the projection maps $\pi_m,\pi_{m'}:M\times
M\to M$ to the support of the Schwartz kernel $K_A(m,m')$ of $A$ is a
proper map. See for example~\cite[Proposition~18.1.22]{ho3}
for how to obtain properly supported quantizations on noncompact
manifolds. Then each element of $\Psi^k(M)$ acts $H^s_{h,\loc}(M)\to
H^{s-k}_{h,\loc}(M)$, where $H^s_{h,\loc}(M)$ denotes the space of
distributions locally in the semiclassical Sobolev space $H_h^s$
(see for example~\cite[Section~7.1]{e-z} for the definition
of semiclassical Sobolev spaces).
We also include properly supported operators that are $\mathcal O(h^\infty)_{\Psi^{-\infty}}$
into all considered pseudodifferential classes, see for example~\cite[Definition~18.1.20]{ho3}.

We have the semiclassical principal symbol map
$$
\sigma:\Psi^k_\rho(M)\to S^k_\rho(M)/h^{1-2\rho}S^{k-1}_\rho(M)
$$
and its right inverse, a non-canonical quantization map
$$
\Op_h:S^k_\rho(M)\to\Psi^k_\rho(M).
$$
For $A\in\Psi^k_\rho(M)$, we often use $\sigma(A)$ to denote
any representative of the corresponding equivalence class, hence
the remainder terms below.
The standard operations of pseudodifferential calculus with symbols
in $S^k_\rho$ have an $\mathcal O(h^{1-2\rho})$ remainder instead
of the $\mathcal O(h)$ remainder valid for the class
$S^k_0$. More precisely, we have for $A\in\Psi^k_\rho(M)$ and
$B\in\Psi^{k'}_\rho(M)$,
$$
\begin{gathered}
\sigma(A^*)=\overline{\sigma(A)}+\mathcal O(h^{1-2\rho})_{S^{k-1}_\rho(M)},\\
\sigma(AB)=\sigma(A)\sigma(B)+\mathcal O(h^{1-2\rho})_{S^{k+k'-1}_\rho(M)},\\
\sigma([A,B])=-ih \{\sigma(A),\sigma(B)\}+\mathcal O(h^{2(1-2\rho)})_{S^{k+k'-2}_{\rho}(M)}.
\end{gathered}
$$
Here $\{\cdot,\cdot\}$ stands for the Poisson bracket and the adjoint is with respect to $L^2(M)$. 
The $\mathcal O(\cdot)$ notation is used in the present paper in the following way:
we write $u=\mathcal O_z(F)_{\mathcal X}$ if the norm of the function,
or the operator, $u$ in the functional space $\mathcal X$ is bounded
by the expression $F$ times a constant depending on the parameter $z$.

\smallsection{Wavefront sets}
If $A:C^\infty(M)\to C^\infty(M)$ is a properly supported operator, we
say that $A=\mathcal O(h^\infty)_{\Psi^{-\infty}}$ if $A$ is smoothing
and each of the $C^\infty(M\times M)$ seminorms of its Schwartz kernel
is $\mathcal O(h^\infty)$.  For each $A\in\Psi^k_\rho(M)$, we have
$A=\Op_h(a)+\mathcal O(h^\infty)_{\Psi^{-\infty}}$ for some $a\in
S^k_\rho(M)$. Define the semiclassical wavefront set $\WFh(A)\subset
\overline T^*M$ of $A$ as follows: a point $(m,\nu)\in \overline T^*M$
does not lie in $\WFh(A)$, if there exists a neighborhood $U$ of
$(m,\nu)$ in $\overline T^*M$ such that each $(m,\nu)$-derivative of
$a$ is $\mathcal O(h^\infty\langle\nu\rangle^{-\infty})$ in $U\cap
T^*M$.

Operators with compact wavefront sets in $T^*M$ are called \emph{compactly
microlocalized}; those are exactly operators of the form
$\Op_h(a)+\mathcal O(h^\infty)_{\Psi^{-\infty}}$ for some $a\in
S^{\comp}_\rho$.  We denote by $\Psi^{\comp}_\rho(M)$ the class of all
compactly microlocalized elements of $\Psi^k_\rho(M)$; as before, we
put $\Psi^{\comp}(M)=\Psi^{\comp}_0(M)$. Compactly microlocalized
operators should not be confused with \emph{compactly supported}
operators (operators whose Schwartz kernels are compactly
supported).

We will need a finer notion of microsupport on $h$-dependent sets, used in
the proofs in Sections~\ref{s:proofs-2} and~\ref{s:proofs-3},
for example in Proposition~\ref{l:analysis2-3}:
%
%
\begin{defi}\label{d:microlocal-vanishing}
An operator $A\in\Psi^{\comp}_\rho(M)$ is said to be
\emph{microsupported} on an $h$-dependent family of sets
$V(h)\subset T^*M$, if we can write $A=\Op_h(a)+\mathcal
O(h^\infty)_{\Psi^{-\infty}}$, where for each compact set $K\subset
T^*M$, each differential operator $\partial^\alpha$ on $T^*M$, and
each $N$, there exists a constant $C_{\alpha N K}$ such that for $h$
small enough,
$$
\sup_{(m,\nu)\in K\setminus V(h)}|\partial^\alpha a(m,\nu;h)|\leq C_{\alpha N K}h^N.
$$
\end{defi}
%
%
Since the change of variables formula for the full symbol of a
pseudodifferential operator~\cite[Theorem~9.10]{e-z} contains an asymptotic expansion in powers
of $h$, consisting of derivatives of the original symbol,
Definition~\ref{d:microlocal-vanishing} does not depend on the choice
of the quantization procedure $\Op_h$.  Moreover, if
$A\in\Psi^{\comp}_\rho$ is microsupported inside some $V(h)$ and
$B\in\Psi^k_\rho$, then $AB$, $BA$, and $A^*$ are also microsupported
inside $V(h)$. It follows from the definition of the wavefront set
that $(m,\nu)\in T^*M$ does not lie in $\WFh(A)$ for some
$A\in\Psi^{\comp}_\rho$, if and only if there exists an
$h$-independent neighborhood $U$ of $(m,\nu)$ such that $A$ is
microsupported on the complement of $U$. Note however that $A$
need not be microsupported on $\WFh(A)$, though it will
be microsupported on any $h$-independent neighborhood of $\WFh(A)$.
Finally, it can be seen by
Taylor's formula that if $A\in\Psi^{\comp}_\rho(M)$ is microsupported
in $V(h)$ and $\rho'>\rho$, then $A$ is also microsupported on the set
of all points in $V(h)$ which are at least $h^{\rho'}$ away from the
complement of $V(h)$.

\smallsection{Ellipticity}
For $A\in\Psi^k_\rho(M)$, define its \emph{elliptic set}
$\Ell(A)\subset \overline T^*M$ as follows: $(m,\nu)\in\Ell(A)$ if and
only if there exists a neighborhood $U$ of $(m,\nu)$ in $\overline
T^*M$ and a constant $C$ such that $|\sigma(A)|\geq
C^{-1}\langle\nu\rangle^k$ in $U\cap T^*M$.  The following statement
is the standard semiclassical elliptic estimate;
see~\cite[Theorem~18.1.24']{ho3} for the closely related microlocal
case and for example~\cite[Section~2.2]{zeeman} for the semiclassical
case.
%
%
\begin{prop}\label{l:elliptic}
Assume that $P\in \Psi_\rho^k(M)$, $A\in \Psi_\rho^{k'}(M)$,
and $\WFh(A)\subset\Ell(P)$. Assume moreover that $A$ is compactly
supported.
Then there exists a constant $C$ and a function $\chi\in C_0^\infty(M)$
such that for each $s\in \mathbb R$, each $u\in H^{s+k'}_{h,\loc}(M)$ and each $N$, we have
\[
\|Au\|_{H^s_h}\leq C\|\chi Pu\|_{H^{s+k'-k}_h}+\mathcal O(h^\infty)\|\chi u\|_{H^{-N}}.
\]
Moreover, if $P$ is a differential operator, then we can take any
$\chi$ such that the Schwartz kernel of $A$ is supported in
$\{\chi\neq 0\}\times\{\chi\neq 0\}$.
\end{prop}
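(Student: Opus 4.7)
The plan is to build a microlocal parametrix for $P$ on a neighborhood of $\WFh(A)$. Since $\WFh(A)$ is a compact subset of the open set $\Ell(P)\subset\overline{T^*M}$, pick $b\in S_\rho^0(M)$ with $\pi(\supp b)$ compact, $b$ supported in $\Ell(P)$, and $b\equiv 1$ on a neighborhood of $\WFh(A)$. On $\supp b$ the symbol $\sigma(P)$ is nonvanishing of order $k$, so $q_0 := b/\sigma(P)\in S_\rho^{-k}(M)$. Setting $Q^{(0)} = \Op_h(q_0)\in\Psi_\rho^{-k}(M)$, the $S_\rho$-composition formula gives $Q^{(0)}P = \Op_h(b) + h^{1-2\rho}R_0$ with $R_0\in\Psi_\rho^{-1}(M)$. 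Iterating, at each step adding a correction of the form $\Op_h(h^{j(1-2\rho)}\sigma(R_{j-1})/\sigma(P))$ to cancel the leading order of the remainder, and asymptotically summing via a Borel argument, yields $Q\in\Psi_\rho^{-k}(M)$ with
\[
QP = B + \mathcal{O}(h^\infty)_{\Psi^{-\infty}},
\]
where $B\in\Psi_\rho^0(M)$ has $\sigma(B)\equiv 1$ on a neighborhood of $\WFh(A)$.

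Multiplying on the left by $A$, the inclusion $\WFh(A)\subset\{\sigma(B)=1\}$ together with the elliptic calculus for compactly microlocalized operators gives $A(I-B) = \mathcal{O}(h^\infty)_{\Psi^{-\infty}}$, so
\[
A = AQP + \mathcal{O}(h^\infty)_{\Psi^{-\infty}}.
\]
Since $A$ is compactly supported I may truncate $Q$ outside a neighborhood of $\pi(\WFh(A))$, introducing only an $\mathcal{O}(h^\infty)_{\Psi^{-\infty}}$ error, so that $AQ$ is compactly supported. For general $P$, choose $\chi\in C_0^\infty(M)$ equal to $1$ on a neighborhood of $\pi(\WFh(AQ))$; then $AQ = AQ\chi + \mathcal{O}(h^\infty)_{\Psi^{-\infty}}$, and the trailing error term can similarly be sandwiched by $\chi$. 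The operator $AQ\in\Psi_\rho^{k'-k}(M)$ is compactly supported and thus acts boundedly $H_{h,\loc}^{s+k'-k}(M)\to H_h^s(M)$, which yields the claimed inequality.

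For the refinement when $P$ is differential, let $\chi\in C_0^\infty(M)$ satisfy $\supp K_A\subset\{\chi\neq 0\}\times\{\chi\neq 0\}$ and pick $\chi_1\in C_0^\infty(M)$ with $\chi_1\equiv 1$ on a neighborhood of the $m$- and $m'$-projections of $\supp K_A$ and $\supp\chi_1\subset\{\chi\neq 0\}$. Then $A = A\chi_1 = \chi_1 A$ by the kernel support condition, and $\pi(\WFh(AQ))\subset\pi(\WFh(A))\subset\{\chi_1 = 1\}$, so pseudodifferential composition gives $AQ(1-\chi_1) = \mathcal{O}(h^\infty)_{\Psi^{-\infty}}$. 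This produces
\[
A = A\chi_1 Q\chi_1 P + \mathcal{O}(h^\infty)_{\Psi^{-\infty}},
\]
and since $P$ is local, $\chi_1 Pu$ is pointwise defined with $\|\chi_1 Pu\|_{H_h^{s+k'-k}}\leq C\|\chi Pu\|_{H_h^{s+k'-k}}$ via the bounded multiplier $\chi_1/\chi\in C_0^\infty(M)$, which is well defined because $\supp\chi_1\subset\{\chi\neq 0\}$. This gives the estimate with the prescribed $\chi$.

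The main obstacle is the bookkeeping of the parametrix iteration in the exotic $S_\rho$ calculus, where the subprincipal remainder shrinks by $h^{1-2\rho}$ rather than $h$; one must verify that each correction stays in the allowed symbol class and perform asymptotic summation carefully. The additional subtlety in the differential case is that $Q$ is only pseudolocal and not literally local, so the prescribed support of $\chi$ is accessed through the microlocal composition argument above rather than via a direct kernel cutoff.
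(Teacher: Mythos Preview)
The paper does not supply a proof of this proposition; it is stated as the standard semiclassical elliptic estimate with references to H\"ormander and to~\cite{zeeman}. Your parametrix construction is precisely the standard argument those references contain, and it is essentially correct.

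Two points of bookkeeping deserve tightening. First, after the iteration you write that $\sigma(B)\equiv 1$ near $\WFh(A)$ and invoke ``elliptic calculus'' to conclude $A(I-B)=\mathcal O(h^\infty)_{\Psi^{-\infty}}$. The principal symbol being $1$ is not enough for this; what you need (and what your construction actually delivers) is $B=\Op_h(b)$ with the \emph{full} symbol $b\equiv 1$ near $\WFh(A)$, so that $\WFh(I-B)\subset\supp(1-b)$ is disjoint from $\WFh(A)$. Second, in the differential case, the remainder $AQ(1-\chi_1)Pu$ must be controlled by $\|\chi u\|_{H^{-N}}$, and for that the Schwartz kernel of $AQ(1-\chi_1)$ must be supported over $\{\chi\neq 0\}$ in the right variable. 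This does not follow from $AQ(1-\chi_1)=\mathcal O(h^\infty)_{\Psi^{-\infty}}$ alone; you should arrange it at the outset by choosing the cutoff $b$ (and hence the full symbol of $Q$) with base support inside $\{\chi_1=1\}$, which is possible since $\pi(\WFh(A))\subset\{\chi_1=1\}^\circ$. Then every error term has kernel supported over $\supp\chi_1\subset\{\chi\neq 0\}$ in both variables, and the estimate closes with the prescribed $\chi$.
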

%
%
\smallsection{Semi-classical wave-front sets of distributions}
An $h$-dependent
family $u(h)\in \mathcal D'(M)$ is called \emph{h-tempered}, if
for each open $U$ compactly contained in $M$, there exist constants
$C$ and $N$ such that 
\begin{equation}
  \label{tempered}
\|u(h)\|_{H^{-N}_{h}(U)}\leq Ch^{-N}.
\end{equation}
For a tempered distribution $u$, we say that $(m_0,\nu_0)\in \overline
T^*M$ does not lie in the wavefront set $\WFh(u)$, if there exists a
neighborhood $V(m_0,\nu_0)$ in $\overline T^*M$ such that for each
$A\in\Psi^0(M)$ with $\WFh(A)\subset V$, we have $Au=\mathcal
O(h^\infty)_{C^\infty}$.  By Proposition~\ref{l:elliptic},
$(m_0,\nu_0)\not\in\WFh(u)$ if and only if there exists compactly
supported $A\in\Psi^0(M)$ elliptic at $(m_0,\nu_0)$ such that
$Au=\mathcal O(h^\infty)_{C^\infty}$.  The wavefront set of $u$ is a
closed subset of $\overline T^*M$; it is empty if and only if
$u=\mathcal O(h^\infty)_{C^\infty(M)}$. We can also verify that for
$u$ tempered and $A\in\Psi^k_\rho(M)$,
$\WFh(Au)\subset\WFh(A)\cap\WFh(u)$.

\subsection{Semiclassical Lagrangian distributions}
  \label{s:prelim.lagrangian}

In this subsection, we review some facts from the theory of
semiclassical Lagrangian distributions.  See~\cite[Chapter~6]{g-s}
or~\cite[Section~2.3]{svn} for a detailed account,
and~\cite[Section~25.1]{ho4} or~\cite[Chapter~11]{gr-s} for the
closely related microlocal case.  However, note that we do not attempt
to define the principal symbols as global invariant geometric objects;
this makes the resulting local theory considerably simpler.

\smallsection{Phase functions}
Let $M$ be a manifold without boundary.  We denote its dimension
by $d$; in the convention used in the present paper, $d=n+1$. As
before, we denote elements of $T^*M$ by $(m,\nu)$, $m\in M$, $\nu\in
T^*_m M$.  Let $\varphi(m,\theta)$ be a smooth real-valued function on
some open subset $U_\varphi$ of $M\times \mathbb R^L$, for some $L$;
we call $m$ \emph{base variables} and $\theta$ \emph{oscillatory
variables}. We say that $\varphi$ is a (nondegenerate) \emph{phase
function}, if the differentials $d
(\partial_{\theta_1}\varphi),\dots,d(\partial_{\theta_L}\varphi)$ are
linearly independent on the \emph{critical set}
\begin{equation}
  \label{e:c-varphi}
C_\varphi:=\{(m,\theta)\mid \partial_\theta \varphi=0\}\subset U_\varphi.
\end{equation}
In this case
\[
\Lambda_\varphi:=\{(m,\partial_m\varphi(m,\theta))\mid (m,\theta)\in C_\varphi\}\subset T^*M
\]
is an (immersed, and we will shrink the domain of $\varphi$ to make it
embedded) Lagrangian submanifold. We say that $\varphi$
\emph{generates} $\Lambda_\varphi$.

\smallsection{Symbols}
Let $\rho\in [0,1/2)$. A smooth function $a(m,\theta;h)$ is called a
compactly supported symbol of type $\rho$ on $U_\varphi$, if it is supported
in some compact $h$-independent subset of $U_\varphi$, and for each
differential operator $\partial^\alpha$ on $M\times \mathbb R^L$,
there exists a constant $C_\alpha$ such that
$$
\sup_{U_\varphi}|\partial^\alpha a|\leq C_\alpha h^{-\rho|\alpha|}.
$$
Similarly to Section~\ref{s:prelim.notation}, we write $a\in
S^{\comp}_\rho(U_\varphi)$ and denote $S^{\comp}:=S^{\comp}_0$. 

\smallsection{Lagrangian distributions}
Given a phase function $\varphi$ and a symbol $a\in
S^{\comp}_\rho(U_\varphi)$, consider the $h$-dependent family of
functions
\begin{equation}
  \label{e:lagrangian-basic}
u(m;h)=h^{-L/2}\int_{\mathbb R^L} e^{i\varphi(m,\theta)/h}a(m,\theta;h)\,d\theta.
\end{equation}
We call $u$ a \emph{Lagrangian distribution} of type $\rho$ generated by $\varphi$.
By the method of non-stationary phase, if
$\supp a$ is contained in some $h$-independent compact
set $K\subset U_\varphi$, then
\begin{equation}
  \label{e:lag-wf}
\WFh(u)\subset\{(m,\partial_m\varphi(m,\theta))\mid (m,\theta)\in C_\varphi\cap K\}\subset\Lambda_\varphi.
\end{equation}
The \emph{principal symbol} $\sigma_\varphi(u)\in S^{\comp}_\rho(\Lambda_\varphi)$ of $u$ 
is defined modulo $\mc{O}(h^{1-2\rho})$ by the expression
\begin{equation}
  \label{e:lagrangian-symbol}
\sigma_\varphi(u)(m,\partial_m\varphi(m,\theta);h)= a(m,\theta;h),\
(m,\theta)\in C_\varphi.
\end{equation}
That $\sigma_\varphi(u)$ does not depend (modulo $\mathcal O(h^{1-2\rho})$) on the choice of $a$
producing $u$ will
follow from Proposition~\ref{l:lagrangian-basic}
and~\eqref{e:lagrangian-key2}.

Following~\cite[Chapter~11]{gr-s}, we introduce a certain (local)
canonical form for Lagrangian distributions. Fix some local system of
coordinates on $M$ (shrinking $M$ to the domain of this coordinate
system and identifying it with a subset of $\mathbb R^d$) and consider
\begin{equation}
  \label{e:lambda-can}
\Lambda_F=\{(m,\nu)\mid m=-\partial_\nu F(\nu),\ \nu\in U_F\}\subset T^*M,
\end{equation}
where $F$ is a smooth real-valued function on some open set
$U_F\subset \mathbb R^d$, such that the image of $-\partial_\nu F$ is
contained in $M$. Then $\Lambda_F$ is Lagrangian; in fact, it is
generated by the phase function $m\cdot\nu+F(\nu)$, with $\nu$ the
oscillatory variable.  One can also prove that any Lagrangian
submanifold not intersecting the zero section (which is
always the case for the Lagrangians considered in this paper) can be locally brought under the
form~\eqref{e:lambda-can} for an appropriate choice of the coordinate
system on $M$~-- see for example~\cite[Lemma~9.5]{gr-s}.

If $b(\nu;h)\in S^{\comp}_\rho(U_F)$ and $\chi\in C_0^\infty(M)$ is
equal to $1$ near $-\partial_\nu F(\supp b)$, then we can define a
Lagrangian distribution by the following special case
of~\eqref{e:lagrangian-basic}:
\begin{equation}
  \label{e:lagrangian-key}
v(m;h)=\chi(m)h^{-d/2}\int_{U_F} e^{i(m\cdot\nu+F(\nu))/h}b(\nu;h)\,d\nu.
\end{equation}
We need $\chi$ to make $v\in C_0^\infty(M)$; however,
by~\eqref{e:lag-wf} (or directly by the method of nonstationary
phase), if we choose $\chi$ differently, then $v$ will change by
$\mathcal O(h^\infty)_{C_0^\infty}$.

If $v$ is given by~\eqref{e:lagrangian-key}, then we can recover
the symbol $b$ by the Fourier inversion formula:
\begin{equation}
  \label{e:lagrangian-key2}
e^{i F(\nu)/h}b(\nu;h)=(2\pi)^{-d}h^{-d/2}\int_M
e^{-im\cdot\nu/h} v(m;h)\,dm+\mathcal O(h^\infty)_{\mathscr S(\mathbb R^d)}.
\end{equation}
Note that
if $v=\mathcal O(h^\infty)_{C_0^\infty}$, then $b(\nu;h)=\mathcal
O(h^\infty)_{C_0^\infty}$.  Moreover, if $v\in C_0^\infty(M)$
satisfies~\eqref{e:lagrangian-key2} for some $b\in
S^{\comp}_\rho(U_F)$, then $v$ is given by~\eqref{e:lagrangian-key}
modulo $\mathcal O(h^\infty)_{C_0^\infty}$.
Following~\cite[Chapter~11]{gr-s}, by the method of stationary phase
each Lagrangian distribution can be brought locally
into the form~\eqref{e:lagrangian-key}:
%
%
\begin{prop}\label{l:lagrangian-basic}
Assume that $\varphi$ is a phase function, and the corresponding
Lagrangian $\Lambda=\Lambda_\varphi$ can be written in the
form~\eqref{e:lambda-can}.  For $a(m,\theta;h)\in
S^{\comp}_\rho(U_\varphi)$ and $b(\nu;h)\in S^{\comp}_\rho(U_F)$,
denote by $u_a$ and $v_b$ the functions given
by~\eqref{e:lagrangian-basic} and~\eqref{e:lagrangian-key},
respectively. Then:

1. For each $a\in S^{\comp}_\rho(U_\varphi)$, there exists $b\in S^{\comp}_\rho(U_F)$
such that $u_a=v_b+\mathcal O(h^\infty)_{C_0^\infty}$. Moreover, we have
the following asymptotic decomposition for $b$:
\begin{equation}
  \label{e:lagrangian-pp-exp}
b(\nu;h)=\sum_{0\leq j<N} h^j L_j a(m,\theta;h)+\mathcal O(h^{N(1-2\rho)})_{S^{\comp}_\rho(U_F)},
\end{equation}
where each $L_j$ is a differential operator of order $2j$ on
$U_\varphi$, and $(m,\theta)\in C_\varphi$ is the solution to
the equation $(m,\partial_m\varphi(m,\theta))=(-\partial_\nu F(\nu),\nu)$.
In particular, if $\sigma_\varphi(u)$ is given by~\eqref{e:lagrangian-symbol},
then
\begin{equation}
  \label{e:lagrangian-pp}
\sigma_\varphi(u)(-\partial_\nu F(\nu),\nu;h)=f_{\varphi F} b(\nu;h)+\mathcal O(h^{1-2\rho})_{S^{\comp}_\rho(U_F)},
\end{equation}
where $f_{\varphi F}$ is some nonvanishing function depending on $\varphi$ and the
coordinate system on $M$. Adding a certain constant to the function $F$,
we can make $f_{\varphi F}$ independent of $h$.

2. For each $b\in S^{\comp}_\rho(U_F)$, there exists $a\in S^{\comp}_\rho(U_\varphi)$
such that $v_b=u_a+\mathcal O(h^\infty)_{C_0^\infty}$.
\end{prop}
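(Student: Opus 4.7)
The plan is to use the Fourier inversion formula \eqref{e:lagrangian-key2} to extract a candidate symbol $b$ directly from $u_a$, then identify the resulting oscillatory integral via the method of stationary phase; for the converse I will invert this symbol map iteratively and sum the series by Borel summation.

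For part~1, I define a candidate $b$ by
$$e^{iF(\nu)/h}b(\nu;h) := (2\pi)^{-d}h^{-d/2}\int_M e^{-im\cdot\nu/h}u_a(m;h)\,dm.$$
Substituting the oscillatory-integral representation of $u_a$ gives a double integral with phase $\Phi(m,\theta;\nu) := \varphi(m,\theta) - m\cdot\nu$ and oscillatory variables $(m,\theta)$. The critical set for fixed $\nu$ is $\{\partial_\theta\varphi = 0,\ \partial_m\varphi = \nu\}$, i.e., the preimage in $C_\varphi$ of the unique point of $\Lambda_\varphi = \Lambda_F$ with fiber coordinate $\nu$, namely $m = -\partial_\nu F(\nu)$. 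Nondegeneracy of the full Hessian $\mathrm{Hess}_{m,\theta}\Phi$ at this critical point is equivalent to $(\partial_\theta\varphi,\partial_m\varphi)$ being a local diffeomorphism there; this combines the nondegeneracy of $\varphi$ as a phase function with the projectability of $\Lambda_\varphi$ onto the fiber coordinate $\nu$, the latter guaranteed by $\Lambda_\varphi = \Lambda_F$. Stationary phase then yields an expansion $b(\nu;h) = e^{i(\Phi_c(\nu)-F(\nu))/h}\sum_{0\leq j<N} h^j L_j a + \mc{O}(h^{N(1-2\rho)})$, with $L_j$ differential operators of order $2j$ on $U_\varphi$ and leading term $L_0 a$ proportional to $|\det \mathrm{Hess}\,\Phi|^{-1/2}$ times $a$ evaluated at the critical point. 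Differentiating $\Phi_c(\nu) = \varphi(m(\nu),\theta(\nu)) - m(\nu)\cdot\nu$ and using the critical-point equations gives $d\Phi_c/d\nu = -m(\nu) = \partial_\nu F(\nu)$, so $\Phi_c - F$ is locally constant; absorbing this constant into $F$ removes the residual phase factor and yields \eqref{e:lagrangian-pp-exp} and \eqref{e:lagrangian-pp}, with $f_{\varphi F}$ the nonvanishing stationary-phase prefactor, made $h$-independent by this constant shift. Reinserting the constructed $b$ into \eqref{e:lagrangian-key2} then gives $u_a - v_b = \mc{O}(h^\infty)_{C_0^\infty}$.

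For part~2, given $b \in \Sc_\rho(U_F)$, I construct $a$ iteratively. The principal part $L_0$ of the map $B\colon a \mapsto b$ from part~1 admits a right inverse: take $a_0(m,\theta;h) := \chi(m,\theta)\, f_{\varphi F}(\partial_m\varphi(m,\theta))\, b(\partial_m\varphi(m,\theta);h)$ with a smooth cutoff $\chi$ equal to $1$ near $C_\varphi$, so that $L_0 a_0 = b$. By part~1, $u_{a_0} = v_{B(a_0)}$ with $B(a_0) - b = \mc{O}(h^{1-2\rho})_{\Sc_\rho(U_F)}$. Applying the same construction to the remainder produces symbols $a_k = \mc{O}(h^{k(1-2\rho)})$ with $B\bigl(\sum_{k\leq K} a_k\bigr) - b = \mc{O}(h^{(K+1)(1-2\rho)})$, and Borel summation yields $a \sim \sum_k a_k$ in $\Sc_\rho(U_\varphi)$ with $v_b - u_a = \mc{O}(h^\infty)_{C_0^\infty}$.

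The main obstacle is the critical-value identity $\Phi_c - F = \text{const}$: this expresses at the level of generating functions the fact that $\varphi$ restricted to $C_\varphi$ (pushed down to $\Lambda_\varphi$) and $-F$ are both primitives of the canonical $1$-form on the common Lagrangian $\Lambda_\varphi = \Lambda_F$, hence agree up to an additive constant on each connected component. A secondary bookkeeping point is that the exotic class $\Sc_\rho$ with $\rho < 1/2$ enlarges the remainder in the stationary-phase expansion from $\mc{O}(h^N)$ to $\mc{O}(h^{N(1-2\rho)})$, since each differentiation of a symbol costs $h^{-\rho}$ and the $j$-th term of the expansion involves $2j$ derivatives; this does not otherwise affect the argument, as stationary phase applies uniformly in $\nu$ on compact sets.
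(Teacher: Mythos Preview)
Your proposal is correct and follows exactly the approach the paper intends: the paper does not give an explicit proof here but simply points to~\cite[Chapter~11]{gr-s} and the phrase ``by the method of stationary phase,'' and your argument is precisely the fleshed-out version of that --- Fourier inversion~\eqref{e:lagrangian-key2} followed by stationary phase in $(m,\theta)$ for the phase $\varphi(m,\theta)-m\cdot\nu$, with the critical-value identity $\Phi_c-F=\text{const}$ and the iterative inversion for part~2. Your handling of the nondegeneracy of the Hessian (via projectability of $\Lambda_\varphi$ onto the $\nu$-fiber) and of the $h^{N(1-2\rho)}$ remainders in the $S^{\comp}_\rho$ class is the right bookkeeping.
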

%
%
%
%
\begin{defi}
  \label{d:lagrangian}
Let $\Lambda\subset T^*M$ be an embedded Lagrangian submanifold.
We say that an $h$-dependent family of functions
$u(m;h)\in C_0^\infty(M)$ is a (compactly supported
and compactly microlocalized) 
Lagrangian distribution of type $\rho$ associated to $\Lambda$, if
it can be written as a sum of finitely many functions
of the form~\eqref{e:lagrangian-basic}, for different phase functions
$\varphi$ parametrizing open subsets of $\Lambda$, plus an
$\mathcal O(h^\infty)_{C_0^\infty}$ remainder. 
Denote by $I^{\comp}_\rho(\Lambda)$ the space of all such distributions,
and put $I^{\comp}(\Lambda):=I^{\comp}_0(\Lambda)$.
\end{defi}
%
%
By Proposition~\ref{l:lagrangian-basic}, if $\varphi$ is a phase function and $u\in
I^{\comp}_\rho(\Lambda_\varphi)$, then $u$ can be written in the
form~\eqref{e:lagrangian-basic} for some symbol $a$, plus an $\mathcal
O(h^\infty)_{C_0^\infty}$ remainder.  The symbol~$\sigma_\varphi(u)$,
given by~\eqref{e:lagrangian-symbol}, is well-defined modulo $\mathcal
O(h^{1-2\rho})$.

The action of a pseudodifferential operator on a Lagrangian
distribution is given by the following proposition, following from
Proposition~\ref{l:lagrangian-basic} and the method of stationary
phase:
%
%
\begin{prop}\label{l:lagrangian-mul}
Let $u\in I^{\comp}_\rho(\Lambda)$ and $P\in \Psi^k_\rho(M)$.
Then $Pu\in I^{\comp}_\rho(\Lambda)$. Moreover,

1. If $\Lambda=\Lambda_\varphi$ for some phase function $\varphi$,
then
$$
\sigma_\varphi(Pu)=\sigma(P)|_{\Lambda_\varphi}\cdot\sigma_\varphi(u)
+\mathcal O(h^{1-2\rho})_{S^{\comp}_\rho(\Lambda)}.
$$

2. Assume that $\Lambda=\Lambda_F$ is given by~\eqref{e:lambda-can} in
some coordinate system on $M$. Let $b(\nu;h)$ and $b^P(\nu;h)$ be the
symbols corresponding to $u$ and $Pu$, respectively,
via~\eqref{e:lagrangian-key}. Let also $P=\Op_h(p)$ for some
quantization procedure $\Op_h$. Then
$$
b^P(\nu;h)=\sum_{0\leq j<N}h^jL_j (p(m,\nu';h)b(\nu;h))|_{\nu'=\nu,\,m=-\partial_\nu F(\nu)}
+\mathcal O(h^{N(1-2\rho)})_{S^{\comp}_\rho(U_F)},
$$
where each $L_j$ is a differential operator of order $2j$ on $M\times
U_F\times U_F$.
\end{prop}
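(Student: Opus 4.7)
\smallsection{Proof plan}

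The plan is to reduce part 1 to part 2 via the canonical form from Proposition~\ref{l:lagrangian-basic}, and then prove part 2 by a direct stationary phase computation. Since the question is local and $u$ is compactly microlocalized, I may shrink the support of $u$ and assume that $\Lambda$ admits, on an open neighborhood of $\WFh(u)$, a representation $\Lambda=\Lambda_F$ of the form \eqref{e:lambda-can} in some coordinate chart on $M$. By Proposition~\ref{l:lagrangian-basic}(2), I can then write $u=v_b+\mathcal O(h^\infty)_{C^\infty_0}$ for some $b\in S^{\comp}_\rho(U_F)$. Because $P$ is properly supported and $\WFh(Pu)\subset\WFh(u)\cap\WFh(P)$, the image $Pu$ is again a compactly supported, compactly microlocalized function; membership in $I^{\comp}_\rho(\Lambda)$ will follow from the explicit formula for $Pu$ produced below.

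For part 2, I write $P=\Op_h(p)+\mathcal O(h^\infty)_{\Psi^{-\infty}}$ in the chosen local coordinates, so that
\[
Pv_b(m;h)= (2\pi h)^{-d}h^{-d/2}\iiint e^{i[(m-m')\cdot\eta+m'\cdot\nu+F(\nu)]/h}\,p(m,\eta;h)\chi(m')b(\nu;h)\,dm'\,d\eta\,d\nu
\]
modulo an $\mathcal O(h^\infty)_{C^\infty_0}$ error. The phase, viewed as a function of the auxiliary variables $(m',\eta)$ alone, has the single nondegenerate critical point $(m',\eta)=(m,\nu)$ with signature zero and Hessian $\det\neq0$. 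Applying stationary phase in $(m',\eta)$ and using $\chi\equiv 1$ near the base projection of $\WFh(u)$ yields, modulo $\mathcal O(h^\infty)_{C^\infty_0}$,
\[
Pv_b(m;h)=h^{-d/2}\int e^{i(m\cdot\nu+F(\nu))/h}\,c(m,\nu;h)\,d\nu,\qquad c(m,\nu;h)=\sum_{0\le j<N}h^j M_j\bigl(p(m,\nu';h)b(\nu;h)\bigr)\Big|_{\nu'=\nu}
\]
with $M_j$ a differential operator of order $2j$ in $(m,\nu')$ built from the Taylor expansion in stationary phase; the remainder is $\mathcal O(h^{N(1-2\rho)})_{S^{\comp}_\rho}$ because each $M_j$ eats $2j$ derivatives, each costing a factor $h^{-\rho}$ in the $S^{\comp}_\rho$ seminorms. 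To convert the amplitude $c(m,\nu;h)$, which still depends on the base variable $m$, into a symbol depending on $\nu$ only, I apply Proposition~\ref{l:lagrangian-basic}(1) to the phase $\varphi(m,\nu)=m\cdot\nu+F(\nu)$ with $\nu$ as the oscillatory variable; this contributes further differential operators whose composition with the $M_j$'s gives the stated operators $L_j$ on $M\times U_F\times U_F$. Identifying $b^P$ via \eqref{e:lagrangian-key2} then yields the claimed expansion.

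Part 1 is obtained from part 2 at leading order. The $j=0$ term of the expansion gives $b^P(\nu;h)=p(-\partial_\nu F(\nu),\nu;h)\,b(\nu;h)+\mathcal O(h^{1-2\rho})_{S^{\comp}_\rho}$; translating back through \eqref{e:lagrangian-pp} of Proposition~\ref{l:lagrangian-basic}(1) for both $u$ and $Pu$ with the same nonvanishing factor $f_{\varphi F}$ (which cancels in the ratio), one obtains $\sigma_\varphi(Pu)=\sigma(P)|_{\Lambda_\varphi}\sigma_\varphi(u)+\mathcal O(h^{1-2\rho})_{S^{\comp}_\rho(\Lambda)}$. For a general phase function $\varphi$ parametrizing $\Lambda$, after possibly shrinking the domain, $\Lambda_\varphi$ can be brought into canonical form $\Lambda_F$ in suitable coordinates, and the conclusion transfers because the principal symbol is defined intrinsically on $\Lambda$ modulo $\mathcal O(h^{1-2\rho})$.

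The main obstacle I anticipate is the bookkeeping with the exotic class $S^{\comp}_\rho$: one must verify that the remainders from stationary phase and from the Kuranishi-type reduction in Proposition~\ref{l:lagrangian-basic}(1) both respect the $\rho$-dependent loss, giving the sharp $\mathcal O(h^{N(1-2\rho)})$ bound rather than $\mathcal O(h^N)$. This amounts to checking that the $j$-th term in each stationary phase expansion, involving a differential operator of order $2j$ and a prefactor $h^j$, lives in $h^{j(1-2\rho)}S^{\comp}_\rho$, which follows from \eqref{e:sym-rho} applied to $p$ and $b$. Coordinate invariance and the passage from the local canonical form back to arbitrary phase functions is then routine given Proposition~\ref{l:lagrangian-basic}.
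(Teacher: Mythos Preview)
Your proposal is correct and follows exactly the route the paper indicates: the paper does not spell out a proof but simply states that the proposition follows ``from Proposition~\ref{l:lagrangian-basic} and the method of stationary phase,'' and your argument is a faithful and accurate elaboration of that sketch. The reduction to the canonical form $\Lambda_F$, the stationary phase in $(m',\eta)$, the subsequent use of Proposition~\ref{l:lagrangian-basic}(1) to eliminate the residual $m$-dependence in the amplitude, and the derivation of part~1 from the $j=0$ term via~\eqref{e:lagrangian-pp} are all the expected steps, and your handling of the $h^{N(1-2\rho)}$ remainder in the $S^{\comp}_\rho$ class is correct.
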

%
%
Finally, we give the following estimate of the $L^2$ norm of a Lagrangian
distribution, following from the boundedness of the Fourier transform on $L^2$:
%
%
\begin{prop}\label{l:lagrangian-norm}
Assume that $u\in I^{\comp}_\rho(\Lambda_F)$, where $\Lambda_F$ is
given by~\eqref{e:lambda-can}. Let $u$ be given
by~\eqref{e:lagrangian-key}, with $b(\nu;h)$ the corresponding
symbol. Then for some constant $C$ independent of $h$,
\begin{equation}
  \label{e:lagrangian-norm}
\|u(m;h)\|_{L^2}\leq C\|b(\nu;h)\|_{L^2(U_F)}.
\end{equation}
\end{prop}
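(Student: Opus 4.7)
The strategy is to recognize the right-hand side of~\eqref{e:lagrangian-key} as essentially a semiclassical inverse Fourier transform, so that the bound follows from Plancherel's theorem. Set $\tilde b(\nu;h) := e^{iF(\nu)/h} b(\nu;h)$, which is a compactly supported function on $\mathbb R^d$ with $|\tilde b(\nu;h)| = |b(\nu;h)|$ and hence $\|\tilde b\|_{L^2(\mathbb R^d)} = \|b\|_{L^2(U_F)}$.

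With the semiclassical Fourier transform normalized as $\mc{F}_h^{-1} w(m) := (2\pi h)^{-d/2}\int e^{im\cdot\nu/h} w(\nu)\,d\nu$, formula~\eqref{e:lagrangian-key} becomes
\begin{equation*}
u(m;h) = (2\pi)^{d/2}\,\chi(m)\, \mc{F}_h^{-1}\tilde b(m).
\end{equation*}
Since $\mc{F}_h^{-1}$ is an isometry on $L^2(\mathbb R^d)$ (Plancherel), and the base manifold $M$ is identified locally with an open subset of $\mathbb R^d$ via the chosen coordinate system,
\begin{equation*}
\|u\|_{L^2(M)} \leq (2\pi)^{d/2}\,\|\chi\|_{L^\infty(M)}\,\|\mc{F}_h^{-1}\tilde b\|_{L^2(\mathbb R^d)} = (2\pi)^{d/2}\,\|\chi\|_{L^\infty}\,\|b\|_{L^2(U_F)},
\end{equation*}
which gives the claim with $C = (2\pi)^{d/2}\|\chi\|_{L^\infty}$.

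There is essentially no obstacle here beyond correctly matching the normalization of $h^{-d/2}$ in~\eqref{e:lagrangian-key} with the $(2\pi h)^{-d/2}$ in the semiclassical Fourier transform. The one tiny point worth flagging is that the bound must not depend on the specific choice of cutoff $\chi$: because any two admissible $\chi$ in~\eqref{e:lagrangian-key} differ on a set where the resulting Lagrangian distribution is $\mc O(h^\infty)_{C_0^\infty}$ (as recalled in the paragraph following~\eqref{e:lagrangian-key}), the constant $C$ is intrinsic modulo an $\mc O(h^\infty)$ error. Since the statement is purely about the leading $L^2$ bound, this is harmless.
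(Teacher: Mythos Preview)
Your proof is correct and is essentially the same approach as the paper's, which simply remarks that the bound follows ``from the boundedness of the Fourier transform on $L^2$'': you have written out exactly that Plancherel argument in detail, including the normalization check. The additional comment about independence of the choice of $\chi$ is a nice touch but not strictly needed for the statement as written.
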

%
%
\smallsection{Fourier integral operators}
A special case of Lagrangian distributions are Fourier integral
operators associated to canonical transformations.  Let $M,M'$ be two
manifolds of the same dimension $d$, and let $\kappa$ be a
symplectomorphism from an open subset of $T^*M$ to an open subset of
$T^*M'$. Consider the Lagrangian
\[
\Lambda_\kappa=\{(m,\nu;m',-\nu') \mid \kappa(m,\nu)=(m',\nu')\}\subset T^*M\times T^*M'=T^*(M\times M').
\]
A compactly supported operator $U:\mathcal D'(M')\to C_0^\infty(M)$ is
called a (semiclassical) \emph{Fourier integral operator} of type
$\rho$ associated to $\kappa$, if its Schwartz kernel $K_U(m,m')$ lies
in $h^{-d/2}I^{\comp}_\rho(\Lambda_\kappa)$. We write $U\in
I^{\comp}_\rho(\kappa)$.  Note that we quantize a canonical
transformation $T^*M\to T^*M'$ as an operator $\mathcal D'(M')\to
C_0^\infty(M)$, in contrast with the standard convention, which would
quantize it as an operator $\mathcal D'(M)\to C_0^\infty(M')$.  The
$h^{-d/2}$ factor is explained as follows: the normalization for
Lagrangian distributions is chosen so that $\|u\|_{L^2}\sim 1$, while
the normalization for Fourier integral operators is chosen so that
$\|U\|_{L^2(M')\to L^2(M)}\sim 1$.

After sufficiently shrinking the domain of $\kappa$ and choosing an
appropriate coordinate system on $M'$ (which is possible for all $\kappa$
whose graph does not intersect the zero section of $T^*M'$, see the remark following~\eqref{e:lambda-can}),
we can find a generating function $S(m,\nu')$ for $\kappa$;
that is,
\begin{equation}
  \label{e:generating-function}
\kappa(m,\nu)=(m',\nu')\iff \partial_m S(m,\nu')=\nu,\
\partial_{\nu'} S(m,\nu')=m'.
\end{equation}
Here $(m,\nu')$ vary in some open set $U_S\subset M\times \mathbb
R^d$. The phase function $S(m,\nu')-m'\cdot\nu'$, with $\nu'$ the
oscillatory variable, parametrizes $\Lambda_\kappa$ and for $U\in
I^{\comp}_\rho(\kappa)$, we write similarly
to~\eqref{e:lagrangian-key},
\begin{equation}
  \label{e:fourior-basic}
K_U(m,m')=h^{-d}\chi(m')\int_{\mathbb R^d} e^{{i\over h}(S(m,\nu')-m'\cdot\nu')}b(m,\nu';h)
\,d\nu'+\mathcal O(h^\infty)_{C_0^\infty},
\end{equation}
for some symbol $b\in S^{\comp}_\rho(U_S)$ and any $\chi\in
C_0^\infty(M')$ such that $\chi=1$ near the set
$\partial_{\nu'}S(\supp b)$.  The function $b$ is determined uniquely
by $U$ modulo $\mathcal O(h^\infty)_{S^{\comp}_\rho(U_S)}$, similarly
to~\eqref{e:lagrangian-key2}.  Note that if $\kappa$ is the identity
map, then $S(m,\nu')=m\cdot \nu'$ and we arrive to the quantization
formula for a semiclassical pseudodifferential operator.

Similarly to Proposition~\ref{l:lagrangian-mul}, we have
%
%
\begin{prop}
  \label{l:fourior-mul}
Assume that $U\in I^{\comp}_\rho(\kappa)$ and $P\in\Psi^k_\rho(M')$.
Then $UP\in I^{\comp}_\rho(\kappa)$.
If moreover $\kappa$ is given by~\eqref{e:generating-function},
$b(m,\nu';h)$ and $b^P(m,\nu';h)$ are the symbols corresponding
to $U$ and $UP$, respectively, via~\eqref{e:fourior-basic},
and $P=\Op_h(p)$ for some quantization procedure $\Op_h$,
then we have the following asymptotic decomposition
for $b^P$:
\[
b^P(m,\nu';h)=\sum_{0\leq j<N}h^j L_j(p(m',\tilde \nu)b(m,\nu'))|_{\tilde \nu=\nu',\,
m'=\partial_{\nu'} S(m,\nu')}+\mathcal O(h^{N(1-2\rho)})_{S^{\comp}_\rho(U_S)}.
\]
Here each $L_j$ is a differential operator of order $2j$ on $M'\times \mathbb R^d\times U_S$.
In particular,
$$
b^P(m,\nu';h)=p(\partial_{\nu'} S(m,\nu'),\nu';h)b(m,\nu';h)+\mathcal O(h^{1-2\rho})_{S^{\comp}_\rho(U_S)}.
$$
A similar statement is true for an operator of the form $PU$, where $P\in\Psi^k_\rho(M)$ and
the terms of the asymptotic decomposition have the form~$h^j L_j(p(\widetilde m,\nu)b(m,\nu'))|_{\widetilde m=m,\,
\nu=\partial_m S(m,\nu')}$. 
\end{prop}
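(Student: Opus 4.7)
The plan is to establish Proposition~\ref{l:fourior-mul} by direct kernel composition followed by semiclassical stationary phase, closely paralleling the proof of Proposition~\ref{l:lagrangian-mul}. Since the statement is local on the graph of $\kappa$, I first use a partition of unity to reduce to the case where $\kappa$ is generated by a single function $S(m,\nu')$ on an open set $U_S\subset M\times\rr^d$ as in~\eqref{e:generating-function}, so that $U$ has Schwartz kernel in the canonical form~\eqref{e:fourior-basic} with amplitude $b(m,\nu';h)$. Writing $P=\Op_h(p)+\mc{O}(h^\infty)_{\Psi^{-\infty}}$ via standard left quantization in local coordinates on $M'$, the composition identity $K_{UP}(m,m')=\int K_U(m,m'')K_P(m'',m')\,dm''$ yields the oscillatory integral
$$
K_{UP}(m,m')=(2\pi)^{-d}h^{-2d}\int e^{i\Phi/h}\,\chi(m'')\,b(m,\nu';h)\,p(m'',\eta;h)\,dm''\,d\nu'\,d\eta+\mc{O}(h^\infty),
$$
with phase $\Phi=S(m,\nu')-m''\cdot\nu'+(m''-m')\cdot\eta$.

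Next, I view $\Phi$ as a phase in the redundant variables $(m'',\eta)\in\rr^{2d}$, with $(m,m',\nu')$ treated as parameters. It has a unique nondegenerate critical point at $m''=m'$, $\eta=\nu'$; the Hessian at this point is $\begin{pmatrix}0&I_d\\ I_d&0\end{pmatrix}$, of determinant $(-1)^d$ and signature zero, and the critical value of $\Phi$ equals $S(m,\nu')-m'\cdot\nu'$, matching the phase of~\eqref{e:fourior-basic}. The semiclassical stationary phase lemma then produces
$$
K_{UP}(m,m')=h^{-d}\int e^{i(S(m,\nu')-m'\cdot\nu')/h}\,\widetilde b(m,m',\nu';h)\,d\nu'+\mc{O}(h^\infty),
$$
where $\widetilde b$ admits an asymptotic expansion in $h$ whose leading term is $p(m',\nu';h)b(m,\nu';h)$ and whose $j$-th term is an explicit differential operator of order $2j$ in $(m'',\eta)$ applied to $p(m'',\eta;h)b(m,\nu';h)$ and then evaluated at the critical point. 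This already shows that $K_{UP}$ is a Lagrangian distribution of type $\rho$ associated to $\Lambda_\kappa$, so $UP\in I^{\comp}_\rho(\kappa)$.

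To eliminate the residual dependence on $m'$ and extract an amplitude $b^P(m,\nu';h)$ depending only on $(m,\nu')$, I Taylor expand $\widetilde b(m,m',\nu';h)$ about $m'=\partial_{\nu'}S(m,\nu')$ and exploit the identity
$$
(m'-\partial_{\nu'}S(m,\nu'))\,e^{i(S(m,\nu')-m'\cdot\nu')/h}=-ih\,\partial_{\nu'}e^{i(S(m,\nu')-m'\cdot\nu')/h}.
$$
Iterating and integrating by parts in $\nu'$ converts each power of $(m'-\partial_{\nu'}S)$ into a factor of $h$ together with a $\nu'$-derivative on the amplitude, producing after rearrangement a symbol of the stated form, with $L_0=1$ and each $L_j$ combining the stationary-phase differential operators above with the substitution $m'=\partial_{\nu'}S(m,\nu')$ from this step. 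The case of $PU$ with $P\in\Psi_\rho^k(M)$ is entirely analogous: one quantizes $P$ in local coordinates on $M$, runs stationary phase in the redundant variables on the $M$-side, and uses the alternative relation $\nu=\partial_mS(m,\nu')$ on $\Lambda_\kappa$ in place of $m'=\partial_{\nu'}S$.

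The main technical obstacle is controlling remainders in the exotic class $S_\rho^{\comp}$. Each application of stationary phase beyond leading order costs two symbol derivatives worth $h^{-\rho}$ each while gaining one factor of $h$, and the integration-by-parts in the last step similarly trades one $h$ for a $\nu'$-derivative on the amplitude. The essential bookkeeping, which requires $\rho<1/2$, is that these contributions assemble so that the order-$j$ term lies in $h^{j(1-2\rho)}S_\rho^{\comp}(U_S)$, producing the stated $\mc{O}(h^{N(1-2\rho)})$ error; this is the standard calibration for type-$(\rho,\rho)$ exotic classes, already invoked in Propositions~\ref{l:lagrangian-basic} and~\ref{l:lagrangian-mul}.
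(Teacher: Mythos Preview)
Your proof is correct and follows essentially the same approach the paper has in mind: the paper merely states that Proposition~\ref{l:fourior-mul} is proved ``similarly to Proposition~\ref{l:lagrangian-mul},'' which in turn is said to follow ``from Proposition~\ref{l:lagrangian-basic} and the method of stationary phase,'' and your argument is a faithful and detailed execution of exactly this---composing kernels, applying stationary phase in the redundant variables, and then reducing to the canonical amplitude via Taylor expansion and integration by parts. Your bookkeeping of the $h^{j(1-2\rho)}$ losses in the exotic class is also correct and matches the paper's conventions for $S_\rho^{\comp}$ calculus.
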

%
%

\subsection{Schr\"odinger propagators}
  \label{s:prelim.propagator}

Let $(M,g)$ be a complete Riemannian
manifold, $\Delta=\Delta_g$ the corresponding (nonnegative)
Laplace--Beltrami operator, and $p(m,\nu)=|\nu|_g^2$ the semiclassical principal
symbol of $h^2\Delta\in\Psi^2(M)$.
We use the notation
$S^*M=p^{-1}(1)\subset T^*M$
for the unit cotangent bundle. The geodesic flow $g^t$ on $T^*M$ is
related to the Hamiltonian flow $e^{tH_p}$ of $p$ by the formula
$g^t=e^{tH_p/2}$. The
operator $\Delta$ is essentially self-adjoint on $L^2(M)$ by~\cite{Ch}
and its domain is given by the Friedrichs extension. Let
$$
U(t)=e^{ith\Delta/2}=e^{{it\over h}(h^2\Delta/2)}
$$
be the semiclassical Schr\"odinger propagator; it is a unitary
operator on $L^2(M)$. The basic microlocal properties of $U(t)$
are given by the following
%
%
\begin{prop}
  \label{l:egorov}
For each $t\in \mathbb R$,

1. (Egorov's Theorem) For each compactly supported
$A\in\Psi^{\comp}_\rho(M)$, there exists compactly supported
$A^t\in\Psi^{\comp}_\rho(M)$ such that
\begin{equation}
  \label{e:egorov}
U(t)AU(-t)=A^t+\mathcal O(h^\infty)_{L^2\to L^2}.
\end{equation}
Moreover, $\WFh(A^t)\subset g^{-t}(\WFh(A))$ and
$\sigma(A^t)=\sigma(A)\circ g^t+\mathcal O(h^{1-2\rho})$.

2. (Microlocalization) $U(t)$ is microlocalized
on the graph of $g^{-t}$, namely if
$A,B\in\Psi^k_\rho(M)$ are compactly supported and at least
one of them is compactly microlocalized, then
\begin{equation}
  \label{e:mic-req}
g^t(\WFh(A))\cap\WFh(B)=\emptyset\ \Longrightarrow\ 
AU(t)B=\mathcal O(h^\infty)_{L^2\to L^2}.
\end{equation}

3. (Parametrix) If $A\in\Psi^{\comp}(M)$ is compactly supported, then
$U(t)A$ is the sum of a compactly microlocalized Fourier integral
operator (of type 0) associated to $g^t$, as defined in
Section~\ref{s:prelim.lagrangian}, and an $\mathcal
O(h^\infty)_{L^2\to L^2}$ remainder. 
\end{prop}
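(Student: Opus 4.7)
The plan is to establish Part 1 first, derive Part 2 as a short corollary, and construct a WKB parametrix for Part 3.

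For Part 1, I look for $A^t$ of the form $\Op_h(a(t;h))$ with $a(t;h)\sim \sum_{j\geq 0} h^{j(1-2\rho)} a_j(t)$ and each $a_j(t)\in S^{\comp}_\rho$. Differentiating $U(-s) A^s U(s)$ in $s$ and integrating gives the Duhamel identity
\begin{equation*}
A^t - U(t)AU(-t) = \int_0^t U(t-s)\bigl[\partial_s A^s - \tfrac{i}{2h}[h^2\Delta, A^s]\bigr] U(s-t)\,ds.
\end{equation*}
Using the symbol-calculus formula $\sigma([h^2\Delta, A^s]) = -ih\{p,\sigma(A^s)\} + \mc{O}(h^{2(1-2\rho)})$ from Section~\ref{s:prelim.notation} and the identity $e^{sH_{p/2}} = g^s$, the leading-order transport equation $\partial_t a_0 = \{p/2, a_0\}$ is solved by $a_0(t) = \sigma(A)\circ g^t$, supported in $g^{-t}(\WFh(A))$. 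The corrections $a_j(t)$, $j\geq 1$, satisfy inhomogeneous first-order transport equations along $g^t$ whose source terms are built from $\{a_k : k<j\}$ via the subprincipal terms of $[h^2\Delta,\Op_h(\cdot)]$, and are solvable globally on $g^{-t}(\WFh(A))$. Borel-summing yields $a(t;h)$; by construction the bracket in the Duhamel identity is $\mc{O}(h^\infty)_{\Psi^{-\infty}}$ uniformly for $s\in[0,t]$, and unitarity of $U$ gives $\|A^t - U(t)AU(-t)\|_{L^2\to L^2} = \mc{O}(h^\infty)$. The wavefront and symbol statements follow from $\supp a_0\subset g^{-t}(\WFh(A))$ and $a_0 = \sigma(A)\circ g^t$.

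Part 2 is a direct corollary. Assume $A$ is compactly microlocalized (the case of $B$ compactly microlocalized is symmetric via adjoints). Part 1 applied at time $-t$ produces $\widetilde A\in\Psi^{\comp}_\rho$ with $U(-t) A U(t) = \widetilde A + \mc{O}(h^\infty)_{L^2\to L^2}$ and $\WFh(\widetilde A)\subset g^t(\WFh(A))$; multiplying on the left by $U(t)$ and using unitarity gives $A U(t) = U(t) \widetilde A + \mc{O}(h^\infty)_{L^2\to L^2}$, whence $A U(t) B = U(t) \widetilde A B + \mc{O}(h^\infty)_{L^2\to L^2}$. Now $\widetilde A B\in\Psi^{\comp}_\rho$ has $\WFh(\widetilde A B)\subset g^t(\WFh(A))\cap\WFh(B) = \emptyset$, so $\widetilde A B = \mc{O}(h^\infty)_{L^2\to L^2}$ and the claim follows.

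For Part 3, I build a Fourier integral operator $V(t)\in I^{\comp}(g^t)$ with $U(t) A - V(t) = \mc{O}(h^\infty)_{L^2\to L^2}$ via WKB. For $|t|\leq \tau_0$ with $\tau_0$ small enough, after covering $\WFh(A)$ by finitely many coordinate charts I solve the Hamilton--Jacobi equation $\partial_t S + p(m,\partial_m S)/2 = 0$ with initial datum $S(0,m,\nu') = m\cdot\nu'$ to obtain a generating function of $g^t$, and seek $V(t)$ with Schwartz kernel
\begin{equation*}
K_{V(t)}(m,m') = (2\pi h)^{-d}\chi(m')\int e^{i(S(t,m,\nu')-m'\cdot\nu')/h} b(t,m,\nu';h)\,d\nu'
\end{equation*}
and $b(t)\sim \sum_{j\geq 0} h^j b_j(t)$. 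Applying Proposition~\ref{l:fourior-mul} with $P=h^2\Delta$ and requiring $(ih\partial_t + h^2\Delta/2) V(t) = \mc{O}(h^\infty)$ reduces to first-order transport equations along $g^t$ for the $b_j$; the initial condition $V(0)=A$ fixes $b_0(0,\cdot)=\sigma(A)$ via Proposition~\ref{l:lagrangian-basic}, and the higher $b_j(0)$ are recovered from the full symbol of $A$. Borel-summing produces $V(t)\in I^{\comp}(g^t)$, and the remainder $W(t):=U(t) A - V(t)$ satisfies $(ih\partial_t + h^2\Delta/2) W(t) = \mc{O}(h^\infty)_{L^2\to L^2}$ with $W(0)=\mc{O}(h^\infty)$, so Duhamel and unitarity of $U$ give $W(t)=\mc{O}(h^\infty)_{L^2\to L^2}$. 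The main obstacle is that for large $|t|$ the Lagrangian $\Lambda_{g^t}$ need not be representable by a single generating function; I handle this by choosing $N\in\mathbb N$ with $t/N$ small enough that the short-time construction above yields an FIO parametrix $V_{t/N}\in I^{\comp}(g^{t/N})$, and then setting $V(t):=V_{t/N}\circ V_{t/N}\circ\cdots\circ V_{t/N}$ ($N$ factors), which lies in $I^{\comp}(g^t)$ by the standard composition rule for compactly microlocalized FIOs of type $0$.
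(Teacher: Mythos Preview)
Your treatments of Parts~1 and~2 are correct and follow the standard route the paper cites (the paper itself merely refers to Zworski's Theorems~11.1 and~10.4 and says Part~2 is immediate from Part~1, giving no further details). One small point in Part~2: when you multiply the Egorov remainder $\mathcal O(h^\infty)_{L^2\to L^2}$ on the right by $B\in\Psi^k_\rho$ with $k>0$, you are implicitly using that this remainder is in fact smoothing; this is true---it has the form $\int_0^t U(t-s)E(s)U(s-t)\,ds$ with $E(s)\in h^\infty\Psi^{-\infty}$ compactly supported, and $U$ preserves semiclassical Sobolev spaces since it commutes with $\Delta$---but it is worth making explicit.

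In Part~3 there is a genuine gap in the passage to large times. Your short-time parametrix $V_\tau$ is constructed with initial condition $V(0)=A$, so $V_\tau\approx U(\tau)A$, \emph{not} $U(\tau)$. Composing $N$ copies therefore gives
\[
(V_{t/N})^N\approx(U(t/N)A)^N=U(t/N)A\,U(t/N)A\cdots U(t/N)A,
\]
which differs from $U(t)A$ by the $N-1$ extra intermediate factors of $A$; commuting them past the propagators via Egorov shows the principal symbol picks up the product $\prod_{j}\sigma(A)\circ g^{-jt/N}$, which is not $\sigma(A)$ in general. The standard repair is to insert fresh microlocal cutoffs at each step. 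Write $U(t)A=U(t')\,U(\tau)A$ with $0<\tau\leq\tau_0$; by your short-time construction $U(\tau)A=W_1+\mathcal O(h^\infty)$ with $W_1\in I^{\comp}(g^\tau)$. Choose $A_1\in\Psi^{\comp}$ equal to the identity microlocally near the (compact) output wavefront set of $W_1$, which lies in $g^{-\tau}(\WFh(A))$, so that $W_1=A_1W_1+\mathcal O(h^\infty)$ and hence $U(t)A=(U(t')A_1)\,W_1+\mathcal O(h^\infty)$. Now iterate on $U(t')A_1$. After finitely many steps one obtains $U(t)A=W_N\cdots W_1+\mathcal O(h^\infty)$ with each $W_j\in I^{\comp}(g^{\tau_j})$ and $\sum_j\tau_j=t$, and the composition lies in $I^{\comp}(g^t)$ by the FIO composition rule you invoke.
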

%
%
The proofs are standard; part~1 can be found
in~\cite[Theorem~11.1]{e-z} (with the mildly exotic classes
$\Psi^{\comp}_\rho$ handled as in Appendix~\ref{s:ehrenfest}), part~2
follows directly from part~1, and part~3 is proved similarly
to~\cite[Theorem~10.4]{e-z}.  The operator $U(t)A$ quantizes $g^t$,
not $g^{-t}$, because of the convention adopted in
Section~\ref{s:prelim.lagrangian} that a canonical transformation
$T^*M\to T^*M'$ is quantized as an operator $\mathcal D'(M')\to
C_0^\infty(M)$.

\smallsection{Egorov's theorem until the Ehrenfest time}
Proposition~\ref{l:egorov} is valid for bounded times $t$; as $t\to \infty$,
the constants in the estimates for the corresponding symbols will blow up.
However, it is still possible to prove Egorov's Theorem for $t$ bounded
by a certain multiple of $\log(1/h)$, called the Ehrenfest time.
To define this time, we fix an open bounded 
set $U\subset M$ with geodesically convex closure in the sense of~\eqref{geodconvex}
and define the \emph{maximal expansion rate}
\begin{equation}
  \label{e:lambda-max}
\Lambda_{\max}:=\limsup_{|t|\to \infty}{1\over |t|}\log\sup_{m\in U,\,|\nu|_g=1,\atop g^t(m,\nu)\in U}
\|dg^t(m,\nu)\|.
\end{equation}
Here $\|dg^t(m,\nu)\|$ is the operator norm of the differential
$
dg^t(m,\nu):T_{(m,\nu)}T^*M\to T_{g^t(m,\nu)}T^*M
$
with respect to any given smooth norm on the fibers of $T(T^*M)$.
Since we will work on a noncompact manifold, we introduce cutoffs
into the corresponding propagators:
%
%
\begin{prop}\label{l:ehrenfest}
Assume that $X_1,X_2\in \Psi^0(M)$ satisfy $\|X_j\|_{L^2\to L^2}\leq 1+\mathcal O(h)$
and are compactly supported inside $U$.
Let $\varepsilon_e>0$ and take $\Lambda_0,\Lambda'_0>0$ such that
$\Lambda_0>\Lambda'_0>(1+2\varepsilon_e)\Lambda_{\max}$.
Fix $t_0\in \mathbb R$. Then for each integer
\begin{equation}\label{e:ehrenfest-l}
l\in [0, \log(1/h)/(2|t_0|\Lambda_0)],
\end{equation}
and each compactly supported $A\in \Psi^{\comp}(M)$
with
$
\WFh(A)\subset \mathcal E_{\varepsilon_e}:=\{1-\varepsilon_e\leq |\nu|_g\leq 1+\varepsilon_e\}
$,
the compactly supported operator
$
A^{(l)}:=(X_2 U(t_0))^l A (U(-t_0)X_1)^l
$
lies in $\Psi^{\comp}_{\rho_l}(M)$, modulo an $\mathcal
O(h^\infty)_{L^2\to L^2}$ remainder, with
\begin{equation}
  \label{e:rho-j}
\rho_l=l|t_0|\Lambda'_0/\log(1/h)<1/2.
\end{equation}
Moreover, the $S^{\comp}_{\rho_l}$ seminorms of the full symbol of
$A^{(l)}$ are bounded uniformly in $l$, in the following sense: the
order $k$ derivatives of this symbol are bounded by $Ch^{-k\rho_l}$,
where $C$ is a constant independent of $h$ and $l$. The principal symbol
of $A^{(l)}$ is
$$
\sigma(A^{(l)})=(\sigma(A)\circ g^{lt_0})\prod_{j=0}^{l-1}(\sigma(X_1)\sigma(X_2))\circ g^{jt_0}
+\mathcal O(h^{1-2\rho_l}). 
$$
The wavefront set of $A^{(l)}$, for $l>0$, is contained
in $\WFh(X_1)\cap\WFh(X_2)\cap \mathcal E_{\varepsilon_e}$.
Finally, if $U_A$ and $U_X$ are open sets such that
$\WFh(A)\subset U_A$ and $\WFh(X_1)\cap\WFh(X_2)\subset U_X$,
then $A^{(l)}$ is microsupported, in the sense of Definition~\ref{d:microlocal-vanishing},
inside the set
$$
V^{(l)}:=g^{-lt_0}(U_A)\cap \bigcap_{j=0}^{l-1} g^{-jt_0}(U_X).
$$
The set $V^{(l)}$ does not depend on $h$ directly, however it depends on $l$,
which is allowed to depend on $h$, and our microlocal vanishing statement
is uniform in $l$. 
\end{prop}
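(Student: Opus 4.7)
The plan is to prove the proposition by induction on $l$, applying a bounded-time Egorov's theorem in mildly exotic symbol classes at each step and tracking the symbol carefully. The base case $l=0$ is trivial, since $A^{(0)}=A\in\Psi^{\comp}_0(M)=\Psi^{\comp}(M)$ and the microsupport condition reduces to the usual wavefront set estimate, $V^{(0)}=U_A$.

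For the inductive step, I would write $A^{(l)}=X_2 U(t_0)A^{(l-1)}U(-t_0)X_1$ and first establish a bounded-time exotic Egorov statement: if $B\in\Psi^{\comp}_\rho(M)$ for some $\rho<1/2$, then $U(t_0)B U(-t_0)\in \Psi^{\comp}_{\rho'}(M)$ with $\rho-\rho'=|t_0|(\Lambda_{\max}+\delta)/\log(1/h)$ for $\delta$ arbitrarily small, and with principal symbol $\sigma(B)\circ g^{t_0}$. This follows from iterating the identity $U(s)[h^2\Delta/2,B]U(-s)$ in the Heisenberg evolution equation for $U(s)BU(-s)$ and identifying the Moyal asymptotic expansion; the only input one needs about $B$ is the bound $|\partial^\alpha\sigma(B)|\leq C_\alpha h^{-\rho|\alpha|}$, and the Faà di Bruno formula gives $|\partial^\alpha(\sigma(B)\circ g^{t_0})|\leq C'_\alpha \|dg^{t_0}\|^{|\alpha|}h^{-\rho|\alpha|}$. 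After composing with the pseudodifferential operators $X_1,X_2\in\Psi^0$ (which lie in every class $\Psi^{\comp}_{\rho_l}$ once restricted to a compactly microlocalized setting), the product $A^{(l)}$ still belongs to $\Psi^{\comp}_{\rho_l}$; its wavefront set is cut by $\WFh(X_1)\cap\WFh(X_2)\cap \mathcal E_{\varepsilon_e}$, and its principal symbol acquires the factor $\sigma(X_1)\sigma(X_2)$. Iterating this over $j=0,\dots,l-1$ gives the symbol formula $\sigma(A^{(l)})=(\sigma(A)\circ g^{lt_0})\prod_{j=0}^{l-1}(\sigma(X_1)\sigma(X_2))\circ g^{jt_0}+\mathcal O(h^{1-2\rho_l})$ and, by taking support of each factor, the microsupport $V^{(l)}=g^{-lt_0}(U_A)\cap\bigcap_{j=0}^{l-1}g^{-jt_0}(U_X)$. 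The bound $\rho_l<1/2$ is immediate from $l|t_0|\leq\log(1/h)/(2\Lambda_0)$ and $\Lambda'_0<\Lambda_0$.

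The main obstacle is the uniformity of the $S^{\comp}_{\rho_l}$ seminorms over $l\sim\log(1/h)$ iterations. A naive iteration of constants produces a factor $C^l$, which would give $h^{-\log C}$ and could destroy the claimed bounds. To avoid this, I would not iterate the symbol estimate step by step; instead, since only the geodesic flow $g^{lt_0}$ and the cutoffs enter the final principal symbol, I would apply the Moyal-type asymptotic expansion for the whole composition $(X_2 U(t_0))^l\cdot A\cdot (U(-t_0)X_1)^l$ and express the remainder terms directly in powers of $\|dg^{lt_0}\|$ on the support of the emerging symbol. By the definition \eqref{e:lambda-max} of $\Lambda_{\max}$, on each point $(m,\nu)$ contributing to the symbol we have $g^{jt_0}(m,\nu)\in U_X\subset U$ for every $j$, so the cocycle bound $\|dg^{lt_0}(m,\nu)\|\leq e^{l|t_0|(\Lambda_{\max}+\delta)}$ applies. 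The choice $\Lambda'_0>(1+2\varepsilon_e)\Lambda_{\max}$ gives a strict safety margin of size $\varepsilon_e\Lambda_{\max}$ between the exponential growth rate of derivatives, $e^{l|t_0|(\Lambda_{\max}+\delta)}$, and the allowed bound $h^{-\rho_l}=e^{l|t_0|\Lambda'_0}$; this margin absorbs the polynomial-in-$l$ losses coming from the combinatorics of iterated Moyal expansions and of the chain rule. A parallel argument controls the $\mathcal O(h^{1-2\rho_l})$ lower-order corrections, after verifying that every intermediate remainder enjoys the same exotic estimate with the same margin.

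Finally, the microsupport claim is proved together with the symbol claim: outside $V^{(l)}$, either $g^{lt_0}(m,\nu)\notin U_A$ or some iterate $g^{jt_0}(m,\nu)$ escapes $U_X$, in either case forcing one of the factors in the symbol (either $\sigma(A)\circ g^{lt_0}$ or some $(\sigma(X_1)\sigma(X_2))\circ g^{jt_0}$) to have derivatives of arbitrary order of size $\mathcal O(h^\infty)$ there; the Moyal remainders inherit the same bound. The resulting estimate is uniform in $l$ as explained above, proving the microsupport statement in the sense of Definition~\ref{d:microlocal-vanishing}.
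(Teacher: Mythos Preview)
Your overall strategy matches the paper's in spirit, and you correctly identify the central obstacle: uniformity of the symbol seminorms over $l\sim\log(1/h)$ iterations. However, your proposed resolution has a genuine gap. You write that Fa\`a di Bruno gives $|\partial^\alpha(\sigma(B)\circ g^{t_0})|\leq C'_\alpha\|dg^{t_0}\|^{|\alpha|}h^{-\rho|\alpha|}$ and then propose to control the long-time composition ``directly in powers of $\|dg^{lt_0}\|$.'' But Fa\`a di Bruno involves \emph{all} derivatives $\partial^\beta g^t$ with $|\beta|\leq|\alpha|$, not just the first. For bounded $t_0$ the higher derivatives are harmless constants; for $t=lt_0\sim\log(1/h)$ one needs the nontrivial estimate $|\partial^\beta g^t|\leq C_\beta e^{|\beta|\Lambda_1 t}$ with $C_\beta$ independent of $t$. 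This is exactly the paper's Lemma~\ref{l:ehrenfest-basic}, and it does \emph{not} follow from the definition of $\Lambda_{\max}$ (which only controls $dg^t$): the paper proves it by induction on the order, using the variational identity $\partial_t g^t_*X=-\tfrac12 g^t_*[H_p,X]$ together with an integral representation of $\nabla_{g^t_*X}g^t_*Y$ in terms of the curvature of an auxiliary connection. Without this lemma your argument either leaves the $C^l$ loss in place or silently assumes the needed higher-derivative bound.

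A secondary point: the Leibniz rule applied to $\prod_{j=0}^{l-1}(\sigma(X_1)\sigma(X_2))\circ g^{jt_0}$ does produce only polynomial-in-$l$ combinatorics as you say, but absorbing them into the exponential margin requires that the undifferentiated factors satisfy $|\sigma(X_1)\sigma(X_2)|\leq 1$ --- this is precisely why the hypothesis $\|X_j\|_{L^2\to L^2}\leq 1+\mathcal O(h)$ is present, and you never invoke it. The paper packages both issues into Proposition~\ref{l:ehrenfest-basic-2}, which solves the iterated transport system for the full asymptotic expansion of the symbol and bounds every term via Lemma~\ref{l:ehrenfest-basic}; the Moyal remainders appear as inhomogeneous source terms $b^{(j)}_m$, $c^{(j)}_m$ in these transport equations rather than through a direct expansion of the $l$-fold operator product.
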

%
%
Proposition~\ref{l:ehrenfest} is the main technical tool of obtaining
the polynomial remainder bound of Theorem~\ref{t:remainder}; it is
also the reason why the classes $\Psi^{\comp}_\rho$ appear. Its proof,
following the methods of~\cite[Section~5.2]{AnNon}
and~\cite[Theorem~11.12]{e-z}, is given in Appendix~\ref{s:ehrenfest}.
See also~\cite[Theorem~7.1]{gabriel} for a more refined estimate in the
more restrictive setting of two-dimensional manifolds with hyperbolic
geodesic flows.
We do not impose any restrictions on the set $U$ at this point, however
in our actual argument it will have to contain a neighborhood of the
trapped set~-- see the beginning of Section~\ref{s:proofs-2}.

\smallsection{Propagating generalized eigenfunctions}
The following fact, similar to~\cite[Proposition~3.3]{eiscusp},
will be used to propagate the Eisenstein functions by the group $U(t)$:
%
%
\begin{lemm}\label{l:key}
Assume that $u\in C^\infty(M)$ solves the equation
\[
(h^2\Delta-z)u=0,\
|1-z|\leq Ch.
\]
Let $\chi\in C_0^\infty(M)$; take $t\in \mathbb R$ and assume
that $\chi_t\in C_0^\infty(M)$ is supported in the interior
of a compact set $K_t\subset M$ and satisfies
\begin{equation}
  \label{e:key-cond}
d_g(\supp\chi,\supp(1-\chi_t))>|t|.
\end{equation}
Here $d_g$ denotes Riemannian distance on $M$. Then 
\[
\chi u=\chi e^{-itz/(2h)}U(t)\chi_t u
  +\mathcal O(h^\infty\|u\|_{L^2(K_t)})_{L^2(M)}.
\]
\end{lemm}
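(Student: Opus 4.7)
The plan is a Duhamel argument combined with the microlocal finite speed of propagation of $U(s)$ along the Hamiltonian flow of $p=|\nu|_g^2$. I would set
\[
w(s):=e^{-isz/(2h)}U(s)\chi_t u,\qquad s\in [0,t].
\]
Because $d_g(\supp\chi,\supp(1-\chi_t))>|t|\ge 0$, the cutoff $\chi_t$ equals $1$ on $\supp\chi$, so $\chi w(0)=\chi u$ while $\chi w(t)$ is the right-hand side of the lemma. A direct computation, using $[h^2\Delta,U(s)]=0$ and $(h^2\Delta-z)u=0$, gives
\[
\partial_s w(s)=\frac{i}{2h}\,e^{-isz/(2h)}\,U(s)\,[h^2\Delta,\chi_t]\,u,
\]
so the problem reduces to proving the uniform bound
\[
\bigl\|\chi\,U(s)\,[h^2\Delta,\chi_t]\,u\bigr\|_{L^2(M)}=\mathcal O(h^\infty)\,\|u\|_{L^2(K_t)},\qquad s\in[0,|t|],
\]
since the $1/(2h)$ prefactor is absorbed by $\mathcal O(h^\infty)$ and $[0,t]$ has bounded length.

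For the main estimate, I would first localize. Since $[h^2\Delta,\chi_t]$ is a first-order differential operator with coefficients supported in $\supp(d\chi_t)$, choosing $\psi\in C_0^\infty(M)$ equal to $1$ on a neighbourhood of $\supp(d\chi_t)$ with $\supp\psi\subset K_t$ gives $[h^2\Delta,\chi_t]u=[h^2\Delta,\chi_t]\psi u$. I would then pick $A_0\in\Psi^{\comp}(M)$ compactly supported with $\sigma(A_0)=1$ near $\supp\psi\cap\{p=z\}$ and $\WFh(A_0)\subset\supp\psi\times\{|p-z|<\delta\}$ for a fixed small $\delta>0$. The operator $(I-A_0)\psi$ is then compactly supported, and its principal symbol vanishes on a neighbourhood of $\supp\psi\cap\{p=z\}$, so $\WFh((I-A_0)\psi)\subset\Ell(h^2\Delta-z)$. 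The semiclassical elliptic estimate (Proposition~\ref{l:elliptic}) applied to $u$ with $P=h^2\Delta-z$ and $Pu=0$ yields
\[
\|(I-A_0)\psi u\|_{H^1_h}=\mathcal O(h^\infty)\,\|u\|_{L^2(K_t)},
\]
and since $[h^2\Delta,\chi_t]\colon H^1_h\to L^2$ has operator norm $\mathcal O(h)$, the distribution $[h^2\Delta,\chi_t]\psi u$ equals $Bu$, where $B:=[h^2\Delta,\chi_t]A_0\psi\in\Psi^{\comp}(M)$, up to an $\mathcal O(h^\infty)\|u\|_{L^2(K_t)}$ error. The task is now to show $\|\chi\,U(s)\,B\|_{L^2\to L^2}=\mathcal O(h^\infty)$ uniformly in $s\in[0,|t|]$.

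At this point I would invoke Proposition~\ref{l:egorov}(2). We have $\WFh(B)\subset\supp(d\chi_t)\times\{|p-z|<\delta\}$ and $\WFh(\chi)=\supp\chi\times T^*M$; a point in $g^s(\WFh(\chi))\cap\WFh(B)$ must be of the form $(m',\nu')=g^s(m,\nu)$ with $m\in\supp\chi$, $m'\in\supp(d\chi_t)$ and $|\nu|_g^2=|\nu'|_g^2\in(z-\delta,z+\delta)$ (since $p$ is preserved by $g^s$), forcing $d_g(\supp\chi,\supp(d\chi_t))\le|s||\nu|_g\le|t|\sqrt{z+\delta}$. Compactness of $\supp\chi$ and $\supp(d\chi_t)$ together with the strict inequality in the hypothesis produces $\eps>0$ with $d_g(\supp\chi,\supp(d\chi_t))\ge|t|+\eps$; choosing $\delta$ and $h$ small enough so that $|t|\sqrt{z+\delta}<|t|+\eps$ (using $z=1+\mathcal O(h)$) makes this intersection empty for every $s\in[0,|t|]$. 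Proposition~\ref{l:egorov}(2) then yields $\chi U(s)B=\mathcal O(h^\infty)_{L^2\to L^2}$ uniformly in $s$, and combining with $\|Bu\|_{L^2}\le C\|u\|_{L^2(K_t)}$ concludes the argument.

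The main technical obstacle is to keep every $\mathcal O(h^\infty)$ bound uniform in $s$ and to quantify all wavefront-set statements for $u$ in terms of $\|u\|_{L^2(K_t)}$, given that $u$ is only a smooth generalized eigenfunction and not a priori a globally tempered distribution. The strict inequality $d_g(\supp\chi,\supp(1-\chi_t))>|t|$ is exactly what absorbs both the $\mathcal O(h)$ drift of $\sqrt z$ away from $1$ and the slack $\delta$ introduced by the microlocal cutoff $A_0$, which is why no additional buffer on $t$ is needed.
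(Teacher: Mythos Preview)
Your argument is correct and follows essentially the same route as the paper's proof: a Duhamel computation reducing to $\chi U(s)[h^2\Delta,\chi_t]u$, a microlocal cutoff near the energy surface, the elliptic estimate for the off--energy piece, and Proposition~\ref{l:egorov}(2) for the on--energy piece. One minor imprecision: from ``$\sigma(A_0)=1$ near $\supp\psi\cap\{p=z\}$'' you cannot conclude $\WFh((I-A_0)\psi)\subset\Ell(h^2\Delta-z)$, since vanishing of the principal symbol does not by itself empty the wavefront set; you should take $A_0$ microlocally equal to the identity there (e.g.\ $A_0=\Op_h(a_0)$ with $a_0\equiv 1$ on a fixed neighbourhood of $\supp\psi\cap S^*M$), which is exactly what the paper does with its cutoff $B$.
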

\begin{proof}
Without loss of generality, we assume that $t\geq 0$.
For $0\leq s\leq t$, define
$
u_s=\chi(u-e^{-isz/(2h)}U(s)\chi_t u)
$.
We need to prove that
\begin{equation}\label{e:key-int}
\|u_t\|_{L^2}=\mathcal O(h^\infty)\|u\|_{L^2(K_t)}.
\end{equation}
Since $\chi=\chi\chi_t$, we have $u_0=0$; next,
$$
\begin{gathered}
2hD_s u_s
  = - \chi e^{-isz/(2h)}U(s)(h^2\Delta-z)\chi_t u
  = - e^{-isz/(2h)}\chi U(s) [h^2\Delta,\chi_t]u.
\end{gathered}
$$
Let $B\in\Psi^{\comp}$ be compactly supported inside $K_t\times K_t$,
equal to the identity microlocally near $\supp\chi_t\cap S^*M$, but
microlocalized in a small neighborhood of $S^*M$ so that
by~\eqref{e:key-cond},
\[
g^s(\supp\chi)\cap \WFh(B)\cap \supp(1-\chi_t)=\emptyset.
\]
Note that $\WFh([h^2\Delta,\chi_t])\subset\supp (1-\chi_t)$.
Then by part~2 of Proposition~\ref{l:egorov},
\begin{equation}
  \label{e:key-1}
\|\chi U(s)[h^2\Delta,\chi_t]Bu\|_{L^2}=\mathcal O(h^\infty)\|u\|_{L^2(K_t)},\
0\leq s\leq t.
\end{equation}
Moreover, by Proposition~\ref{l:elliptic}
\begin{equation}
  \label{e:key-2}
\|\chi U(s)[h^2\Delta,\chi_t](1-B)u\|_{L^2}=\mathcal O(h^\infty)\|u\|_{L^2(K_t)}.
\end{equation}
Combining~\eqref{e:key-1} and~\eqref{e:key-2}, we get
$\|\pl_s u_s\|_{L^2}=\mathcal O(h^\infty)\|u\|_{L^2(K_t)}$; it remains
to integrate in $s$ to get~\eqref{e:key-int}.
\end{proof}
%
%
\smallsection{Hilbert--Schmidt norm estimates}
We now prove Hilbert--Schmidt norm estimates for the product of a
pseudodifferential operator with a spectral
projector. (See~\cite[Section~19.1]{ho3} for the properties of
Hilbert--Schmidt and trace class operators.)  To simplify notation, we
consider a spectral interval of size $h$ centered at
$\lambda=1$; similar statement is true for the interval
$[\lambda+c_1h,\lambda+c_2h]$ with $\lambda>0$, replacing $S^*M$ by $\lambda S^*M$. 
%
%
\begin{lemm}\label{l:h-s-estimate}
Fix $c_1,c_2\in \mathbb R$ and
let $\indic_{[1+c_1h,1+c_2h]}(h^2\Delta)$ be defined by means of
spectral theory. Assume that $A\in\Psi^{\comp}_\rho(M)$ is compactly
supported. Then 
\begin{equation}
  \label{e:h-s-estimate-1}
h^{(d-1)/2}\|\indic_{[1+c_1h,1+c_2h]}(h^2\Delta)A\|_{\HS}\leq C\|\sigma(A)\|_{L^2(S^*M)}+\mathcal O(h^{1-2\rho}).
\end{equation}
Here $C$ is a constant independent of $A$ (if $\WFh(A)$ is
contained in a fixed compact set), however the constant
in $\mathcal O(h^{1-2\rho})$ depends on $A$.  We take the $L^2$ norm
of $\sigma(A)$ on the energy surface $S^*M$ with respect to the
Liouville measure $\mu_L$.

Moreover, if $\WFh(A)$ is microsupported, in the sense of
Definition~\ref{d:microlocal-vanishing}, in some $h$-dependent family
of sets $V(h)\subset T^*M$, then
\begin{equation}
  \label{e:h-s-estimate-2}
h^{(d-1)/2}\|\indic_{[1+c_1h,1+c_2h]}(h^2\Delta)A\|_{\HS}\leq C \mu_L(V(h)\cap S^*M)^{1/2}+\mathcal O(h^\infty).
\end{equation}
Here $\mu_L(V(h)\cap S^*M)$ denotes the volume of $V(h)\cap S^*M$ with
respect to the Liouville measure on $S^*M$ and the constant $C$
depends on a certain $S^{\comp}_\rho$-seminorm of the full symbol of
$A$.
\end{lemm}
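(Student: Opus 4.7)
I start from
\[
\|\indic_{[1+c_1h, 1+c_2h]}(h^2\Delta)A\|_{\HS}^2 = \Tr\bigl(\indic_{[1+c_1h, 1+c_2h]}(h^2\Delta)\,AA^*\bigr).
\]
Pick a nonnegative cutoff $\chi\in C_0^\infty(\rr)$ with $\chi\geq \indic_{[c_1,c_2]}$, and set $\chi_h(\la):=\chi((\la-1)/h)$. Since $AA^*\geq 0$ and $\chi_h(h^2\Delta)\geq \indic_{[1+c_1h,1+c_2h]}(h^2\Delta)$ as self-adjoint operators, the spectral theorem gives
\[
\|\indic(\cdot)A\|_{\HS}^2\leq \Tr(\chi_h(h^2\Delta)\,AA^*),
\]
so the task reduces to a local Weyl law for $AA^*$ with an $h$-width spectral window.

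\textbf{Reduction to a bounded PDO.} Choose $\psi\in C_0^\infty(\rr)$ equal to $1$ near $\{|\nu|_g^2:(m,\nu)\in\WFh(A)\}$. By Helffer--Sj\"ostrand functional calculus, $\psi(h^2\Delta)\in\Psi^{\comp}(M)$ has principal symbol $\psi(|\nu|_g^2)$; the elliptic estimate of Proposition~\ref{l:elliptic} applied to $h^2\Delta-\la$ for $\la\notin\supp\psi$ gives $(1-\psi(h^2\Delta))AA^*=\mc{O}(h^\infty)_{\HS}$. Thus $B:=\psi(h^2\Delta)AA^*\in\Psi^{\comp}_\rho(M)$ is a compactly supported, compactly microlocalized PDO with $\sigma(B)=|\sigma(A)|^2$ on $S^*M$, and $\chi_h(h^2\Delta)AA^* = \chi_h(h^2\Delta)B + \mc{O}(h^\infty)_{\HS}$.

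\textbf{Local trace formula via the propagator.} Represent $\chi_h$ by Fourier inversion,
\[
\chi_h(h^2\Delta)=\frac{1}{2\pi}\int_\rr \hat\chi(\tau)\,e^{-i\tau/h}\,U(2\tau)\,d\tau,
\]
and truncate to $|\tau|\leq T_0$ at cost $\mc{O}(h^\infty)$ using the Schwartz decay of $\hat\chi$. By Proposition~\ref{l:egorov}(3), $U(2\tau)B$ is a compactly microlocalized FIO associated to $g^{2\tau}$ whose Schwartz kernel is an oscillatory integral with generating function $S(m,\nu;\tau)$ satisfying $S|_{\tau=0}=m\cdot\nu$, $\partial_\tau S|_{\tau=0}=|\nu|_g^2$, and amplitude $b(m,\nu;\tau)\in S^{\comp}_\rho$ with $b|_{\tau=0}=\sigma(B)$ modulo $\mc{O}(h^{1-2\rho})$. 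Evaluating the kernel on the diagonal and integrating over $\tau$ gives
\[
\Tr(\chi_h(h^2\Delta)B)=\frac{1}{(2\pi)^{d+1}h^d}\int e^{i(S(m,\nu;\tau)-m\cdot\nu-\tau)/h}\,\hat\chi(\tau)\,b(m,\nu;\tau)\,dm\,d\nu\,d\tau+\mc{O}(h^\infty).
\]
For $T_0$ sufficiently small, the phase $\Phi=S-m\cdot\nu-\tau$ has critical set $\{\tau=0\}\times S^*M$ (no closed orbit of period $\leq 2T_0$ meets $\supp b$). Degenerate stationary phase transverse to this $(2d-1)$-submanifold --- equivalently, the substitution $s=(|\nu|_g^2-1)/h$ turning the $\tau$-integral into Fourier inversion of $\chi$ --- yields
\[
\Tr(\chi_h(h^2\Delta)B)=(2\pi h)^{1-d}\Bigl(\textstyle\int\chi\Bigr)\int_{S^*M}|\sigma(A)|^2\,d\mu_L+\mc{O}(h^{2-d-2\rho}),
\]
and rearranging gives \eqref{e:h-s-estimate-1}. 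For \eqref{e:h-s-estimate-2}, the microsupport hypothesis forces the full symbol of $b$ to vanish to infinite order outside any fixed neighborhood of $V(h)$, so the stationary-phase integrand is concentrated in $V(h)\cap S^*M$ and its contribution is bounded by a seminorm of $b$ times $\mu_L(V(h)\cap S^*M)$.

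\textbf{Main obstacle.} The technical heart is the stationary-phase analysis: the critical set is the whole $(2d-1)$-dimensional submanifold $\{0\}\times S^*M$, so one needs degenerate stationary phase (or the explicit coordinate change above) together with a careful Maslov/Hessian computation to pin down the leading constant. In the exotic class $\Psi^{\comp}_\rho$, the subprincipal terms in the FIO amplitude produce the $h^{1-2\rho}$ losses that feed into the stated remainder. A secondary point is ruling out contributions from periodic orbits for $\tau\neq 0$; this is handled by fixing $T_0$ smaller than the return time of any point of $\supp b\cap S^*M$ to $\supp b$.
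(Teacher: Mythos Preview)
Your approach via $\|\indic_{[1+c_1h,1+c_2h]}(h^2\Delta)A\|_{\HS}^2=\Tr(\chi_h(h^2\Delta)AA^*)$ and a local trace formula is valid and genuinely different from the paper's, but one step needs fixing. You cannot truncate the $\tau$-integral to $|\tau|\le T_0$ at cost $\mathcal O(h^\infty)$ by Schwartz decay of $\hat\chi$ alone: the crude bound $|\Tr(U(2\tau)B)|\le\|B\|_{\mathrm{tr}}=\mathcal O(h^{-d})$ gives a tail contribution $\mathcal O(h^{-d})\int_{|\tau|>T_0}|\hat\chi|$, which is a fixed constant times $h^{-d}$, not $\mathcal O(h^\infty)$; and for $\tau$ beyond the shortest period, closed geodesics produce genuine contributions that you have no control over. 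The cure is to choose $\chi$ with $\hat\chi$ compactly supported in $(-T_0,T_0)$ from the start --- for instance $\chi=c^{-1}|\psi|^2$ with $\hat\psi\in C_0^\infty$ and $c>0$ small enough that $\chi\ge\indic_{[c_1,c_2]}$. Then no truncation is needed and no periodic orbits enter. This is precisely the paper's choice; it writes $\indic=Z\chi$ with $Z$ a bounded function of $h^2\Delta$ rather than using your positivity inequality, but the effect is the same.

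The paper's own route avoids the trace and stationary phase entirely. After passing to $\chi((h^2\Delta-1)/h)A$ with $\hat\chi\in C_0^\infty(-T,T)$, it observes that the Schwartz kernel is, modulo $\mathcal O(h^\infty)_{\HS}$, a compactly supported Lagrangian distribution in $I^{\comp}_\rho(\Lambda)$, where $\Lambda=\{(m,\nu;m',-\nu'):|\nu|_g=1,\ g^{2t}(m,\nu)=(m',\nu')\text{ for some }|t|<T\}$ is generated by the phase $S(m,\nu';2t)-m'\cdot\nu'-t$ with $(\nu',t)$ as oscillatory variables. The Hilbert--Schmidt norm is then just the $L^2(M\times M)$ norm of this kernel, which by Plancherel (Proposition~\ref{l:lagrangian-norm}) is bounded by the $L^2$ norm of the symbol $\tilde b(\nu,\nu')$ in the normal form~\eqref{e:lagrangian-key}. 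Since $\tilde b$ has the expansion~\eqref{e:b-expansion} in derivatives of the full symbol of $A$ evaluated on $S^*M$, both estimates drop out of pointwise bounds on $\tilde b$: for~\eqref{e:h-s-estimate-1} the leading term of $\tilde b$ is a fixed multiple of $\sigma(A)|_{S^*M}$, and for~\eqref{e:h-s-estimate-2} one has $\tilde b=\mathcal O(h^\infty)$ outside the preimage of $V(h)$, so $\|\tilde b\|_{L^2}^2\le(\sup|\tilde b|)^2\cdot\mu_L(V(h)\cap S^*M)+\mathcal O(h^\infty)$. This sidesteps the degenerate stationary-phase computation and the Maslov bookkeeping; your trace approach recovers the same bounds once the $\hat\chi$ issue is fixed, but with more work.
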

\begin{proof}
Take a function $\chi\in \mathscr S(\mathbb R)$ such that $\hat\chi$
is compactly supported in some interval $(-T,T)$ and $\chi$ does not
vanish on $[c_1,c_2]$ (for example, take nonzero $\psi\in C_0^\infty(\rr)$ with
$\psi\geq 0$, then $|\hat{\psi}|>0$ in an interval $[c_1\eps,c_2\eps]$; set
$\chi(x):=\hat\psi(\eps x)$). Then
\[
\indic_{[1+c_1h,1+c_2h]}(h^2\Delta)=Z\chi((h^2\Delta-1)/h),\
\]
where $Z$ is a certain function of $h^2\Delta$
and it is bounded on $L^2(M)$ uniformly in $h$. 
It then suffices to estimate the Hilbert--Schmidt norm of
\[
B = h^{(d-1)/2}\chi((h^2\Delta-1)/h)A
  = (2\pi)^{-1}h^{(d-1)/2}\int_{-T}^T\hat\chi(t) e^{-it/h}U(2t)A\,dt. 
\]
Let $A_0\in\Psi^{\comp}_0(M)$ be compactly supported and equal to the
identity microlocally near $\WFh(A)$.  By part~3 of
Proposition~\ref{l:egorov}, for each $t$ we have
$
U(2t)A_0=U_{2t}+R_{2t}
$,
where $U_{2t}\in I^{\comp}(g^{2t})$ and $R_{2t}=\mathcal O(h^{\infty})_{L^2\to L^2}$.
Then
\begin{equation}
  \label{e:hs-approx}
(U(2t)-U_{2t})A=\mathcal O(h^\infty)_{\HS}.
\end{equation}
Indeed, we can write the left-hand side of~\eqref{e:hs-approx}
as the sum of $R_{2t}A$ and $U(2t)(1-A_0)A$; it remains
to note that $R_{2t}=\mathcal O(h^\infty)_{L^2\to L^2}$,
$\|A\|_{\HS}$ is polynomially bounded in $h$,
and $\|(1-A_0)A\|_{\HS}=\mathcal O(h^\infty)$.

By~\eqref{e:hs-approx}, we can replace $U(2t)$ by $U_{2t}$ in the
definition of $B$. Now, $\|B\|_{\HS}$ is equal to
the $L^2(M\times M)$ norm of the Schwartz kernel $K_B$. Using the
local normal form~\eqref{e:fourior-basic}, we can write $K_B$, up to an $\mathcal
O(h^\infty)_{C_0^\infty}$ remainder and an appropriate cutoff in the
$m'$ variable, as a finite sum of expressions of the form (in a fixed
coordinate system on $M$)
\begin{equation}
  \label{e:parametrix}
h^{-(d+1)/2}\int_{-T}^T \int_{\mathbb R^d}
  e^{i(S(m,\nu';2t)-m'\cdot \nu'-t)/h}b(m,\nu',t;h)\,d\nu' dt.
\end{equation}
Here $S(m,\nu';2t)$ is a generating function for $g^{2t}$ and $b$ is a
certain symbol in $S^{\comp}_\rho$. Moreover, $b$ admits an asymptotic
expansion in terms of the full symbol of $A$, by
Proposition~\ref{l:fourior-mul}. The fact that $S$ and $b$ can be
choosen to depend smoothly on $t$ follows from the proof of part~3 of
Proposition~\ref{l:egorov}. By~\cite[Lemma~10.5]{e-z},
$S$ satisfies the Hamilton--Jakobi equation
\begin{equation}
  \label{e:g-t-eq}
g^{2t}(m,\nu)=(m',\nu')\Longrightarrow
\partial_t(S(m,\nu';2t))=p(m,\pl_m S(m,\nu';2t)).
\end{equation}
It follows that
$
\Phi(m,m',\nu',t)=S(m,\nu';2t)-m'\cdot \nu'-t
$
is a nondegenerate phase function (with $m,m'$ as base variables and
$\nu',t$ as the oscillatory variables) and generates the (immersed)
Lagrangian
\[
\Lambda=\{(m,\nu;m',-\nu')\mid p(m,\nu)=1,\ \exists t\in (-T,T):
g^{2t}(m,\nu)=(m',\nu')\}.
\]
Then~\eqref{e:parametrix} lies in $I^{\comp}_\rho(\Lambda)$. By the
local normal form~\eqref{e:lagrangian-key} of a Lagrangian
distribution, we can write~\eqref{e:parametrix}, up to an $\mathcal
O(h^\infty)_{C_0^\infty}$ remainder and an appropriate cutoff in the
$(m,m')$ variables, as the sum of finitely many expressions of the
form
\begin{equation}
  \label{e:parametrix2}
h^{-d}\int_{\mathbb R^{2d}} e^{i(m\cdot\nu+m'\cdot\nu'+F(\nu,\nu'))/h}
\tilde b(\nu,\nu';h)\,d\nu d\nu',
\end{equation}
where $F$ parametrizes some open subset of $\Lambda$ by~\eqref{e:lambda-can}
and $\tilde b$ is a symbol in $S^{\comp}_\rho$. By Proposition~\ref{l:fourior-mul}
and Proposition~\ref{l:lagrangian-basic}, we see that the symbol
$\tilde b$ has the following asymptotic expansion in terms of the full symbol
$a$ of $A$:
\begin{equation}
  \label{e:b-expansion}
\tilde b(\nu,\nu';h)=\sum_{0\leq j<N} h^jL_ja(m',\nu';h)+\mathcal O(h^{N(1-2\rho)})_{S^{\comp}_\rho},
\end{equation}
where each $L_j$ is a differential operator of order $2j$ and $m,m'$
are given by the relation $(m,\nu,m',-\nu')\in \Lambda$; in
particular, $(m',\nu')\in S^*M$.

We now use Proposition~\ref{l:lagrangian-norm} to estimate the $L^2$
norm of~\eqref{e:parametrix2}; as $B$ is, modulo $\mathcal
O(h^\infty)_{\HS}$, a sum of operators with Schwartz kernels of the
form~\eqref{e:parametrix2}, this would give an estimate on the
Hilbert--Schmidt norm of $B$.  For~\eqref{e:h-s-estimate-1}, we can
write $\tilde b(\nu,\nu';h)$ as a multiple of $a(m',\nu')$ plus an
$\mathcal O(h^{1-2\rho})$ remainder and note that $(m',\nu')$ always
lies in $S^*M$.  For~\eqref{e:h-s-estimate-2}, we use that $\tilde
b=\mathcal O(h^\infty)$ outside of the preimage of $V(h)$ under the
map $(\nu,\nu')\mapsto (m',\nu')$, and that $\sup |\tilde b|$ can be
estimated by a certain $S^{\comp}_\rho$-seminorm of $a$.
\end{proof}
%
%
\smallsection{Local traces of integrated Schr\"odinger propagators}
We give the following version of the Schr\"odinger propagator
trace formula when there are no contributions from closed geodesics:
%
%
\begin{lemm}
  \label{l:robert-trace}
Assume that $M$ is a $d$-dimensional complete Riemannian manifold
and $B_s$ is a family of compactly supported
pseudodifferential operators in~$\Psi^{\comp}_\rho(M)$, smooth
and compactly supported in $s\in (-T_0,T_0)$, where $T_0>0$ is fixed.
Assume also that all $B_s$ are microsupported, in the sense of Definition~\ref{d:microlocal-vanishing},
in some $h$-dependent family of bounded sets $V(h)\subset T^*M$, and the following nonreturning
condition holds:
\begin{equation}
  \label{e:robert-trace-condition}
(m,\nu)\in V(h),\ |s|< T_0\Longrightarrow d((m,\nu),g^s(m,\nu))\geq C^{-1}|s|h^\rho.
\end{equation}
Here $C$ is some constant and $d$ denotes some smooth distance function on $T^*M$.
Let $B_s=\Op_h(b(s))+\mathcal O(h^\infty)_{\Psi^{-\infty}}$ for some
family of symbols $b(s,m,\nu)$ and some quantization procedure $\Op_h$.
Then for each $N$ and each $\lambda>0$, we have the trace expansion
\begin{equation}
  \label{e:robert-trace}
\begin{gathered}
(2\pi h)^{d-1}\int_{-T_0}^{T_0}e^{-i\lambda^2s/(2h)}\Tr(U(s)B_s)\,ds
\\=\sum_{0\leq j<N} h^j\int_{S^*M} L_jb(0,m,\lambda\nu)\,d\mu_L(m,\nu)
+\mathcal O(h^{N(1-2\rho)})_{C^\infty_\lambda},
\end{gathered}
\end{equation}
where $\mu_L$ is the Liouville measure and each $L_j$ is a differential operator of order $2j$
on $T^*M_{(m,\nu)}\times (-T_0,T_0)_s$, independent of $B_s$ and smooth in $\lambda$.
In particular, $L_0=\lambda^{d-2}$.
\end{lemm}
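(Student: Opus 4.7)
The plan is to reduce $\Tr(U(s)B_s)$ to an oscillatory integral via the small-time FIO parametrix for $U(s)$, apply clean stationary phase on $(s,m,\nu)$, and show the contribution from $|s|$ bounded below is negligible by non-stationary phase. First I split the $s$-integral with a cutoff $\chi\in C_0^\infty((-2\delta,2\delta))$ equal to $1$ on $[-\delta,\delta]$, for an $h$-independent $\delta>0$ small enough that a small-time parametrix for $U(s)$ is available on a neighborhood of (a fixed bounded set containing) $V(h)$; write the integral as $I_1(h)+I_2(h)$ corresponding to the factors $\chi(s)$ and $1-\chi(s)$.

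\textbf{Large $|s|$.} For $\delta\leq|s|\leq T_0$, Proposition~\ref{l:egorov}(3) and Proposition~\ref{l:fourior-mul} express $U(s)B_s$ in local normal form
\[
K_{U(s)B_s}(m,m')=h^{-d}\int e^{i(S(m,\nu;s)-m'\cdot\nu)/h}c(s,m,\nu;h)\,d\nu+\mathcal O(h^\infty),
\]
with $S$ a generating function for $g^s$ and $c\in \Sc_\rho$ effectively supported where the target $(\partial_\nu S(m,\nu;s),\nu)$ lies in $V(h)$. Setting $m=m'$ and combining with $e^{-i\lambda^2 s/(2h)}$ gives an oscillatory integral whose $(m,\nu)$-phase gradient $(\partial_m S-\nu,\partial_\nu S-m)$ measures the distance $d((m',\nu),g^s(m',\nu))$ on $\supp c$, which is bounded below by $|s|h^\rho/C$ by the nonreturning hypothesis~\eqref{e:robert-trace-condition}. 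Each integration by parts in $(m,\nu)$ supplies $h/(|s|h^\rho)$ from the phase and $h^{-\rho}$ from differentiating the amplitude, giving $h^{1-2\rho}/|s|$ per step; after $N$ iterations $I_2(h)=\mathcal O(h^{N(1-2\rho)})$.

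\textbf{Small $|s|$.} For $|s|\leq 2\delta$ with $\delta$ small, $g^s$ admits a generating function $\psi(m,\nu;s)$ on a neighborhood of $V(h)$ with $\psi|_{s=0}=m\cdot\nu$ and solving $\partial_s\psi=p(m,\partial_m\psi)/2$. Composing the corresponding parametrix for $U(s)$ with $B_s$ by Proposition~\ref{l:fourior-mul} and restricting to $m=m'$ yields
\[
I_1(h)=(2\pi h)^{-d}\int\!\!\int\!\!\int\chi(s)e^{i\Phi/h}\tilde c(s,m,\nu;h)\,ds\,dm\,d\nu,
\]
with $\Phi=\psi-m\cdot\nu-\lambda^2 s/2$ and an asymptotic expansion $\tilde c\sim\sum_j h^j\tilde c_j$ beginning with $\tilde c_0(0,m,\nu)=b(0,m,\nu)$. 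Since $\Phi|_{s=0}=0$ we have $\Phi=sq(m,\nu,s)$ with $q|_{s=0}=(p-\lambda^2)/2$; the critical set of $\Phi$ consists of $(s,m,\nu)$ with $g^s(m,\nu)=(m,\nu)$ and $p(m,\nu)=\lambda^2$, and the nonreturning hypothesis rules out contributions away from $s=0$ on $\supp\tilde c\cap\supp\chi$. What remains is the clean critical manifold $C=\{(0,m,\nu):p(m,\nu)=\lambda^2\}$ of codimension $2$ in the $(2d+1)$-dimensional ambient space. In local normal coordinates $(s,n)$ with $n=p-\lambda^2$, the mixed Hessian entry $\partial_s\partial_n\Phi|_{s=0}=1/2$ is nonzero, hence the normal Hessian is nondegenerate. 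Clean stationary phase then gives an expansion
\[
(2\pi h)^{d-1}I_1(h)=\sum_{0\leq j<N}h^j\int_{\{p(m,\nu)=\lambda^2\}}L_j^\sharp b(0,m,\nu)\,d\sigma+\mathcal O(h^{N(1-2\rho)}),
\]
for differential operators $L_j^\sharp$ of order $2j$ on $T^*M\times(-T_0,T_0)$, smooth in $\lambda$ and independent of $B_s$. Rescaling $\nu=\lambda\omega$ with $\omega\in S^*M$ converts $d\sigma$ on $\{p=\lambda^2\}$ into a constant multiple of the Liouville measure on $S^*M$ and produces the factor $\lambda^{d-2}$ at leading order from the combination of the spherical Jacobian $\lambda^{d-1}$ and the normalization $|\nabla p|\sim 2\lambda$, giving \eqref{e:robert-trace} with $L_0=\lambda^{d-2}$. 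Smoothness in $\lambda$ follows from the smooth $\lambda$-dependence of the phase and of the critical manifold.

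\textbf{Main obstacle.} The technical core is executing clean stationary phase at the positive-dimensional critical manifold $C$ in the presence of the exotic symbol class $\Sc_\rho$. The remainder $h^{N(1-2\rho)}$ reflects the precise balance between the $h^\rho$-loss in the nonreturning estimate and the $h^{-\rho}$ cost of differentiating the amplitude: each order of the clean stationary phase expansion involves $2j$ derivatives of $\tilde c$, each worth a factor $h^{-\rho}$, so the $N$-th order remainder is naturally of order $h^{N(1-2\rho)}$ rather than $h^N$, exactly matching the large-$|s|$ bound from Step~2.
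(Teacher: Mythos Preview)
Your argument is correct and matches the paper's approach: FIO parametrix for $U(s)$, then stationary phase with the only contribution coming from $\{s=0,\ |\nu|_g=\lambda\}$, while $|s|$ bounded away from zero is handled by nonstationary phase using the nonreturning bound and the microsupport condition. The paper carries out the $s=0$ contribution concretely by passing to polar coordinates $\nu=r\omega$ and applying two-variable stationary phase in $(r,s)$ (citing Robert and Robert--Tamura for the computation), which is exactly your clean stationary phase written out; one small imprecision in your write-up is that in the small-$|s|$ region the absence of critical points at $s\neq 0$ follows from taking $\delta$ below the injectivity radius (no short closed geodesics), not from the nonreturning hypothesis---the latter is only needed once $|s|\geq\delta$.
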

\begin{proof}
As in the proof of Lemma~\ref{l:h-s-estimate}, we can reduce to computing
the trace of the operator with the Schwartz kernel (in some fixed local coordinates)
$$
K(m,m')=(2\pi h)^{-1}\int_{-T_0}^{T_0}\int_{\mathbb R^d}
e^{{i\over h}(S(m,\nu';s)-m'\cdot\nu'-\lambda^2s/2)}\tilde b(m,\nu',s;h)\,d\nu'ds,
$$
where $S(m,\nu';s)$ is a local generating function for $g^s$ in the
sense of~\eqref{e:generating-function} and $\tilde b(m,\nu',s;h)$ is a
certain symbol in $S_\rho$ having an asymptotic expansion in terms of
the jet of $b_s$ at the point $(\partial_{\nu'} S(m,\nu';s),\nu')$.  The
trace of the corresponding operator is
$$
\int\limits_M K(m,m)\,dm=(2\pi h)^{-1}\int\limits_{-T_0}^{T_0}
\int\limits_{M\times \mathbb R^d} e^{{i\over h}(S(m,\nu';s)-m\cdot\nu'-\lambda^2s/2)}\tilde b(m,\nu',s;h)\,dmd\nu'ds.
$$
We now use the method of stationary phase. The stationary points of
the phase are solutions to the equations $g^s(m,\nu')=(m,\nu')$ and
$|\nu'|_g=\lambda$; they occur at $s=0$ and may also occur for
$\lambda |s|\geq r_i$, where $r_i>0$ is the injectivity radius of $M$. For
$\lambda |s|\geq r_i/2$, we see by~\eqref{e:robert-trace-condition} that the
expression under the integral can be split into two pieces, on one of
which the symbol is $\mathcal O(h^\infty)$ and on the other, the
differential of the phase function has length at least
$C^{-1}h^{\rho}$; by repeated integration by parts, the latter integral is $\mathcal
O(h^\infty)$.

It remains to evaluate the contribution of the
stationary set $\{s=0\}\cap \lambda S^*M$. The phase function is degenerate on
these points; however, one can pass to polar coordinates
$\nu'=r\omega$, with $|\omega|_g=1$ and $r>0$, and apply the method of
stationary phase in the $(r,s)$ variables, resulting in the
expansion~\eqref{e:robert-trace}. See for example the proofs
of~\cite[Th\'eor\`eme~V-7 and Proposition~V-8]{Ro}
or~\cite[Lemma~3.1]{RoTa} for details of the computation.
\end{proof}
%
%

\section{General assumptions}
  \label{s:general}

In this section, we list general geometric assumptions on the
manifold $M$ and analytic assumptions on its Laplacian required for
our results to hold. As noted in the introduction, they are satisfied in
particular if outside of a compact set, $M$ is isometric to either the
Euclidean space (studied in Section~\ref{s:euclidean}) or an
asymptotically hyperbolic space of constant curvature (studied in
Section~\ref{s:ah}).  We also derive some direct consequences of the
general assumptions, including averaged estimates on plane waves and
the existence of limiting measures $\mu_\xi$.

\subsection{Geometric assumptions}
  \label{s:general.geometry}

In this subsection, we specify the geometry of the manifold $M$ at infinity.

Let us introduce some notation and terminology first. On a complete Riemannian manifold 
$(M,g)$ we denote by $g^t$ the geodesic flow of the metric $g$, considered as
a map on the cotangent bundle $T^*M$. Any smooth function $f$ on $M$
can be lifted to a function on $T^*M$; denote by $\dot f,\ddot f\in
C^\infty(T^*M)$ the derivatives of $f$ with respect to the geodesic
flow:
$$
\dot f(m,\nu):=d_tf(g^t(m,\nu))|_{t=0},\
\ddot f(m,\nu):=d_t^2f(g^t(m,\nu))|_{t=0}.
$$
We denote by $S^*M$ the unit cotangent bundle $\{(m,\nu)\mid
|\nu|_g=1\}\subset T^*M$.

A \emph{boundary defining function} on a smooth compact manifold $\bbar{M}$ with boundary is
a smooth function $x:\overline M\to [0,\infty)$ such that $x>0$ on $M$
and $x$ vanishes to first order on $\plM$.

We make the following assumptions:
%
%
\begin{enumerate}
\asItem{G1}
$(M,g)$ is a complete Riemannian manifold of dimension
$d=n+1$. Moreover, there exists a \emph{compactification} of $M$,
namely a compact manifold with boundary $\overline M$ such that $M$ is
diffeomorphic to the interior of $\overline M$.
The boundary $\plM$ is called the \emph{boundary at infinity};

\asItem{G2}
There exists a boundary defining function $x$ on $M$ and a
constant $\varepsilon_0>0$ such that for any point $(m,\nu)\in S^*M$,
\begin{equation}
  \label{e:kinda-convex}
\text{if }x(m,\nu)\leq\varepsilon_0\text{ and }
\dot x(m,\nu)=0,\text{ then }
\ddot x(m,\nu)<0;
\end{equation}

\asItem{G3} For each $(m,\nu)\in S^*M$ such that $x(m)\leq\eps_0$ and $\dot{x}(m,\nu)\leq 0$,
the geodesic $g^t(m,\nu)$ (projected onto the base space $M$)
converges as $t\to+\infty$, in the topology of $\overline M$, to some
point $\xi_{+\infty}(m,\nu)\in \plM$. The function
$\xi_{+\infty}$ depends smoothly on $(m,\nu)$, and we extend it
naturally (as the limit of the corresponding geodesic) to a smooth
function on $S^*M\setminus\Gamma_-$, with $\Gamma_-$ given in
Definition~\ref{d:trapped} below;

  \asItem{G4}
There exists an open set $U_\infty\subset
M\times\plM$ such that 
$\bbar{U}_\infty$ contains a neighbourhood of $\{(\xi,\xi)\in\bbar{M}\times \plM\mid\xi\in\plM\}$
and a smooth real-valued function
$\phi(m,\xi)=\phi_\xi(m)$ on $U_\infty$ such that
$|\partial_m\phi_\xi(m)|_g=1$ everywhere and the function
\begin{equation}
  \label{e:tau}
\tau(m,\xi):=(m,\partial_m\phi_\xi(m))\in S^*M,\
(m,\xi)\in U_\infty,
\end{equation}
is a diffeomorphism from $U^+_\infty$ onto $V^+_\infty$ with inverse given by
$$
\tau^{-1}(m,\nu)=(m,\xi_{+\infty}(m,\nu)),\ (m,\nu)\in V^+_\infty
$$
where the sets $U^+_\infty$ and $V^+_\infty$ are defined by
$$
\begin{gathered}
U^+_\infty:=\{(m,\xi)\in U_\infty\mid x(m)\leq \eps_0,\,\, \dot{x}(\tau(m,\xi))\leq 0\},\\
V^+_\infty:=\{(m,\nu)\in S^*M\mid x(m)\leq \eps_0,\, \dot{x}(m,\nu)\leq 0,\,\, (m,\xi_{+\infty}(m,\nu))\in U_\infty\};
\end{gathered}
$$

  \asItem{G5} if $(m,\nu)\in V^+_\infty$, then $g^t(m,\nu)\in V^+_\infty$ for
all $t\geq 0$;

  \asItem{G6} if $\xi\in\plM$ and $m,m'\in M$ are such that $(m,\xi),(m',\xi)\in
U_\infty^+$, then $\partial_\xi \phi_\xi(m)=\partial_\xi\phi_\xi(m')$
if and only if $\tau(m,\xi)$ and $\tau(m',\xi)$ lie on the same
geodesic. Moreover, the matrix $\partial_m \partial_\xi\phi_\xi(m)$
has rank $n$. 
\end{enumerate}
%
%

%
%
\begin{figure}
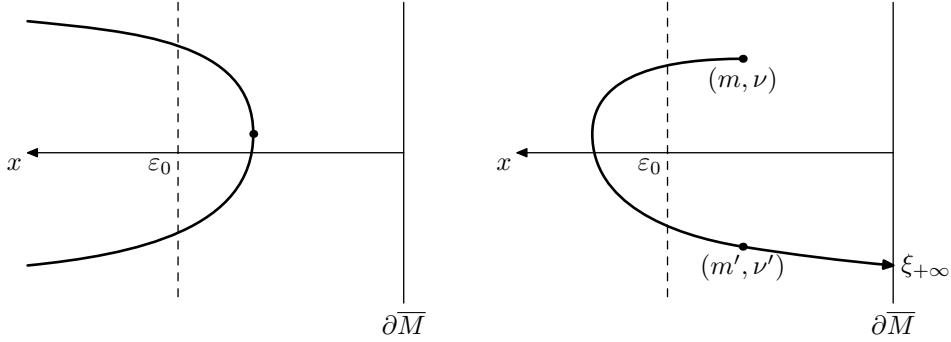

\includegraphics{qeefun.9}
\qquad
\includegraphics{qeefun.10}
\caption{Illustrations for (G2) and (G3). Left: (G2) is not satisfied. Right: (G2) is satisfied. 
The point $(m,\nu)$ does not escape directly in the forward direction, 
but the point $(m',\nu')$ does, illustrating (G3).}
\label{f:g2-g3}
\end{figure}
%
%

\smallsection{Escaping trajectories and the trapped set}.
We now define the incoming/outgoing tails $\Gamma_\pm$ and the trapped set $K$:
%
%
\begin{defi}
  \label{d:trapped}
Let $\gamma(t)$ be a unit speed geodesic. We say that it
\emph{escapes} in the forward, respectively backward, direction, if
$\gamma(t)$ goes to infinity in $M$ as $t\to +\infty$, respectively
$t\to -\infty$. If $\gamma(t)$ does not escape in some direction, we
call it \emph{trapped} in this direction. Denote by $\Gamma_+\subset
S^*M$ the union of all geodesics trapped in the backward direction, by
$\Gamma_-$ the union of all geodesics trapped in the forward
direction, and put $K=\Gamma_+\cap\Gamma_-$; we call $K$ the
\emph{trapped set}.
\end{defi}
%
%
An escaping geodesic could potentially spend a long time in the
compact part of the manifold. It is helpful to consider
geodesics that escape in a straightforward way (with the boundary
defining function $x$ decreasing along them); they appeared in assumption~\as{G3} for instance. 
%
%
\begin{defi}
  \label{d:directly-escape}
We say that $(m,\nu)\in S^*M$ \emph{directly escapes} in the forward,
respectively backward, direction, if $x(m)\leq\varepsilon_0$ and $\dot
x(m,\nu)\leq 0$, respectively $\dot x(m,\nu)\geq 0$. Here
$\varepsilon_0$ is the constant from~\as{G2}.  Denote by
$\mathcal{DE}_+$, respectively $\mathcal{DE}_-$, the set of all points
directly escaping in the forward, respectively backward, direction.
\end{defi}
%
%
One can verify that $\Gamma_\pm$ are closed sets and the trapped
set $K$ is compact (see~\cite[Appendix]{ge-sj}); in fact, since
$S^*M\cap\{x\leq\varepsilon_0\}\subset \mathcal{DE}_+\cup
\mathcal{DE}_-$, we have $K\subset\{x>\varepsilon_0\}$.

For the example of $M=\mathbb R^{n+1}$ discussed below, we have $\Gamma_\pm =\emptyset$. The point
$(m,\nu)$ lies in $\mathcal{DE}_+$ if and only if $x(m)\leq\varepsilon_0$
and $m\cdot\nu\geq 0$.

\smallsection{Comments on the geometric assumptions}

A basic example to have in mind for a manifold satisfying our assumptions is
$M=\mathbb R^{n+1}$ with the radial compactification $\overline M$
being a closed ball and the boundary at infinity $\plM$ equal to the
sphere $\mathbb S^n$. We will often use this example to illustrate the
somewhat abstract assumptions of this section. (A more general version
will be considered in Section~\ref{s:euclidean}.)

An important corollary of the assumption~\as{G2} is that for
$\varepsilon\leq\varepsilon_0$, the compact set
$\{x\geq\varepsilon\}\subset M$ is \emph{geodesically convex} in the sense of~\eqref{geodconvex}.
For the example of $M=\mathbb R^{n+1}$, we can take
$x=(1+|m|^{-2})^{-1/2}$, where $|m|$ is the Euclidean length of $m\in
\mathbb R^d$; the corresponding sets $\{x\geq\varepsilon\}$ are balls
centered at zero.

It also follows from~\as{G2} that for $(m,\nu)\in \mathcal{DE}_+$, the
function~$x(g^t(m,\nu))$ is decreasing for $t\geq 0$. One can show
that $x(g^t(m,\nu))\to 0$ as $t\to +\infty$ and thus $g^t(m,\nu)$
escapes in the forward direction; we do not give a proof of this fact
as it follows from the more restrictive assumption~\as{G3}.  Also, if
a geodesic $\gamma(t)$ escapes in the forward direction, then for $t$
large enough we have $\gamma(t)\in \mathcal{DE}_+$. For $M=\rr^{n+1}$,
we have $\xi_{+\infty}(m,\nu)=\nu\in \mathbb S^n$.

Assumption \as{G4} means that for $m$ sufficiently close to the infinity,
the covectors $\nu$ such that $(m,\nu)\in \mathcal{DE}_+$ are in
one-to-one correspondence with the limit points
$\xi_{+\infty}(m,\nu)$, and the inverse correspondence can
be described by a phase function.
It follows in particular from~\as{G4} that for a fixed $\xi\in \plM$,
the set of directly escaping points $(m,\nu)$ such that
$\xi_{+\infty}(m,\nu)=\xi$ and $(m,\xi)\in U_\infty$ is the
intersection of $\mathcal{DE}_+$ with the Lagrangian
\begin{equation}
  \label{e:lambda-xi}
\Lambda_\xi:=\{(m,\partial_m\phi_\xi(m))\mid (m,\xi)\in U_\infty\}.
\end{equation}
In the model case $M=\mathbb R^{n+1}$ we can put for any $R>0$,
$U_\infty=\{(m,\xi)\mid |m|> R\}$, and $\phi_\xi(m)=m\cdot\xi$, so
that $\tau$ is the canonical map from $\mathbb R^{n+1}\times \mathbb S^n$
to $S^* \mathbb R^{n+1}$. Then $U^+_\infty=\{(m,\xi)\mid |m|>R,\
m\cdot\xi\geq 0\}$ and $V^+_\infty=\{(m,\nu)\mid |m|\geq R,\ m\cdot \nu\geq 0\}$; the
difference is that $U^+_\infty$ is considered as a subset of $\mathbb
R^{n+1}\times \mathbb S^n$, while $V^+_\infty$ is considered as a
subset of $S^* \mathbb R^{n+1}$.

The condition \as{G5} can also be viewed as a condition on $U_\infty^+$, saying that for any 
$(m,\xi)\in U_\infty^+$, the whole geodesic `segment' $\gamma(m,\xi)$ relating $m$ and the point $\xi\in \pl\bbar{M}$ is such that $\gamma(m,\xi)\x \{\xi\}$ is contained in $U_\infty^+$.

The condition~\as{G6} is required in Proposition~\ref{l:trace-1}. To explain
it, note that under the assumption~\as{G4}, if $(m,\xi)\in U^+_\infty$ and
$(m(t),\nu(t))=g^t(\tau(m,\xi))$, then
\begin{equation}
  \label{e:g6-calculation}
\partial_t\phi_\xi(m(t))|_{t=0}=
\partial_m\phi_\xi(m)\cdot \partial_t m(t)|_{t=0}
=g(\partial_m\phi_\xi(m),\partial_m\phi_\xi(m))=1.
\end{equation}
Therefore, $\partial_\xi\phi_\xi(m)$ is constant on the geodesic passing
through $\tau(m,\xi)$.

\subsection{Analytic assumptions}
  \label{s:general.analytic}

In this subsection, we formulate the analytic assumptions on plane
waves. We will prove that they are satisfied in the Euclidean near infinity setting (in Section \ref{s:euclidean.analysis})
and in the hyperbolic near infinity setting (in Section \ref{s:eisenstein.analytic}).
Let $M$ be as in the previous subsection, $\Delta$ be the
(nonnegative definite) Laplace--Beltrami operator on $M$, and $h>0$ be
the semiclassical parameter.  We make the following assumptions:
%
%
\begin{enumerate}
\asItem{A1}
There exists $c_0\geq 0$ (equal to 0 for the Euclidean and to $n^2/4$
for the hyperbolic case), such that for each $\la>0$, $h>0$
and $\xi\in \plM$, there exists a function, called
\emph{distorted plane wave}, $E_h(\la,\xi;m)=E(\la/h,\xi;m)$, smooth in all variables and
solving on $M$ the differential equation~\eqref{e:e-eq-h} in $m$:
$$
(h^2\Delta-c_0h^2-\la^2)E_h(\la,\xi;\cdot)=0.
$$
Here $\xi$ gives the direction of the plane wave, 
while $\la$ corresponds to its semiclassical energy;
\asItem{A2}
for each $0< \la_1\leq \la_2$, the Schwartz kernel of the semiclassical spectral
projector
$$
\Pi_{[\lambda_1,\lambda_2]}:=\indic_{[\lambda_1^2+c_0h^2,\lambda_2^2+c_0h^2]}(h^2\Delta)
$$
can be written in the form
\begin{equation}
  \label{e:spectrum-eis-2}
\Pi_{[\lambda_1,\lambda_2]}(m,m')=(2\pi h)^{-n-1}\int_{\lambda_1}^{\lambda_2}
\lambda^n f_\Pi(\lambda/h) \int_{\plM}E_h(\lambda,\xi;m)\overline{E_h(\lambda,\xi;m')}\,d\xi d\lambda.
\end{equation} 
Here integration in $\xi$ is carried with respect to a certain given
volume form $d\xi$ on $\plM$ and $f_\Pi(z)>0$ is a
smooth function of $z$ such that $|\partial_z^kf_\Pi(z)|\leq
C_k\langle z\rangle^{-k}$ for each $k$ and $f_\Pi(z)\to 1$ as
$z\to\infty$.
\end{enumerate}
%
%
We now assume that plane waves admit the decomposition
\begin{equation}
  \label{e:e-h-decomposition}
E_h(\lambda,\xi;m)=\chi_0(m;\xi)E^0_h(\lambda,\xi;m)+E^1_h(\lambda,\xi;m),
\end{equation}
where $\chi_0,E^0_h,E_h^1$ are respectively a cutoff function, an explicit
`outgoing' part of the wave, and the `incoming' part, satisfying more precisely the following properties: 
\begin{enumerate}
\asItem{A3}
$\chi_0(m;\xi)$ is a function smooth in $m\in\overline M$ and $\xi\in \plM$,
supported inside the set $U_\infty$ from~\as{G4}
and $\chi_0(m,\xi)=1$ for $m$ sufficiently close to $\xi$;

\asItem{A4}
$E^0_h(\lambda,\xi;m)$ is a smooth function of $\lambda/h \in \mathbb R^*$
and $(m,\xi)\in U_\infty$, of the form
\begin{equation}
  \label{e:e0-h}
E^0_h(\lambda,\xi;m)=e^{{i\lambda\over h}\phi_\xi(m)}b^0(\lambda,\xi,m;h),
\end{equation}
where $U_\infty$ and $\phi_\xi$ are defined in~\as{G4} and  $b^0(\la,\xi,m;h)=b^0(\la/h,\xi,m)$ 
is a classical symbol in $h$ for $(m,\xi)\in U_\infty$, $\la\in\rr^*$;
that is, $b^0(\la,\xi,m;h)$ is smooth in all variables, including $h$, up to $h=0$.
The limit $b^0(\la,\xi,m;0)=\lim_{h\to 0}b^0(\la/h,\xi,m)$ for $\la>0$ is independent of $\la$;
\asItem{A5}
for $\lambda$ in a fixed compact subset of $(0,\infty)$ and $\varepsilon_0$ defined in~\as{G2}, the function
\begin{equation}
  \label{e:e-1-h-tilde}
\widetilde E^1_h(\lambda,\xi;m):={E^1_h(\lambda,\xi;m)\over 1+\|E_h(\lambda,\xi;m)\|_{L^2(\{x\geq\varepsilon_0\})}}
\end{equation}
is $h$-tempered in the sense of~\eqref{tempered};

\asItem{A6}
for $\lambda$ in a fixed compact subset of $(0,\infty)$,
each $\xi\in \plM$, and each
$(m,\lambda\nu)\in\WFh(\widetilde E^1_h(\lambda,\xi))$, we have $(m,\nu)\in S^*M$
and either the geodesic $\gamma(t)=g^t(m,\nu)$
does not escape in the forward direction (i.e. $(m,\nu)\in\Gamma_-$)
or there exists $t\geq 0$ such that $\gamma(t)$ lies in the set
\begin{equation}
  \label{e:w-xi}
W_\xi:=\{(m,\partial_m\phi_\xi(m))\mid m\in \supp(\partial_m \chi_0)\}.
\end{equation}
The constants in the corresponding estimates (in the definition of the
wave front set of a distribution given in
Section~\ref{s:prelim.notation}) are uniform in $\lambda$ and $\xi$;

\asItem{A7}
there exists $\varepsilon_1\in (0,\varepsilon_0)$ such that
for $(m,\nu)\in S^*M$ directly escaping in the forward direction
and $x(m)\leq\varepsilon_1$, the point
$(m,\xi_{+\infty}(m,\nu))$ lies in the set~$U_\infty$ defined in~\as{G4}
and  $\chi_0=1$ near this point;

\asItem{A8}
Let $\tau:U^+_\infty\to V^+_\infty$ be the diffeomorphism from~\as{G4}.
Then its Jacobian with respect to the volume measure
$\Vol(m)d\xi$ on $U^+_\infty$ and the Liouville measure on $V^+_\infty$,
is equal to $|b^0(1,\xi,m;0)|^2$, with $b^0$ defined in~\as{A4}.

\end{enumerate}
%
%

For example, for $M=\mathbb R^{n+1}$ we put $c_0=0$,
$E_h(\la,\xi;m)=e^{i\la\xi\cdot m/h}$ and use the standard volume
form on the sphere $\plM=\mathbb S^{n}$.  The
equation~\eqref{e:spectrum-eis-2} then follows from the Fourier inversion
formula.

Let us informally explain how the
decomposition~\eqref{e:e-h-decomposition} is constructed and provide a
justification for assumptions~\as{A3}--\as{A6}, putting for simplicity $\lambda=1$.
First of all, \as{A4} implies
that  for any $\chi\in
C_0^\infty(M)$, $\chi\chi_0 E^0_h$, as a function of $m$, is a
Lagrangian distribution associated to the Lagrangian $\Lambda_\xi$
from~\eqref{e:lambda-xi}. In fact, in the cases considered in the present paper,
$E^0_h$ solves on its domain the equation~\eqref{e:e-eq-h};
however, we do not make this assumption here, as in more complicated
cases (such as asymptotically hyperbolic manifolds of variable
curvature) $E^0_h$ might only be an approximate solution
to~\eqref{e:e-eq-h} in a certain sense.

If we assume that $E^0_h$ solves~\eqref{e:e-eq-h} on its domain, then
the function
$$
F_h(\lambda,\xi;m)=(h^2\Delta-\lambda^2-c_0h^2)(\chi_0(m)E^0_h(\lambda,\xi;m))
$$
is equal to $[h^2\Delta,\chi_0]E^0_h$. Since $E^0_h$ is a Lagrangian
distribution associated to $\Lambda_\xi$, the wavefront set of $F_h$
is contained in $W_\xi$.  We will now take $E^1_h=-R_h(\lambda)F_h$,
where $R_h(\lambda)$ is the \emph{incoming scattering resolvent}, a
certain right inverse of $h^2\Delta-\lambda^2-c_0h^2$.  Moreover, in
our cases $R_h(\lambda)$ will be microlocally incoming in the weak
sense: if we multiply $F_h$ by a (possibly small)
constant to make $R_h(\lambda)F_h$ bounded polynomially in $h$, then
each point in the wavefront set of $R_h(\lambda)F_h$, when propagated
forward by the geodesic flow, will either converge to the trapped set
or pass through $\WFh(F_h)$. Thus, the assumption~\as{A6} should be
viewed as a direct consequence of the fact that the scattering
resolvent is microlocally incoming and of propagation of
singularities.

The assumption~\as{A7} looks less natural, but will play an essential
role in our proofs, in Propositions~\ref{l:geometry-1}
and~\ref{l:geometry-2}. It holds for both Euclidean and hyperbolic
infinities, but for different reasons. For the hyperbolic infinity,
$\chi_0(\cdot;\xi)$ is equal to $1$ in a small neighborhood of $\xi$
and one can see that for $(m,\nu)$ directly escaping in the forward
direction and converging to $\xi$, the distance from $m$ to $\xi$ in
$\overline M$ is $\mathcal O(x(m))$.  This is not true in the
Euclidean case; however, in that case $\chi_0$ is equal to $1$ outside
of a compact subset of $M$ (that is, near the whole boundary
$\plM$, not just near $\xi$).

The assumption~\as{A8} is required to relate the natural measure
arising from the function $E_h^0$ to the Liouville measure. If $E_h$
were equal to $E^0_h$, then this assumption would simply follow
by taking the trace in~\eqref{e:spectrum-eis-2} with a compactly supported
pseudodifferential operator and a smooth cutoff function
in $\lambda$.

\subsection{Limiting measures}
  \label{s:general.limiting}

We now define the family of limiting measures $\mu_\xi$. These
measures result from propagating the natural measure arising from the
`outgoing' part $E^0_h$ of the plane wave, which is supported on the
Lagrangian $\Lambda_\xi$ from~\eqref{e:lambda-xi}, backwards along the geodesic
flow. In contrast with~\cite{eiscusp}, where the exponential decay of
the measure along the flow ensured its convergence, our measures will
only be defined for almost every $\xi$.

We first define
the measure $\tilde\mu_\xi$ on $S^*M$, corresponding to~$E^0_h$,
as follows: for each compactly supported continuous function $a$ on $S^*M$, put
\begin{equation}
  \label{e:mu-xi-tilde}
\int_{S^*M} a\,d\tilde\mu_\xi=\int_{(m,\xi)\in U_\infty^+}|b^0(1,\xi,m;0)|^2 a(\tau(m,\xi))\,\Vol(m).
\end{equation}
The support of $\tilde\mu_\xi$ is contained in the Lagrangian $\Lambda_\xi$
from~\eqref{e:lambda-xi} and the integral~\eqref{e:mu-xi-tilde}
depends continuously on $\xi$. 
We see from~\as{A8} that for any continuous function $f$ on $\plM$,
\begin{equation}
  \label{e:d2-cor}
\int_{\plM} f(\xi)\int_{S^*M} a(m,\nu)\,d\tilde\mu_\xi(m,\nu) d\xi=\int_{V_\infty^+} f(\xi_{+\infty}(m,\nu))
a(m,\nu)\,d\mu_L(m,\nu). 
\end{equation}
We now want to define the measure $\mu_\xi$ by
\begin{equation}
  \label{e:mu-xi-def}
\int_{S^*M} a\,d\mu_\xi=\lim_{t\to +\infty}\int_{S^*M} a\circ g^{-t}\,d\tilde\mu_\xi,
\end{equation}
valid for all compactly supported continuous functions $a$. 
To show that the limit exists for almost every $\xi$ (chosen independently of $a$)
and for every $a$, we will use monotonicity. By~\eqref{e:d2-cor}, \as{G5},
and using the invariance of the function $\xi_{+\infty}$ and the Liouville measure $\mu_L$ under
the geodesic flow, we see that if $a$ and $f$ are nonnegative, then
$$
\int_{\plM} f(\xi)\int_{S^*M}(a\circ g^{-t})\,d\tilde\mu_\xi d\xi
=\int_{g^{-t}(V^+_\infty)} f(\xi_{+\infty}(m,\nu))a(m,\nu)\,d\mu_L(m,\nu)
$$ 
is increasing with $t$. Therefore, for each $\xi$ the integral
$$
I_{a,t}(\xi)=\int_{S^*M} (a\circ g^{-t})\,d\tilde\mu_\xi
$$
is increasing in $t$ for any nonnegative $a$. Moreover, the integral
of $I_{a,t}(\xi)$ in $\xi$ is bounded by a $t$-independent
constant, namely by the integral of $a$ by the
Liouville measure. 
Taking $a$ to be an approximation of the characteristic function
of each member of a countable family of compact sets exhausting $S^*M$,
and using the monotone convergence theorem, we see that there exists
a measure zero set $\mathcal X\subset \plM$ such
that for $\xi\not\in \mathcal X$, we have
for any compactly supported continuous function $a$,
$$
\lim_{t\to+\infty}\int_{S^*M} (a\circ g^{-t})\,d\tilde\mu_\xi<\infty.
$$
This limit is a continuous functional on the space of continuous
compactly supported functions on $S^*M$; therefore, there exists
unique Borel measure $\mu_\xi$ such that~\eqref{e:mu-xi-def} holds.
Moreover, we see that the limit~\eqref{e:mu-xi-def} is uniform in $a$,
as soon as we fix a compact set containing $\supp a$ and impose a
bound on $\sup_{S^*M}|a|$. One also sees immediately~\eqref{e:supp-mu-xi},
namely that for compactly supported continuous $a$,
$$
\supp a\cap\overline{\xi_{+\infty}^{-1}(\xi)}=\emptyset\Longrightarrow
\int_{S^*M}a\,d\mu_\xi=0,
$$
as $\int_{S^*M}(a\circ g^{-t})\,d\tilde\mu_\xi=0$ for all $t$.

We can integrate the measure $\mu_\xi$ in $\xi$, getting back
the Liouville measure:
%
%
\begin{prop}
  \label{l:mu-xi-0}
For each $f\in C^\infty(\plM)$
and each $a\in C_0^\infty(S^*M)$ we have
\begin{equation}
  \label{e:disintegration-2}
\int_{\plM}f(\xi)\int_{S^*M} a(m,\nu)\,d\mu_\xi(m,\nu)d\xi
=\int_{S^*M\setminus\Gamma_-} f(\xi_{+\infty}(m,\nu))a(m,\nu)\,d\mu_L(m,\nu).
\end{equation}
In particular, if $\mu_L(\Gamma_-)=0$ (which will always be the case
in our theorems, see~\eqref{e:mes0}), then $\int \mu_\xi\,d\xi$ is the Liouville measure.
\end{prop}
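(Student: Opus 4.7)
The plan is to reduce Proposition~\ref{l:mu-xi-0} to an application of the monotone convergence theorem, feeding the identity~\eqref{e:d2-cor} to the defining limit~\eqref{e:mu-xi-def} and using the flow-invariance of both $\mu_L$ and $\xi_{+\infty}$.

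First I would assume $a\geq 0$ and $f\geq 0$; the general case follows by splitting into positive and negative parts. For fixed $t\geq 0$, apply~\eqref{e:d2-cor} with $a$ replaced by $a\circ g^{-t}$ to obtain
$$
\int_{\plM} f(\xi)\, I_{a,t}(\xi)\,d\xi = \int_{V^+_\infty} f(\xi_{+\infty}(m,\nu))\, a(g^{-t}(m,\nu))\,d\mu_L(m,\nu).
$$
Now perform the change of variable $(m,\nu)\mapsto g^{t}(m,\nu)$. Since $\mu_L$ is invariant under the geodesic flow and $\xi_{+\infty}\circ g^t=\xi_{+\infty}$ (which follows directly from Definition~\ref{d:directly-escape} and~\as{G3}), this yields
$$
\int_{\plM} f(\xi)\, I_{a,t}(\xi)\,d\xi = \int_{g^{-t}(V^+_\infty)} f(\xi_{+\infty}(m,\nu))\, a(m,\nu)\,d\mu_L(m,\nu).
$$

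Next I would identify the limit of the domains $g^{-t}(V^+_\infty)$ as $t\to+\infty$. By~\as{G5} the set $V^+_\infty$ is forward invariant, so the family $\{g^{-t}(V^+_\infty)\}_{t\geq 0}$ is monotone nondecreasing. I claim that
$$
\bigcup_{t\geq 0} g^{-t}(V^+_\infty) = S^*M \setminus \Gamma_-.
$$
The inclusion $\subset$ is immediate from the definition of $\Gamma_-$. For the reverse inclusion, take $(m,\nu)\in S^*M\setminus \Gamma_-$, so the forward geodesic escapes. By~\as{G2} and a standard argument (see the discussion after Definition~\ref{d:directly-escape}), for $t$ sufficiently large one has $x(g^t(m,\nu))\leq\varepsilon_0$ and $\dot x(g^t(m,\nu))\leq 0$. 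Moreover $g^t(m,\nu)$ converges in $\overline M$ to $\xi_{+\infty}(m,\nu)\in\plM$, and since $\bar U_\infty$ contains a neighbourhood of the boundary diagonal by~\as{G4}, we have $(g^t(m,\nu), \xi_{+\infty}(m,\nu))\in U_\infty$ for $t$ large, so $g^t(m,\nu)\in V_\infty^+$.

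Finally, monotone convergence applied to the right-hand side gives
$$
\lim_{t\to+\infty} \int_{\plM} f(\xi)\, I_{a,t}(\xi)\,d\xi = \int_{S^*M\setminus \Gamma_-} f(\xi_{+\infty}(m,\nu))\, a(m,\nu)\,d\mu_L(m,\nu).
$$
For the left-hand side, the construction of $\mu_\xi$ before the proposition shows that for almost every $\xi$, $I_{a,t}(\xi)\nearrow \int a\,d\mu_\xi$, while on the remaining measure-zero set of $\xi$ we can take the convention $\mu_\xi=0$. A second application of monotone convergence (now in $\xi$) identifies the left-hand side in the limit with $\int_{\plM} f(\xi) \int_{S^*M} a\,d\mu_\xi\, d\xi$, yielding~\eqref{e:disintegration-2}. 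The ``in particular'' statement is then immediate: if $\mu_L(\Gamma_-)=0$ the right-hand side is just $\int_{S^*M} f(\xi_{+\infty})a\,d\mu_L$.

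The main technical point is the identification of $\bigcup_{t\geq 0}g^{-t}(V_\infty^+)$ with $S^*M\setminus\Gamma_-$, which requires combining the convex-geometry statement~\as{G2} (that forces $\dot x\leq 0$ once $x$ is small enough along an escaping geodesic) with the fact that $U_\infty$ eventually captures the pair $(m,\xi_{+\infty}(m,\nu))$ via~\as{G4}; everything else is bookkeeping around the defining limit~\eqref{e:mu-xi-def}.
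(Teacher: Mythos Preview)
Your proof is correct and follows essentially the same route as the paper's: rewrite the left-hand side via~\eqref{e:d2-cor} and the change of variables $(m,\nu)\mapsto g^t(m,\nu)$ as an integral over $g^{-t}(V^+_\infty)$, then identify $\bigcup_{t\geq 0}g^{-t}(V^+_\infty)=S^*M\setminus\Gamma_-$ and pass to the limit. The only cosmetic difference is that the paper invokes dominated convergence directly (since $a$ is compactly supported and $f$ bounded), whereas you reduce to $a,f\geq 0$ and apply monotone convergence twice; both are fine.
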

\begin{proof}
The left-hand side can be written as
$$
\lim_{t\to +\infty} \int_{g^{-t}(V^+_\infty)} f(\xi_{+\infty}(m,\nu))a(m,\nu)\,d\mu_L(m,\nu).
$$
It remains to use the dominated convergence theorem; indeed, the function
under the integral is bounded and compactly supported, we have
$g^{-t_1}(V^+_\infty)\subset g^{-t_2}(V^+_\infty)$ for $t_1<t_2$, and
the union of $g^{-t}(V^+_\infty)$ over all $t\in \mathbb R$ is exactly $S^*M\setminus\Gamma_-$,
as for every geodesic $\gamma(t)$ escaping in the forward direction and for $t$
large enough, the point $\gamma(t)$ is directly escaping in the forward direction
and  $(\gamma(t),\xi_{+\infty}(\gamma(t)))\in U_\infty$.
\end{proof}
%
%
Finally, the following lemma will be useful to relate our measure $\mu_\xi$
to the one obtained from $E^0_h$ in the proofs of Theorems~\ref{t:convergence}
and~\ref{t:remainder}:
%
%
\begin{lemm}
  \label{l:mu-xi-1}
Let $\xi\not\in \mathcal X$, so that $\mu_\xi$ is well-defined. Let
$a$ be a compactly supported continuous function on $S^*M$.

1. $\mu_\xi$ is invariant under the geodesic flow: for each $t\in \mathbb R$,
\begin{equation}
  \label{e:mu-xi-invariance}
\int_{S^*M} a\circ g^t\,d\mu_\xi=\int_{S^*M} a\,d\mu_\xi.
\end{equation}

2. If $\supp a\subset \mathcal{DE}_+\cap \{x\leq \varepsilon_1\}$,
where $\mathcal{DE}_+$ is given by Definition~\ref{d:directly-escape}
and $\varepsilon_1$ is defined in~\as{A7}, then
\begin{equation}
  \label{e:mu-xi-reduction}
\int_{(m,\xi)\in U_\infty} |b^0(1,\xi,m;0)\chi_0(m;\xi)|^2
a(m,\partial_m\phi_\xi(m))\,\Vol(m)=\int_{S^*M} a\,d\mu_\xi.
\end{equation}
\end{lemm}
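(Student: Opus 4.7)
\smallsection{Proof plan for Lemma~\ref{l:mu-xi-1}}

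For part 1, I would simply exploit the defining formula~\eqref{e:mu-xi-def} and translate the flow parameter. Replacing $a$ by $a\circ g^s$ in~\eqref{e:mu-xi-def} gives $\int a\circ g^s\,d\mu_\xi=\lim_{t\to+\infty}\int a\circ g^{s-t}\,d\tilde\mu_\xi$. Substituting $t'=t-s$ and observing that $t\to+\infty$ iff $t'\to+\infty$, the right-hand side becomes $\lim_{t'\to+\infty}\int a\circ g^{-t'}\,d\tilde\mu_\xi=\int a\,d\mu_\xi$, the existence of this last limit being guaranteed by $\xi\notin\mathcal X$.

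For part 2, the strategy is: first, use the support hypothesis and~\as{A7} to reduce the left-hand side of~\eqref{e:mu-xi-reduction} to $\int a\,d\tilde\mu_\xi$; then, show that the monotone quantity $t\mapsto\int a\circ g^{-t}\,d\tilde\mu_\xi$ is in fact \emph{constant} for $t\ge 0$ on this particular test function, so that taking $t\to+\infty$ in~\eqref{e:mu-xi-def} yields $\int a\,d\mu_\xi=\int a\,d\tilde\mu_\xi$. For the first reduction, suppose $(m,\xi)\in U_\infty$ gives a nonzero contribution to the left-hand side of~\eqref{e:mu-xi-reduction}. Then $\tau(m,\xi)\in\supp a\subset\mathcal{DE}_+\cap\{x\le\varepsilon_1\}$, which by Definition~\ref{d:directly-escape} means $x(m)\le\varepsilon_1<\varepsilon_0$ and $\dot x(\tau(m,\xi))\le 0$, so $(m,\xi)\in U_\infty^+$. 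The inversion formula $\tau^{-1}(m,\nu)=(m,\xi_{+\infty}(m,\nu))$ from~\as{G4} forces $\xi=\xi_{+\infty}(\tau(m,\xi))$, and~\as{A7} then gives $\chi_0(m;\xi)=1$. Thus the left-hand side equals $\int_{U_\infty^+(\xi)}|b^0|^2 a(\tau(m,\xi))\,\Vol(m)=\int a\,d\tilde\mu_\xi$ with $U_\infty^+(\xi):=\{m:(m,\xi)\in U_\infty^+\}$.

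For the flow-invariance step, define $\Phi_t:=\tau^{-1}\circ g^t\circ\tau$; by~\as{G5} this is a well-defined self-map of $U_\infty^+$ for every $t\ge 0$, and since $\xi_{+\infty}\circ g^t=\xi_{+\infty}$, the map $\Phi_t$ preserves the $\xi$-coordinate, so it restricts to a map $\Phi_t^{(1)}(\cdot,\xi): U_\infty^+(\xi)\to S_{-t}(\xi)$, where $S_{-t}(\xi):=\{m\in U_\infty^+(\xi):g^{-t}(\tau(m,\xi))\in V_\infty^+\}$. Assumption~\as{A8} reads $\tau^*\mu_L=|b^0(1,\xi,m;0)|^2\Vol(m)\,d\xi$; combined with $g^t$-invariance of $\mu_L$ and $(g^t)_*(\mu_L|_{V_\infty^+})=\mu_L|_{g^t(V_\infty^+)}$, this gives $(\Phi_t)_*(|b^0|^2\Vol\,d\xi)=\mathbf{1}_{\Phi_t(U_\infty^+)}|b^0|^2\Vol\,d\xi$. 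Disintegrating in $\xi$, the fiber map $\Phi_t^{(1)}(\cdot,\xi)$ is a measure-preserving bijection from $U_\infty^+(\xi)$ to $S_{-t}(\xi)=\Phi_t^{(1)}(U_\infty^+(\xi),\xi)$ with respect to $|b^0(1,\xi,m;0)|^2\Vol(m)$. Because $\supp a\subset V_\infty^+$, the integrand of $\int a\circ g^{-t}\,d\tilde\mu_\xi$ vanishes off $S_{-t}(\xi)$; changing variables $\tilde m=\Phi_{-t}^{(1)}(m,\xi)$ and using $\tau(\tilde m,\xi)=g^{-t}(\tau(m,\xi))$ yields $\int a\circ g^{-t}\,d\tilde\mu_\xi=\int a\,d\tilde\mu_\xi$ for every $t\ge 0$, and letting $t\to+\infty$ gives~\eqref{e:mu-xi-reduction}.

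The main obstacle in this plan is the bookkeeping around $\Phi_{-t}$ in Step~3: the induced backward flow is only defined on the proper subset $S_{-t}(\xi)\subsetneq U_\infty^+(\xi)$, so one must check carefully that the integrand of interest is indeed supported there, and then correctly transfer the measure-preservation from the $(2n+1)$-dimensional statement coming from~\as{A8} to the $(n+1)$-dimensional statement on each $\xi$-fiber. Everything else is a direct use of the definitions and the geometric/analytic assumptions.
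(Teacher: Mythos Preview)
Your argument is correct. Part~1 and the first reduction in Part~2 match the paper exactly. For the flow-invariance step in Part~2, you take a genuinely different route: you work fiberwise, using~\as{A8} together with $g^t$-invariance of $\mu_L$ to deduce that the induced flow $\Phi_t^{(1)}(\cdot,\xi)$ on each $\xi$-slice preserves $|b^0|^2\,\Vol$, and then change variables directly. The paper instead avoids the fiberwise computation entirely: it observes that to prove $\int a\circ g^{-t}\,d\tilde\mu_\xi=\int a\,d\tilde\mu_\xi$ for the given $\xi$, it suffices (by continuity of $\tilde\mu_\xi$ in $\xi$, noted after~\eqref{e:mu-xi-tilde}) to prove it after pairing with an arbitrary $f\in C_0^\infty(\plM)$; via~\eqref{e:d2-cor} this becomes $\int_{g^{-t}(V_\infty^+)}f(\xi_{+\infty})a\,d\mu_L=\int_{V_\infty^+}f(\xi_{+\infty})a\,d\mu_L$, which is immediate since $\supp a\subset V_\infty^+\subset g^{-t}(V_\infty^+)$.

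The obstacle you flag --- passing from the $(2n+1)$-dimensional measure-preservation to the $(n+1)$-dimensional fiber statement --- is precisely what the paper's trick sidesteps. Your disintegration does work (everything is smooth, so the Jacobian identity implied by~\as{A8} holds pointwise, not just a.e.\ in $\xi$), but the paper's route is shorter and entirely bypasses the bookkeeping you were worried about.
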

\begin{proof}
1. Follows immediately from the definition~\eqref{e:mu-xi-def}.

2. First of all, note that for $m$ in the support of the function
$a(m,\partial_m\phi_\xi(m))$, we have $(m,\xi)\in U^+_\infty$
and $\chi_0(m;\xi)=1$ by~\as{A7}; therefore, the left-hand side of~\eqref{e:mu-xi-reduction}
becomes the integral of $a$ over the measure $\tilde\mu_\xi$ defined
in~\eqref{e:mu-xi-tilde}. By~\eqref{e:mu-xi-def}, it is enough to show
that for $t\geq 0$,
$$
\int_{S^*M} a\circ g^{-t}\,d\tilde\mu_\xi=\int_{S^*M} a\,d\tilde\mu_\xi.
$$
For that, it is enough to show that for each $f\in C_0^\infty(\plM)$,
$$
\int_{\pl M}f(\xi)\int_{S^*M} a\circ g^{-t}\,d\tilde\mu_\xi
=\int_{\pl M}\int_{S^*M} a\,d\tilde\mu_\xi.
$$
Using~\eqref{e:d2-cor}, we rewrite this as
$$
\int_{g^{-t}(V^+_\infty)} f(\xi_{+\infty}) a\,d\mu_L
=\int_{V^+_\infty} f(\xi_{+\infty}) a\,d\mu_L.
$$
This is true as $\supp a\subset V^+_\infty\subset g^{-t}(V^+_\infty)$.
\end{proof}
%
%

\subsection{Averaged estimates on plane waves}
  \label{s:general.averaged}

One of the principal tools of the present paper are microlocal
estimates on the plane waves $E_h(\lambda,\xi)$ \emph{on average} in
$\lambda,\xi$, where $\lambda$ takes values in a size $h$
interval. They are direct consequences of~\eqref{e:spectrum-eis-2} and
the Hilbert--Schmidt norm estimate~\eqref{e:h-s-estimate-1}.  More
precisely, restricting to the case $\lambda=1+\mathcal O(h)$ for
simplicity, we have the following
%
%
\begin{prop}
  \label{l:avg-bdd}
Let $\chi\in C_0^\infty(M)$. Then:

1. $\chi\Pi_{[1,1+h]}$ is a Hilbert--Schmidt operator and there exists
a global constant $C$ such that for each bounded operator $A:L^2(M)\to
L^2(M)$, we have
\begin{equation}
  \label{e:hs-est}
h^{-1}\|A\chi(m) E_h(\lambda,\xi;m)\|_{L^2_{m,\xi,\lambda}(M\times \plM\times [1,1+h])}^2
\leq C h^{n}\|A\chi\Pi_{[1,1+h]}\|_{\HS}^2.
\end{equation}

2. The functions $\chi E_h$ are bounded in $L^2$
on average in the following sense: there exists a constant $C(\chi)$ such that
for any $h$,
\begin{equation}
  \label{e:avg-bdd}
h^{-1}\|\chi(m) E_h(\lambda,\xi;m)\|^2_{L^2_{m,\xi,\lambda}(M\times\plM\times [1,1+h])}\leq C(\chi).
\end{equation}

The $h^{-1}$ prefactor in both cases is due to the fact that we are integrating
over an interval of size $h$ in $\lambda$.
\end{prop}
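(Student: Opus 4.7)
The plan is to prove that $\chi\Pi_{[1,1+h]}$ is Hilbert--Schmidt with norm $\mathcal O(h^{-n/2})$ via Lemma~\ref{l:h-s-estimate} (Step~1), then derive the general inequality~\eqref{e:hs-est} from the spectral resolution formula~\eqref{e:spectrum-eis-2} of~\as{A2} (Step~2), and finally deduce the $L^2$ bound~\eqref{e:avg-bdd} of Part~2 by combining Steps~1 and~2 with $A=I$ (Step~3).

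\emph{Step 1.} Choose $\psi\in C_0^\infty(\mathbb R)$ with $\psi\equiv 1$ on $[1/2,2]$. For $h$ small enough, $\psi(h^2\Delta)\Pi_{[1,1+h]}=\Pi_{[1,1+h]}$ by functional calculus, so $\chi\Pi_{[1,1+h]}=\chi\psi(h^2\Delta)\Pi_{[1,1+h]}$. The operator $\chi\psi(h^2\Delta)$ lies in $\Psi^{\comp}(M)$, is compactly supported, and has principal symbol $\chi(m)\psi(|\nu|_g^2)$, which restricts to $\chi(m)$ on $S^*M$. Applying Lemma~\ref{l:h-s-estimate} with $c_1=0$, $c_2=1$, and $d=n+1$ yields $\|\Pi_{[1,1+h]}\chi\psi(h^2\Delta)\|_{\HS}\leq C(\chi)h^{-n/2}$, hence $\|\chi\Pi_{[1,1+h]}\|_{\HS}\leq C(\chi)h^{-n/2}$ (using $\|T\|_{\HS}=\|T^*\|_{\HS}$ and the reality of $\chi$).

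\emph{Step 2.} Write $\Pi=\Pi_{[1,1+h]}$ and $Q=A\chi$. Since $Q\Pi=A(\chi\Pi)$ with $A$ bounded and $\chi\Pi$ Hilbert--Schmidt by Step~1, $Q\Pi$ is also Hilbert--Schmidt. Using $\Pi^*=\Pi=\Pi^2$,
\begin{equation*}
\|Q\Pi\|_{\HS}^2=\Tr(\Pi Q^*Q\Pi)=\Tr(Q^*Q\Pi).
\end{equation*}
Substituting~\eqref{e:spectrum-eis-2} for the kernel of $\Pi$ and interchanging the trace with the $(\lambda,\xi)$-integration (justified by Fubini, since all integrands are non-negative and $Q\Pi$ is Hilbert--Schmidt), one obtains
\begin{equation*}
\|Q\Pi\|_{\HS}^2=(2\pi h)^{-n-1}\int_1^{1+h}\lambda^n f_\Pi(\lambda/h)\int_{\plM}\|QE_h(\lambda,\xi)\|_{L^2(M)}^2\,d\xi\,d\lambda.
\end{equation*}
For $\lambda\in[1,1+h]$ and $h$ small, $\lambda^n\geq 1$, and since $\lambda/h\to\infty$ with $f_\Pi(z)\to 1$ we have $f_\Pi(\lambda/h)\geq 1/2$; thus the $\lambda$-weight is bounded below by $ch^{-n-1}$. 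Rearranging and using $Q=A\chi$ gives~\eqref{e:hs-est}.

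\emph{Step 3 and obstacle.} Applying~\eqref{e:hs-est} with $A=I$ and combining with Step~1 gives
\begin{equation*}
h^{-1}\|\chi E_h\|^2_{L^2_{m,\xi,\lambda}(M\times\plM\times[1,1+h])}\leq C h^n\cdot C(\chi)^2 h^{-n}=C(\chi)^2 C,
\end{equation*}
which is~\eqref{e:avg-bdd}. The argument contains no hard analytic step; the only item needing care is the Fubini interchange in Step~2, which works precisely because the integrand is non-negative and $\chi\Pi$ is already known to be Hilbert--Schmidt by Step~1.
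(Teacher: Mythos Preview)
Your Step~2 is essentially the paper's own derivation of~\eqref{e:hs-est}: one multiplies~\eqref{e:spectrum-eis-2} by $A\chi$ on the left and $\bar\chi A^*$ on the right and takes the trace. One small wrinkle: the passage $\Tr(\Pi Q^*Q\Pi)=\Tr(Q^*Q\Pi)$ is not justified, since $Q^*Q\Pi$ is only Hilbert--Schmidt and cyclicity of the trace requires one factor to be trace class. This is harmless if you instead compute $\Tr(Q\Pi Q^*)=\Tr\big((Q\Pi)(Q\Pi)^*\big)$ and substitute the spectral formula there, exactly as in~\eqref{e:spectrum-eis-A}.

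The substantive gap is in Step~1. On a noncompact manifold the operator $\chi\psi(h^2\Delta)$ is \emph{not} compactly supported: its Schwartz kernel lives in $\supp\chi\times M$, and nothing in the abstract framework of Section~\ref{s:general} tells you that the kernel of $\psi(h^2\Delta)$ decays in the second variable. Lemma~\ref{l:h-s-estimate} genuinely uses compact support of $A$ (to control $\|A\|_{\HS}$ and to build the FIO parametrix for $U(2t)A_0$), so it does not apply as written. You could try to split $\chi\psi(h^2\Delta)=\chi\psi(h^2\Delta)\chi_1+\chi\psi(h^2\Delta)(1-\chi_1)$ with $\chi_1\in C_0^\infty$ equal to~$1$ near $\supp\chi$; the first piece is then compactly supported and in $\Psi^{\comp}$ by Helffer--Sj\"ostrand, but bounding $\|\chi\psi(h^2\Delta)(1-\chi_1)\Pi\|_{\HS}$ would require off-diagonal decay of $\psi(h^2\Delta)$ that is not available under the general assumptions.

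The paper circumvents this with a purely local construction. Rather than invoking the functional calculus, it writes (for each fixed $\lambda$) the elliptic parametrix $\chi=B+Q_\lambda(h^2\Delta-\lambda^2-c_0h^2)+R_\lambda$ with $B\in\Psi^{\comp}$ compactly supported, $B=\chi$ microlocally near $S^*M$, and $R_\lambda=\mathcal O(h^\infty)_{\Psi^{-\infty}}$. Applied to $E_h$, the middle term vanishes by~\eqref{e:e-eq-h}, the $B$ term is handled by~\eqref{e:hs-est} together with Lemma~\ref{l:h-s-estimate} (now legitimately applicable), and the $R_\lambda$ term is dealt with by a short $\lambda$-differentiation argument because $R_\lambda$ depends on~$\lambda$. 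Your functional-calculus idea and the paper's parametrix are morally the same reduction (replace $\chi$ by something microlocalized near $S^*M$), but the parametrix route stays local and avoids any global assumption on $\psi(h^2\Delta)$.
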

\begin{proof}
1. It follows immediately from~\eqref{e:spectrum-eis-2} that 
\begin{equation}
  \label{e:spectrum-eis-3}
h^{-1}\int_1^{1+h} f_\Pi(\lambda/h)\lambda^n \int_{\plM}
(\chi E_h(\lambda,\xi))\otimes (\chi E_h(\lambda,\xi))\,d\xi d\lambda
=(2\pi)^{n+1}h^n\chi\Pi_{[1,1+h]}\bar\chi.
\end{equation}
Here $\otimes$ denotes the Hilbert tensor product, defined in~\eqref{e:tensor-product}.
The integral on the left-hand side converges in the trace class norm, as the
Schwartz kernels of the integrated operators are smooth and compactly supported.
Therefore, $\chi\Pi_{[1,1+h]}\bar\chi$ is trace class. Since
$$
\chi\Pi_{[1,1+h]}\bar\chi=(\chi\Pi_{[1,1+h]})(\chi\Pi_{[1,1+h]})^*,
$$
we see that $\chi\Pi_{[1,1+h]}$ is a Hilbert--Schmidt operator. Now,
multiplying both sides of~\eqref{e:spectrum-eis-3} by $A$ on the left
and $A^*$ on the right and taking the trace, we get
\begin{equation}
  \label{e:spectrum-eis-A}
\begin{gathered}
h^{-1}\|\lambda^{n/2}f_\Pi(\lambda/h)^{1/2}A\chi(m)E_h(\lambda,\xi;m)\|_{L^2_{m,\xi,\lambda}(M\times \plM\times [1,1+h])}^2\\
=(2\pi)^{n+1}h^n\Tr((A\chi\Pi_{[1,1+h]})(A\chi\Pi_{[1,1+h]})^*)\\
=(2\pi)^{n+1}h^n\|A\chi\Pi_{[1,1+h]}\|_{\HS}^2.
\end{gathered}
\end{equation}

2. We would like to use Lemma~\ref{l:h-s-estimate} to estimate
$\|\chi \Pi_{[1,1+h]}\|_{\HS}$ (we can put $\chi$ on the other side of
the projector in~\eqref{e:h-s-estimate-1} by taking the adjoint),
however this is not directly possible as $\chi$ is not compactly
microlocalized. We thus use that $E_h$ solve the
equation~\eqref{e:e-eq-h}, writing by the elliptic parametrix
construction (same as for the proof of Proposition~\ref{l:elliptic})
\begin{equation}
  \label{e:bdd-internal}
\chi=B+Q_\lambda(h^2\Delta-\lambda^2-c_0h^2)+R_\lambda
\end{equation}
for $\lambda\in [1,1+h]$, where $B\in\Psi^{\comp}(M)$,
$Q_\lambda\in\Psi^{-2}(M)$, and $R_\lambda\in
h^\infty\Psi^{-\infty}(M)$ are compactly supported and $B$ is
independent of $\lambda$ and equal to $\chi$ microlocally near $S^*M$.
We can also assume that $Q_\lambda$ and $R_\lambda$ are smooth in
$\lambda$. Now, we substitute~\eqref{e:bdd-internal} into the
left-hand side of~\eqref{e:avg-bdd} and use the triangle inequality.
By~\eqref{e:hs-est}, the term featuring $B$ is bounded by a constant
times $h^n\|B\Pi_{[1,1+h]}\|_{\HS}^2$, which is bounded uniformly in
$h$ by Lemma~\ref{l:h-s-estimate}. The term featuring
$Q_\lambda$ is zero by~\eqref{e:e-eq-h}.

Finally, we show that the term featuring $R_\lambda$ is $\mathcal
O(h^\infty)$. This does not follow immediately from~\eqref{e:hs-est},
as the operator $R_\lambda$ depends on $\lambda$.  We use the
following variant of~\eqref{e:spectrum-eis-A}: for $\tilde\lambda\in [1,1+h]$,
$$
h^{-1}\|\lambda^{n/2}f_\Pi(\lambda/h)^{1/2}
R_{\tilde\lambda}E_h(\lambda)\|^2_{L^2_{m,\xi,\lambda}(M\times \plM\times [1,\tilde\lambda])}
=(2\pi)^{n+1}h^n\|R_{\tilde\lambda}\Pi_{[1,\tilde\lambda]}\|_{\HS}^2.
$$
Differentiating in $\tilde\lambda$, we get
$$
\begin{gathered}
(2\pi)^{n+1}h^n\partial_{\tilde\lambda}\|R_{\tilde\lambda}\Pi_{[1,\tilde\lambda]}\|_{\HS}^2=
h^{-1}\| \tilde\lambda^{n/2}f_\Pi(\tilde\lambda/h)^{1/2}R_{\tilde\lambda}E_h(\tilde\lambda)\|^2_{L^2(m,\xi)
(M\times\pl M)}+\\
2h^{-1}\Real\langle \lambda^{n/2}f_\Pi(\lambda/h)^{1/2}(\partial_{\tilde\lambda} R_{\tilde\lambda})E_h(\lambda),
\lambda^{n/2}f_\Pi(\lambda/h)^{1/2}R_{\tilde\lambda}E_h(\lambda)\rangle_{L^2_{m,\xi,\lambda}
(M\times\pl M\times [1,\tilde\lambda])}.
\end{gathered}
$$
We now integrate in $\tilde\lambda$ from $1$ to $1+h$. The integral of
the left-hand side is bounded by a constant times
$h^n\|R_{1+h}\|_{\HS}^2=\mathcal O(h^\infty)$.  The integral of the first
term on the right-hand side is the quantity we are estimating.
Finally, the second term on the right-hand side is bounded by a
constant times
$h^n|\Tr((\partial_{\tilde\lambda}R_{\tilde\lambda})\Pi_{[1,1+h]}
R_{\tilde\lambda}^*)|$, which is $\mathcal O(h^\infty)$ uniformly in
$\tilde\lambda$, as the Hilbert--Schmidt norms of both
$R_{\tilde\lambda}$ and $\partial_{\tilde\lambda}R_{\tilde\lambda}$
are $\mathcal O(h^\infty)$.
\end{proof}
%
%

\section{Proofs}
  \label{s:proofs}

\subsection{Proof of Theorem~\ref{t:convergence}}
  \label{s:proofs-1}

In this section, we prove the convergence Theorem~\ref{t:convergence}
under the following assumption:
\begin{equation}
  \label{e:mes-0}
\mu_L(K)=0,
\end{equation}
where $\mu_L$ denotes the Liouville measure on $S^*M$ and $K$ is the trapped set.
First of all, note that~\eqref{e:mes-0} implies
\begin{equation}
  \label{e:mes0}
\mu_L(\Gamma_\pm)=0.
\end{equation}
Indeed, fix $\varepsilon\in (0,\varepsilon_0)$, where
$\varepsilon_0$ is defined in~\as{G2}, and take the set
$\Gamma_+^\varepsilon=\Gamma_+\cap\{x\geq\varepsilon\}$.
For $(m,\nu)\in\Gamma_+\cap \{x=\varepsilon\}$, we have
$\dot x(m,\nu)<0$; indeed, otherwise $(m,\nu)$
directly escapes in the backward direction and thus cannot lie
in $\Gamma_+$. It follows that $g^{-t}(\Gamma_+^\varepsilon)\subset\Gamma_+^\varepsilon$
for $t\geq 0$. Since $\Gamma_+^\varepsilon$ is bounded, and
$\mu_L$ is invariant under the geodesic flow,
we have
$$
\mu_L(\Gamma_+^\varepsilon)=\lim_{t\to +\infty}\mu_L(g^{-t}(\Gamma_+^\varepsilon))
=\mu_L\bigg(\bigcap_{t\geq 0}g^{-t}(\Gamma_+^\varepsilon)\bigg)=\mu_L(K)=0.
$$
Letting $\varepsilon\to 0$, we get~\eqref{e:mes0}.

We next note that the averaged $L^2$ bound~\eqref{e:avg-bdd} on $E_h$
on compact sets, together with~\eqref{e:e-eq-h} and the elliptic
Proposition~\ref{l:elliptic}, give the following
%
%
\begin{prop}
  \label{l:elliptic-our}
Assume that $A\in\Psi^0(M)$ is compactly supported and $\WFh(A)\cap S^*M=\emptyset$.
Then
\begin{equation}
  \label{e:elliptic-our}
h^{-1}\|\langle A E_h(\lambda,\xi),E_h(\lambda,\xi)\rangle\|_{L^1_{\xi,\lambda}(\plM\times [1,1+h])}
=\mathcal O(h^\infty).
\end{equation}
\end{prop}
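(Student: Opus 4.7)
The plan is to exploit the fact that $E_h$ solves the eigenfunction equation $P_\lambda E_h = 0$, where $P_\lambda := h^2\Delta - c_0 h^2 - \lambda^2$ has principal symbol $|\nu|_g^2 - \lambda^2$, which is elliptic away from the shell $\{|\nu|_g = \lambda\}$. Since $\WFh(A)$ is a compact set disjoint from $S^*M$ and $\lambda \in [1,1+h]$, the operator $P_\lambda$ is elliptic on a fixed neighborhood of $\WFh(A)$, uniformly for all small $h$ and all such $\lambda$.

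First, I would apply the elliptic parametrix construction (exactly as in equation~\eqref{e:bdd-internal} in the proof of Proposition~\ref{l:avg-bdd}) to write
\begin{equation*}
A = Q_\lambda P_\lambda + R_\lambda,
\end{equation*}
where $Q_\lambda \in \Psi^{-2}(M)$ is compactly supported, and $R_\lambda \in h^\infty \Psi^{-\infty}(M)$ is compactly supported, both smooth in $\lambda$. Since $P_\lambda E_h(\lambda,\xi) = 0$ by assumption~\as{A1}, this gives
\begin{equation*}
\langle A E_h(\lambda,\xi), E_h(\lambda,\xi) \rangle = \langle R_\lambda E_h(\lambda,\xi), E_h(\lambda,\xi) \rangle.
\end{equation*}

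Next, because the Schwartz kernel of $R_\lambda$ is compactly supported in $M \times M$ uniformly in $\lambda \in [1,1+h]$, I can choose $h$-independent cutoffs $\chi_1, \chi_2 \in C_0^\infty(M)$ such that $R_\lambda = \chi_1 R_\lambda \chi_2$ for every $\lambda$. Using $\|R_\lambda\|_{L^2\to L^2} = \mathcal{O}(h^\infty)$ uniformly in $\lambda$, I would bound
\begin{equation*}
|\langle R_\lambda E_h, E_h \rangle| \leq \|R_\lambda\|_{L^2 \to L^2}\, \|\chi_2 E_h\|_{L^2} \|\chi_1 E_h\|_{L^2}.
\end{equation*}
Integrating over $(\xi,\lambda) \in \plM \times [1,1+h]$ and applying Cauchy--Schwarz together with the averaged bound~\eqref{e:avg-bdd} of Proposition~\ref{l:avg-bdd} (which gives $\int\int \|\chi_j E_h\|^2_{L^2} \, d\lambda d\xi = \mathcal{O}(h)$) yields
\begin{equation*}
h^{-1}\int_{\plM}\int_1^{1+h} |\langle R_\lambda E_h, E_h\rangle|\, d\lambda d\xi \leq h^{-1} \cdot \mathcal{O}(h^\infty) \cdot \mathcal{O}(h) = \mathcal{O}(h^\infty),
\end{equation*}
which is exactly~\eqref{e:elliptic-our}.

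There is no serious obstacle here: the proof is a direct two-step argument combining the standard elliptic parametrix (which absorbs $A$ into an $\mathcal{O}(h^\infty)$ smoothing remainder modulo something that kills $E_h$) with the averaged $L^2$ mass bound on Eisenstein functions. The only mild care needed is ensuring the parametrix is constructed uniformly in $\lambda \in [1,1+h]$, which is automatic since the elliptic set contains a fixed neighborhood of $\WFh(A)$ for all such $\lambda$ and all small enough $h$.
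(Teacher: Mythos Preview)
Your proof is correct and follows essentially the same approach as the paper: the paper's proof is a one-line remark that the result follows from the averaged $L^2$ bound~\eqref{e:avg-bdd}, the eigenfunction equation~\eqref{e:e-eq-h}, and the elliptic estimate (Proposition~\ref{l:elliptic}), and you have correctly spelled out those details. Your handling of the $\lambda$-dependent remainder $R_\lambda$ via the pointwise operator-norm bound $\|R_\lambda\|_{L^2\to L^2}=\mathcal O(h^\infty)$ (uniform in $\lambda$) combined with Cauchy--Schwarz in $(\xi,\lambda)$ is clean and avoids the more delicate $\lambda$-dependent Hilbert--Schmidt argument that appears in the proof of Proposition~\ref{l:avg-bdd}.
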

%
%
Therefore, it is enough to prove~\eqref{e:convergence-1} for a
compactly supported $A\in\Psi^{\comp}(M)$ microlocalized in an
arbitrarily small neighborhood of $S^*M$.

Take $t>0$; we will calculate limits of the form $\lim_{t\to
+\infty}\lim_{h\to 0}$, thus $\mathcal O_t(h^\infty)$ expressions
(that is, expressions that are $\mathcal O(h^\infty)$ with the
constants depending on $t$) will be negligible. Take $\chi\in
C_0^\infty(M)$ indepedendent of $t$ and such that $A=\chi A\chi$.  We
first use that $E_h$ is a generalized eigenfunction of the
Laplacian~\eqref{e:e-eq-h} and apply Lemma~\ref{l:key}:
for each $\lambda\in [1,1+h]$ and each $\xi\in \pl M$,
\begin{equation}
  \label{e:propagated0}
\chi E_h=\chi e^{-it(\lambda^2+c_0h^2)/2h}U(t)\chi_t E_h+\mathcal O_t(h^\infty\|E_h\|_{L^2(K_t)})_{L^2}.
\end{equation}
Here $U(t)=e^{ith\Delta/2}$ is the semiclassical Schr\"odinger propagator
and $\chi_t\in C_0^\infty(M)$ is supported in the interior of the compact set $K_t\subset M$
and satisfies $d_g(\supp\chi,\supp(1-\chi_t))>t$. We also assume that $|\chi_t|\leq 1$
everywhere and
$K_t$ contains $\{x\geq\varepsilon_0\}$, where $\varepsilon_0$ is defined in~\as{G2}.
By Proposition~\ref{l:egorov}, we can write
$U(-t)AU(t)=A^{-t}+\mathcal O_t(h^\infty)_{L^2\to L^2}$, where
$A^{-t}\in\Psi^{\comp}$ is compactly supported. Then
\begin{equation}
  \label{e:propagated}
\langle AE_h,E_h\rangle=\langle A^{-t}\chi_t E_h,\chi_t E_h\rangle+\mathcal O_t(h^\infty)\|E_h\|_{L^2(K_t)}^2.
\end{equation}
We will now write
\begin{equation}\label{defofA0A1}
A^{-t}=A^{-t}_0+A^{-t}_1,\
A^{-t}_0:=A^{-t}\varphi,\
A^{-t}_1:=A^{-t}(1-\varphi),
\end{equation}
where the $L^2$
norm of the principal symbol of $A_0^{-t}$ will decay with $t$ and
the operator $A^{-t}_1$ will be negligible on $E^1_h$. The function
$\varphi\in C_0^\infty(M)$ is taken independent of $t$
and such that $\supp\chi\subset\{x>\varepsilon_\chi\}$ for some $\varepsilon_\chi$
and $\varphi=1$ near $\{x\geq\varepsilon_\chi\}$. We also require that
$\varphi=1$ near $\{x\geq\varepsilon_1\}$, where $\varepsilon_1$ comes from
the assumption~\as{A7}.

We first show that the terms in~\eqref{e:propagated} featuring both $A^{-t}_1$
and $E_h^1$ are $\mathcal O(h^\infty)$. For that, we need to show that the trajectories
in $\WFh(A^{-t}_1)\subset \supp(1-\varphi)\cap g^t(\supp\chi)$ satisfy the
geometric property shown on Figure~\ref{f:general-explanation}:
%
%
\begin{lemm}
  \label{l:geometry-1}
Let $t\geq 0$.  Assume that $(m,\nu)\in S^*M$ satisfies
$m\in\supp(1-\varphi)$, but $g^{-t}(m,\nu)\in\supp\chi$.  Then:
\begin{enumerate}
\item $(m,\nu)$ directly escapes in the forward direction, in the sense
of Definition~\ref{d:directly-escape};
\item for each $s\geq 0$, $g^s(m,\nu)$ does not lie in the set $W_\xi$
defined in~\eqref{e:w-xi}, for any $\xi\in \plM$.
\end{enumerate}
\end{lemm}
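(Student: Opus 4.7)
The plan is to combine the convexity-type condition \as{G2}, the bijectivity of $\tau$ from \as{G4}, and the normalization of $\chi_0$ from \as{A7}. From the assumptions on $\varphi$ (which equals $1$ near both $\{x\geq\eps_\chi\}$ and $\{x\geq\eps_1\}$), I would first read off that $\supp(1-\varphi) \subset \{x < \min(\eps_\chi,\eps_1)\}$, hence $x(m) < \eps_1 \leq \eps_0$; on the other hand, $\supp\chi \subset \{x>\eps_\chi\}$ gives $x(g^{-t}(m,\nu)) > \eps_\chi$.

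For part (1), I would study $f(s) := x(g^s(m,\nu))$ on $[-t,0]$. Since $f(-t) > \eps_\chi > f(0)$, the supremum $s_1$ of those $s\in[-t,0]$ with $f(s)=\eps_\chi$ lies in $[-t,0)$, and on $(s_1, 0]$ we have $f < \eps_\chi \leq \eps_0$ while $f(s_1) > f(0)$. If $\dot x(m,\nu) = f'(0)$ were positive, then $f$ would have to attain a local minimum in $(s_1,0)$; but at such a minimum \as{G2} gives $\ddot x < 0$, forcing a local maximum instead, a contradiction. Hence $\dot x(m,\nu) \leq 0$, and combined with $x(m) < \eps_0$ this yields $(m,\nu) \in \mc{DE}_+$.

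For part (2), I would first show via a bootstrap argument that $\dot x(g^s(m,\nu)) \leq 0$ for all $s \geq 0$: if $s_\ast$ is the infimum of times $s>0$ with $\dot x>0$, then $\dot x$ vanishes at $s_\ast$ by continuity, and \as{G2} (which applies since $f \leq \eps_1 \leq \eps_0$ up to $s_\ast$) yields $\ddot x < 0$ at $s_\ast$, contradicting the definition of $s_\ast$. Consequently $f$ is non-increasing on $[0,\infty)$, stays below $\eps_1$, and $g^s(m,\nu) \in \mc{DE}_+$ throughout. Setting $\xi_0 := \xi_{+\infty}(m,\nu)$, which is constant along the forward orbit, \as{A7} then tells me that $(m(s),\xi_0) \in U_\infty$ with $\chi_0 = 1$ in a neighborhood of this point, so $m(s) \notin \supp(\pl_m\chi_0(\cdot,\xi_0))$.

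To close part (2), I would suppose for contradiction that $g^s(m,\nu) \in W_\xi$ for some $\xi \in \plM$; then $\nu(s) = \pl_{m(s)}\phi_\xi(m(s))$, which means $\tau(m(s),\xi) = g^s(m,\nu)$. Since $g^s(m,\nu) \in V^+_\infty$, the injectivity of $\tau : U^+_\infty \to V^+_\infty$ from \as{G4} forces $\xi = \xi_0$, and this contradicts the vanishing of $\pl_m\chi_0(\cdot,\xi_0)$ at $m(s)$. The main obstacle I anticipate is the bootstrap using \as{G2} to control the sign of $\dot x$ along the full forward orbit; once that is in place, the identification $\xi = \xi_0$ through the bijectivity in \as{G4} and the consequent appeal to \as{A7} are quick.
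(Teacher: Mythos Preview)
Your proof is correct and follows essentially the same route as the paper. The paper's argument for part~(1) is phrased slightly more tersely via the dichotomy $\mathcal{DE}_+\cup\mathcal{DE}_-$ (since $x(m)<\varepsilon_0$, failure of forward direct escape forces backward direct escape, which makes $x$ decrease along the backward orbit, contradicting $x(g^{-t}(m,\nu))>\varepsilon_\chi>x(m)$), whereas you unwind the same convexity consequence of~\as{G2} as an explicit no-local-minimum argument; for part~(2) both proofs invoke~\as{G4} to identify $\xi=\xi_{+\infty}(m,\nu)$ and then~\as{A7} to obtain the contradiction with $m(s)\in\supp(\partial_m\chi_0)$.
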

\begin{proof}
(1) We have $x(m)<\varepsilon_1\leq\varepsilon_0$; therefore, if $(m,\nu)$ does
not directly escape in the forward direction, then it directly
escapes in the backward direction; this would imply that $x(g^{-t}(m,\nu))$
is decreasing in $t\geq 0$, which is impossible as
$x(m)<\varepsilon_\chi<x(g^{-t}(m,\nu))$.

(2) The point $g^s(m,\nu)$ directly escapes in the forward direction
and $x(g^s(m,\nu))<\varepsilon_1$. If $g^s(m,\nu)\in W_\xi$, then by~\as{G4},
$\xi=\xi_{+\infty}(m,\nu)$, but this is impossible as $\chi_0=1$
near $(g^s(m,\nu),\xi_{+\infty}(m,\nu))$ by~\as{A7}.
\end{proof}
%
%
Combining Lemma~\ref{l:geometry-1} with the microlocal information
we have on $E^1_h$, we get 
%
%
\begin{prop}
  \label{l:analysis-1}
If $E_h=\chi_0E^0_h+E^1_h$ is the decomposition~\eqref{e:e-h-decomposition},
then for each $t\geq 0$,
\begin{equation}
  \label{e:analysis-1}
\begin{gathered}
\langle AE_h,E_h\rangle=\langle A^{-t}_1\chi_t \chi_0E^0_h,\chi_t \chi_0E^0_h\rangle
+\langle A^{-t}_0 \chi_t E_h,\chi_t E_h\rangle
+\mathcal O_t(h^\infty(1+\|E_h\|_{L^2(K_t)}^2)).
\end{gathered}
\end{equation}
where $A_0^{-t},A_1^{-t}$ are defined in \eqref{defofA0A1}.
\end{prop}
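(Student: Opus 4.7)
The plan is to start from \eqref{e:propagated} and split $A^{-t}=A^{-t}_0+A^{-t}_1$; the piece $\langle A^{-t}_0\chi_t E_h,\chi_t E_h\rangle$ already appears on the right of \eqref{e:analysis-1}, so the task reduces to showing
\[
\langle A^{-t}_1\chi_t E_h,\chi_t E_h\rangle=\langle A^{-t}_1\chi_t\chi_0 E^0_h,\chi_t\chi_0 E^0_h\rangle+\mathcal O_t\big(h^\infty(1+\|E_h\|_{L^2(K_t)}^2)\big).
\]
To this end I would substitute $E_h=\chi_0 E^0_h+E^1_h$ into the left-hand side, expand, and show that the three resulting terms containing a factor of $E^1_h$ are negligible.

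The main input is a wavefront computation for $A^{-t}_1$. By Egorov's theorem (Proposition~\ref{l:egorov}) $\WFh(A^{-t})\subset g^t(\WFh(A))$, and since $A^{-t}_1=A^{-t}(1-\varphi)$,
\[
\WFh(A^{-t}_1)\subset\{(m,\nu)\in S^*M\mid m\in\supp(1-\varphi),\ g^{-t}(m,\nu)\in\supp\chi\}.
\]
Lemma~\ref{l:geometry-1} shows that every such $(m,\nu)$ directly escapes in the forward direction, so $(m,\nu)\notin\Gamma_-$, and that no forward iterate $g^s(m,\nu)$, $s\geq 0$, hits any of the sets $W_{\xi'}$. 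Comparing with \as{A6}, this is exactly the complement of $\WFh(\widetilde E^1_h(\lambda,\xi))$ for $\lambda$ close to $1$, uniformly in $\xi\in\plM$. Hence $\WFh(A^{-t}_1)\cap\WFh(\widetilde E^1_h(\lambda,\xi))=\emptyset$.

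Armed with this disjointness, the standard semiclassical vanishing statement (applicable because $A^{-t}_1$ is compactly supported and compactly microlocalized, while $\widetilde E^1_h$ is $h$-tempered by \as{A5}) yields $\|A^{-t}_1\chi_t\widetilde E^1_h\|_{L^2}=\mathcal O_t(h^\infty)$, with constants uniform in $\lambda,\xi$ thanks to the uniformity clause in \as{A6}. Reintroducing the normalizing factor from~\as{A5} and using $\{x\geq\varepsilon_0\}\subset K_t$ converts this into $\|A^{-t}_1\chi_t E^1_h\|_{L^2}=\mathcal O_t\big(h^\infty(1+\|E_h\|_{L^2(K_t)})\big)$. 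Since $\chi_t\chi_0 E^0_h$ is a Lagrangian state whose amplitude is a compactly supported classical symbol by~\as{A4}, its $L^2$-norm is bounded uniformly in $h$, and $\|\chi_t E^1_h\|_{L^2}\leq C_t(1+\|E_h\|_{L^2(K_t)})$ follows from the triangle inequality. Cauchy--Schwarz on each of the three terms, combined with the elementary bound $1+a\leq 2(1+a^2)$, then absorbs every contribution into $\mathcal O_t(h^\infty(1+\|E_h\|_{L^2(K_t)}^2))$.

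The one point that requires care is the uniformity in $\xi$ of the wavefront disjointness; this is encoded in the statement of \as{A6} together with the $\xi$-independent geometry supplied by Lemma~\ref{l:geometry-1}, so there is no real obstruction. Everything else is routine pseudodifferential bookkeeping.
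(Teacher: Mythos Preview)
Your approach is essentially identical to the paper's: start from~\eqref{e:propagated}, split $A^{-t}=A^{-t}_0+A^{-t}_1$, and use Lemma~\ref{l:geometry-1} together with~\as{A6} to show that $A^{-t}_1$ is microlocally disjoint from $\widetilde E^1_h$, hence kills $E^1_h$ up to $\mathcal O_t(h^\infty(1+\|E_h\|_{L^2(K_t)}))$.

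There is one small gap in your bookkeeping. After expanding $E_h=\chi_0E^0_h+E^1_h$, one of the three cross terms is
\[
\langle A^{-t}_1\chi_t\chi_0 E^0_h,\chi_t E^1_h\rangle,
\]
and here a direct Cauchy--Schwarz only gives $\|A^{-t}_1\chi_t\chi_0 E^0_h\|\cdot\|\chi_t E^1_h\|=\mathcal O_t(1)\cdot C_t(1+\|E_h\|_{L^2(K_t)})$, which is not small. You need to move the operator to the other side and bound $\|(A^{-t}_1)^*\chi_t E^1_h\|_{L^2}$ instead. This is immediate---$\WFh((A^{-t}_1)^*)=\WFh(A^{-t}_1)$, so your wavefront disjointness argument applies verbatim to the adjoint---but it should be said. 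The paper makes exactly this point by running the argument for ``$B$ equal to either $A^{-t}_1$ or its adjoint.'' Once you add that one line, your proof is complete and matches the paper's.
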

\begin{proof}
By~\eqref{e:propagated}, it is enough to show that
$$
\langle A^{-t}_1\chi_t E_h,\chi_t E_h\rangle
-\langle A^{-t}_1\chi_t\chi_0 E_h^0,\chi_t \chi_0E_h^0\rangle
=\mathcal O_t(h^\infty(1+\|E_h\|_{L^2(K_t)}^2)).
$$
Given that $\|\chi_0 E_h^0\|_{L^2(K_t)}=\mathcal O(1)$, it suffices to prove 
$$
\|B\chi_t E^1_h\|_{L^2}=\mathcal O_t(h^\infty(1+\|E_h\|_{L^2(K_t)})),
$$
where $B$ is equal to either $A^{-t}_1$ or its adjoint.
This in turn follows from
\begin{equation}
  \label{e:analysis-1.1}
\|B\chi_t \widetilde E^1_h\|_{L^2}=\mathcal O_t(h^\infty),
\end{equation}
with $\widetilde E^1_h$ defined in~\eqref{e:e-1-h-tilde}. 
Take $(m,\nu)\in\WFh(B\chi_t\widetilde E^1_h)\subset S^*M$. Then
by Proposition~\ref{l:egorov},
$$
(m,\nu)\in\WFh(B)\subset\WFh(A^{-t})\cap\supp(1-\varphi)
\subset g^t(\WFh(A))\cap\supp(1-\varphi).
$$
Since $\WFh(A)\subset\supp\chi$, we see that
$m\in\supp(1-\varphi)$ and $g^{-t}(m,\nu)\in\supp\chi$;
therefore, by Lemma~\ref{l:geometry-1},
the geodesic $g^s(m,\nu)$ escapes in the forward direction
and does not pass through $W_\xi$ for
$s\geq 0$. But then by~\as{A6} the point $(m,\nu)$ cannot
lie in $\WFh(\widetilde E^1_h)$, a contradiction.
We showed that the wavefront set of $B\chi_t\widetilde E^1_h$
is empty, which implies~\eqref{e:analysis-1.1}.
\end{proof}
%
%
We now use the averaged estimate~\eqref{e:hs-est} and the
Hilbert--Schmidt norm estimates from
Section~\ref{s:prelim.propagator}, to estimate the second term on the
right-hand side of~\eqref{e:analysis-1}:
%
%
\begin{prop}
  \label{l:analysis-2}
There exists a constant $C$ independent of $t$ such that
\begin{equation}
  \label{e:analysis-2}
h^{-1}\|\langle A^{-t}_0\chi_t E_h,\chi_t E_h\rangle\|_{L^1_{\xi,\lambda}(\plM\times
[1,1+h])}\leq C\|(\sigma(A)\circ g^{-t})\varphi\|_{L^2(S^*M)}+\mathcal O_t(h).
\end{equation}
Here $\|a\|_{L^2(S^*M)}$ is the $L^2$ norm of the restriction
of $a$ to $S^*M$ with respect to the Liouville measure.
\end{prop}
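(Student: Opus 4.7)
The plan is to combine the spectral-theoretic identity~\eqref{e:spectrum-eis-3} relating plane-wave pairings to traces of spectral projectors with the Hilbert--Schmidt bound of Lemma~\ref{l:h-s-estimate} and the averaged $L^2$-estimate of Proposition~\ref{l:avg-bdd}, after first replacing the $t$-dependent cutoff $\chi_t$ by a fixed one.

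The first step is pseudolocalization. Since $A_0^{-t}=A^{-t}\varphi$ has wavefront set in $g^t(\WFh(A))\cap\pi^{-1}(\supp\varphi)$, and the geodesic flow preserves $|\nu|_g$, this wavefront set lies in a fixed compact subset of $T^*M$ with base projection inside $\supp\varphi$, independently of $t$. I would fix $\tilde\varphi\in C_0^\infty(M)$ equal to $1$ on $\supp\varphi$; for $t$ large enough that $\chi_t=1$ on $\supp\tilde\varphi$, the pseudodifferential calculus gives
$$
\langle A_0^{-t}\chi_tE_h,\chi_tE_h\rangle=\langle A_0^{-t}\tilde\varphi E_h,\tilde\varphi E_h\rangle+\mathcal O_t(h^\infty)\|E_h\|^2_{L^2(K_t)},
$$
and after integration in $(\xi,\lambda)$, the averaged bound~\eqref{e:avg-bdd} makes the contribution of the error $\mathcal O_t(h^\infty)$.

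The key estimate uses positivity. Splitting $A$ into self-adjoint and anti-self-adjoint parts and then symmetrizing the composition with $\varphi$ (the symmetrization error $\tfrac12[A^{-t},\varphi]$ is $h$ times a compactly microlocalized pseudo and yields an $\mathcal O_t(h)$ contribution after integration), we may assume $A_0^{-t}$ is self-adjoint with real principal symbol $a_t=(\sigma(A)\circ g^{-t})\varphi$. I would write $a_t=\widetilde a_t^+-\widetilde a_t^-$ with smooth nonnegative symbols $\widetilde a_t^\pm$ obtained by mollifying $\max(\pm a_t,0)$, whose $L^1$ norms are bounded by $\|\max(\pm a_t,0)\|_{L^1(S^*M)}+\varepsilon$. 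Sharp G\aa rding applied to $\widetilde a_t^\pm\geq 0$ gives
$$
|\langle A_0^{-t}\tilde\varphi E_h,\tilde\varphi E_h\rangle|\leq\sum_{\pm}\langle\tilde\varphi\Op_h(\widetilde a_t^\pm)\tilde\varphi E_h,E_h\rangle+\mathcal O_t(h)\|\tilde\varphi E_h\|^2_{L^2}.
$$
Integrating in $(\xi,\lambda)\in\plM\times[1,1+h]$ and using the identity~\eqref{e:spectrum-eis-3}, each positive term becomes $(2\pi)^{n+1}h^{n+1}\Tr(\tilde\varphi\Op_h(\widetilde a_t^\pm)\tilde\varphi\,\Pi_{[1,1+h]})+\mathcal O_t(h^2)$. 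Factoring the positive pseudo as $C_\pm^*C_\pm+hE_\pm$ with $\sigma(C_\pm)=\tilde\varphi\sqrt{\widetilde a_t^\pm}$ and applying Lemma~\ref{l:h-s-estimate} to $C_\pm$ produces $h^n\Tr(\tilde\varphi\Op_h(\widetilde a_t^\pm)\tilde\varphi\,\Pi_{[1,1+h]})\leq C\|\widetilde a_t^\pm\|_{L^1(S^*M)}+\mathcal O_t(h)$; summing over $\pm$ and applying Cauchy--Schwarz on the compact set $\supp\varphi\cap S^*M$ (which has fixed finite Liouville measure) converts $L^1$ into the required $L^2$ norm with a constant $C$ independent of $t$, while the residual terms $\mathcal O_t(h)\|\tilde\varphi E_h\|^2_{L^2}$ contribute $\mathcal O_t(h)$ after the $(\xi,\lambda)$-integration via~\eqref{e:avg-bdd}.

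The main obstacle is the construction of the smooth nonnegative symbols $\widetilde a_t^\pm$ so that the implied constants remain independent of $t$: $\max(\pm a_t,0)$ is only Lipschitz at the zero set of $a_t$, so the mollification scale must be chosen uniformly in $t$ while preserving both $\widetilde a_t^+-\widetilde a_t^-=a_t$ and the $L^1$ control. A simpler Cauchy--Schwarz approach in $L^2_{m,\xi,\lambda}$ using directly~\eqref{e:hs-est} and~\eqref{e:avg-bdd} would yield only an $\mathcal O_t(h^{1/2})$ remainder, which would nevertheless still be enough to deduce Theorem~\ref{t:convergence} since the final argument takes $\lim_{t\to\infty}\lim_{h\to 0}$.
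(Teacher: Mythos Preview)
The paper's proof is precisely the ``simpler Cauchy--Schwarz approach'' you dismiss in your final paragraph, and your belief that it only gives an $\mathcal O_t(h^{1/2})$ remainder is incorrect. The paper inserts a fixed cutoff $\varphi_1$ with $\varphi_1=1$ near $\supp\varphi$, disposes of the $(1-\varphi_1)A_0^{-t}$ piece as $\mathcal O_t(h^\infty)$, and then applies Cauchy--Schwarz in $m$ and in $(\xi,\lambda)$:
\[
h^{-1}\|\langle A_0^{-t}\chi_tE_h,\varphi_1\chi_tE_h\rangle\|_{L^1_{\xi,\lambda}}
\leq \big(h^{-1/2}\|A_0^{-t}\chi_tE_h\|_{L^2_{m,\xi,\lambda}}\big)\big(h^{-1/2}\|\varphi_1\chi_tE_h\|_{L^2_{m,\xi,\lambda}}\big).
\]
The second factor is bounded uniformly in $t,h$ by~\eqref{e:avg-bdd}. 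For the first, \eqref{e:hs-est} gives $h^{-1/2}\|A_0^{-t}\chi_tE_h\|_{L^2}\leq Ch^{n/2}\|A_0^{-t}\chi_t\Pi_{[1,1+h]}\|_{\HS}$, and then~\eqref{e:h-s-estimate-1} bounds this Hilbert--Schmidt norm \emph{itself} (not its square) by $C\|(\sigma(A)\circ g^{-t})\varphi\|_{L^2(S^*M)}+\mathcal O_t(h)$. No square root of an $\mathcal O(h)$ error is ever taken, so the remainder is genuinely $\mathcal O_t(h)$.

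Your more elaborate route via sharp G\aa rding and trace positivity is therefore unnecessary, and it also has a gap beyond the mollification difficulty you already flag: the factorization $\tilde\varphi\Op_h(\widetilde a_t^\pm)\tilde\varphi=C_\pm^*C_\pm+hE_\pm$ with $\sigma(C_\pm)=\tilde\varphi\sqrt{\widetilde a_t^\pm}$ is problematic because $\sqrt{\widetilde a_t^\pm}$ is generally not smooth on the zero set of $\widetilde a_t^\pm$ (which is nonempty since the symbol is compactly supported). One can repair this by computing $h^n\Tr(B\Pi_{[1,1+h]})$ directly via the same FIO parametrix used in the proof of Lemma~\ref{l:h-s-estimate}, obtaining $\int_{S^*M}\sigma(B)\,d\mu_L+\mathcal O_t(h)$ without any square root; but at that point the G\aa rding/positivity detour buys nothing over the paper's two-line Cauchy--Schwarz argument.
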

\begin{proof}
Take a real-valued function $\varphi_1\in C_0^\infty(M)$ independent of $t$ such
that $\varphi_1=1$ near $\supp\varphi$. 
Then the left-hand side of~\eqref{e:analysis-2} is bounded by
$$
h^{-1}\|\langle A^{-t}_0 \chi_t E_h,\varphi_1 \chi_t E_h\rangle\|_{L^1_{\xi,\lambda}}
+h^{-1}\|\langle (1-\varphi_1)A^{-t}_0 \chi_t E_h, \chi_t E_h\rangle\|_{L^1_{\xi,\lambda}},
$$
where the $L^1$, and later $L^2$, norms in $\xi,\lambda$ are taken over
$\plM\times [1,1+h]$.
The second term here is $\mathcal O_t(h^\infty)$ by the bound~\eqref{e:avg-bdd}
and since $(1-\varphi_1)A^{-t}_0=\mathcal O_t(h^\infty)_{L^2\to L^2}$
is compactly supported. The first term can be estimated by applying the Cauchy--Schwarz
inequality first in $m$ and then in $(\lambda,\xi)$:
$$
\begin{gathered}
h^{-1}\|\langle A^{-t}_0 \chi_t E_h,\varphi_1 \chi_t E_h\rangle\|_{L^1_{\xi,\lambda}}
\leq h^{-1}\|\,\|A^{-t}_0 \chi_t E_h\|_{L^2(M)}\cdot \|\varphi_1 \chi_t E_h\|_{L^2(M)}\|_{L^1_{\xi,\lambda}}\\\leq
h^{-1/2}\|A^{-t}_0\chi_t E_h\|_{L^2_{m,\xi,\lambda}}\cdot h^{-1/2} \|\varphi_1 \chi_t E_h\|_{L^2_{m,\xi,\lambda}}.
\end{gathered}
$$
Now, $h^{-1/2}\|\varphi_1 \chi_t E_h\|_{L^2_{m,\xi,\lambda}}$ is bounded (independently of $t$)
uniformly in $h$ by~\eqref{e:avg-bdd}. As for $h^{-1/2}\|A^{-t}_0\chi_t E_h\|_{L^2_{m,\xi,\lambda}}$,
we can estimate it using~\eqref{e:hs-est} by a constant times
$$
h^{n/2}\|A^{-t}_0\chi_t\Pi_{[1,1+h]}\|_{\HS}.
$$
Note that the operator $A^{-t}_0\chi_t\in\Psi^{\comp}$ is compactly
supported and it is compactly microlocalized independently of $t$. It
then remains to apply~\eqref{e:h-s-estimate-1} (to the adjoint of
our operator); by
Proposition~\ref{l:egorov}, the principal symbol of $A^{-t}_0\chi_t$ is
given by $(\sigma(A)\circ g^{-t})\varphi$.
 \end{proof}
%
%
We now use the dynamical assumption that $\mu_L(K)=0$. The function
$(\sigma(A)\circ g^{-t})\varphi$ is supported in a $t$-independent
compact set and bounded uniformly in $t$. Moreover, it converges to
zero pointwise on $S^*M\setminus\Gamma_+$ as $t\to+\infty$. Therefore,
by~\eqref{e:mes0} and the dominated convergence theorem we have
$(\sigma(A)\circ g^{-t})\varphi\to 0$ in $L^2(S^*M)$, as $t\to
+\infty$. It then follows from~\eqref{e:analysis-1} together with the
bound~\eqref{e:avg-bdd} and from~\eqref{e:analysis-2} that
$$
\lim_{t\to +\infty}\limsup_{h\to 0}h^{-1}\|\langle AE_h,E_h\rangle
-\langle A^{-t}_1\chi_t\chi_0E_h^0,\chi_t\chi_0E_h^0\rangle\|_{L^1_{\xi,\lambda}(\plM\times [1,1+h])}=0.
$$
To prove Theorem~\ref{t:convergence}, it now remains to show that
\begin{equation}
  \label{e:convergence-int}
\lim_{t\to +\infty}\limsup_{h\to 0}h^{-1}\bigg\|\langle A^{-t}_1\chi_t\chi_0E_h^0,
\chi_t\chi_0E_h^0\rangle-\int_{S^*M}\sigma(A)\,d\mu_\xi\bigg\|_{L^1_{\xi}(\plM)}=0
\end{equation}
uniformly in $\lambda=1+\mathcal O(h)$.
We first note that by~\eqref{e:e0-h} the function
$$
\chi_t\chi_0E^0_h(\lambda,\xi;m)=e^{{i\lambda\over h}\phi_\xi(m)}\chi_t(m)\chi_0(m,\xi)
b^0(1,\xi,m;0)+\mathcal O_t(h)_{L^2}
$$
is a compactly supported Lagrangian distribution associated to the Lagrangian
$\Lambda_\xi$ from~\eqref{e:lambda-xi}. Therefore, by Proposition~\ref{l:lagrangian-mul}, we find
\begin{equation}
  \label{e:convergence-int-777}
A^{-t}_1\chi_t\chi_0E^0_h(\lambda,\xi)
=e^{{i\lambda\over h}\phi_\xi}\chi_t\chi_0b^0(1,\xi,m;0)\sigma(A^{-t}_1)(m,\partial_m\phi_\xi(m))
+\mathcal O_t(h)_{L^2}.
\end{equation}
Therefore,
$$
\begin{gathered}
\langle A^{-t}_1\chi_t\chi_0 E_h^0,\chi_t\chi_0E_h^0\rangle
=\int_M \sigma(A^{-t}_1)(m,\partial_m\phi_\xi(m))
|\chi_t\chi_0b^0(1,\xi,m;0)|^2\,\Vol(m)+\mathcal O_t(h).
\end{gathered}
$$
Now, by Proposition~\ref{l:egorov},
$\sigma(A^{-t}_1)=(\sigma(A)\circ g^{-t})(1-\varphi)$. By Lemma~\ref{l:geometry-1},
this function is supported in $\mathcal{DE}_+\cap\{x<\varepsilon_1\}$, with
$\mathcal{DE}_+$ from Definition~\ref{d:directly-escape}.
Also, $\chi_t=1$ near $\supp\sigma(A^{-t}_1)$.
Then by part~2 of Lemma~\ref{l:mu-xi-1},
\begin{equation}
  \label{e:int-calc}
\langle A^{-t}_1\chi_t\chi_0 E_h^0,\chi_t\chi_0 E_h^0\rangle
=\int_{S^*M} (\sigma(A)\circ g^{-t})(1-\varphi)\,d\mu_\xi+\mathcal O_t(h).
\end{equation}
Therefore, \eqref{e:convergence-int} reduces to
\begin{equation}
  \label{e:convergence-int-2}
\lim_{t\to +\infty}\bigg\|\int_{S^*M}(\sigma(A)\circ g^{-t})(1-\varphi)\,d\mu_\xi-
\int_{S^*M}\sigma(A)\,d\mu_\xi\bigg\|_{L^1_\xi(\plM)}=0.
\end{equation}
By part~1 of Lemma~\ref{l:mu-xi-1} and~\eqref{e:disintegration-2},
we write the norm on the left-hand side of~\eqref{e:convergence-int-2}
as
$$
\bigg\|\int_{S^*M}\sigma(A)(\varphi\circ g^t)\,d\mu_\xi\bigg\|_{L^1_\xi(\plM)}
\leq\int_{S^*M}|\sigma(A)(\varphi\circ g^t)|\,d\mu_L.
$$
The expression under the integral
on the right-hand side is bounded and compactly supported uniformly in $t$
and converges to zero pointwise on $S^*M\setminus\Gamma_-$; by~\eqref{e:mes0} and
the dominated convergence theorem, we get~\eqref{e:convergence-int-2}. This finishes
the proof of Theorem~\ref{t:convergence}.

\smallsection{The nontrapped case}
We briefly discuss the situation when
$\WFh(A)\cap\Gamma_-=\emptyset$. In this case, for $t$ large enough
(depending on $A$), for any $(m,\nu)\in\WFh(A)$ we have
$g^t(m,\nu)\not\in\supp\varphi$ and thus
$$
A_0^{-t}=\mathcal O(h^\infty)_{L^2\to L^2}.
$$
Then by~\eqref{e:analysis-1} and the bound~\eqref{e:avg-bdd},
$$
\langle AE_h,E_h\rangle=\langle A^{-t}_1\chi_t\chi_0E^0_h,\chi_t\chi_0E^0_h\rangle
+\mathcal O(h^\infty)_{L^1_{\xi,\lambda}(\plM\times [1,1+h])}.
$$
The quantity $\langle A^{-t}_1\chi_t\chi_0 E^0_h,\chi_t\chi_0
E^0_h\rangle$ is calculated in~\eqref{e:int-calc} up to $\mathcal
O(h)$. However, since $E^0_h$ is a Lagrangian distribution, one can
get by Proposition~\ref{l:lagrangian-mul} a full expansion of this
quantity in powers of $h$; this yields
\begin{equation}
  \label{e:full-expansion}
\langle AE_h(\lambda,\xi),E_h(\lambda,\xi)\rangle=\sum_{0\leq j<N}h^j\int_{S^*M}L_ja\,d\mu_\xi
+\mathcal O(h^{N+1})_{L^1_{\xi,\lambda}(\plM\times [1,1+h])},
\end{equation}
where $A=\Op_h(a)$ for some symbol $a$ and some quantization procedure
$\Op_h$ and each $L_j$ is a differential operator of order $2j$ on
$T^*M$, with $L_0=1$.

\subsection{Estimates on the remainder}
  \label{s:proofs-2}

In this subsection, we prove~\eqref{e:convergence-2} and establish an
approximation fact (Proposition~\ref{l:analysis2-main}) used in the
proofs of~\eqref{e:convergence-3} and Theorem~\ref{asympofs_A}.

\smallsection{Classical escape rate and Ehrenfest time}
Let $K_0\subset M$ be a compact geodesically convex set (in the sense of~\eqref{geodconvex}) containing a
neighborhood of the projection of the trapped set $K$ onto $M$. As
in~\eqref{e:T-t}, define the set
$$
\mathcal T(t)=\{(m,\nu)\in S^*M\mid m\in K_0,\ g^t(m,\nu)\in K_0\}.
$$
The choice of $K_0$ does not matter here: if $K'_0\subset M$ is another set with
same properties and $\mathcal T'(t)$ is defined using $K'_0$ in place of $K_0$,
then there exists a constant $T_0>0$ such that for each $T\geq T_0$ and $t\geq 0$,
\begin{equation}
  \label{e:who-cares-about-k-0}
g^T(\mathcal T'(t+2T))\subset \mathcal T(t).
\end{equation}
Indeed, assume that~\eqref{e:who-cares-about-k-0} were false. Then
there exists sequences $T_j\to +\infty$, $t_j\geq 0$, and
$(m_j,\nu_j)\in S^*M$ such that $g^{-T_j}(m_j,\nu_j)$ and
$g^{t_j+T_j}(m_j,\nu_j)$ both lie in $K'_0$, but for each $j$, either
(1) $(m_j,\nu_j)\not\in K_0$ or (2) $g^{t_j}(m_j,\nu_j)\not\in K_0$.
We may assume that case~(1) holds for all $j$; case (2) is handled
similarly, reversing the direction of the flow and taking
$g^{t_j}(m_j,\nu_j)$ in place of $(m_j,\nu_j)$.  Take $\varepsilon>0$
such that $K'_0\subset \{x\geq\varepsilon\}$; since
$\{x\geq\varepsilon\}$ is geodesically convex (in the sense of~\eqref{geodconvex}) for $\varepsilon$ small
enough, we have $(m_j,\nu_j)\in \{x\geq\varepsilon\}$.  Passing to a
subsequence, we can assume that $(m_j,\nu_j)\to (m,\nu)\in S^*M$ as
$j\to +\infty$.  Now, since $g^{-T_j}(m_j,\nu_j)\in K'_0$ and $T_j\to
+\infty$, we have $(m,\nu)\in \Gamma_+$ (indeed, otherwise there would
exist $s>0$ such that $g^{-s}(m,\nu)\in \{x<\varepsilon\}$ and
this would also hold in a neighborhood of $(m,\nu)$). Similarly, since
$g^{t_j+T_j}(m_j,\nu_j)\in K'_0$ and $t_j+T_j\to +\infty$, we have
$(m,\nu)\in\Gamma_-$. It follows that $(m,\nu)\in K$, which is
impossible, as each $(m_j,\nu_j)$ does not lie in $K_0$, which
contains a neighborhood of $K$.

By changing $\Lambda_0$ slightly and
using~\eqref{e:who-cares-about-k-0}, we see that the choice of $K_0$
does not matter for the validity of~\eqref{e:convergence-2}
and~\eqref{e:convergence-3}; more precisely, if
$\Lambda_0>\Lambda'_0$, then $r'(h,\Lambda'_0)\leq Cr(h,\Lambda_0)$,
where $r'$ is defined by~\eqref{defofrh} using $\mathcal T'$ in place
of $\mathcal T$. Also, the
maximal expansion rate $\Lambda_{\max}$ defined in~\eqref{lamax}
does not depend on the choice of $K_0$.

We now choose a geodesically convex $K_0$ in the sense of~\eqref{geodconvex} such that its interior
contains the supports of all cutoff functions and compactly supported
operators used in the argument below. We will rely on
Proposition~\ref{l:ehrenfest} (with $U$ equal to the interior of
$K_0$); we let $\Lambda_0>\Lambda_{\max}$ and fix $\varepsilon_e>0$
and $\Lambda'_0$ such that
$\Lambda_0>\Lambda'_0>(1+2\varepsilon_e)\Lambda_{\max}$. Define the
Ehrenfest time
\begin{equation}
  \label{e:t-e}
t_e:=\log(1/h)/(2\Lambda_0).
\end{equation}
Then when propagating an operator in $\Psi^{\comp}$ microlocalized
inside
\begin{equation}
  \label{e:e-e-e}
\mathcal E_{\varepsilon_e}:=\{1-\varepsilon_e\leq|\nu|_g\leq 1+\varepsilon_e\}
\end{equation}
with cutoffs supported inside $K_0$, as in Proposition~\ref{l:ehrenfest},
for time $t=lt_0\in [-t_e,t_e]$, we get a mildly exotic pseudodifferential operator
in $\Psi^{\comp}_{\rho_e}$, where
\begin{equation}
  \label{e:rho-e}
\rho_e:=t_e\Lambda'_0/\log(1/h)=\Lambda'_0/(2\Lambda_0)<1/2.
\end{equation}

\smallsection{First decomposition of $\langle AE_h,E_h\rangle$}
By Proposition~\ref{l:elliptic-our},
we may assume that $A\in\Psi^{\comp}(M)$ is compactly supported and microlocalized inside
the set $\mathcal E_{\eps_e}$ defined in~\eqref{e:e-e-e}.

We first establish the following decomposition similar to~\eqref{e:analysis-1}:
\begin{equation}
  \label{e:iterated-estimate-2}
\begin{gathered}
\langle AE_h,E_h\rangle
=e^{il\beta} \langle A(\varphi U(t_0))^l\varphi E_h,E_h\rangle\\
+\sum_{j=1}^l e^{ij\beta}\langle A(\varphi U(t_0))^j(1-\varphi)\varphi_{t_0}\chi_0E^0_h,E_h\rangle
+\mathcal O(h^\infty \mathcal N(E_h)^2),
\end{gathered}
\end{equation}
uniformly in $\xi\in\plM$ and $\lambda\in [1,1+h]$; comparing with \eqref{e:analysis-1}, the 
first term in the right hand side of \eqref{e:iterated-estimate-2} corresponds to the $A_0^{-t}$ term in 
\eqref{e:analysis-1}, the sum over $j$ corresponds to the $A_1^{-t}$ term in \eqref{e:analysis-1}.
Here $l=\mathcal
O(\log(1/h))$ is a nonnegative integer and $t_0>0$ and
$\varphi,\varphi_{t_0}\in C_0^\infty(M)$, specified below, are
independent of $j$.  The quantity $\mathcal N(E_h)$, defined
in~\eqref{e:n-e-h}, is related to the $L^2$ norm of $E_h$ on a certain
compact set, and is bounded on average by~\eqref{e:n-e-h-bdd}.  The
real-valued parameter $\beta$ is equal to
\begin{equation}
  \label{e:beta}
\beta=-t_0(\lambda^2+c_0h^2)/(2h)
\end{equation}
and will not play a big role in our argument.

To show~\eqref{e:iterated-estimate-2}, we start by considering the
functions $\varphi,\varphi_1,\varphi_2\in C_0^\infty(M)$ such that:
%
%
\begin{itemize}
\item $0\leq\varphi,\varphi_1,\varphi_2\leq 1$ everywhere,
\item $\varphi=1$ near $\supp\varphi_2$ and $\varphi_1=1$ near $\supp\varphi$, and
\item $\varphi_2=1$ both near the support of $A$ and
near the set $\{x\geq\varepsilon_1\}$, with $\varepsilon_1$
defined in~\as{A7}.
\end{itemize}
%
%
The proof of~\eqref{e:iterated-estimate-2} only uses the function
$\varphi$, however the other two functions will be required for the
more precise decomposition~\eqref{e:analysis2-main} below.

We now have the following analogue of Lemma~\ref{l:geometry-1}:
%
%
\begin{lemm}\label{l:geometry-2}
There exists $t_0\geq 0$ such that if $(m,\nu)\in S^*M$ satisfies
\begin{equation}
  \label{e:geometry-2}
m\in\supp(1-\varphi_2)\text{ and }g^{-t}(m,\nu)\in\supp\varphi_1
\text{ for some }t\geq t_0,
\end{equation}
then:
\begin{enumerate}
\item $(m,\nu)$ directly escapes in the forward direction;
\item for each $s\geq 0$, $g^s(m,\nu)$ does not lie in the set $W_\xi$
defined in~\eqref{e:w-xi} for any $\xi\in \plM$; and
\item for each $s\geq t_0$, $g^s(m,\nu)\not\in\supp\varphi_1$.
\end{enumerate}
\end{lemm}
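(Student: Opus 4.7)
The plan is to adapt the pointwise argument of Lemma~\ref{l:geometry-1} and obtain uniformity of $t_0$ through a compactness argument. First I would record the basic set-theoretic inclusions coming from the nested structure of the cutoffs: since $\varphi_2 = 1$ on an open neighborhood of $\{x \geq \varepsilon_1\}$, there exists $\varepsilon_1' < \varepsilon_1$ with $\supp(1-\varphi_2) \subset \{x \leq \varepsilon_1'\} \subset \{x < \varepsilon_0\}$, and since $\supp \varphi_1$ is a compact subset of $M$, there exists $\varepsilon_3 > 0$ with $\supp \varphi_1 \subset \{x \geq \varepsilon_3\}$.

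For part~(1) I would argue by contradiction. If $(m,\nu)$ does not directly escape in the forward direction, then because $x(m) < \varepsilon_0$ the only possibility is $\dot x(m,\nu) > 0$, so $(m,\nu)$ directly escapes in the backward direction. Assumption~\as{G2} implies that any critical point of $s \mapsto x(g^{-s}(m,\nu))$ at which $x \leq \varepsilon_0$ is a strict local maximum; combined with the fact that this function starts non-increasing at $s=0$, this forces it to be monotonically decreasing on $s \geq 0$ and, by the backward analogue of \as{G3}, to tend to $0$ as $s \to +\infty$. On the compact set
\[
C := \{(m,\nu) \in S^*M \mid m \in \supp(1-\varphi_2),\ \dot x(m,\nu) \geq 0\},
\]
the first exit time
$
T(m,\nu) := \inf\{s \geq 0 \mid x(g^{-s}(m,\nu)) < \varepsilon_3\}
$
is therefore finite, and by monotonicity $g^{-s}(m,\nu) \notin \supp \varphi_1$ for all $s \geq T(m,\nu)$. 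The function $T$ is upper semi-continuous on $C$, hence bounded by some $T_0 < \infty$, and choosing $t_0 \geq T_0$ yields the desired contradiction.

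For part~(2) the reasoning is essentially pointwise. By part~(1), $(m,\nu)$ directly escapes forward, so $g^s(m,\nu)$ does too and $x(g^s(m,\nu)) \leq x(m) \leq \varepsilon_1$ for $s \geq 0$ by the same monotonicity argument. If $g^s(m,\nu) \in W_\xi$ for some $\xi$, then the inversion formula $\tau^{-1}(m',\nu') = (m', \xi_{+\infty}(m',\nu'))$ from~\as{G4} forces $\xi = \xi_{+\infty}(g^s(m,\nu))$; but then \as{A7} says $\chi_0(\cdot;\xi) = 1$ near $\pi(g^s(m,\nu))$, so $\partial_m\chi_0(\cdot;\xi)$ vanishes there, contradicting the definition~\eqref{e:w-xi} of $W_\xi$.

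Part~(3) is dual to part~(1): on the compact set $C_+ := \{(m,\nu) \in S^*M \mid m \in \supp(1-\varphi_2),\ \dot x(m,\nu) \leq 0\}$ the monotonicity of $s \mapsto x(g^s(m,\nu))$ and the same upper semi-continuity argument produce a uniform $T_0^+ < \infty$ such that $x(g^s(m,\nu)) < \varepsilon_3$, hence $g^s(m,\nu) \notin \supp \varphi_1$, for all $s \geq T_0^+$ and all $(m,\nu) \in C_+$. Setting $t_0 := \max(T_0, T_0^+)$ then fulfills all three conclusions simultaneously. The main technical point is not conceptual but rather the verification that these exit times can be chosen uniformly via compactness and upper semi-continuity; once that is in place, the rest is a direct transcription of the argument for Lemma~\ref{l:geometry-1}, with the crucial input from \as{A7} entering only in part~(2).
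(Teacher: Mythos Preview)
Your overall strategy matches the paper's, and part~(2) is handled exactly as in Lemma~\ref{l:geometry-1}. However, there is a concrete error in parts~(1) and~(3): the sets $C$ and $C_+$ you declare compact are not. Since $\varphi_2\in C_0^\infty(M)$, the function $1-\varphi_2$ equals $1$ outside a compact set, so $\supp(1-\varphi_2)$ contains an entire neighborhood of infinity in $M$; hence $C$ and $C_+$ are unbounded subsets of $S^*M$, and the upper semi-continuity argument does not directly yield a finite supremum.

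The repair is immediate once noticed: your exit time $T$ vanishes whenever $x(m)<\varepsilon_3$, so $\sup_C T=\sup_{C\cap\{x\geq\varepsilon_3\}}T$, and $C\cap\{x\geq\varepsilon_3\}$ is a closed subset of $S^*M$ sitting over the compact set $\{x\geq\varepsilon_3\}\subset M$, hence compact. The paper avoids this detour by building compactness in from the start: if $(m,\nu)$ fails~(1) then $(m,\nu)\in\mathcal{DE}_-$ and, by monotonicity of $x$ along backward-escaping trajectories, $x(m)\geq x(g^{-t}(m,\nu))\geq\varepsilon_\varphi$ where $\supp\varphi_1\subset\{x\geq\varepsilon_\varphi\}$. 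Thus the contradicting point already lies in the genuinely compact set $\mathcal{DE}_-\cap\{x\geq\varepsilon_\varphi\}$, on which a uniform $t_0$ is chosen so that $g^{-t}$ has left $\supp\varphi_1$ for $t\geq t_0$. Part~(3) is then a one-line appeal to the compactness of $\mathcal{DE}_+\cap\{x\geq\varepsilon_\varphi\}$. With the compactness issue corrected, your argument and the paper's coincide.
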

\begin{proof}
%
%
\begin{figure}
\includegraphics{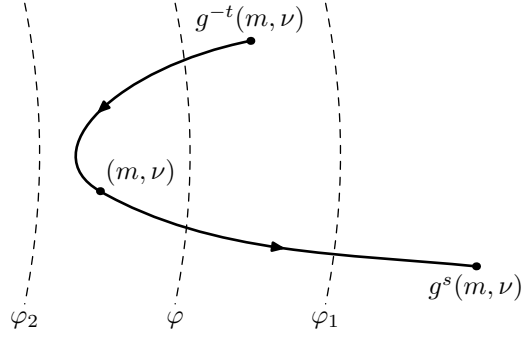}
\caption{An illustration of Lemma~\ref{l:geometry-2}. The functions
$\varphi,\varphi_1,\varphi_2$ are supported to the left of the corresponding
dashed lines; the right side of the figure represents infinity.}
\label{f:geometry-2}
\end{figure}
%
%
(1) Let $\supp\varphi_1\subset \{x\geq\varepsilon_\varphi\}$. The
set $\mathcal{DE}_-\cap \{x\geq\varepsilon_\varphi\}$, where
$\mathcal{DE}_-$ is specified in Definition~\ref{d:directly-escape},
is compact; therefore, there exists $t_0>0$ such that for $t\geq t_0$
and $(m,\nu)\in \mathcal{DE}_-\cap\{x\geq\varepsilon_\varphi\}$,
we have $g^{-t}(m,\nu)\not\in\supp\varphi_1$.

Now, assume that $(m,\nu)$ satisfies~\eqref{e:geometry-2}, but it does
not directly escape in the forward direction. Since $(m,\nu)\in\supp(1-\varphi_2)$,
we have $x(m)\leq\varepsilon_0$; therefore, $(m,\nu)\in \mathcal{DE}_-$.
Then $x(m)\geq x(g^{-t}(m,\nu))\geq\varepsilon_\varphi$; therefore,
$(m,\nu)\in \mathcal{DE}_-\cap \{x\geq\varepsilon_\varphi\}$, a contradiction
with the fact that $g^{-t}(m,\nu)\in\supp\varphi_1$ and $t\geq t_0$.

(2) This is proved exactly as part~2 of Lemma~\ref{l:geometry-1}.

(3) It is enough to use part~(1), take $t_0$ large enough, and use that
the set $\mathcal{DE}_+\cap\{x\geq\varepsilon_\varphi\}$ is compact. 
\end{proof}
%
%
Take $t_0$ from Lemma~\ref{l:geometry-2}. Let $\varphi_{t_0}\in C_0^\infty(M)$
be real-valued and satisfy $d_g(\supp\varphi_1,\supp(1-\varphi_{t_0}))>t_0$. Take
a compact set $K_{t_0}\subset M$ whose interior contains $\supp\varphi_{t_0}$. Put
\begin{equation}
  \label{e:n-e-h}
\mathcal N(E_h):=1+\|E_h\|_{L^2(K_{t_0})};
\end{equation}
this quantity depends on $\lambda$ and $\xi$ and we know by~\eqref{e:avg-bdd} that
\begin{equation}
  \label{e:n-e-h-bdd}
h^{-1}\|\mathcal N(E_h)\|^2_{L^2_{\xi,\lambda}(\plM\times [1,1+h])}=\mathcal O(1).
\end{equation}
By~\eqref{e:e-eq-h} and Lemma~\ref{l:key}, we have similarly to~\eqref{e:propagated0},
\begin{equation}
  \label{e:e-propagated-again}
\varphi E_h=e^{i\beta}\varphi U(t_0)\varphi_{t_0}E_h
+\mathcal O(h^\infty \mathcal N(E_h))_{L^2}.
\end{equation}
Here $\beta$ is given by~\eqref{e:beta}.
Iterating~\eqref{e:e-propagated-again} by writing $\varphi_{t_0}=
\varphi+(1-\varphi)\varphi_{t_0}$, we get for
$l=\mathcal O(\log(1/h))$ (or even for $l$ polynomially bounded in $h$)
\begin{equation}
  \label{e:iterated-estimate-0}
\varphi E_h=e^{il\beta} (\varphi U(t_0))^l \varphi E_h
+\sum_{j=1}^l e^{ij\beta} (\varphi U(t_0))^j (1-\varphi)\varphi_{t_0}E_h
+\mathcal O(h^\infty \mathcal N(E_h))_{L^2},
\end{equation}
uniformly in $\xi\in\plM$ and $\lambda\in[1,1+h]$.  Same is true if
$\varphi$ is replaced by any function $\varphi'\in C_0^\infty(M)$ such
that $d_g(\supp\varphi',\supp(1-\varphi_{t_0}))>t_0$.  One can also
replace $U(t_0)$ by $U(-t_0)$.

We now use our knowledge of the wavefront set of $\widetilde E^1_h$ to prove
the following analogue of Proposition~\ref{l:analysis-1}:
%
%
\begin{prop}\label{l:analysis2-1}
If $E_h=\chi_0E^0_h+E^1_h$ is the decomposition~\eqref{e:e-h-decomposition}, then
\begin{equation}
  \label{e:analysis2-1}
\|\varphi U(t_0)(1-\varphi)\varphi_{t_0}E^1_h\|_{L^2}=\mathcal O(h^\infty \mathcal N(E_h)),
\end{equation}
uniformly in $\xi\in\plM$ and $\lambda\in[1,1+h]$.  Same is true if we
replace each instance of $\varphi$ by any function in the set
$\{\varphi,\varphi_1,\varphi_2\}$.
\end{prop}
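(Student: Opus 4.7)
The plan is to show that the operator $T:=\varphi U(t_0)(1-\varphi)\varphi_{t_0}$, applied to the normalized remainder $\widetilde E^1_h=E^1_h/\mc N(E_h)$, has empty $h$-wavefront set uniformly in $(\xi,\lambda)$. Since $T$ has compactly supported Schwartz kernel and $\widetilde E^1_h$ is $h$-tempered uniformly in $(\xi,\lambda)$ by~\as{A5}, emptiness of $\WFh(T\widetilde E^1_h)$ will immediately give $T\widetilde E^1_h=\mc O(h^\infty)_{L^2}$, and multiplying through by $\mc N(E_h)$ will yield the stated bound. A preliminary observation is that, by elliptic regularity applied to the eigenvalue equation~\eqref{e:e-eq-h} for $E_h$ and the Lagrangian structure of $E^0_h$ from~\as{A4}, one has $\WFh(\widetilde E^1_h)\subset\{|\nu|_g=\lambda\}$, so the distribution is effectively compactly microlocalized near the energy surface; this will allow parts~2 and~3 of Proposition~\ref{l:egorov} to be applied directly.

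The heart of the argument is a geometric contradiction on $S^*M$. Suppose $(m_0,\nu_0^*)\in\WFh(T\widetilde E^1_h)$. By parts~2--3 of Proposition~\ref{l:egorov} (Egorov's theorem together with propagation of singularities for the Schr\"odinger group), combined with the microlocal structure of the multiplications by $\varphi$ and $(1-\varphi)\varphi_{t_0}$, we must have $m_0\in\supp\varphi$ and, setting $(m',\nu'^*):=g^{t_0}(m_0,\nu_0^*)$, both $m'\in\supp((1-\varphi)\varphi_{t_0})$ and $(m',\nu'^*)\in\WFh(\widetilde E^1_h)$. Writing $\nu':=\nu'^*/\lambda\in S^*M$, \as{A6} then forces the dichotomy: either $(m',\nu')\in\Gamma_-$, or there exists $s\geq 0$ with $g^s(m',\nu')\in W_\xi$. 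On the other hand, the cutoff construction gives $\supp(1-\varphi)\subset\supp(1-\varphi_2)$ (because $\varphi=1$ near $\supp\varphi_2$) and $\supp\varphi\subset\supp\varphi_1$ (because $\varphi_1=1$ near $\supp\varphi$), so $m'\in\supp(1-\varphi_2)$ and $g^{-t_0}(m',\nu'^*)=(m_0,\nu_0^*)$ has base point in $\supp\varphi_1$. Lemma~\ref{l:geometry-2}, applied with $t=t_0$, will then produce that $(m',\nu')$ directly escapes in the forward direction---so $(m',\nu')\notin\Gamma_-$---and that $g^s(m',\nu')\notin W_\xi$ for every $s\geq 0$. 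This contradicts the dichotomy from~\as{A6}, so $\WFh(T\widetilde E^1_h)$ is empty as desired. The replacement of $\varphi$ by $\varphi_1$ or $\varphi_2$ requires no new idea: the nesting $\varphi_2\prec\varphi\prec\varphi_1$ gives the inclusions $\supp(1-\varphi_*)\subset\supp(1-\varphi_2)$ and $\supp\varphi_*\subset\supp\varphi_1$ for every $\varphi_*\in\{\varphi,\varphi_1,\varphi_2\}$, so the same argument goes through verbatim.

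The main technical point requiring care is the extraction of a pointwise statement on $S^*M$ from the composition $T\widetilde E^1_h$: the multiplications by $\varphi$ and $(1-\varphi)\varphi_{t_0}$ are not compactly microlocalized, so one needs to approximate them by compactly microlocalized pseudodifferential operators near $\{|\nu|_g=\lambda\}$ in order to directly invoke parts~2--3 of Proposition~\ref{l:egorov}. This is routine because $\widetilde E^1_h$ is already microlocalized on this surface, and it is also the only place where uniformity in $(\xi,\lambda)$ could be lost; uniformity is preserved thanks to the explicit uniformity of the wavefront-set constants in~\as{A6}.
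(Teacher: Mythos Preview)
Your proposal is correct and follows essentially the same route as the paper: reduce to $\widetilde E^1_h$, then use the geometric contradiction from Lemma~\ref{l:geometry-2} together with assumption~\as{A6} to rule out every potential wavefront-set point. The only cosmetic difference is packaging: the paper makes your last paragraph explicit by replacing $\varphi_{t_0}$ with a compactly microlocalized $B\in\Psi^{\comp}$ whose wavefront set already excludes the bad points (those $(m,\nu)$ with $m\in\supp(1-\varphi)$ and $g^{-t_0}(m,\nu)\in\supp\varphi$), and then applies part~2 of Proposition~\ref{l:egorov} directly to the operator $\varphi U(t_0)(1-\varphi)B$, rather than arguing pointwise on $\WFh(T\widetilde E^1_h)$.
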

\begin{proof}
Recalling the definition~\eqref{e:e-1-h-tilde} of $\widetilde E^1_h$,
we see that~\eqref{e:analysis2-1} follows from
\begin{equation}
  \label{e:analysis2-1.1}
\|\varphi U(t_0)(1-\varphi)\varphi_{t_0}\widetilde E^1_h\|_{L^2}=\mathcal O(h^\infty).
\end{equation}
We now make the following observation: a point $(m,\nu)\in S^*M$ in
the wavefront set of $\widetilde E^1_h$ will make an $\mathcal
O(h^\infty)$ contribution to~\eqref{e:analysis2-1.1} unless $m\in\supp
(1-\varphi)$, but $g^{-t_0}(m,\nu)\in \supp\varphi$; however,
by~\as{A6} and Lemma~\ref{l:geometry-2}, in this case
$(m,\nu)\not\in\WFh(\widetilde E^1_h)$.  To make this argument
rigorous, we can write (bearing in mind that $\widetilde E^1_h$ is
polynomially bounded)
$$
\varphi_{t_0}\widetilde E^1_h=B \widetilde E^1_h+\mathcal O(h^\infty)_{L^2},
$$
where $B\in\Psi^{\comp}$ is compactly supported and such that
$$
(m,\nu)\in\WFh(B)\cap \supp(1-\varphi) \Longrightarrow
g^{-t_0}(m,\nu)\not\in\supp \varphi.
$$
Then the operator $\varphi U(t_0)(1-\varphi)B$ is $\mathcal O(h^\infty)_{L^2\to L^2}$
by part~2 of Proposition~\ref{l:egorov}, which proves~\eqref{e:analysis2-1.1}.
\end{proof}
%
%
Using~\eqref{e:analysis2-1}, we can replace $E_h$ by $\chi_0 E^0_h$ in
each term of the sum~\eqref{e:iterated-estimate-0}:
\begin{equation}
  \label{e:iterated-estimate-1}
\begin{gathered}
\varphi E_h=e^{il\beta}(\varphi U(t_0))^l\varphi E_h
+\sum_{j=1}^l e^{ij\beta}(\varphi U(t_0))^j(1-\varphi)\varphi_{t_0}\chi_0E^0_h
\\+\mathcal O(h^\infty \mathcal N(E_h))_{L^2}.
\end{gathered}
\end{equation}
Applying the operator $A=\varphi A\varphi$, we get~\eqref{e:iterated-estimate-2}.

\smallsection{Properties of propagators up to Ehrenfest time}
We will now establish certain properties of the cut off and iterated
propagators up to the Ehrenfest time $t_e$ defined in~\eqref{e:t-e},
or, in certain cases, up to twice the Ehrenfest time. The need for
these properties arises mostly because of the cutoffs present in the
argument. Define the Ehrenfest index
\begin{equation}
  \label{e:l-e}
l_e := \lfloor t_e/t_0\rfloor + 1\sim\log(1/h).
\end{equation}
%
%
\begin{lemm}\label{l:analysis2-2}
Assume that $\varphi',\varphi''\in C_0^\infty(M)$ satisfy
$|\varphi'|,|\varphi''|\leq 1$ everywhere.
Let $B\in\Psi^{\comp}$ be compactly supported and
microlocalized inside the set $\mathcal E_{\varepsilon_e}$ defined in~\eqref{e:e-e-e}.
Then:

1. If $\varphi''=1$ near $\supp\varphi'$, then for $0\leq j\leq l_e$,
\begin{gather}
  \label{e:analysis2-2.2}
(\varphi' U(\pm t_0))^jBU(\mp jt_0)=(\varphi' U(\pm t_0))^jB(U(\mp t_0)\varphi'')^j
+\mathcal O(h^\infty)_{L^2\to L^2},\\
  \label{e:analysis2-2.2-rev}
(U(\pm t_0)\varphi')^j BU(\mp jt_0)=(U(\pm t_0)\varphi')^j B(\varphi''U(\mp t_0))^j
+\mathcal O(h^\infty)_{L^2\to L^2}.
\end{gather}

2. If $B_1,B_2\in\Psi^{\comp}$ satisfy same conditions as $B$ and
moreover $\WFh(B_1)\cap \WFh(B_2)=\emptyset$, then for $0\leq j\leq 2l_e$
(that is, up to twice the Ehrenfest time)
\begin{equation}
  \label{e:analysis2-2.3}
B_1(\varphi' U(\pm t_0))^j B(U(\mp t_0)\varphi'')^j B_2=\mathcal O(h^\infty)_{L^2\to L^2}.
\end{equation}
Same is true if we replace $\varphi' U(\pm t_0)$ by $U(\pm t_0)\varphi'$
and/or replace $U(\mp t_0)\varphi''$ by $\varphi'' U(\mp t_0)$.

3. If $\varphi''=1$ near $\supp\varphi'$ and both $\varphi''$
and $B$ are supported at distance more than $t_0$ from $\supp(1-\varphi_{t_0})$, then
for $0\leq j\leq l_e$ ($\beta$ is defined in \eqref{e:beta})
\begin{gather}
  \label{e:analysis2-2.1}
e^{\pm ij\beta}(\varphi' U(\pm t_0))^j B E_h
=(\varphi' U(\pm t_0))^j B (U(\mp t_0)\varphi'')^j E_h
+\mathcal O(h^\infty \mathcal N(E_h))_{L^2},\\
  \label{e:analysis2-2.1-rev}
e^{\pm ij\beta}(U(\pm t_0)\varphi')^j B E_h
=(U(\pm t_0)\varphi')^j B (\varphi'' U(\mp t_0))^j\varphi_{t_0} E_h
+\mathcal O(h^\infty \mathcal N(E_h))_{L^2}.
\end{gather}
\end{lemm}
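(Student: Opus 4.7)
The overall plan is to combine Egorov's theorem up to the Ehrenfest time (Proposition~\ref{l:ehrenfest}) with telescoping identities and a few uses of Lemma~\ref{l:key}. The recurring mechanism is: cutoffs that equal~$1$ near the microsupport of adjacent factors may be inserted or deleted modulo $\mc{O}(h^\infty)$.

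For Part~1, the approach for \eqref{e:analysis2-2.2} is to expand the difference of the two sides using the telescoping identity
\[
(U(\mp t_0)\varphi'')^j - U(\mp jt_0) = \sum_{i=0}^{j-1} (U(\mp t_0)\varphi'')^{i}\,U(\mp t_0)(\varphi''-1)\,U(\mp(j-i-1)t_0),
\]
compose on the left with $(\varphi' U(\pm t_0))^j B$, and show that each summand is $\mc{O}(h^\infty)_{L^2\to L^2}$. Proposition~\ref{l:ehrenfest}, applied with $X_2=\varphi'$, $X_1=\varphi''$, $A=B$, identifies $(\varphi' U(\pm t_0))^j B (U(\mp t_0)\varphi'')^i$ modulo $\mc{O}(h^\infty)_{L^2\to L^2}$ with a compactly microsupported element of $\Psi^{\comp}_{\rho_e}$ whose microsupport lies in a flow-image of $\WFh(B)\cap\supp\varphi'$; after one more application of $U(\mp t_0)$, this microsupport is contained in $\{\varphi''=1\}$ by hypothesis, so composition with $(\varphi''-1)$ kills the symbol microlocally. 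Equation~\eqref{e:analysis2-2.2-rev} follows either by taking adjoints (exchanging the roles of $X_1$ and $X_2$ in Proposition~\ref{l:ehrenfest}) or by an analogous telescoping on the opposite side.

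For Part~2, I would first express $(\varphi' U(\pm t_0))^j B (U(\mp t_0)\varphi'')^j$ as a compactly microlocalized pseudodifferential operator $P$ via Proposition~\ref{l:ehrenfest} and conclude $B_1 P B_2 = \mc{O}(h^\infty)_{L^2\to L^2}$ from $\WFh(B_1)\cap\WFh(B_2)=\emptyset$ and the product rule for wavefront sets. To reach the full range $j\leq 2l_e$, which exceeds the one-shot Ehrenfest bound, I would split $j = j_1+j_2$ with $j_1,j_2\leq l_e$, apply Proposition~\ref{l:ehrenfest} first to the inner block $P := (\varphi' U(\pm t_0))^{j_2}B(U(\mp t_0)\varphi'')^{j_2}\in\Psi^{\comp}_{\rho_e}$, and then again with $A=P$ to the outer iteration. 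The main obstacle is that after two iterations the exotic order approaches $\Lambda'_0/\Lambda_0$, which is less than~$1$ but not a priori less than~$1/2$, so one cannot appeal to standard pseudodifferential composition. This must be circumvented by tracking microsupport directly through the flow, as in the proof of Proposition~\ref{l:ehrenfest}: the effective canonical relation over $2l_e$ steps is still governed by the geodesic flow, which is invertible, so sandwiching by $B_1$ and $B_2$ with disjoint wavefronts yields an empty effective wavefront set and hence an $\mc{O}(h^\infty)$ bound.

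For Part~3 the extra ingredient is Lemma~\ref{l:key}. Because both $\varphi''$ and $B$ are at distance more than $t_0$ from $\supp(1-\varphi_{t_0})$, Lemma~\ref{l:key} applied with $\chi_t=\varphi_{t_0}$ and $t=\mp t_0$ gives $\chi E_h = e^{\pm i\beta}\chi U(\mp t_0)\varphi_{t_0}E_h + \mc{O}(h^\infty\mathcal N(E_h))_{L^2}$ for $\chi\in\{B,\varphi''\}$. For~\eqref{e:analysis2-2.1} I would apply Lemma~\ref{l:key} once with $t=\mp jt_0$, producing an enlarged cutoff $\chi_{t,j}$ and a phase $e^{\pm ij\beta}$, so that $e^{\pm ij\beta}(\varphi'U(\pm t_0))^j BE_h = (\varphi'U(\pm t_0))^j BU(\mp jt_0)\chi_{t,j}E_h + \mc{O}$; then use Part~1 to replace $BU(\mp jt_0)$ by $B(U(\mp t_0)\varphi'')^j$ and microlocally remove $\chi_{t,j}$ by noting that the effective input support of $(U(\mp t_0)\varphi'')^j$ lies in the $jt_0$-neighborhood of $\supp\varphi''$, where $\chi_{t,j}=1$. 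For the reversed version~\eqref{e:analysis2-2.1-rev} the iteration of Lemma~\ref{l:key} naturally leaves $\varphi_{t_0}E_h$ on the right, which accounts for the asymmetry with~\eqref{e:analysis2-2.1}. The principal technical delicacy is handling the cutoff $\chi_{t,j}$ (whose support grows with $j$) in such a way that the error remains bounded by $\mc{O}(h^\infty\mathcal N(E_h))$ rather than by a $j$-dependent quantity; this is handled by combining the $L^2$-averaged bound~\eqref{e:avg-bdd} on $E_h$ with the $h^\infty$ gain from the microlocal argument, which easily absorbs any polynomial growth in $j=\mc{O}(\log(1/h))$.
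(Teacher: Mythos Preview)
Your Part~1 is essentially the paper's induction unrolled into a telescoping sum, which is fine; the one slip is that $(\varphi' U(\pm t_0))^j B (U(\mp t_0)\varphi'')^i$ with $j\neq i$ is not pseudodifferential (net propagation is $(j-i)t_0$), so Proposition~\ref{l:ehrenfest} does not apply as you state. The argument is easily repaired by writing the summand as $(\varphi'U(\pm t_0))^{j-i-1}\varphi'\cdot\big[U(\pm t_0)C_iU(\mp t_0)\big]\cdot(\varphi''-1)\cdot U(\mp(j-i-1)t_0)$ with $C_i:=(\varphi'U(\pm t_0))^i B(U(\mp t_0)\varphi'')^i\in\Psi^{\comp}_{\rho_e}$, and using pseudolocality of the bracketed operator together with $\supp\varphi'\cap\supp(1-\varphi'')=\emptyset$.

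For Part~2 you correctly spot the obstacle but do not supply the trick the paper uses. The paper does \emph{not} iterate Proposition~\ref{l:ehrenfest} on an exotic symbol; instead it conjugates by $U(j_1t_0)$ to write
\[
B_1(\varphi'U(\pm t_0))^j B(U(\mp t_0)\varphi'')^j B_2
= U(j_1t_0)\,\widetilde B_1\,\widetilde B\,\widetilde B_2\,U(-j_1t_0),
\]
where $\widetilde B_1=U(-j_1t_0)B_1(\varphi'U(\pm t_0))^{j_1}$, $\widetilde B=(\varphi'U(\pm t_0))^{j_2}B(U(\mp t_0)\varphi'')^{j_2}$, $\widetilde B_2=(U(\mp t_0)\varphi'')^{j_1}B_2U(j_1t_0)$. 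Each factor involves at most $l_e$ iterations, so Proposition~\ref{l:ehrenfest} (with Part~1 to insert the missing cutoffs in $\widetilde B_1,\widetilde B_2$) puts all three in $\Psi^{\comp}_{\rho_e}$. The microsupports of $\widetilde B_1,\widetilde B_2$ land in $g^{j_1t_0}(U_1),g^{j_1t_0}(U_2)$ respectively, which are disjoint since $U_1\supset\WFh(B_1)$ and $U_2\supset\WFh(B_2)$ can be taken disjoint. Your fallback of ``tracking microsupport directly through the flow'' is too vague to stand as a proof.

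Part~3 contains a genuine gap. If you invoke Lemma~\ref{l:key} with $t=\mp jt_0$, the remainder it produces is $\mathcal O(h^\infty\|E_h\|_{L^2(K_{t,j})})$, where $K_{t,j}$ has diameter $\gtrsim jt_0\sim\log(1/h)$. The statement to be proved is pointwise in $(\lambda,\xi)$ with error $\mathcal O(h^\infty\mathcal N(E_h))$, and $\mathcal N(E_h)$ is defined via the \emph{fixed} compact set $K_{t_0}$; the averaged bound~\eqref{e:avg-bdd} cannot upgrade this, and assumption~\as{A5} gives no control on how the temperedness constants depend on the compact set. The paper avoids this entirely: it uses Lemma~\ref{l:key} only for a \emph{single} step $t=\pm t_0$ (so the cutoff stays $\varphi_{t_0}$ and the error stays $\mathcal O(h^\infty\mathcal N(E_h))$), and proceeds by induction on $j$. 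At each step one obtains
\[
(\varphi'U(\pm t_0))^j B(U(\mp t_0)\varphi'')^{j-1}U(\mp t_0)\,\varphi_{t_0}E_h,
\]
and the operator in front is in $\Psi^{\comp}_{\rho_e}$ with wavefront set contained in $\supp\varphi'$ (because of the outermost $\varphi'$); since $\varphi''=1$ near $\supp\varphi'$, pseudolocality lets one replace $\varphi_{t_0}$ by $\varphi''$, closing the induction without ever enlarging the cutoff.
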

\begin{proof}
We will repeatedly use Propositions~\ref{l:egorov}
and~\ref{l:ehrenfest} and omit the $\mathcal O(h^\infty)_{L^2\to L^2}$
remainders present there.

1. We prove~\eqref{e:analysis2-2.2}; \eqref{e:analysis2-2.2-rev}
is proved similarly. Assume that the signs are chosen so that~\eqref{e:analysis2-2.2}
features~$\varphi'U(t_0)$. We argue by induction in $j$.
The case $j=0$ is obvious. Now, assume that~\eqref{e:analysis2-2.2} is true for $j-1$
in place of $j$. Then
$$
(\varphi' U(t_0))^j BU(- j t_0)
=\varphi' B'+\mathcal O(h^\infty)_{L^2\to L^2},
$$
where
$$
B'=U(t_0)(\varphi'U(t_0))^{j-1}B(U(-t_0)\varphi'')^{j-1}U(-t_0)
$$
is a compactly supported operator in $\Psi_{\rho_e}^{\comp}$
(modulo the $\mathcal O(h^\infty)_{L^2\to L^2}$ remainder from
Proposition~\ref{l:ehrenfest}, which we henceforth omit),
with $\rho_e$ defined in~\eqref{e:rho-e}. Since $\supp\varphi'\cap\supp
(1-\varphi'')=\emptyset$, we have
$$
\varphi' B'=\varphi'B'\varphi''+\mathcal O(h^\infty)_{L^2\to L^2}
$$
and~\eqref{e:analysis2-2.2} follows.

2. We again assume that the signs are chosen so that~\eqref{e:analysis2-2.3}
features~$\varphi'U(t_0)$.
Write $j=j_1+j_2$, where $0\leq j_1,j_2\leq l_e$, and write the left-hand side
of~\eqref{e:analysis2-2.3} 
as $U(j_1t_0)\widetilde B_1\widetilde B \widetilde B_2U(-j_1t_0)$, where
$$
\begin{gathered}
\widetilde B=(\varphi'U(t_0))^{j_2}B(U(-t_0)\varphi'')^{j_2},\\
\widetilde B_1=U(-j_1 t_0)B_1(\varphi'U(t_0))^{j_1},\
\widetilde B_2=(U(-t_0)\varphi'')^{j_1}B_2 U(j_1 t_0).
\end{gathered}
$$
Now, $\widetilde B$ is a compactly supported member
of~$\Psi_{\rho_e}^{\comp}$.  Same can be said about $\widetilde B_1$
and $\widetilde B_2$, by applying~\eqref{e:analysis2-2.2-rev} and its
adjoint (where the role of~$\varphi'$ is played by either $\varphi'$
or~$\varphi''$ and the role of~$\varphi''$, by a suitably chosen
cutoff function). Moreover, if $U_1,U_2$ are bounded open subsets of
$T^*M$ such that $\WFh(B_k)\subset U_k$ and $U_1\cap U_2=\emptyset$,
then by Proposition~\ref{l:ehrenfest}, $\widetilde B_k$ is
microsupported, in the sense of
Definition~\ref{d:microlocal-vanishing}, on the set $g^{j_1
t_0}(U_k)$; since these two sets do not intersect, we see that
$\widetilde B_1\widetilde B\widetilde B_2=\mathcal O(h^\infty)_{L^2\to
L^2}$ as needed.

3. We once again fix the sign so that $U(t_0)$ stands next to $\varphi'$.
Formally, \eqref{e:analysis2-2.1} and~\eqref{e:analysis2-2.1-rev}
follow by applying~\eqref{e:analysis2-2.2}
and~\eqref{e:analysis2-2.2-rev}, respectively, to the
identity~$e^{ij\beta}E_h=U(- jt_0)E_h$.  To make this observation rigorous, we
write by Lemma~\ref{l:key}
$$
\begin{gathered}
e^{i\beta} BE_h=BU(- t_0)\varphi_{t_0}E_h+\mathcal O(h^\infty \mathcal N(E_h))_{L^2},\\
e^{i\beta}\varphi'' E_h= \varphi'' U(-t_0)\varphi_{t_0}E_h+\mathcal O(h^\infty \mathcal N(E_h))_{L^2}.
\end{gathered}
$$
We now use induction in $j$. For $j=0$, both~\eqref{e:analysis2-2.1} and~\eqref{e:analysis2-2.1-rev}
are trivial. Now, assume that they both hold for $j-1$ in place of $j$. We then write
$$
\begin{gathered}
e^{ij\beta}(\varphi'U(t_0))^j BE_h
\\=e^{i\beta} (\varphi'U(t_0))^j B(U(-t_0)\varphi'')^{j-1}E_h
+\mathcal O(h^\infty \mathcal N(E_h))_{L^2}
\\=(\varphi' U(t_0))^j B(U(-t_0)\varphi'')^{j-1}U(-t_0)\varphi_{t_0}E_h
+\mathcal O(h^\infty \mathcal N(E_h))_{L^2}.
\end{gathered}
$$
The operator $(\varphi' U(t_0))^jB(U(-t_0)\varphi'')^{j-1}U(-t_0)$ is
a compactly supported element of $\Psi^{\comp}_\rho$;
moreover, as $j\geq 1$, the wavefront set of this operator is contained in $\supp\varphi'$.
Since $\varphi''=1$ near $\supp\varphi'$, we can replace $\varphi_{t_0}$ by $\varphi''$ in the
last formula, proving~\eqref{e:analysis2-2.1}.

We next write
$$
e^{ij\beta}(U(t_0)\varphi')^jBE_h
=e^{i\beta}(U(t_0)\varphi')^jB(\varphi''U(-t_0))^{j-1}\varphi_{t_0} E_h
+\mathcal O(h^\infty \mathcal N(E_h))_{L^2}.
$$
However, $\varphi' (U(t_0)\varphi')^{j-1}B(\varphi'' U(-t_0))^{j-1}$
is a compactly supported element of $\Psi^{\comp}_\rho$ 
and its wavefront set is contained in $\supp\varphi'$. Since $\varphi''=1$ near
$\supp\varphi'$, we can replace $\varphi_{t_0}$ by $\varphi''$, obtaining~\eqref{e:analysis2-2.1-rev}:
$$
\begin{gathered}
e^{ij\beta}(U(t_0)\varphi')^jBE_h
\\=e^{i\beta}(U(t_0)\varphi')^jB(\varphi''U(-t_0))^{j-1}\varphi'' E_h
+\mathcal O(h^\infty \mathcal N(E_h))_{L^2}
\\=(U(t_0)\varphi')^jB(\varphi'' U(-t_0))^{j-1}\varphi'' U(-t_0)\varphi_{t_0}E_h
+\mathcal O(h^\infty \mathcal N(E_h))_{L^2}.\qedhere
\end{gathered}
$$
\end{proof}
%
%

\smallsection{Second decomposition of~$\langle AE_h,E_h\rangle$}
We now analyse the terms of~\eqref{e:iterated-estimate-2}, reducing
$\langle AE_h,E_h\rangle$ to an expression depending on the `outgoing'
part $E^0_h$ of the plane wave (see~\eqref{e:e-h-decomposition}), with
remainder estimated by the classical escape rate for up to twice the
Ehrenfest time.

We will use Lemma~\ref{l:analysis2-2}; since it only applies
to pseudodifferential operators microlocalized inside the set $\mathcal E_{\eps_e}$
from~\eqref{e:e-e-e}, we take an operator
\begin{equation}
  \label{e:x0}
X_0\in\Psi^{\comp}(M),\
\WFh(X_0)\subset \mathcal E_{\eps_e},\
X_0=1\text{ near }S^*M\cap\supp\varphi_{t_0},
\end{equation}
compactly supported inside $K_{t_0}$.
By~\eqref{e:e-eq-h} and the elliptic estimate (Proposition~\ref{l:elliptic}), 
we have
\begin{equation}
  \label{e:x0-eq}
\varphi_{t_0} E_h=X_0\varphi_{t_0}E_h+\mathcal O(h^\infty \mathcal N(E_h))_{L^2}
=\varphi_{t_0}X_0E_h+\mathcal O(h^\infty \mathcal N(E_h))_{L^2}.
\end{equation}
Same is true if we replace $E_h$ by $\chi_0E_h^0$, as by~\as{A4} and
the fact that $|\partial_m\phi_\xi|_g=1$, we have $\WFh(\chi_0E_h^0)\subset
S^*M$.  We also recall that $\WFh(A)\subset \mathcal E_{\eps_e}$.

We start by estimating the first term on the right-hand side of~\eqref{e:iterated-estimate-2}
for $l$ up to twice the Ehrenfest time, in terms of the classical
escape rate:

%
%
\begin{prop}\label{l:analysis2-3} 
There exists a constant $C$ such that for $0\leq l\leq 2l_e$, we have
\begin{equation}
  \label{e:analysis2-3}
h^{-1}\|\langle A(\varphi U(t_0))^l\varphi E_h,E_h\rangle\|_{L^1_{\xi,\lambda}
(\plM\times [1,1+h])}\leq C\mu_L(\mathcal T(lt_0))
+\mathcal O(h^\infty).
\end{equation}
\end{prop}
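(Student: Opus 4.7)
The strategy is to reduce $A(\varphi U(t_0))^l \varphi$ to a compactly microlocalized pseudodifferential operator whose microsupport lies in $\mathcal{T}(lt_0)$, and then estimate $\langle\cdot,\cdot\rangle$ by a Cauchy--Schwarz argument in which a matched microlocal cutoff $X_l$ is inserted on \emph{both} sides of the inner product, so that each resulting $L^2$-factor carries $\mu_L(\mathcal{T}(lt_0))^{1/2}$ instead of just one of them.

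\textbf{Algebraic reduction.} Commuting propagators past cutoffs gives the algebraic identity
\[
A(\varphi U(t_0))^l\varphi \;=\; \widetilde P_l\cdot U(lt_0),\qquad \widetilde P_l \;:=\; A\prod_{j=0}^{l} U(jt_0)\varphi U(-jt_0).
\]
Lemma~\ref{l:key} together with $(h^2\Delta-c_0h^2-\lambda^2)E_h=0$ yields $\widetilde P_l U(lt_0)E_h = e^{ilt_0(\lambda^2+c_0h^2)/(2h)}\widetilde P_l E_h + \mathcal O(h^\infty\mathcal N(E_h))_{L^2}$. For $l\le l_e$, Proposition~\ref{l:ehrenfest} applied with $X_1=X_2=X_0$ (a compactly microlocalized cutoff equal to the identity near $S^*M$) shows that $\widetilde P_l X_0\in\Psi^{\comp}_{\rho_e}(M)$ is microsupported inside $\WFh(A)\cap\bigcap_{j=0}^l g^{-jt_0}(\supp\varphi)$, which is contained in $\mathcal T(lt_0)$: any $(m,\nu)$ in this intersection has $m=g^0(m,\nu)\in\supp\varphi\subset K_0$ and $\pi g^{lt_0}(m,\nu)\in\supp\varphi\subset K_0$. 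For $l_e<l\le 2l_e$, split $l=l_1+l_2$ with $l_1,l_2\le l_e$ and write
\[
\widetilde P_l \;=\; \widetilde Q_{l_1}\cdot\bigl(U(l_1t_0)\widetilde R_{l_2}U(-l_1t_0)\bigr),
\]
with $\widetilde Q_{l_1}=A\prod_{j=0}^{l_1-1}U(jt_0)\varphi U(-jt_0)$ and $\widetilde R_{l_2}=\prod_{j=0}^{l_2}U(jt_0)\varphi U(-jt_0)$. Each block is admissible by Proposition~\ref{l:ehrenfest}; the extra Egorov conjugation keeps the product in a mildly exotic class $\Psi^{\comp}_{2\rho_e}$ with $2\rho_e<1/2$ provided $\Lambda_0'$ is fixed below $\Lambda_0/2$ at the outset, and the microsupport calculation combines to give the same inclusion in $\mathcal T(lt_0)$.

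\textbf{Microlocalized Cauchy--Schwarz and Hilbert--Schmidt estimates.} A naive Cauchy--Schwarz pairing of $\widetilde P_l E_h$ against $E_h$ would only give $\mu_L(\mathcal T(lt_0))^{1/2}$, since $h^{-1/2}\|E_h\|_{L^2_{m,\xi,\lambda}}$ is only bounded by~$\mathcal O(1)$ via~\eqref{e:avg-bdd}. To upgrade to a linear bound, choose $X_l\in\Psi^{\comp}(M)$ equal to the identity microlocally on $\WFh(\widetilde P_l)$ and microsupported in an $h^\rho$-enlargement of $\mathcal T(lt_0)\cap S^*M$ of Liouville measure $\leq C\mu_L(\mathcal T(lt_0))+\mathcal O(h^\infty)$. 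Then $(1-X_l)\widetilde P_l$ and $\widetilde P_l(1-X_l)$ are $\mathcal O(h^\infty)_{L^2\to L^2}$, giving
\[
\langle \widetilde P_l E_h, E_h\rangle \;=\; \langle \widetilde P_l X_l E_h,\, X_l E_h\rangle + \mathcal O(h^\infty\mathcal N(E_h)^2).
\]
Cauchy--Schwarz first in $L^2(M)$ and then in $L^2_{\xi,\lambda}$ yields
\[
h^{-1}\|\langle\cdot,\cdot\rangle\|_{L^1_{\xi,\lambda}} \leq \bigl(h^{-1/2}\|\widetilde P_l X_l E_h\|_{L^2_{m,\xi,\lambda}}\bigr)\bigl(h^{-1/2}\|X_l E_h\|_{L^2_{m,\xi,\lambda}}\bigr).
\]
Applying~\eqref{e:hs-est} together with Lemma~\ref{l:h-s-estimate} part~2, each of the two factors is bounded by $C\mu_L(\mathcal T(lt_0))^{1/2}+\mathcal O(h^\infty)$, because $\widetilde P_l X_l$ and $X_l$ are both microsupported in a set of Liouville measure at most $C\mu_L(\mathcal T(lt_0))$. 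Multiplying produces the advertised bound, the $\mathcal O(h^\infty\mathcal N(E_h)^2)$ error being absorbed via~\eqref{e:n-e-h-bdd}.

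\textbf{Main obstacles.} The two delicate points are: (i)~keeping the composed exotic class below $1/2$ in the two-block Egorov argument when $l\in(l_e,2l_e]$, which forces us to choose $\Lambda_0'$ strictly less than $\Lambda_0/2$ (and hence $\Lambda_0$ sufficiently large relative to $\Lambda_{\max}$); and (ii)~constructing the microlocalizer $X_l$ so that its microsupport has Liouville measure comparable to $\mu_L(\mathcal T(lt_0))$ itself, rather than to a fixed $h$-independent neighbourhood of~$K$. For (ii), the lower bound~\eqref{e:lower} ensures that $\mathcal T(lt_0)$ already contains an $e^{-\Lambda_0 lt_0/2}\gtrsim h^{1/2}$-tubular neighbourhood of the trapped set, so a further $h^\rho$-enlargement with $\rho$ slightly below $1/2$ only inflates the Liouville measure by a bounded factor.
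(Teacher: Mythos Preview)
Your overall mechanism---form a compactly microlocalized operator microsupported in $\mathcal T(lt_0)$, insert a matched microlocalizer on both sides of the pairing, then use Cauchy--Schwarz together with the Hilbert--Schmidt bound~\eqref{e:h-s-estimate-2}---is exactly what the paper does.  The gap is in how you organize the $2l_e$ steps of propagation, and it is not merely cosmetic.

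Your one-sided reduction $A(\varphi U(t_0))^l\varphi=\widetilde P_l\,U(lt_0)$ tries to make $\widetilde P_l$ itself pseudodifferential.  For $l$ near $2l_e$ the symbol of $\widetilde P_l$ is obtained from $\sigma(A)$ and $\varphi$ by composing with $g^{jt_0}$ for $j$ up to $2l_e$, so its derivatives grow like $h^{-2\rho_e}$, and the two-block argument you sketch lands in $\Psi^{\comp}_{2\rho_e}$.  This forces you to impose $2\rho_e<1/2$, i.e.\ $\Lambda'_0<\Lambda_0/2$, hence $\Lambda_0>2(1+2\varepsilon_e)\Lambda_{\max}$.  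But the proposition (and Theorem~\ref{t:remainder} downstream) is stated for \emph{any} $\Lambda_0>\Lambda_{\max}$; under your restriction you only prove a strictly weaker statement.  The paper avoids this by exploiting the bilinear structure: it writes
\[
\langle A(\varphi U(t_0))^l\varphi E_h,E_h\rangle
=\langle(\varphi U(t_0))^{l_1}\varphi E_h,\ (U(-t_0)\varphi)^{l_2}A^*E_h\rangle,
\qquad l_1,l_2\le l_e,
\]
and then uses Lemma~\ref{l:analysis2-2} to turn each side into $B_l^kE_h$ with $B_l^k\in\Psi^{\comp}_{\rho_e}$ (each block propagated only to the Ehrenfest time).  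The product $B_l=(B_l^2)^*B_l^1$ and the companion microlocalizer $\widetilde B_l$ stay in $\Psi^{\comp}_{\rho_e}$, so only $\rho_e<1/2$ is needed---which is automatic from $\Lambda'_0<\Lambda_0$.

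Your ``obstacle (ii)'' is tied to the same issue and, as written, is not a proof.  The fact that $g^{lt_0/2}(\mathcal T(lt_0))$ contains an $h^{1/2}$-tube around $K$ is a \emph{lower} bound on $\mu_L(\mathcal T(lt_0))$; it does not by itself control the measure of an $h^\rho$-enlargement of $\mathcal T(lt_0)$ (a thin set can have a much fatter $h^\rho$-neighborhood).  The paper's argument instead flows the $h^{\rho_e}$-enlargement of the microsupport of $B_l$ by $g^{\pm l_k t_0}$ with $l_k\le l_e$: the displacement is at most $Ce^{t_e\Lambda''_0}h^{\rho_e}=Ce^{-t_e(\Lambda'_0-\Lambda''_0)}$, a positive power of $h$, so the flowed enlargement lands inside $K_0$ and one gets $\widetilde{\mathcal S}_l\cap S^*M\subset g^{l_2t_0}(\mathcal T(lt_0))$, whose Liouville measure equals $\mu_L(\mathcal T(lt_0))$.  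If you tried the same flow argument with your one-sided $\widetilde P_l$ you would need to flow for time up to $2t_e$, and the bound $e^{2t_e\Lambda''_0}h^{\rho}$ is small only under the same extra constraint $\Lambda_0>2\Lambda_{\max}$.
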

\begin{proof}
We write $l=l_1+l_2$, where $0\leq l_1,l_2\leq l_e$; then
$$
\langle A(\varphi U(t_0))^l\varphi E_h,E_h\rangle
=\langle (\varphi U(t_0))^{l_1}\varphi E_h,(U(-t_0)\varphi)^{l_2}A^* E_h\rangle.
$$
Now, by~\eqref{e:analysis2-2.1}
$$
\begin{gathered}
e^{il_1\beta}(\varphi U(t_0))^{l_1}\varphi E_h
=e^{il_1\beta}(\varphi U(t_0))^{l_1}X_0\varphi E_h+\mathcal O(h^\infty \mathcal N(E_h))_{L^2}\\
=B_l^1 E_h+\mathcal O(h^\infty \mathcal N(E_h))_{L^2},
\end{gathered}
$$
where
$$
B_l^1=(\varphi U(t_0))^{l_1}X_0\varphi (U(-t_0)\varphi_1)^{l_1}.
$$
Similarly, by~\eqref{e:analysis2-2.1-rev} (recalling that $A=\varphi A\varphi$)
$$
e^{-i l_2\beta}(U(-t_0)\varphi)^{l_2}A^* E_h=B_l^2E_h+\mathcal O(h^\infty \mathcal N(E_h))_{L^2},
$$
where
$$
B_l^2=(U(-t_0)\varphi)^{l_2}A^* (\varphi_1U(t_0))^{l_2}\varphi_{t_0}.
$$
Put $B_l=(B_l^2)^*B_l^1$; recalling~\eqref{e:n-e-h-bdd}, it is then enough to show that
\begin{equation}
  \label{e:analysis2-3.1}
h^{-1}\|\langle B_l E_h,E_h\rangle\|_{L^1_{\xi,\lambda}(\plM\times [1,1+h])}
\leq C\mu_L(\mathcal T(lt_0))+\mathcal O(h^\infty).
\end{equation}
Now, by Proposition~\ref{l:ehrenfest}, the operator $B_l^1$ is a compactly supported element
of $\Psi^{\comp}_{\rho_e}$ (modulo an $\mathcal O(h^\infty)_{L^2\to L^2}$ remainder
which we will omit), and it is microsupported, in the sense of Definition~\ref{d:microlocal-vanishing},
inside the set $g^{-l_1t_0}(\{\varphi_1\neq 0\})$ (here
we only use that $\supp\varphi\subset \{\varphi_1\neq 0\}$). Similarly, $B_l^2\in\Psi^{\comp}_{\rho_e}$
is microsupported inside $g^{l_2 t_0}(\{\varphi_1\neq 0\})$. Therefore, $B_l$
is microsupported on the set
$$
\mathcal S_l=g^{-l_1t_0}(\{\varphi_1\neq 0\})\cap g^{l_2 t_0}(\{\varphi_1\neq 0\}).
$$
Note also that $B_l$ is compactly supported independently of $l$.

Now, by taking the convolution of the indicator function of an
$h^{\rho_e}$ sized neighborhood of $\mathcal S_l$ with an
appropriately rescaled cutoff function, we can construct a compactly
supported operator $\widetilde B_l\in\Psi^{\comp}_{\rho_e}$ such that
$B_l=\widetilde B_l^* B_l+\mathcal O(h^\infty)_{\Psi^{-\infty}}$ and
$\widetilde B_l$ is microsupported inside an $\mathcal O(h^{\rho_e})$
sized neighborhood $\widetilde{\mathcal S}_l$ of $\mathcal S_l$.
Using~\eqref{e:t-e}, \eqref{e:rho-e}, and the estimate
on the Lipschitz constant of the flow given by~\eqref{e:lambda-max},
we see that for $(\widetilde m,\tilde \nu)\in \widetilde{\mathcal S}_l\cap S^*M$,
there exists $(m,\nu)\in \mathcal S_l\cap S^*M$ such that
$d((\widetilde m,\tilde\nu),(m,\nu))\leq Ch^{\rho_e}$ and 
for $\Lambda'_0>\Lambda''_0>(1+2\varepsilon_e)\Lambda_{\max}$,
$$
d(g^{l_1t_0}(\widetilde m,\tilde \nu),g^{l_1t_0}(m,\nu))\leq Ce^{l_1t_0\Lambda''_0}h^{\rho_e}
\leq Ce^{t_e\Lambda''_0}h^{\rho_e}
\leq Ce^{-t_e(\Lambda'_0-\Lambda''_0)}
$$
is bounded by some positive power of $h$. Here $d$ denotes some smooth distance function on $T^*M$.
Same is true if we replace $g^{l_1t_0}$ with $g^{-l_2t_0}$; therefore, if the compact set $K_0$ used
in the definition~\eqref{e:T-t} of $\mathcal T(t)$ is chosen large enough, we have
\begin{equation}
  \label{e:tilde-s-l}
\widetilde{\mathcal S_l}\cap S^*M\subset g^{l_2t_0}(\mathcal T(lt_0)).
\end{equation}
Using the Cauchy--Schwartz inequality and~\eqref{e:hs-est},
we bound the left-hand side of~\eqref{e:analysis2-3.1} by
$$
\begin{gathered}
h^{-1}\|\langle B_l E_h,E_h\rangle\|_{L^1_{\xi,\lambda}}
\leq h^{-1}\|\langle B_l E_h,\widetilde B_l E_h\rangle\|_{L^1_{\xi,\lambda}}+\mathcal O(h^\infty)\\
\leq h^{-1}\|B_lE_h\|_{L^2(M)L^2_{\xi,\lambda}}\cdot \|\widetilde B_lE_h\|_{L^2(M)L^2_{\xi,\lambda}}+\mathcal O(h^\infty)\\
\leq C(h^{n/2}\|B_l\Pi_{[1,1+h]}\|_{\HS})(h^{n/2}\|\widetilde B_l\Pi_{[1,1+h]}\|_{\HS})+\mathcal O(h^\infty).
\end{gathered}
$$
It remains to use~\eqref{e:h-s-estimate-2} (or rather its adjoint). Indeed, both $B_l$ and
$\widetilde B_l$ are bounded in $\Psi^{\comp}_{\rho_e}$ uniformly in $l$, and they
are microsupported in $\widetilde{\mathcal S}_l$; therefore, by~\eqref{e:tilde-s-l}
$$
h^{-1}\|\langle B_l E_h,E_h\rangle\|_{L^1_{\xi,\lambda}}\leq C\mu_L(\widetilde{\mathcal S}_l\cap S^*M)
+\mathcal O(h^\infty)\leq C\mu_L(\mathcal T(lt_0))+\mathcal O(h^\infty).\qedhere 
$$
\end{proof}
%
%
As for the sum in~\eqref{e:iterated-estimate-2}, we have the following
%
%
\begin{prop}\label{l:analysis2-35} 
For $1\leq j\leq 2 l_e$, we have
\begin{equation}
  \label{e:analysis2-35}
\begin{gathered}
e^{ij\beta}\langle A(\varphi U(t_0))^j(1-\varphi)\varphi_{t_0}\chi_0 E^0_h,
E_h\rangle=\langle \widetilde A^j \chi_0E_h^0,\chi_0E_h^0\rangle
+\mathcal O(h^\infty \mathcal N(E_h)^2),\\
\widetilde A^{j}:=\varphi_{t_0}(1-\varphi_2)(U(-t_0)\varphi_1)^jA(\varphi U(t_0))^j(1-\varphi)\varphi_{t_0},
\end{gathered}
\end{equation}
uniformly in $\xi\in\plM$ and $\lambda\in [1,1+h]$.
\end{prop}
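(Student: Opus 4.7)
The plan is to dualize the inner product so that $E_h$ occupies the second slot, propagate it backward using \eqref{e:analysis2-2.1-rev}, and then replace $E_h$ by the truncated outgoing wave $(1-\varphi_2)\varphi_{t_0}\chi_0E_h^0$, controlling the two resulting error pieces by a Lagrangian-calculus argument and an \as{A6}-based wavefront argument respectively.

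Concretely, using the adjoint relations one writes
$$
e^{ij\beta}\langle A(\varphi U(t_0))^j(1-\varphi)\varphi_{t_0}\chi_0E_h^0,E_h\rangle
=\langle\chi_0E_h^0,\,e^{-ij\beta}\varphi_{t_0}(1-\varphi)(U(-t_0)\varphi)^jA^*E_h\rangle,
$$
and applies \eqref{e:analysis2-2.1-rev} with the $-$ signs, $\varphi'=\varphi$, $\varphi''=\varphi_1$, $B=A^*$; the hypotheses hold since $\varphi_1=1$ near $\supp\varphi$ and $d_g(\supp\varphi_1\cup\supp A,\supp(1-\varphi_{t_0}))>t_0$ by construction. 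Using that $\|\chi_0E_h^0\|_{L^2(\supp\varphi_{t_0})}=\mathcal O(1)$, this converts the LHS into $\langle\chi_0E_h^0,\mathcal P\,\varphi_{t_0}E_h\rangle+\mathcal O(h^\infty\mathcal N(E_h)^2)$, where $\mathcal P:=\varphi_{t_0}(1-\varphi)(U(-t_0)\varphi)^jA^*(\varphi_1U(t_0))^j$. Computing $(\widetilde A^j)^*$ shows that the RHS equals $\langle\chi_0E_h^0,\mathcal P(1-\varphi_2)\varphi_{t_0}\chi_0E_h^0\rangle$. It therefore suffices to prove that $\mathcal P$ annihilates, to order $h^\infty$, the difference $\varphi_{t_0}E_h-(1-\varphi_2)\varphi_{t_0}\chi_0E_h^0=\varphi_2\varphi_{t_0}\chi_0E_h^0+\varphi_{t_0}E_h^1$.

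For the first piece $\varphi_2\varphi_{t_0}\chi_0E_h^0$, I first apply Lemma~\ref{l:analysis2-2} part~1 iteratively (valid since $j\le 2l_e$) to see that $\mathcal P$ coincides, modulo $\mathcal O(h^\infty)_{L^2\to L^2}$, with a compactly supported element of $\Psi^{\comp}_{\rho_e}(M)$. By \as{A4} the function $\varphi_2\varphi_{t_0}\chi_0E_h^0$ lies in $I^{\comp}(\Lambda_\xi)$, and Proposition~\ref{l:lagrangian-mul} then gives $\mathcal P\,\varphi_2\varphi_{t_0}\chi_0E_h^0\in I^{\comp}_{\rho_e}(\Lambda_\xi)$; the key observation is that its full symbol contains the factor $(1-\varphi(m))\varphi_2(m)$ at every order of the asymptotic expansion. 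Since $\varphi\equiv1$ on an open neighborhood of $\supp\varphi_2$, both this factor and all its derivatives vanish identically, forcing $\mathcal P\,\varphi_2\varphi_{t_0}\chi_0E_h^0=\mathcal O(h^\infty)_{C_0^\infty}$.

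For the second piece, it suffices by \eqref{e:e-1-h-tilde} to show $\mathcal P\,\varphi_{t_0}\widetilde E_h^1=\mathcal O(h^\infty)_{L^2}$. This is the iterated analogue of Proposition~\ref{l:analysis2-1}: after reducing $\mathcal P$ to a mildly exotic pseudodifferential operator as above, its microsupport lies in the intersection of $\supp(1-\varphi)\cap\supp\varphi_{t_0}$ with $\bigcap_{k=1}^j g^{-kt_0}(\supp\varphi)\cap g^{-jt_0}(\WFh(A))$, and a point $(m,\nu)$ in this microsupport which also belongs to $\WFh(\widetilde E_h^1)$ would, by \as{A6}, either lie in $\Gamma_-$ or have a forward orbit hitting $W_\xi$. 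Lemma~\ref{l:geometry-2}, applied to $g^{jt_0}(m,\nu)\in\supp\varphi_1$ with $t=jt_0\ge t_0$ and with $(m,\nu)\in\supp(1-\varphi)\subset\supp(1-\varphi_2)$ taken as the starting condition after a symmetric reformulation, excludes both alternatives, making the microsupport empty. I expect this last step to be the main obstacle: the case $(m,\nu)\in\Gamma_-\setminus K$ must be ruled out with care, since such a trajectory can be trapped forward while having base point in $\supp(1-\varphi)$ and entering $\supp\varphi$ shortly thereafter; eliminating it requires the disjointness $\supp(1-\varphi)\cap K=\emptyset$ (following from $K\subset\{x>\varepsilon_0\}\subset\supp\varphi_2\subset\{\varphi=1\}$) together with the full geometric content of Lemma~\ref{l:geometry-2}.
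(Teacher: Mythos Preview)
Your approach has a genuine gap in the range $l_e<j\le 2l_e$, which is precisely where the difficulty of this proposition lies.

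You invoke \eqref{e:analysis2-2.1-rev} with $B=A^*$ to propagate $E_h$ through $j$ steps, and you claim that $\mathcal P=\varphi_{t_0}(1-\varphi)(U(-t_0)\varphi)^jA^*(\varphi_1U(t_0))^j$ is, modulo $\mathcal O(h^\infty)$, an element of $\Psi^{\comp}_{\rho_e}$ ``by Lemma~\ref{l:analysis2-2} part~1 iteratively (valid since $j\le 2l_e$)''. But parts~1 and~3 of Lemma~\ref{l:analysis2-2} are stated and proved only for $0\le j\le l_e$: their inductive proofs use Proposition~\ref{l:ehrenfest}, whose hypothesis \eqref{e:ehrenfest-l} restricts the number of iterations to $l\le t_e/|t_0|\sim l_e$. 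For $j>l_e$ the operator $\mathcal P$ is \emph{not} a pseudodifferential operator in any $\Psi^{\comp}_\rho$ with $\rho<1/2$, so both of your error-term arguments (the Lagrangian-symbol argument for $\varphi_2\varphi_{t_0}\chi_0E_h^0$ and the microsupport/wavefront argument for $\varphi_{t_0}\widetilde E_h^1$) collapse, and \eqref{e:analysis2-2.1-rev} itself is unavailable.

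The only part of Lemma~\ref{l:analysis2-2} valid up to $2l_e$ is part~2, equation~\eqref{e:analysis2-2.3}, and it achieves this by splitting $j=j_1+j_2$ with $j_1,j_2\le l_e$ and conjugating so that each half is pseudodifferential. The paper's proof is organized around exactly this device: it expands $\varphi_1E_h$ via an analogue of \eqref{e:iterated-estimate-0} with $j$ steps, producing one term that matches the desired $\widetilde A^j$ expression (after a single-step use of Proposition~\ref{l:analysis2-1}), one ``endpoint'' term handled directly by \eqref{e:analysis2-2.3}, and a sum over $1\le k<j$ of intermediate terms; the latter require a further case split $k\le l_e$ versus $k\ge l_e$, with the second case again relying on conjugation by $U((k-l_e)t_0)$ to bring everything within the Ehrenfest window. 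Your dualize-and-propagate strategy is cleaner when $j\le l_e$, but for the full range $j\le 2l_e$ you would need to reproduce this splitting, which essentially brings you back to the paper's argument.
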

\begin{proof}
Since $A=\varphi A\varphi$, we can replace $E_h$ by $\varphi_1 E_h$ on
the left-hand side of~\eqref{e:analysis2-35}.  Writing
down~\eqref{e:iterated-estimate-0} for $\varphi_1$ in place of
$\varphi$ and using $\varphi_2$ in place of $\varphi_1$ in the
splitting $\varphi_{t_0}=\varphi_1+(1-\varphi_1)\varphi_{t_0}$ in the
last step, we get
\begin{equation}
  \label{e:analysis2-35.1}
\begin{gathered}
\varphi_1 E_h=e^{ij\beta}(\varphi_1 U(t_0))^j\varphi_2 E_h
+e^{ij\beta}(\varphi_1 U(t_0))^j (1-\varphi_2)\varphi_{t_0}E_h
\\
+\sum_{k=1}^{j-1}e^{ik\beta}
(\varphi_1 U(t_0))^k(1-\varphi_1)\varphi_{t_0}E_h
+\mathcal O(h^\infty \mathcal N(E_h))_{L^2}.
\end{gathered}
\end{equation}
We now substitute~\eqref{e:analysis2-35.1} into the left-hand side of~\eqref{e:analysis2-35}.
The first term gives, after using~\eqref{e:x0-eq}
to replace $\varphi_{t_0}\chi_0E^0_h$ by $X_0\varphi_{t_0}\chi_0 E^0_h$
and $\varphi E_h$ by $X_0\varphi E_h$ 
$$
\langle A(\varphi U(t_0))^j (1-\varphi)\varphi_{t_0}\chi_0 E^0_h,(\varphi_1 U(t_0))^j\varphi_2 E_h\rangle
=\langle B_0 \chi_0 E^0_h,E_h\rangle+\mathcal O(h^\infty \mathcal N(E_h)^2),
$$
where
$$
B_0=\varphi_2X_0^*(U(-t_0)\varphi_1)^j A(\varphi U(t_0))^j (1-\varphi)X_0\varphi_{t_0}=\mathcal O(h^\infty)_{L^2\to L^2}
$$
by~\eqref{e:analysis2-2.3}, as $\supp\varphi_2\cap \supp(1-\varphi)=\emptyset$.

Next, we use Proposition~\ref{l:analysis2-1} to write the second term of~\eqref{e:analysis2-35.1} as
$$
e^{ij\beta}(\varphi_1 U(t_0))^j(1-\varphi_2)\varphi_{t_0}\chi_0E^0_h+\mathcal O(h^\infty \mathcal N(E_h))_{L^2};
$$
therefore, this term gives the right-hand side of~\eqref{e:analysis2-35}.

It remains to estimate the contribution of each term of the sum in~\eqref{e:analysis2-35.1},
which we can write, using~\eqref{e:x0-eq},
as $e^{i(j-k)\beta}\langle B_k\chi_0 E^0_h,\chi_0E^0_h\rangle+\mathcal O(h^\infty \mathcal N(E_h)^2)$,
with
$$
B_k=\varphi_{t_0}X_0^*(1-\varphi_1)(U(-t_0)\varphi_1)^k A(\varphi U(t_0))^j(1-\varphi)X_0\varphi_{t_0}.
$$
We need to show that $\|B_k\|_{L^2\to L^2}=\mathcal O(h^\infty)$
for $1\leq k<j$. For that, we consider two cases.
First, assume that $k\leq l_e$. Then we have
\begin{equation}
  \label{e:interm-1}
\varphi_{t_0} X_0^*(1-\varphi_1)(U(-t_0)\varphi_1)^kA(\varphi U(t_0))^k\varphi=\mathcal O(h^\infty)_{L^2\to L^2},
\end{equation}
as the supports of $1-\varphi_1$ and $\varphi$ do not intersect and
the operator in between them is a compactly supported element of
$\Psi^{\comp}_{\rho_e}$ (modulo an $\mathcal O(h^\infty)_{L^2\to
L^2}$ remainder which we will omit). Since $B_k$ is obtained by
multiplying the left-hand side of~\eqref{e:interm-1} on the right by
$U(t_0)(\varphi U(t_0))^{j-1-k}(1-\varphi)X_0\varphi_{t_0}$, it is
also $\mathcal O(h^\infty)_{L^2\to L^2}$.

Now, assume that $k\geq l_e$. Take $\widetilde\varphi_1\in C_0^\infty(M)$
equal to 1 near $\supp\varphi_1$ and such that $|\widetilde\varphi_1|\leq 1$ everywhere.
We write by~\eqref{e:analysis2-2.2} and its adjoint,
$$
\begin{gathered}
U((k-l_e)t_0)B_k U(-(j-l_e)t_0)=B_k^1B_k^2B_k^3+\mathcal O(h^\infty)_{L^2\to L^2},\\
B_k^1=(\widetilde\varphi_1 U(t_0))^{k-l_e}\varphi_{t_0}X_0^*(1-\varphi_1)(U(-t_0)\varphi_1)^{k-l_e},\\
B_k^2=(U(-t_0)\varphi_1)^{l_e}A(\varphi U(t_0))^{l_e},\\
B_k^3=(\varphi U(t_0))^{j-l_e}(1-\varphi)X_0\varphi_{t_0}(U(-t_0)\varphi_1)^{j-l_e}.
\end{gathered}
$$
Now all $B_k^i$, $i=1,2,3$, are compactly supported members of
$\Psi^{\comp}_{\rho_e}$.  Let $U_1,U_2$ be two bounded open sets such
that $\supp(\varphi_{t_0}(1-\varphi_1))\subset U_1$,
$\supp\varphi\subset U_2$, and $U_1\cap U_2=\emptyset$.
Since~$k-l_e>j-l_e$ and by Proposition~\ref{l:ehrenfest}, the operator
$B_k^1$ is microsupported, in the sense of
Definition~\ref{d:microlocal-vanishing}, on the set
$g^{-(k-l_e)t_0}(U_1)$, while $B_k^3$ is microsupported on the set
$g^{-(k-l_e)t_0}(U_2)$; since these two sets do not intersect, we get
$B_k=\mathcal O(h^\infty)_{L^2\to L^2}$, finishing the proof.
\end{proof}
%
%
Combining~\eqref{e:iterated-estimate-2} with~\eqref{e:analysis2-3}
and~\eqref{e:analysis2-35}, we have finally proved
%
%
\begin{prop}
  \label{l:analysis2-main}
For $0\leq l\leq 2 l_e$ and $A\in \Psi^{\comp}$
microlocalized inside the set~$\mathcal E_{\varepsilon_e}$ defined in~\eqref{e:e-e-e}, we have 
\begin{equation}
  \label{e:analysis2-main}
\begin{gathered}
\langle A E_h,E_h\rangle
=\sum_{j=1}^l\langle \widetilde A^j\chi_0 E^0_h,\chi_0 E^0_h\rangle
+\mathcal O\big(h\mu_L(\mathcal T(lt_0))+h^\infty\big)_{L^1_{\xi,\lambda}(\plM\times [1,1+h])},\\
\widetilde A^{j}:=\varphi_{t_0}(1-\varphi_2)(U(-t_0)\varphi_1)^jA(\varphi U(t_0))^j(1-\varphi)\varphi_{t_0}.
\end{gathered}
\end{equation}
Here $l_e$ is defined in~\eqref{e:l-e}
and $\mathcal T(t)$ in~\eqref{e:T-t}.
\end{prop}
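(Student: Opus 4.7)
The plan is to combine the three main ingredients that have just been established: the iterated propagation identity \eqref{e:iterated-estimate-2}, Proposition~\ref{l:analysis2-3}, and Proposition~\ref{l:analysis2-35}. Since \eqref{e:iterated-estimate-2} has already been derived by iterating \eqref{e:e-propagated-again} and replacing $E_h$ by $\chi_0 E_h^0$ in the `outgoing' term via Proposition~\ref{l:analysis2-1}, the proposition reduces to treating the two pieces of that decomposition separately in the $h^{-1} L^1_{\xi,\lambda}$ norm.

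First, I would control the `propagator-only' term $e^{il\beta}\langle A(\varphi U(t_0))^l \varphi E_h, E_h\rangle$. Proposition~\ref{l:analysis2-3}, applied with the given $l\leq 2l_e$, gives directly
\[
h^{-1}\|\langle A(\varphi U(t_0))^l \varphi E_h, E_h\rangle\|_{L^1_{\xi,\lambda}(\plM\times[1,1+h])} \leq C\mu_L(\mc{T}(lt_0)) + \mc{O}(h^\infty),
\]
which accounts exactly for the $h\mu_L(\mc{T}(lt_0))$ part of the claimed remainder after multiplying through by $h$.

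Next, for each $j$ in the finite sum (there are $l = \mc{O}(\log(1/h))$ terms), I would invoke Proposition~\ref{l:analysis2-35} to obtain
\[
e^{ij\beta}\langle A(\varphi U(t_0))^j (1-\varphi)\varphi_{t_0}\chi_0 E_h^0, E_h\rangle = \langle \widetilde A^j \chi_0 E_h^0, \chi_0 E_h^0\rangle + \mc{O}(h^\infty \mc{N}(E_h)^2),
\]
uniformly in $(\xi,\lambda)$. Summing the $j$ terms gives the main expression $\sum_{j=1}^l \langle \widetilde A^j \chi_0 E_h^0, \chi_0 E_h^0\rangle$ appearing in \eqref{e:analysis2-main}, with a cumulative remainder of order $l \cdot h^\infty \mc{N}(E_h)^2$, which is still $\mc{O}(h^\infty \mc{N}(E_h)^2)$ since $l$ grows only logarithmically.

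It remains to convert the pointwise $\mc{O}(h^\infty \mc{N}(E_h)^2)$ remainders coming from \eqref{e:iterated-estimate-2} and from the sum into an $h^\infty$ bound in the $L^1_{\xi,\lambda}$ norm. The averaged bound \eqref{e:n-e-h-bdd} gives $h^{-1}\|\mc{N}(E_h)\|^2_{L^2_{\xi,\lambda}(\plM\times[1,1+h])} = \mc{O}(1)$, so integrating an $\mc{O}(h^N \mc{N}(E_h)^2)$ quantity against the measure $d\xi\,d\lambda$ of mass $\mc{O}(h)$ yields $\mc{O}(h^{N+1})$ in $L^1_{\xi,\lambda}$ for any $N$, hence an $\mc{O}(h^\infty)$ contribution to the remainder. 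Combining this with the bound from Proposition~\ref{l:analysis2-3} (multiplied by $h$ to undo the $h^{-1}$ normalization) produces precisely the stated remainder $\mc{O}(h\mu_L(\mc{T}(lt_0)) + h^\infty)_{L^1_{\xi,\lambda}}$. I do not anticipate a substantive obstacle here: every nontrivial microlocal and dynamical input has been packaged in the two preceding propositions, and the argument is essentially a bookkeeping step showing that all remainders accumulate acceptably over the $\mc{O}(\log(1/h))$ iterations.
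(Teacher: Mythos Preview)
Your proposal is correct and follows exactly the paper's approach: the paper's proof is the single sentence ``Combining~\eqref{e:iterated-estimate-2} with~\eqref{e:analysis2-3} and~\eqref{e:analysis2-35}, we have finally proved'' Proposition~\ref{l:analysis2-main}, and you have simply spelled out this combination, including the routine conversion of the pointwise $\mathcal O(h^\infty\mathcal N(E_h)^2)$ remainders to $\mathcal O(h^\infty)_{L^1_{\xi,\lambda}}$ via~\eqref{e:n-e-h-bdd}.
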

Note that the sum on the right-hand side corresponds to the 
$A_1^{-t}$ term in \eqref{e:analysis-1}, while the remainder contains 
both the $A_0^{-t}$ term (dealt with in Proposition~\ref{l:analysis2-3}) and the remainder in \eqref{e:analysis-1}.
We have kept the $\mathcal O(h^\infty)$ remainder to include the nontrapping case.
%
%

\smallsection{Proof of~\eqref{e:convergence-2}}
Put $l$ equal to the number $l_e$ defined in~\eqref{e:l-e}.
By~\eqref{e:analysis2-main}, it is enough to approximate the terms
$\langle\widetilde A^j\chi_0 E^0_h,\chi_0E^0_h\rangle$.  This is done
by the following improvement of~\eqref{e:convergence-int}, relying on the Lagrangian structure of
$E^0_h$ and featuring the interpolated escape rate $r(h,\Lambda)$
from~\eqref{defofrh}:
%
%
\begin{prop}\label{l:analysis2-4}
Put $l=l_e$ given by \eqref{e:l-e}, and $r(h,\Lambda)$ defined in \eqref{defofrh}. 
Then the sum on the right-hand side of~\eqref{e:analysis2-main} is approximated as follows:
\begin{equation}
  \label{e:analysis2-4}
\sum_{j=1}^l \langle \widetilde A^j\chi_0 E^0_h,\chi_0E^0_h\rangle=
\int_{S^*M} \sigma(A)\,d\mu_\xi+\mathcal O(hr(h,2\Lambda_0))_{L^1_{\xi,\lambda}(\plM
\times [1,1+h])}.
\end{equation}
\end{prop}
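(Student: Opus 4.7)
The plan is to analyze each term $\langle\widetilde A^j\chi_0 E^0_h,\chi_0 E^0_h\rangle$ via the Lagrangian structure of $\chi_0 E^0_h$ furnished by~\as{A4}, and then to sum over $j$ using a telescoping identity together with the flow-invariance of $\mu_\xi$. By iterating Egorov (Proposition~\ref{l:ehrenfest}), $\widetilde A^j\in\Psi^{\comp}_{\rho_j}$ with $\rho_j:=jt_0\Lambda'_0/\log(1/h)\leq\rho_e<1/2$, and its principal symbol equals
\[
\sigma(\widetilde A^j)=\varphi_{t_0}^2(1-\varphi_2)(1-\varphi)\cdot(\varphi_1\sigma(A)\varphi)\circ g^{-jt_0}\cdot\prod_{k=1}^{j-1}(\varphi_1\varphi)\circ g^{-kt_0}+\mathcal O(h^{1-2\rho_j}).
\]
Our cutoff relations ($\varphi_1=1$ near $\supp\varphi\supset\supp\sigma(A)$ and $\varphi=1$ near $\supp\varphi_2$) simplify this to $\varphi_{t_0}^2(1-\varphi)\cdot\sigma(A)\circ g^{-jt_0}\cdot\prod_{k=1}^{j-1}\varphi\circ g^{-kt_0}$ modulo $\mathcal O(h^{1-2\rho_j})$.

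Since $\chi_0 E^0_h\in I^{\comp}(\Lambda_\xi)$ with principal symbol $\chi_0 b^0(1,\xi,m;0)$, Proposition~\ref{l:lagrangian-mul} gives
\[
\langle\widetilde A^j\chi_0 E^0_h,\chi_0 E^0_h\rangle
=\sum_{k=0}^{N-1}h^k\int_M L_k[\sigma(\widetilde A^j)|_{\Lambda_\xi}]\cdot|\chi_0 b^0|^2\,\Vol(m)+\mathcal O(h^{N(1-2\rho_j)})
\]
with $L_k$ of order $2k$. The support of $\sigma(\widetilde A^j)|_{\Lambda_\xi}$ lies in $\mathcal{DE}_+\cap\{x\leq\varepsilon_1\}$: the factor $(1-\varphi_2)$ enforces $x\leq\varepsilon_1$, and the condition $g^{-jt_0}(m,\nu)\in\supp\varphi_1$ for $jt_0\geq t_0$ together with $m\in\supp(1-\varphi_2)$ yields $\mathcal{DE}_+$ by Lemma~\ref{l:geometry-2}. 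Thus Lemma~\ref{l:mu-xi-1}(2) converts the principal integral into $\int\sigma(\widetilde A^j)\,d\mu_\xi$; applying flow-invariance and the change of variable $(m,\nu)\mapsto g^{jt_0}(m,\nu)$ turns this into $\int_{S^*M}\sigma(A)\cdot(1-\varphi)\circ g^{jt_0}\cdot\prod_{k=1}^{j-1}\varphi\circ g^{kt_0}\,d\mu_\xi$. Summing in $j$ telescopes via
\[
\sum_{j=1}^l\Big(\prod_{k=1}^{j-1}\varphi_{(k)}\Big)(1-\varphi_{(j)})=1-\prod_{k=1}^l\varphi_{(k)},\qquad \varphi_{(k)}:=\varphi\circ g^{kt_0},
\]
yielding $\int\sigma(A)\,d\mu_\xi-\int\sigma(A)\prod_{k=1}^l\varphi\circ g^{kt_0}\,d\mu_\xi$. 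With $l=l_e$ and $K_0\supset\supp\varphi$, Proposition~\ref{l:mu-xi-0} bounds the $L^1_\xi$ norm of the subtracted term by $\|\sigma(A)\|_\infty\mu_L(\mathcal T(t_e))$, giving an $L^1_{\xi,\lambda}$ contribution $\leq Chr(h,2\Lambda_0)$ at the $\theta=1$ endpoint of $r$.

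It remains to control the subprincipal terms ($k\geq 1$) summed over $j$. Each $(j,k)$-contribution to the $L^1_{\xi,\lambda}$ norm is $\mathcal O(h^{1+k(1-2\rho_j)}\mu_L(\mathcal T(jt_0)))$: the $L_k$-derivatives of a symbol in $S^{\comp}_{\rho_j}$ cost $h^{-2k\rho_j}$, and after $\xi$-averaging the Liouville support bound of Proposition~\ref{l:mu-xi-0} applies. Writing $\theta:=jt_0/t_e\in[0,1]$ so that $\rho_j=\theta\rho_e$ and $jt_0=\theta t_e$, the inequality $1+k(1-2\theta\rho_e)\geq 2-\theta$ holds for all $k\geq 1$ and $\theta\in[0,1]$ (since $2\rho_e<1$), so each term is $\leq h\cdot h^{1-\theta}\mu_L(\mathcal T(\theta t_e))\leq hr(h,2\Lambda_0)$; the logarithmic factor from summing $l_e\sim\log(1/h)$ values of $j$ is absorbed by a small reduction of $\Lambda'_0$. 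The main obstacle is precisely this delicate accounting: the interpolated quantity $r(h,2\Lambda_0)$ is designed so that the trade-off between the exotic-symbol loss $h^{-2\rho_j}$ and the shrinking trapped measure $\mu_L(\mathcal T(jt_0))$ fits within a single remainder uniform in $j=1,\dots,l_e$.
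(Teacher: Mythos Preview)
Your proof follows the paper's approach essentially step for step: Egorov up to Ehrenfest time for $\widetilde A^j$, Lagrangian calculus for $\chi_0E^0_h$, Lemma~\ref{l:geometry-2} plus Lemma~\ref{l:mu-xi-1}(2) to pass to $\int\sigma(\widetilde A^j)\,d\mu_\xi$, the telescoping identity, and Proposition~\ref{l:mu-xi-0} to bound the leftover by $\mu_L(\mathcal T(l_et_0))$.

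The one place your argument is looser than the paper's is the summation of the subprincipal terms over $j$. You bound each $(j,k)$-term by $h\cdot r(h,2\Lambda_0)$ via $h^{1+k(1-2\theta\rho_e)}\leq h^{2-\theta}$, and then appeal to ``a small reduction of $\Lambda'_0$'' to absorb the resulting $\log(1/h)$ from summing $l_e$ terms. That dismissal does not work as stated: $r(h,2\Lambda_0)$ depends only on $\Lambda_0$, and the inequality you use is tight at $\theta=0$ regardless of $\Lambda'_0$. What actually saves the day is that for $k=1$ your inequality has slack exactly $\theta(1-2\rho_e)$, i.e.
\[
h^{1-2\rho_j}\mu_L(\mathcal T(jt_0))
\;=\; \big(h^{1-\theta}\mu_L(\mathcal T(\theta t_e))\big)\cdot h^{\theta(1-2\rho_e)}
\;\leq\; r(h,2\Lambda_0)\,e^{-2(\Lambda_0-\Lambda'_0)jt_0},
\]
since $h^{t_0/t_e}=e^{-2\Lambda_0t_0}$. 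The extra factor $e^{-2(\Lambda_0-\Lambda'_0)jt_0}$ makes the sum over $j$ a convergent geometric series, so no $\log(1/h)$ ever appears. This is precisely the computation the paper carries out. Once you insert this one line, your argument matches the paper's proof.
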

\begin{proof}
By Proposition~\ref{l:ehrenfest}, the operator $\widetilde A^j$ is
compactly supported and lies in $\Psi_{\rho_j}^{\comp}$, modulo an
$\mathcal O(h^\infty)_{L^2\to L^2}$ remainder, where
$$
\rho_j={jt_0\over t_e}\rho_e, \quad \textrm{ with }\, jt_0/t_e\leq 1+o(1)
$$
with~$t_e$ and $\rho_e$ defined in~\eqref{e:t-e} and~\eqref{e:rho-e}, respectively.
Next, $\widetilde A^j$ is microsupported, in the sense of Definition~\ref{d:microlocal-vanishing},
in the set
$$
\mathcal Q_j:=g^{t_0}(\{\varphi_1\neq 0\})\cap g^{jt_0}(\{\varphi_1\neq 0\}).
$$
If the set $K_0$ from the definition~\eqref{e:T-t} of $\mathcal T(t)$ is large enough, then
 $\mathcal Q_j\cap S^*M\subset g^{jt_0}(\mathcal T(jt_0))$; by the definition~\eqref{defofrh} of $r(h,\Lambda)$,
we find
\begin{equation}
  \label{e:q-bound}
h^{1-jt_0/t_e} \mu_L(\mathcal Q_j\cap S^*M)\leq r(h,2\Lambda_0).
\end{equation}
By~\as{A4} and Proposition~\ref{l:lagrangian-mul}, we have the following analogue of~\eqref{e:convergence-int-777}:
$$
\widetilde A^j\chi_0E^0_h=e^{{i\lambda\over h}\phi_\xi}\chi_0 b^0(1,\xi,m;0)\sigma(\widetilde A^j)(m,\partial_m
\phi_\xi(m))+\mathcal O(h^{1-2\rho_j})_{L^\infty}.
$$
Here and below, we also use  the fact that the seminorms of $\widetilde{A}^j$ are uniform with respect to 
$j$ in the sense stated in Proposition  \ref{l:ehrenfest}, in order to control the remainders.
Moreover, by part~2 of the same proposition, we see that $\widetilde
A^j\chi_0E^0_h$ is $\mathcal O(h^\infty)$ outside of the set of points
$m\in U_\xi$ such that $(m,\partial_m\phi_\xi(m))\in \mathcal Q_j$.
By Lemma~\ref{l:geometry-2}, $\sigma(\widetilde A^j)$ is
supported in $\supp(1-\varphi)\cap
g^{t_0}(\supp\varphi_1)\subset\mathcal{DE}_+\cap \{x<\varepsilon_1\}$,
with $\mathcal{DE}_+$ from Definition~\ref{d:directly-escape}.  Using
part~2 of Lemma~\ref{l:mu-xi-1}, we then get
\begin{equation}
  \label{e:almost-there}
\langle \widetilde A^j \chi_0 E^0_h,\chi_0 E^0_h\rangle
=\int_{S^*M} \sigma(\widetilde A^j)\,d\mu_\xi
+\mathcal O(h^{1-2\rho_j}\mu_\xi(\mathcal Q_j))+\mathcal O(h^\infty),
\end{equation}
uniformly in $\xi\in\plM$ and $\lambda\in [1,1+h]$. Now, we write by~\eqref{e:q-bound}
and Proposition~\ref{l:mu-xi-0},
$$
\begin{gathered}
\sum_{j=1}^l h^{1-2\rho_j}\|\mu_\xi(\mathcal Q_j)\|_{L^1_\xi}
=\sum_{j=1}^l h^{1-2\rho_j}\mu_L(\mathcal Q_j\cap S^*M)\\
\leq r(h,2\Lambda_0)\sum_{j=1}^l h^{(1-2\rho_e)jt_0/t_e}
=r(h,2\Lambda_0)\sum_{j=1}^l e^{-2\Lambda_0(1-2\rho_e)jt_0}
\leq Cr(h,2\Lambda_0).
\end{gathered}
$$
It remains to sum up the integrals in~\eqref{e:almost-there}.  We have
by Proposition~\ref{l:ehrenfest}, bearing in mind that
$\varphi\varphi_1=\varphi$, $(1-\varphi)(1-\varphi_2)=1-\varphi$,
$A=\varphi A\varphi$, and
$d_g(\supp(1-\varphi_{t_0}),\supp\varphi)>t_0$,
$$
\sigma(\widetilde A^j)=(\sigma(A)\circ g^{-jt_0})(1-\varphi)\prod_{k=1}^{j-1}\varphi\circ g^{-kt_0}.
$$
By part~1 of Lemma~\ref{l:mu-xi-1}, we write
$$
\begin{gathered}
\sum_{j=1}^l\int_{S^*M}\sigma(\widetilde A^j)\,d\mu_\xi=
\int_{S^*M} \sigma(A)\sum_{j=1}^l (1-\varphi\circ g^{jt_0})
\prod_{k=1}^{j-1}\varphi\circ g^{kt_0}\,d\mu_\xi\\
=\int_{S^*M}\sigma(A)\Big(1-\prod_{k=1}^l\varphi\circ g^{kt_0}\Big)\,d\mu_\xi.
\end{gathered}
$$
It remains to note that by Proposition~\ref{l:mu-xi-0},
$$
\int_{\plM}\int_{S^*M}|\sigma(A)|\prod_{k=1}^l\varphi\circ g^{kt_0}\,d\mu_\xi d\xi
=\int_{S^*M}|\sigma(A)|\prod_{k=1}^l\varphi\circ g^{kt_0}\,d\mu_L
=\mathcal O(\mu_L(\mathcal T(lt_0)))
$$
since the expression under the last integral is supported in $\mathcal T(lt_0)$.
\end{proof}
%
%

\subsection{Trace estimates}
  \label{s:proofs-3}

In this subsection, we prove a stronger remainder
bound~\eqref{e:convergence-3} for the case when $\langle
AE_h,E_h\rangle$ is paired with a test function in $\xi$ and obtain an
expansion of the trace of spectral projectors with a fractal
remainder~-- Theorem~\ref{asympofs_A}.

\smallsection
{Expressing $E^0_h\otimes E^0_h$ via Schr\"odinger propagators}
Our argument will be based on the
decomposition~\eqref{e:analysis2-main}. The remainder in this
decomposition is already controlled by the escape rate at twice the
Ehrenfest time $t_e$ defined in~\eqref{e:t-e}. However, in the
previous subsection (see Proposition~\ref{l:analysis2-4}), we were
only able to estimate the sum in~\eqref{e:analysis2-main} for $l$ up
to the Ehrenfest index $l_e\sim t_e/t_0$ defined in~\eqref{e:l-e}. We
therefore need a better way of writing down the Lagrangian states
$E^0_h$, when coupled with a test function in $\xi$, and such a way is
provided by (with the cutoffs allowing for representation of the
spectral measure in terms of a finite time integral)
%
%
\begin{lemm}
  \label{l:trace-1}
Let $f(\xi)\in C^\infty(\plM)$ and define for $\lambda\in (1/2,2)$,
\begin{equation}
  \label{e:pi-0-f}
\Pi^0_f(\lambda):=\int_{\plM}f(\xi)(\chi_0 E^0_h(\lambda,\xi))
\otimes (\chi_0 E^0_h(\lambda,\xi))\,d\xi.
\end{equation}
Here $\otimes$ denotes the Hilbert tensor product,
see~\eqref{e:tensor-product}. Assume that $\widetilde X_1,\widetilde
X_2\in \Psi^{\comp}(M)$ are compactly supported and the projections
$\pi_S(\WFh(\widetilde X_j))$ of $\WFh(\widetilde X_j)$ onto $S^*M$
along the radial rays in the fibers of $T^*M$ lie inside
$\mathcal{DE}_+\cap\{x\leq\varepsilon_1\}$, with $\mathcal{DE}_+$
defined in~\eqref{d:directly-escape} and~$\varepsilon_1$
from~\as{A7}. Then
$$
\widetilde X_1\Pi^0_f(\lambda)\widetilde X_2^*=(2\pi h)^n\int_{-T_0}^{T_0} e^{-i\lambda^2 s/(2h)}  U(s)B_s(\lambda)\,ds
+\mathcal O(h^\infty)_{L^2\to L^2},
$$
where $T_0>0$ depends on the support of the Schwartz kernels of $\widetilde X_j$ but
not on $h$,  $B_s(\lambda)\in\Psi^{\comp}(M)$
is compactly supported on $M$, smooth and compactly supported in $s\in (-T_0,T_0)$,
depending smoothly on $\lambda$. Moreover, if $\xi_{+\infty}$ is the function defined in~\as{G3}, then
\begin{equation}
  \label{e:pi-0-symbol}
\sigma(B_0(1))|_{S^*M}=f(\xi_{+\infty})\sigma(\widetilde X_1\widetilde X_2^*).
\end{equation}
\end{lemm}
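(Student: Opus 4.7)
My plan is to represent both sides of the identity as semiclassical Lagrangian distributions on $M\times M$ associated to the same canonical relation
\begin{equation*}
\Lambda=\{((m,\nu),(m',-\nu'))\in T^*M\times T^*M\mid (m,\nu)\in \lambda S^*M,\ (m',\nu')=g^s(m,\nu),\ s\in(-T_0,T_0)\},
\end{equation*}
and to determine $B_s$ by matching full symbols via Propositions~\ref{l:lagrangian-mul} and~\ref{l:fourior-mul}. Using \as{A4}, the Schwartz kernel of $\Pi^0_f(\lambda)$ is the oscillatory integral
\begin{equation*}
\int_{\plM}f(\xi)\chi_0(m;\xi)\chi_0(m';\xi)b^0(\lambda,\xi,m;h)\overline{b^0(\lambda,\xi,m';h)}\,e^{i\lambda(\phi_\xi(m)-\phi_\xi(m'))/h}\,d\xi,
\end{equation*}
with oscillatory variable $\xi\in\plM$ and phase $\Phi(m,m',\xi)=\lambda(\phi_\xi(m)-\phi_\xi(m'))$. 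Assumption~\as{G6} ensures $\Phi$ is nondegenerate (the $dm$-components of $d(\partial_{\xi_i}\Phi)$ are independent because $\partial_m\partial_\xi\phi_\xi(m)$ has rank $n$) and identifies its critical set with triples where $\tau(m,\xi)$ and $\tau(m',\xi)$ lie on a common geodesic; the resulting Lagrangian is exactly $\Lambda$ after parametrization by $(m,\xi,s)$, where $s$ is the signed geodesic time between $\tau(m,\xi)$ and $\tau(m',\xi)$.

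To confine $s$ to a bounded interval I use the wavefront hypothesis on $\widetilde X_j$. Since $\pi_S(\WFh(\widetilde X_j))\subset\mathcal{DE}_+\cap\{x\leq\varepsilon_1\}$ and $\chi_0\equiv 1$ on this region by~\as{A7}, both cutoffs $\chi_0$ may be dropped modulo $\mathcal O(h^\infty)$. The compact supports of the Schwartz kernels of $\widetilde X_j$ restrict $m,m'$ to a fixed compact set, and condition~\as{G5} combined with the direct-escape condition forces the geodesic segment joining $\tau(m,\xi)$ to $\tau(m',\xi)$ to remain in a bounded portion of $V^+_\infty$, yielding a finite $T_0>0$ depending only on the supports. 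On the right-hand side, part~3 of Proposition~\ref{l:egorov} provides a parametrix for $U(s)$ with kernel $(2\pi h)^{-(n+1)}\int e^{i(S(m,\nu';s)-m'\cdot\nu')/h}\tilde b(m,\nu',s;h)\,d\nu'$ in local coordinates; after multiplying by $e^{-i\lambda^2s/(2h)}$ and applying stationary phase in $s$, which pins $\partial_sS=\lambda^2/2$ and hence $|\nu'|_g=\lambda$, the kernel of $(2\pi h)^n\int_{-T_0}^{T_0}e^{-i\lambda^2s/(2h)}U(s)B_s(\lambda)\,ds$ becomes a Lagrangian distribution associated to $\Lambda$, with the $(2\pi h)^n$ prefactor chosen precisely to balance the $n$ oscillatory variables on the left against the $n+2$ on the right. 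Solving iteratively for the full symbol of $B_s$ in powers of $h$ then yields a smooth family $B_s(\lambda)\in\Psi^{\comp}(M)$ compactly supported in $s\in(-T_0,T_0)$.

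For the principal-symbol identity~\eqref{e:pi-0-symbol}, I localize at $s=0$, where $\Lambda$ degenerates to the diagonal of $\lambda S^*M$ and the integral $(2\pi h)^n\int e^{-i\lambda^2s/(2h)}U(s)B_s(1)\,ds$ acts microlocally as a pseudodifferential operator with principal symbol $\sigma(B_0(1))|_{S^*M}$ on $S^*M$. On the $\Pi^0_f$-side, the change of variables $\xi\mapsto\nu=\partial_m\phi_\xi(m)$ has Jacobian $|b^0(1,\xi,m;0)|^2$ by~\as{A8}, converting $f(\xi)|b^0|^2\Vol(m)\,d\xi$ into $f(\xi_{+\infty}(m,\nu))\,d\mu_L$ on $V^+_\infty\cap S^*M$; incorporating the action of $\widetilde X_1$ in $m$ and $\widetilde X_2^*$ in $m'$ and reading off the principal symbol gives $\sigma(B_0(1))|_{S^*M}=f(\xi_{+\infty})\sigma(\widetilde X_1\widetilde X_2^*)$. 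The hard part will be the careful matching of normalization constants across the two stationary-phase reductions, one in $\xi$ on the left and one in $(\nu',s)$ on the right; assumption~\as{A8} is tailored precisely so that these constants agree, reducing this step to a direct if notationally intricate computation in local coordinates.
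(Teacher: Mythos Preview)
Your approach is correct and follows the same overall strategy as the paper: both sides are recognized as Lagrangian distributions associated to the canonical relation $\Lambda$ (the paper calls it $\widetilde\Lambda_\lambda$), and the family $B_s(\lambda)$ is constructed by matching full symbols via Proposition~\ref{l:lagrangian-basic}.

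There are two points where the paper proceeds differently. First, for the principal symbol identity~\eqref{e:pi-0-symbol}, the paper avoids your direct stationary-phase computation entirely: instead it takes the trace of $\widetilde X_1\Pi^0_f(1)\widetilde X_2^*Z$ against an arbitrary $Z\in\Psi^{\comp}$, evaluates the left-hand side via the limiting-measure machinery (Proposition~\ref{l:mu-xi-0} and the computation at the end of Section~\ref{s:proofs-1}) and the right-hand side via the trace formula of Lemma~\ref{l:robert-trace}, and concludes by nondegeneracy of the pairing. This sidesteps the normalization-constant bookkeeping you flag as ``the hard part''; your direct route via~\as{A8} is valid in principle, but the trace argument is cleaner precisely because it compares invariant quantities and never touches Maslov factors. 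Second, you do not address smooth dependence of $B_s(\lambda)$ on $\lambda$, which is not automatic (multiplication by $e^{i\psi(\lambda)/h}$ preserves the $\lambda$-wise Lagrangians but destroys smoothness in $\lambda$). The paper handles this by showing that both $h^{-n/2}\widetilde\Pi$ and $h^{-n/2}\Pi_B$ are Lagrangian distributions in the variables $(m,m',\lambda)$ jointly, on the extended Lagrangian $\{(m,\nu,m',\nu',\lambda,q_\lambda)\mid q_\lambda=-\lambda s\}$, using~\eqref{e:g6-calculation} on the left and the explicit phase $e^{-i\lambda^2 s/(2h)}$ on the right; you should add this step.
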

\begin{proof}
We write
$$
\widetilde X_1\Pi^0_f(\lambda)\widetilde X_2^*
=\int_{\plM} f(\xi)(\widetilde X_1 \chi_0E^0_h(\lambda,\xi))
\otimes (\widetilde X_2\chi_0 E^0_h(\lambda,\xi))\,d\xi.
$$
By~\as{A4}, $\chi_0E^0_h(\lambda,\xi)$
is a Lagrangian distribution associated to $\lambda$ times the
Lagrangian $\Lambda_\xi$ from~\eqref{e:lambda-xi}.
By Proposition~\ref{l:lagrangian-mul}, we can write
$$
\widetilde X_j \chi_0E^0_h(\lambda,\xi)(m)=e^{{i\lambda\over h}\phi_\xi(m)}
b_j(\lambda,\xi,m;h)+\mathcal O(h^\infty)_{C_0^\infty},
$$
where $\phi_\xi$ is defined in~\as{G4} and $b_j$ is a classical symbol in $h$
smooth in $\lambda,\xi,m$ and compactly supported in $m$. The symbol $b_j$
depends on the operator $\widetilde X_j$; in fact, we can make
$\supp b_j\subset \tau^{-1}(\pi_S(\WFh(\widetilde X_j)))$, with $\tau$
defined in~\eqref{e:tau}. We then write the Schwartz kernel
of $\widetilde X_1\Pi^0_f(\lambda)\widetilde X_2^*$, modulo
an $\mathcal O(h^\infty)_{C^\infty_0}$ remainder, as
\begin{equation}
  \label{e:pi-kernel}
\widetilde\Pi(m,m';\lambda,h)=\int_{\plM} e^{{i\lambda\over h}(\phi_\xi(m)-\phi_\xi(m'))}
f(\xi)b_1(\lambda,\xi,m;h)\overline{b_2(\lambda,\xi;m',h)}\,d\xi.
\end{equation}
Now, the support of each $b_j$ in the $(m,\xi)$ variables lies in the set
$U^+_\infty$ defined in~\as{G4}. The critical points of the phase
$\lambda(\phi_\xi(m)-\phi_\xi(m'))$ are given by
$\partial_\xi\phi_\xi(m)=\partial_\xi\phi_\xi(m')$; using~\as{G6}, we see
that $h^{-n/2}\widetilde\Pi(m,m';\lambda,h)$ is a Lagrangian distribution
associated to the Lagrangian
\begin{equation}
  \label{e:Lambda-lambda}
\widetilde\Lambda_\lambda:=\{(m,\nu;m',\nu')\mid |\nu|_g=\lambda,\
\exists s\in (-T_0,T_0): g^s(m,\nu)=(m',\nu')\}.
\end{equation}
Here $T_0>0$ is large, but fixed.

Now, take some family $B_s(\lambda)\in\Psi^{\comp}(M)$ smooth and
compactly supported in $s\in(-T_0,T_0)$ and define the operator
\begin{equation}
  \label{e:pi-b}
\Pi_B(\lambda):=(2\pi h)^n\int_{-T_0}^{T_0}e^{-i\lambda^2 s/(2h)}U(s)B_s(\lambda)\,ds.
\end{equation}
Following the proof of Lemma~\ref{l:h-s-estimate}, we see that $h^{-n/2}$ times
the Schwartz kernel $\Pi_B(m,m';\lambda,h)$ of $\Pi_B(\lambda)$ is,
up to an $\mathcal O(h^\infty)_{L^2\to L^2}$ remainder,
a compactly supported and compactly microlocalized Lagrangian distribution
associated to the Lagrangian $\widetilde\Lambda_\lambda$.
Moreover, the principal symbol of $h^{-n/2}\Pi_B(m,m';\lambda,h)$ at
$(m,\nu,m',\nu')$ such that $g^s(m,\nu)=(m',\nu')$ is a nonvanishing
factor times $\sigma(B_s)(m',\nu')$. Arguing as in the proof of part~2
of Proposition~\ref{l:lagrangian-basic}, we see that we can find a family
of operators $B_s(\lambda)$ such that
$$
\widetilde\Pi(m,m';\lambda,h)=\Pi_B(m,m';\lambda,h)+\mathcal O(h^\infty)_{C^\infty_0}.
$$

It remains to check that the family $B_s(\lambda)$ can be chosen to
depend smoothly on $\lambda$ uniformly in $h$ (this is not automatic,
as multiplication by $e^{{i\over h}\psi(\lambda)}$ for some function
$\psi$ destroys this property, but does not change the Lagrangians
where our kernels are microlocalized for each $\lambda$).  For that,
it is enough to note (by Proposition~\ref{l:lagrangian-basic}) that if
we consider~$h^{-n/2}\widetilde\Pi$ and $h^{-n/2}\Pi_B$ as Lagrangian distributions in
$m,m',\lambda$, they are associated to the Lagrangian
$$
\{(m,\nu,m',\nu',\lambda,q_\lambda)\mid
|\nu|_g=\lambda,\ \exists s\in (-T_0,T_0):
g^s(m,\nu)=(m',\nu'),\ q_\lambda=-\lambda s\},
$$
where $q_\lambda$ is the momentum corresponding to $\lambda$.
For $\widetilde\Pi$, this is true as when
$\tau(m',\xi)=g^{\lambda s}(\tau(m,\xi))$, we have $\phi_\xi(m)-\phi_\xi(m')=-\lambda s$
by~\eqref{e:g6-calculation}; for $\Pi_B$,
this is seen directly from the parametrization~\eqref{e:parametrix},
keeping in mind the factor $e^{-i\lambda^2 s/(2h)}$ in the definition of $\Pi_B$.

Finally, to show the formula~\eqref{e:pi-0-symbol}, put $\lambda=1$,
take an arbitrary $Z\in\Psi^{\comp}$, and
compute the trace
\begin{equation}
  \label{e:trace-backwards}
\Tr(\widetilde X_1\Pi^0_f(1)\widetilde X_2^*Z)=(2\pi h)^n\int_{-T_0}^{T_0}e^{-is/(2h)}\Tr(U(s)B_s(1)Z)\,ds
+\mathcal O(h^\infty).
\end{equation}
The left-hand side of~\eqref{e:trace-backwards} can be computed as at
the end of Section~\ref{s:proofs-1}, using the limiting measure
$\mu_\xi$; by Proposition~\ref{l:mu-xi-0}, it is equal to the integral
of $f(\xi_{+\infty})\sigma(\widetilde X_2^*Z\widetilde X_1)$ over the
Liouville measure on $S^*M$, plus an $\mathcal O(h)$ remainder.  The
right-hand side of~\eqref{e:trace-backwards} can be computed by the
trace formula~\eqref{e:robert-trace}, and is equal to the integral of
$B_0(1)Z$ over the Liouville measure on $S^*M$, plus an $\mathcal
O(h)$ remainder. Therefore,
$$
\int_{S^*M}\sigma(Z)f(\xi_{+\infty})\sigma(\widetilde X_1\widetilde X_2^*)\,d\mu_L
=\int_{S^*M}\sigma(Z)\sigma(B_0(1))\,d\mu_L
$$
for any $Z$; \eqref{e:pi-0-symbol} follows.
\end{proof}
%
%
\smallsection{Proof of~\eqref{e:convergence-3}}
By~\eqref{e:analysis2-main}, it is enough to approximate the sum in this formula
up to twice the Ehrenfest time:
%
%
\begin{prop}\label{l:trace-2}
Fix $f\in C^\infty(\plM)$ and $A\in\Psi^{\comp}(M)$ is microlocalized
inside the set~$\mathcal E_{\varepsilon_e}$ defined in~\eqref{e:e-e-e}.
Put $l=2l_e$, where $l_e$ is defined in~\eqref{e:l-e},
and consider the following function 
\begin{equation}
  \label{e:trace-2.1}
S_f(\lambda):=\sum_{j=1}^l \int_{\plM}f(\xi)\langle
\widetilde A^j\chi_0E^0_h(\lambda,\xi),\chi_0E^0_h(\lambda,\xi)\rangle\,d\xi.
\end{equation}
If $\xi_{+\infty}(m,\nu)$ is the limit of $g^t(m,\nu)$ as $t\to +\infty$,
for $(m,\nu)\in S^*M\setminus \Gamma_-$ (see~\as{G3}), then
for $\lambda\in [1,1+h]$,
\begin{equation}
  \label{e:trace-2.2}
S_f(\lambda)=\int_{S^*M} f(\xi_{+\infty})\sigma(A)\,d\mu_L+\mathcal O(r(h,\Lambda_0)).
\end{equation}
Here $r(h,\Lambda)$ is defined in~\eqref{defofrh}. Moreover, for each $k$
\begin{equation}
  \label{e:trace-2.3}
\sup_{\lambda\in [1,1+h]}|\partial_\lambda^k S_f(\lambda)|\leq C_k h^{-k\rho_e},
\end{equation}
where $\rho_e$ is defined in~\eqref{e:rho-e}.
\end{prop}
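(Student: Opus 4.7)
The plan is to pass from the $\xi$-integrated pairing to a trace on $M$ via $\langle Bu,v\rangle=\Tr(B(u\otimes v))$, use Lemma~\ref{l:trace-1} to rewrite $\Pi^0_f(\lambda)$ as an integrated Schr\"odinger propagator over the bounded window $s\in(-T_0,T_0)$, and evaluate the resulting finite-time trace via Lemma~\ref{l:robert-trace}. Boundedness of the $s$-window is crucial: it is what allows the argument to extend up to twice the Ehrenfest time, at which range $\widetilde A^j$ alone is only pseudodifferential in a rather exotic class.

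First I write $S_f(\lambda)=\sum_{j=1}^l \Tr(\widetilde A^j\Pi^0_f(\lambda))$. To meet the hypotheses of Lemma~\ref{l:trace-1}, I insert a cutoff $\widetilde X\in\Psi^{\comp}(M)$ with $\pi_S(\WFh(\widetilde X))\subset \mathcal{DE}_+\cap\{x\leq\varepsilon_1\}$ and $\widetilde X$ equal to the identity microlocally on a neighborhood of $\mathcal{DE}_+\cap\{x<\varepsilon_1\}\cap\mathcal E_{\varepsilon_e/2}$. By \as{G4}, \as{A4} and \as{A7}, the wavefront set of $(1-\varphi)\varphi_{t_0}\chi_0E^0_h(\lambda,\xi)$ lies in $\lambda\Lambda_\xi\cap\pi^{-1}(\{x<\varepsilon_1\})\subset \mathcal{DE}_+\cap\{x<\varepsilon_1\}$ uniformly in $\xi\in\plM$ and $\lambda\in[1,1+h]$, and similarly on the left side of $\widetilde A^j$; hence $\Tr(\widetilde A^j\Pi^0_f)=\Tr(\widetilde A^j\widetilde X\Pi^0_f\widetilde X^*)+\mathcal O(h^\infty)$. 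Lemma~\ref{l:trace-1} together with cyclicity then yields
$$
\Tr(\widetilde A^j\Pi^0_f(\lambda))=(2\pi h)^n\int_{-T_0}^{T_0} e^{-i\lambda^2 s/(2h)}\Tr\bigl(\widetilde A^j U(s)B_s(\lambda)\bigr)\,ds+\mathcal O(h^\infty).
$$

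Next I reduce the inner trace to a form amenable to Lemma~\ref{l:robert-trace}. Splitting $\widetilde A^j=\widetilde C^L_j A^\circ_j \widetilde C^R_j$ with $A^\circ_j:=(U(-t_0)\varphi_1)^{\lfloor j/2\rfloor}A(\varphi U(t_0))^{\lfloor j/2\rfloor}\in\Psi^{\comp}_{\rho_e}$ (by Proposition~\ref{l:ehrenfest}, since $\lfloor j/2\rfloor\leq l_e$) and $\widetilde C^L_j,\widetilde C^R_j$ the remaining $\lceil j/2\rceil\leq l_e$ cutoff-propagator blocks, whose pseudodifferential parts $\widetilde C^L_j U((j/2)t_0)$ and $U(-(j/2)t_0)\widetilde C^R_j$ both lie in $\Psi^{\comp}_{\rho_e}$. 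Using cyclicity, $U(a)U(b)=U(a+b)$, and bounded-time Egorov conjugation of $B_s(\lambda)$ by $U(s)$, I rearrange
$$
\Tr\bigl(\widetilde A^j U(s)B_s(\lambda)\bigr)=\Tr\bigl(U(s)\widetilde B^j_s(\lambda)\bigr),
$$
with $\widetilde B^j_s$ a smooth family in $s\in(-T_0,T_0)$ of compactly supported pseudodifferential operators in an appropriate class $\Psi^{\comp}_{\rho}$ with $\rho<1/2$, microsupported (after an $h^{\rho}$-thickening as in the proof of Proposition~\ref{l:analysis2-3}) inside $g^{\lceil j/2\rceil t_0}(\mathcal T(jt_0))$, and whose principal symbol at $s=0$ equals $f(\xi_{+\infty})\sigma(\widetilde A^j)$ on $S^*M$ by~\eqref{e:pi-0-symbol}. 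The nonreturning condition~\eqref{e:robert-trace-condition} then holds on this microsupport, and Lemma~\ref{l:robert-trace} yields an expansion with leading term $\int_{S^*M}f(\xi_{+\infty})\sigma(\widetilde A^j)\,d\mu_L$ and remainder bounded by $h^{1-2\rho}\mu_L(\mathcal T(jt_0)\cap S^*M)$.

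To finish, I sum over $j=1,\dots,l=2l_e$. The leading terms telescope using the explicit formula $\sigma(\widetilde A^j)=(\sigma(A)\circ g^{-jt_0})(1-\varphi)\prod_{k=1}^{j-1}\varphi\circ g^{-kt_0}$ of Proposition~\ref{l:analysis2-4} together with flow-invariance of~$\mu_L$: the sum equals $\int_{S^*M} f(\xi_{+\infty})\sigma(A)(1-\prod_{k=1}^l\varphi\circ g^{-kt_0})\,d\mu_L$, which gives $\int_{S^*M}f(\xi_{+\infty})\sigma(A)\,d\mu_L$ modulo an $\mathcal O(\mu_L(\mathcal T(lt_0)))$ tail. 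The remainders sum to $\mathcal O(r(h,\Lambda_0))$ via a geometric series in $h^{(1-2\rho)jt_0/t_e}\mu_L(\mathcal T(jt_0))$ dominated by the interpolating definition~\eqref{defofrh}. Estimate~\eqref{e:trace-2.3} follows by differentiating the Schr\"odinger-propagator representation in $\lambda$; each derivative costs at most $h^{-\rho_e}$ from the exotic symbols. The main obstacle is Step~3: precisely tracking, through the half-Ehrenfest split and the various cyclic and Egorov rearrangements, the principal symbol of $\widetilde B^j_s(\lambda)$ (so that telescoping in $j$ reconstructs $\sigma(A)$ exactly) and the shape of its microsupport (so that the remainder has the precise Liouville size $\mu_L(\mathcal T(jt_0))$ needed to sum to $r(h,\Lambda_0)$).
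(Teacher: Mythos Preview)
Your proposal is correct and follows essentially the same route as the paper: rewrite $S_f(\lambda)$ as $\sum_j\Tr(\widetilde A^j\Pi^0_f(\lambda))$, insert a cutoff $\widetilde X$ supported in $\mathcal{DE}_+\cap\{x\leq\varepsilon_1\}$ to invoke Lemma~\ref{l:trace-1}, split each $\widetilde A^j$ at the half-Ehrenfest index via cyclicity of the trace, and apply Lemma~\ref{l:robert-trace} to the resulting $\Psi^{\comp}_{\rho_e}$ family. The telescoping computation and the summation of remainders to $r(h,\Lambda_0)$ are exactly as in the paper, as is the mechanism for~\eqref{e:trace-2.3}.

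Two small points where the paper is more careful than your sketch. First, the nonreturning condition~\eqref{e:robert-trace-condition} is not a consequence of the microsupport lying in $g^{\lceil j/2\rceil t_0}(\mathcal T(jt_0))$ alone (that set can contain closed geodesics); it follows instead from the factors $(1-\varphi_2)$ and $(1-\varphi)$ in $\widetilde A^j$, which by Lemma~\ref{l:geometry-2} force the microsupport of the $B_{2,s}^j$-block into a shifted copy of $\mathcal{DE}_+$. Second, the rearrangement you describe in Step~3 is delicate because the outer block $\widetilde C^R_j$ composed with $U(s)B_s(\lambda)$ and then cycled does not immediately fit the $(X_2U(t_0))^lB(U(-t_0)X_1)^l$ template of Proposition~\ref{l:ehrenfest}; the paper handles this by rewriting
\[
B_{2,s}^j=(\varphi U(t_0))^{j_2}\cdot(1-\varphi)\varphi_{t_0}X_0U(s)B_s\varphi_{t_0}(1-\varphi_2)U(-s)\cdot\bigl(U(-t_0)\,U(s)\varphi_1U(-s)\bigr)^{j_2},
\]
i.e.\ absorbing the bounded-time $U(s)$ into the $X_1$-slot. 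You correctly flag this as the main obstacle; the paper's explicit formula is what makes the symbol and microsupport tracking go through cleanly.
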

\begin{proof}
First of all, take a compactly supported operator $\widetilde X\in
\Psi^{\comp}$ such that $\WFh(\widetilde X)\cap S^*M$ lies inside the
set $\mathcal{DE}_+\cap \{x\leq\varepsilon_1\}$ and
for $X_0$ defined in~\eqref{e:x0},
$$
\begin{gathered}
\varphi U(t_0)(1-\varphi)\varphi_{t_0}X_0(1-\widetilde X)=\mathcal O(h^\infty)_{L^2\to L^2},\\
\varphi_1 U(t_0)(1-\varphi_2)\varphi_{t_0}X_0(1-\widetilde X)=\mathcal O(h^\infty)_{L^2\to L^2}.
\end{gathered}
$$
Such an operator exists by Lemma~\ref{l:geometry-2} (it can be
easily seen that in part~1 of this lemma, $(m,\nu)$ actually
lies in the interior of $\mathcal{DE}_+$).
Then by~\eqref{e:x0-eq}, the definition~\eqref{e:analysis2-main} of~$\widetilde A^j$,
and Lemma~\ref{l:trace-1},
$$
\begin{gathered}
S_f(\lambda)
=\sum_{j=1}^l\int_{\plM} f(\xi)\langle \widetilde A^jX_0\widetilde X\chi_0 E_h^0(\lambda,\xi),
\widetilde X\chi_0 E_h^0(\lambda,\xi)\rangle\,d\xi+\mathcal O(h^\infty)
\\=\sum_{j=1}^l\Tr(\widetilde A^jX_0\widetilde X\Pi^0_f(\lambda)\widetilde X^*)
+\mathcal O(h^\infty)\\
=(2\pi h)^n\sum_{j=1}^l\int_{-T_0}^{T_0} e^{-i\lambda^2s/(2h)}\Tr(\widetilde A^j X_0U(s)B_s(\lambda))\,ds
+\mathcal O(h^\infty)
\end{gathered}
$$
for some fixed $T_0>0$ and some family $B_s(\lambda)\in\Psi^{\comp}$
smooth in $s$ and $\lambda$ and compactly supported in $s$; we can
make $B_s$ microlocalized inside the set $\mathcal E_{\eps_e}$ defined
in~\eqref{e:e-e-e}.  We will henceforth ignore the $\mathcal
O(h^\infty)$ term.

Now, take $1\leq j\leq l$ and put $j=j_1+j_2$, where $0\leq
j_1,j_2\leq l_e$, $j_2\geq 1$, and $|j_1-j_2|\leq 1$. Using the
cyclicity of the trace, we find
$$
\begin{gathered}
\Tr(\widetilde A^j X_0U(s)B_s(\lambda))
= \Tr(U(s)B_1^j B_{2,s}^j(\lambda)),\\
B_1^j := (U(-t_0)\varphi_1)^{j_1}A(\varphi U(t_0))^{j_1},\\
B_{2,s}^j(\lambda) := (\varphi U(t_0))^{j_2}(1-\varphi)\varphi_{t_0}X_0U(s)B_s(\lambda)\varphi_{t_0}(1-\varphi_2)
(U(-t_0)\varphi_1)^{j_2}U(-s).
\end{gathered}
$$
Put $\rho_j=(jt_0/t_e)\rho_e$; since $j_1,j_2\leq j/2+1$, by Proposition~\ref{l:ehrenfest}
the operator $B_1^j$ is a compactly supported element of $\Psi^{\comp}_{\rho_j/2}$
(modulo an $\mathcal O(h^\infty)_{L^2\to L^2}$ remainder which we will ignore).
Same can be said about the operator
$$
B_{2,s}^j(\lambda)=(\varphi U(t_0))^{j_2}\cdot
(1-\varphi)\varphi_{t_0}X_0 U(s)B_s(\lambda)\varphi_{t_0}(1-\varphi_2)U(-s)\cdot
(U(-t_0)\cdot U(s)\varphi_1U(-s))^{j_2}.
$$
(The operator $U(s)\varphi_1 U(-s)$ is not pseudodifferential because
$\varphi_1$ is not compactly microlocalized, but this can be easily
fixed by taking $\widetilde X_0\in \Psi^{\comp}$ equal to the identity
on a sufficiently large compact set and replacing $U(s)\varphi_1U(-s)$
by $U(s)\varphi_1\widetilde X_0 U(-s)$ in~$B_{2,s}(\lambda)$, with an
$\mathcal O(h^\infty)$ error.)  Therefore, $B_1^jB^j_{2,s}(\lambda)$
also lies in $\Psi^{\comp}_{\rho_j/2}$; moreover, it depends smoothly
on $s$ and $\lambda$, uniformly in this operator class. (In principle,
we get powers of $l\sim\log(1/h)$ when differentiating in $s$, due to
the $(U(-t_0)\cdot U(s)\varphi_1 U(s))^{j_2}$ term, but they can be
absorbed into the powers of $h$ in the
expansion~\eqref{e:robert-trace}.)

We now use the trace formula of Lemma~\ref{l:robert-trace}, writing
\begin{equation}
  \label{e:s-f-ultimate}
S_f(\lambda)=(2\pi h)^n\sum_{j=1}^l\int_{-T_0}^{T_0}e^{-i\lambda^2 s/(2h)}\Tr(U(s)B_1^jB_{2,s}^j(\lambda))\,ds.
\end{equation}
The operator $B_{2,s}(\lambda)^j$ is microsupported, in the sense of
Definition~\ref{d:microlocal-vanishing}, inside the set
$g^{-j_2t_0}(\{\varphi_2\neq 1\})\cap g^{-(j_2-1)t_0}(\{\varphi_1\neq
0\})$; by Lemma~\ref{l:geometry-2}, this set lies
inside $g^{-j_2t_0}(\mathcal{DE}_+)$ and in particular does not intersect
any closed geodesics, therefore~\eqref{e:robert-trace-condition}
holds. The estimate~\eqref{e:trace-2.3} now follows immediately
from~\eqref{e:robert-trace}.  The power $h^{-k\rho_e}$ arises because
we integrate over the energy surface $\{|\nu|_g=\lambda\}$ depending
on $\lambda$; therefore, $\partial^k_\lambda S_{f}(\lambda)$ will
involve $k$th derivatives of the full symbol of
$B_1^jB^j_{2,s}(\lambda)$ in the direction transversal to the energy
surface, which are bounded by $h^{-k\rho_j}$. The
sum~\eqref{e:s-f-ultimate} has $l\sim\log(1/h)$ terms; however, our
estimate is not multiplied by $\log(1/h)$ because one can see that the
sum of Liouville measures of the sets where these terms are
microsupported is bounded.

As for the approximation~\eqref{e:trace-2.2}, we write (note that we take $s=0$ in
$B_{2,s}$)
$$
\begin{gathered}
\sigma(B_1^j)=(\sigma(A)\circ g^{-j_1t_0})\prod_{k=1}^{j_1-1} \varphi\circ g^{-kt_0},\\
\sigma(B_{2,0}^j(\lambda))|_{S^*M}=\big((1-\varphi)\sigma(B_0(\lambda))\big)\circ g^{j_2t_0}\prod_{k=0}^{j_2-1}\varphi\circ g^{kt_0}.
\end{gathered}
$$
Since the Liouville measure is invariant under the geodesic flow,
the contribution of the principal term of~\eqref{e:robert-trace} to $S_f(\lambda)$ for
$\lambda=1$ is
$$
\int_{S^*M}\sigma(A)\sum_{j=1}^l\big((1-\varphi)\sigma(B_0(\lambda))\big)\circ g^{jt_0}
\prod_{k=1}^{j-1} \varphi\circ g^{kt_0}\,d\mu_L.
$$
Now, by~\eqref{e:pi-0-symbol}, $\sigma(B_0(1))=f(\xi_{+\infty})$
on the support of the integrated expression; recombining the terms as in the proof
of Proposition~\ref{l:analysis2-4}, we get the right-hand side of~\eqref{e:trace-2.2},
with an $\mu_L(\mathcal T(lt_0))$ remainder. The subprincipal terms (and
also the difference $S_f(\lambda)-S_f(1)$ for $\lambda\in [1,1+h]$) are estimated
using the bound on the Liouville measure of the set where $B_1^jB^j_{2,s}$ is
microsupported; arguing as in the proof of Proposition~\ref{l:analysis2-4},
we see that they are bounded by a constant times $r(h,\Lambda_0)$.
\end{proof}
%
%

\smallsection{Expansion of the trace of spectral projectors in powers of $h$}
We now use the results obtained so far to derive an asymptotic
expansion for the trace of the product of the spectral projector
$\indic_{[0,\lambda^2]}(P(h))$ with a compactly supported
pseudodifferential operator, with the remainder depending on the
classical escape rate for up to twice the Ehrenfest time. Here we denote
\begin{equation}
  \label{e:p-h}
P(h):=h^2(\Delta-c_0),
\end{equation}
with the constant $c_0$ from~\as{A1}. It will also be more convenient for us to use
the spectral parameter $s=\lambda^2$ in the following corollary and theorem
(not to be confused with the time variable $s$ used in Lemma~\ref{l:trace-1}).

We start with the following consequence of the decomposition~\eqref{e:analysis2-main},
the bound~\eqref{e:trace-2.2}, and the spectral formula~\eqref{e:spectrum-eis-2}:
%
%
\begin{corr}
  \label{estiminhbox}
Take $\Lambda_0>\Lambda_{\max}$, with $\Lambda_{\max}$ defined
in~\eqref{lamax}, and let $\mathcal T(t)$ be defined
in~\eqref{e:T-t}. For $\eps>0$, let $\varphi\in
C_0^\infty((1-\eps,1+\eps))$ equal to $1$ near $[1-\eps/2,1+\eps/2]$
and for $s\in \mathbb R$, define $\varphi_s:=\varphi\cdot
\indic_{(-\infty,s]}$. If $\eps>0$ is small enough, then for each compactly
supported $A \in \Psi^0(M)$, there exist some functions
$S_h(s),Q_h(s)$ and some constants $C>0$, $C_k>0$ such that for all
$s\in\rr$ and all $k\in\nn$
\begin{equation}
  \label{e:estiminhbox}
\begin{gathered}
\Tr\big(A\varphi_s(P(h))\big)=S_h(s)+Q_h(s),\
|\pl_{s}^kS_h(s)|\leq C_kh^{-n-1-k/2},\\
 |Q_h(s+u)-Q_h(s)|\leq Ch^{-n}\mu_L\bigg(\mathcal T\bigg(\frac{|\log h|}{\Lambda_0}\bigg)\bigg)
+\mathcal O(h^\infty)
\text{ for }u\in [0,h].
\end{gathered}
\end{equation}
\end{corr}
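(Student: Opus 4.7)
The plan is to combine the spectral representation~\eqref{e:spectrum-eis-2} with the near-Ehrenfest-scale decomposition~\eqref{e:analysis2-main}. Choose $\eps>0$ so small that $\supp\varphi\subset((1-\eps_e/2)^2,(1+\eps_e/2)^2)$. Split $A=A_1+A_2$ where $A_1\in\Psi^{\comp}(M)$ is compactly supported and microlocalized in $\mathcal E_{\eps_e}$, and $\WFh(A_2)\cap\mathcal E_{\eps_e/2}=\emptyset$; by the functional calculus for $P(h)$ and the elliptic estimate (cf.\ Proposition~\ref{l:elliptic-our}), $\Tr(A_2\varphi_s(P(h)))$ together with all its $s$-derivatives is $\mathcal O(h^\infty)$ uniformly in $s$, and can be absorbed into $S_h$. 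For $A_1$, assumption~\as{A2} gives
\[
\Tr(A_1\varphi_s(P(h)))=(2\pi h)^{-n-1}\int_0^{\sqrt{s_+}}\varphi(\lambda^2)\lambda^nf_\Pi(\lambda/h)\rho_{A_1}(\lambda)\,d\lambda,\qquad s_+:=\max(s,0),
\]
where $\rho_{A_1}(\lambda):=\int_{\plM}\langle A_1E_h(\lambda,\xi),E_h(\lambda,\xi)\rangle\,d\xi$.

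Apply~\eqref{e:analysis2-main} with $l=2l_e$ (so $lt_0=|\log h|/\Lambda_0+\mathcal O(1)$) and paired with $f\equiv 1$ to split $\rho_{A_1}(\lambda)=S_1(\lambda)+R_1(\lambda)$, with $S_1(\lambda)$ the sum in~\eqref{e:trace-2.1} with $f\equiv 1$. The argument of Proposition~\ref{l:trace-2} and the derivative bound~\eqref{e:trace-2.3} apply verbatim for $\lambda$ in any fixed compact subset of $(0,\infty)$ (the value $\lambda=1$ there was only a normalization; the operators $B_s(\lambda)$ of Lemma~\ref{l:trace-1} and the phase $e^{-i\lambda^2s/(2h)}$ are smooth in $\lambda$), yielding
\[
|\partial_\lambda^kS_1(\lambda)|\leq C_kh^{-k\rho_e},\qquad \int_{\lambda_0}^{\lambda_0+h}|R_1(\lambda)|\,d\lambda\leq Ch\,\mu_L\bigl(\mathcal T(|\log h|/\Lambda_0)\bigr)+\mathcal O(h^\infty)
\]
uniformly for $\lambda,\lambda_0$ in a compact neighborhood of $\sqrt{\supp\varphi}$. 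The $L^1$ bound on $R_1$ comes from integrating the $L^1_{\xi,\lambda}$ remainder of~\eqref{e:analysis2-main} against $f\equiv 1$ in $\xi$; the extension from $[1,1+h]$ to arbitrary $[\lambda_0,\lambda_0+h]$ is just a translation of the averaged estimates of Section~\ref{s:general.averaged}.

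Set $S_h(s):=(2\pi h)^{-n-1}\int_0^{\sqrt{s_+}}\varphi(\lambda^2)\lambda^nf_\Pi(\lambda/h)S_1(\lambda)\,d\lambda+\Tr(A_2\varphi_s(P(h)))$ and $Q_h(s):=\Tr(A\varphi_s(P(h)))-S_h(s)$. Differentiating the first summand in $s$ produces inverse powers of $\sqrt{s_+}$ and $\lambda$-derivatives of the integrand evaluated at $\lambda=\sqrt{s_+}$; using the symbol bound $|f_\Pi^{(k)}(z)|\leq C_k\langle z\rangle^{-k}$ (which makes $\partial_\lambda^kf_\Pi(\lambda/h)$ bounded for $\lambda$ bounded below) together with $|\partial_\lambda^jS_1|\leq C_jh^{-j\rho_e}$, one gets $|\partial_s^kS_h(s)|\leq C_kh^{-n-1-(k-1)\rho_e}\leq C_kh^{-n-1-k/2}$ via $\rho_e<1/2$ from~\eqref{e:rho-e}. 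For the difference $Q_h(s+u)-Q_h(s)=(2\pi h)^{-n-1}\int_{\sqrt{s_+}}^{\sqrt{(s+u)_+}}\varphi(\lambda^2)\lambda^nf_\Pi(\lambda/h)R_1(\lambda)\,d\lambda$ with $u\in[0,h]$ and $s$ near $\supp\varphi$, the interval of integration has length $\leq u/(2\sqrt{s_+})\leq Ch$, and the $L^1$ bound on $R_1$ gives exactly $|Q_h(s+u)-Q_h(s)|\leq Ch^{-n}\mu_L(\mathcal T(|\log h|/\Lambda_0))+\mathcal O(h^\infty)$. For $s$ outside a neighborhood of $\supp\varphi$ both summands of $\Tr(A\varphi_s(P(h)))$ are locally constant in $s$, so the estimates are trivial. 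The one nontrivial point to verify is the uniformity in $\lambda$ of~\eqref{e:analysis2-main} and~\eqref{e:trace-2.3} over a compact neighborhood of $\sqrt{\supp\varphi}$ rather than just~$[1,1+h]$; this is routine since all the cutoffs, pseudodifferential constructions, and the trace formula of Lemma~\ref{l:robert-trace} used in Sections~\ref{s:proofs-2} and~\ref{s:proofs-3} depend only on $\lambda$ being bounded away from $0$ and $\infty$.
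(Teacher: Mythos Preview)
Your approach is essentially the paper's: express the trace via~\eqref{e:spectrum-eis-2}, feed in the decomposition~\eqref{e:analysis2-main} at $l=2l_e$, let $S_h$ carry the $S_1(\lambda)$ piece, and use~\eqref{e:trace-2.3} for the derivative bound while the $L^1_{\xi,\lambda}$ remainder of~\eqref{e:analysis2-main} controls $Q_h(s+u)-Q_h(s)$.

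There is one technical slip. You place the elliptic piece $\Tr(A_2\varphi_s(P(h)))$ into $S_h$ and assert that all its $s$-derivatives are $\mathcal O(h^\infty)$. Differentiating in $s$ produces $(2\pi h)^{-n-1}$ times the integrand $\varphi(\lambda^2)\lambda^nf_\Pi(\lambda/h)\rho_{A_2}(\lambda)$ evaluated at $\lambda=\sqrt{s_+}$, so you need a \emph{pointwise} bound on $\rho_{A_2}(\lambda)=\int_{\plM}\langle A_2E_h(\lambda,\xi),E_h(\lambda,\xi)\rangle\,d\xi$ and its $\lambda$-derivatives. Proposition~\ref{l:elliptic-our} (and the functional calculus you cite) only yields $L^1_{\xi,\lambda}$ control; the elliptic parametrix gives $A_2E_h=R_\lambda E_h$ with $R_\lambda=\mathcal O(h^\infty)_{\Psi^{-\infty}}$, but converting this to a pointwise bound would require $\|\chi E_h(\lambda,\xi)\|_{L^2}$ to be polynomially bounded pointwise in $(\lambda,\xi)$, which is not available without a resolvent estimate that can fail in the presence of trapping.

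The fix is immediate: put the $A_2$ contribution into $Q_h$ rather than $S_h$, exactly as the paper does (it simply reduces to $A\in\Psi^{\comp}$ and defines $S_h$ from $S_1$ alone). Then only the difference $\Tr(A_2(\varphi_{s+u}-\varphi_s)(P(h)))$ for $u\in[0,h]$ is needed, and this is an integral of $\rho_{A_2}$ over a $\lambda$-interval of length $\leq Ch$, which the $L^1$ bound of Proposition~\ref{l:elliptic-our} handles directly. With that correction your argument matches the paper's proof.
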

\begin{proof}
By~\eqref{e:spectrum-eis-2},
$$
\Tr\big(A\varphi_s(P(h))\big)=(2\pi h)^{-n-1}
\int\limits_{\sqrt{1-\eps}}^{\sqrt{1+\eps}} \lambda^n f_\Pi(\lambda/h)\varphi_s(\lambda^2)\int\limits_{\plM}
\langle A E_h(\lambda,\xi),E_h(\lambda,\xi)\rangle \,d\xi d\lambda.
$$
By Proposition~\ref{l:elliptic-our}, we may assume that $A\in\Psi^{\comp}$.
Now, note that the decomposition~\eqref{e:analysis2-main} (with $l=2l_e$) is still valid in
any $\mathcal O(h)$ sized interval inside $(\sqrt{1-\eps},\sqrt{1+\eps})$, if
 $\eps$ is small enough. More precisely, if we write
$$
S_h(s):=(2\pi h)^{-n-1}\int\limits_{\sqrt{1-\varepsilon}}^{\sqrt{1+\varepsilon}}
\lambda^nf_\Pi(\lambda/h)\varphi_s(\lambda^2)S_1(\lambda)\,d\lambda,
$$
where $S_1(\lambda)$ is defined by~\eqref{e:trace-2.1} with
$f(\xi)\equiv 1$, then we have the expansion~\eqref{e:estiminhbox}
with $Q_h(s)$ satisfying the required bound. To estimate the
derivatives of $S_h(s)$, it now suffices to use the
bound~\eqref{e:trace-2.3}, noting that it is valid for
$|\lambda^2-1|<\varepsilon$ if $\eps$ is small enough.
\end{proof}
%
%

Using the last corollary, we can show the following trace decomposition
with a fractal remainder, the proof of which is based on a Tauberian argument:
%
%
\begin{theo}\label{asympofs_A}
Let $P(h)$ be defined in~\eqref{e:p-h}, let $a\in S^{0}(M)$ be compactly
supported 
 and $A={\rm Op}_h(a)\in
\Psi^{0}(M)$ a compactly supported  quantization. Then there exist some smooth differential 
operators $L_j$ of order $2j$ on $T^*M$, depending on the quantization procedure ${\rm
Op}_h$, with $L_0=1$, such that for any compact interval $I\subset (0,\infty)$, all $s\in I$, all $h>0$ 
small, and all $N\in\nn$ 
\[
\begin{split}
{\rm Tr}\big(A\indic_{[0,s]}(P(h))\big)= &(2\pi h)^{-n-1}\sum_{j=0}^{N} h^j\int\limits_{|\nu|_g^2\leq s}L_ja\, d\mu_\omega
+h^{-n}\mc{O}\big(\mu_L(\mc{T}(\Lambda_0^{-1}|\log h|))+h^{N}\big)
\end{split}
\]
where the remainder is uniform in $s$. Here $\mu_\omega$ is the
standard volume form on $T^*M$; we have
$\mu_\omega=\omega_S^{n+1}/(n+1)!$, where $\omega_S$ is the symplectic
form.
\end{theo}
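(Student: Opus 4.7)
The strategy is a Tauberian argument that upgrades Corollary~\ref{estiminhbox} to pointwise control of $\Tr(A\indic_{[0,s]}(P(h)))$. By Proposition~\ref{l:elliptic-our} and a partition of unity in the spectral parameter on $[0,\max I + 1]$ into smooth cutoffs $\chi_j$ of small support, I first split
\[
\Tr(A\indic_{[0,s]}(P(h))) = \sum_j \Tr(A\chi_j(P(h))\indic_{[0,s]}(P(h))).
\]
Cutoffs $\chi_j$ supported strictly below $s$ yield smooth traces with the full semiclassical expansion, which follows from Corollary~\ref{estiminhbox} evaluated at $s=+\infty$ on each shifted cutoff, combined with the strengthened $\mu_L(\mc{T})$-type remainder in Proposition~\ref{l:trace-2} noted after~\eqref{e:analysis2-main}. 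Cutoffs supported above $s$ vanish. Only the boundary term, with $\chi_{j_0}$ localized in a small neighbourhood of $s$, requires the genuine Tauberian argument.

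For the boundary term, I decompose $A=A_+-A_-$ with $A_\pm\ge 0$ in $\Psi^{\comp}$ modulo $\mc{O}(h)_{\Psi^{\comp}}$ errors, via sharp G\aa rding applied to $\Real(a)$ and $\Imag(a)$; the errors contribute at most $\mc{O}(h^{-n})$ to the trace, absorbable in the remainder. Then $s\mapsto \Tr(A_\pm \chi_{j_0}(P(h))\indic_{[0,s]}(P(h)))$ is nondecreasing. Fix a mollifier $\rho\in C_0^\infty((-\tfrac12,\tfrac12))$ with $\rho\ge 0$, $\int\rho=1$, and set
\[
\tilde\psi_s(t):=\int \indic_{[0,s]}(t-hu)\rho(u)\,du,
\]
a smooth function equal to $1$ on $t\le s-h/2$ and to $0$ on $t\ge s+h/2$. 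Monotonicity gives the sandwich
\[
\Tr(A_\pm\chi_{j_0}(P(h))\tilde\psi_{s-h}(P(h)))\le \Tr(A_\pm\chi_{j_0}(P(h))\indic_{[0,s]}(P(h)))\le \Tr(A_\pm\chi_{j_0}(P(h))\tilde\psi_{s+h}(P(h))),
\]
reducing the theorem to an expansion of $\Tr(A\chi_{j_0}(P(h))\tilde\psi_{s'}(P(h)))$ for $s'=s\pm h$ together with an $h^{-n}\mc{O}(\mu_L(\mc{T}))$ bound on the sandwich gap.

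Both pieces proceed by revisiting the proof of Corollary~\ref{estiminhbox} with $\varphi_s$ replaced by $\chi_{j_0}\tilde\psi_{s'}$: insert the decomposition \eqref{e:spectrum-eis-2}, integrate against the $\lambda$-pointwise full expansion of $\int_{\plM}\langle AE_h(\lambda,\xi),E_h(\lambda,\xi)\rangle\,d\xi$ with $\mu_L(\mc{T}(\Lambda_0^{-1}|\log h|))$ remainder from the strengthened Proposition~\ref{l:trace-2}, and absorb the difference between $\tilde\psi_{s'}(|\nu|_g^2)$ and $\indic_{|\nu|_g^2\le s}$ (supported in a width-$\mc{O}(h)$ neighbourhood of the energy surface) into the remainder. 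The sandwich gap $\Tr(A\chi_{j_0}(P(h))(\tilde\psi_{s+h}-\tilde\psi_{s-h})(P(h)))$ is a spectral trace over a window of width $\mc{O}(h)$; this is precisely the increment controlled by the $Q_h$ bound of Corollary~\ref{estiminhbox}, giving $h^{-n}\mc{O}(\mu_L(\mc{T}))$ plus the Weyl contribution of order $h^{-n}$ absorbed in the expansion. The main obstacle is the $h^{-n}$ (rather than the naive $h^{-n-1}$) improvement at this step: the pointwise-in-$\lambda$ expansion from Proposition~\ref{l:trace-2} integrates against $\tilde\psi_{s+h}-\tilde\psi_{s-h}$ over a range of $\lambda$ of width $\mc{O}(h)$, so the $(2\pi h)^{-n-1}$ prefactor combines with this width to yield the claimed $h^{-n}\mc{O}(\mu_L(\mc{T}))$, provided the remainder in Proposition~\ref{l:trace-2} is uniform in $\lambda$ and survives in the mildly exotic symbol class generated by the $h$-scale variation of $\tilde\psi_{s'}$~--- a uniformity that must be verified by revisiting the Ehrenfest-time analysis of Propositions~\ref{l:analysis2-4} and~\ref{l:trace-2}.
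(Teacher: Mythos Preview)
Your Tauberian strategy is along the right lines, but the monotonicity step via sharp G\aa rding contains a genuine gap. When you write $A=A_+-A_-$ with $A_\pm\geq 0$ modulo an $\mc{O}(h)_{\Psi^{\comp}}$ error $E$, the contribution $\Tr(E\,\chi_{j_0}(P(h))\indic_{[0,s]}(P(h)))$ is of size $\mc{O}(h)\cdot\mc{O}(h^{-n-1})=\mc{O}(h^{-n})$, since $\chi_{j_0}$ has fixed (not $h$-dependent) width. Even if you instead keep $A=A_+-A_-$ exact and use $A_\pm\geq -Ch$, the sandwich inequality acquires an error $Ch\cdot\Tr\big(\chi(\tilde\psi_{s+h}-\tilde\psi_{s-h})(P(h))\chi\big)=\mc{O}(h)\cdot\mc{O}(h^{-n})=\mc{O}(h^{-n+1})$. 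Neither of these is absorbable into the stated remainder $h^{-n}\mc{O}(\mu_L(\mc{T}(\Lambda_0^{-1}|\log h|))+h^N)$: in the Axiom~A regime one has $\mu_L(\mc{T}(\Lambda_0^{-1}|\log h|))=\mc{O}(h^{-P(J^u)/\Lambda_0})$, which is $o(h)$ whenever $-P(J^u)/\Lambda_0>1$ (e.g.\ convex co-compact quotients with $\delta<n-1$), so your ``absorbable'' claim fails exactly in the cases where the theorem is strongest. The positivity route cannot be repaired by iteration, since at each step you trade an $h$ for a full spectral window and never beat $h^{-n+1}$.

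The paper avoids monotonicity entirely. It writes $\sigma_{A,h}(s):=\Tr(A\varphi_s(P(h)))$ and compares it with the convolution $\sigma_{A,h}\star\theta_h$, where $\hat\theta$ is compactly supported; the smoothed trace $\sigma'_{A,h}\star\theta_h$ is then expanded directly by the short-time propagator trace formula (Lemma~\ref{l:robert-trace}), and integrated in $s$. The key point is that the decomposition $\sigma_{A,h}=S_h+Q_h$ of Corollary~\ref{estiminhbox} gives derivative bounds $|\partial_s^kS_h|\leq C_kh^{-n-1-k/2}$ (the exponent $k/2$ coming from the mildly exotic class $\Psi^{\comp}_{\rho_e}$, $\rho_e<1/2$), so integration by parts on the Fourier side yields $S_h-S_h\star\theta_h=\mc{O}(h^\infty)$; the rough part $Q_h$ then supplies $Q_h-Q_h\star\theta_h=\mc{O}(h^{-n}\mu_L(\mc{T}))$ straight from its increment bound. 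The smooth away-from-$s$ piece is handled by Helffer--Sj\"ostrand functional calculus rather than by Corollary~\ref{estiminhbox}. In short, the role you assign to positivity is played instead by the sub-$h^{-k}$ growth of $\partial_s^kS_h$, and this is what makes the remainder genuinely $h^{-n}\mc{O}(\mu_L(\mc{T}))$ rather than merely $\mc{O}(h^{-n})$.
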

\begin{proof} 
By rescaling $h$, it suffices to prove the result for $|s-1|\leq
\eps/2$ where $\eps>0$ is obtained in Corollary~\ref{estiminhbox}, we
can thus assume $|s-1|\leq \eps/2$.Ê Let $\varphi_s$ be defined as in
Corollary~\ref{estiminhbox}, and $\psi\in C_0^\infty((-1+\varepsilon/2,1-\varepsilon/2))$ such
that $\psi+\varphi=1$ on $[0,1+\eps/2]$. For
$s\in(1-\eps/2,1+\eps/2)$, one has
$\indic_{[0,s]}(P(h))=\varphi_s(P(h))+\psi(P(h))$ and it suffices to study
the expansion in $h$ of $ \sigma_{A,h}(s)$ and $\Tr(A\psi(P(h)))$
where
\begin{equation}
  \label{sigmaAh}
\sigma_{A,h}(s):=\Tr\big(A\varphi_s(P(h))\big)=\Tr\big(A\varphi_s(P(h))\chi\big).
\end{equation}  
if $\chi\in C_0^\infty(M)$ is such that $A=\chi A\chi$.  Since $A$ is
compactly supported, one can use the functional calculus of
Helffer--Sj\"ostrand \cite[Chapters~8-9]{d-s} to deduce that
$A\psi(P(h))\chi\in \Psi^{\comp}(M)$ is a compactly supported
and microsupported pseudodifferential operator\footnote{An alternative method in the Euclidean near infinity setting is the
functional calculus of Helffer--Robert \cite{HeRo}.}. Its trace 
has a complete expansion in powers of $h$ (see \cite[Th 9.6]{d-s}):
\begin{equation}
\label{traapsi}
\Tr(A\psi(P(h))\chi)=(2\pi h)^{-n-1}\sum_{j=0}^Nh^j\int_{T^*M}L''_ja \,d\mu_\omega+\mc{O}(h^{-n+N})
\end{equation} 
where $L''_j$ are some differential operators of order $2j$ and $L''_0a(m,\nu)=a(m,\nu)\psi(|\nu|_g^2)$. 

Let us now analyze the function $\sigma_{A,h}$. This is a smooth
function of $s>0$ by the smoothness assumption on the $E_h(\la,\xi)$
in $\la$, it is constant in $s$ for $|1-s|>\varepsilon$, and we know
that $\sigma_{A,h}(s)=\mc{O}(h^{-n-1})$ uniformly in $s$ by
Lemma~\ref{l:h-s-estimate}. Let $\theta(s)\in \mathscr{S}(\rr)$
be a Schwartz function such that $\hat{\theta}\in
C_0^\infty(-\eta,\eta)$ for some small $\eta>0$ and
$\hat{\theta}(t)=1$ near $t=0$, and let
$\theta_h(s)=h^{-1}\theta(s/h)$. We write
\[
\sigma'_{A,h}(s):=\pl_s \sigma_{A,h}(s)=\Tr(A\varphi(P(h))d\Pi_{s}(P(h))\chi)\in C_0^\infty((0,\infty))
\] 
where $d\Pi_s(P(h))$ is the spectral measure of $P(h)$.  The operator
$A\varphi(P(h))d\Pi_{s}(P(h))\chi $ has a smooth compactly supported
kernel and is trace class. We clearly have $\sigma'_{A,h}\star
\theta_h\in \mathscr{S}(\rr)$ and by a simple computation, its
semi-classical Fourier transform is given by
\[
\int_{\rr}e^{-i\frac{t}{h}s}\sigma'_{A,h}\star \theta_h(s)ds=
\Tr(A\varphi(P(h))e^{-i\frac{t}{h}P(h)})\hat{\theta}(t)
\]
and thus
\[
\sigma'_{A,h}\star \theta_h(s)=\frac{1}{2\pi h}\int_{\rr}e^{i\frac{s}{h}t} \Tr(A\varphi(P(h))e^{-i\frac{t}{h}P(h)})\hat{\theta}(t)dt.
\]
Now we can apply Lemma~\ref{l:robert-trace} with
$B_s=\demi e^{ic_0hs/2}\hat{\theta}(-s/2)A\varphi(P(h))$; the
condition~\eqref{e:robert-trace-condition} is satisfied because
$\hat\theta$ is supported in a small neighborhood of zero. This shows
that, as $h\to 0$, we have the expansion (locally uniformly in $s$)
\[
\sigma'_{A,h}\star \theta_h(s)=(2\pi h)^{-n-1}\Big(\sum_{j=0}^{N} h^j\int_{S^*M}\widetilde L_jb(m,\sqrt{s}\nu)d\mu_L(m,\nu)+
\mc{O}(h^{N+1})\Big)
\]
for all $N\in\nn$, where $b$ is a symbol such that ${\rm
Op}_h(b)=\demi A\varphi(P(h))+\mc{O}(h^\infty)$, $\widetilde L_j$ are
differential operators of order $2j$ on $T^*M$, smooth in $\sqrt{s}$,
with $\widetilde L_0=s^{\frac{n-1}{2}}$.  In particular, one has
$\widetilde L_0b(m,\sqrt{s}\nu)=\demi
s^{\frac{n-1}{2}}a(m,\sqrt{s}\nu)\varphi(s|\nu|_g^2)+\mc{O}(h)$.  Notice that $b$
is supported in $\{(m,\nu)\in T^*M\mid |\nu|_g^2\in \supp\varphi\}$ thus
$\widetilde L_jb(m,\sqrt{s}\nu)$ is smooth in $s\in\rr$ when $(m,\nu)\in
S^*M$. Since $\sigma_{A,h}(s)$ is bounded in $s$, the convolution
$\sigma_{A,h}\star \theta_h(s)$ is well defined (as an element in
$L^\infty(\rr)$) and we have for all $N$ and for all $s\leq 2$
\[
\begin{split}
\sigma_{A,h}\star\theta_h(s)= (2\pi h)^{-n-1}\Big(\sum_{j=0}^{N} h^j\int_{0}^s
\int_{S^*M}\widetilde L_jb(m,\sqrt{u}\nu)d\mu_L(m,\nu)du+\mc{O}(h^{N+1})\Big). 
\end{split}
\]
We are going to show that, uniformly in $s\in\rr$,
\begin{equation}\label{needtoprove}
\sigma_{A,h}(s)-\sigma_{A,h}\star\theta_h(s)=\mc{O}(h^\infty)+\mc{O}\big(h^{-n}\mu_L(\mc{T}(\Lambda_0^{-1}|\log h|))\big),
\end{equation}
using the decomposition
$$
\sigma_{A,h}(s)=S_h(s)+Q_h(s)
$$ 
defined in \eqref{e:estiminhbox}. Since $\pl_s S_h(s)$ is a compactly supported symbol  
we get by integrating by parts $N$ times
\[
(1-\hat{\theta}(t)) \int e^{-i\frac{t}{h}s}\pl_s S_{h}(s)ds=(1-\hat{\theta}(t))\int e^{-i\frac{t}{h}s}
\bigg(\frac{h}{it}\bigg)^N\pl^{N+1}_{s}
(S_{h}(s))ds=\mc{O}\bigg(\frac{h^{N/4}}{(1+|t|)^{N}}\bigg)
\]
for all $t\in \rr$ and all $N\gg 1$. Thus, taking the Fourier transform 
we deduce that $S_h(s)-S_h\star \theta_h(s)=\mc{O}(h^{\infty})$
uniformly in $s$.  From \eqref{e:estiminhbox}, we obtain by induction that for all $s,u$  
\[
|Q_h(s+u)-Q_h(s)|\leq Ch^{-n}\bigg(1+\frac{|u|}{h}\bigg)\mu_L(\mc{T}(\Lambda_0^{-1}|\log h|))
+\mathcal O(h^\infty)
\]
then multiplying by $\theta_h(-u)$ and integrating in $u$, we obtain~\eqref{needtoprove}.

Given~\eqref{needtoprove}, we have
\begin{equation}
\label{expsigma}
\begin{gathered}
\sigma_{A,h}(s)=(2\pi h)^{-n-1}\sum_{j=0}^{N} h^j\int_{0}^s
\int_{S^*M}\widetilde L_jb(m,\sqrt{u}\nu)d\mu_L(m,\nu)du\\
+\mc{O}\big(h^{-n}\mu_L(\mc{T}(\Lambda_0^{-1}|\log h|))\big)+\mathcal O(h^{-n+N}).
\end{gathered}
\end{equation}
Since the symbol of $b$ is explicitly obtained from $a$ using Moyal
product, we can rewrite this expression with $a$ instead of $b$ and
with some new differential operators with the same properties as
$\widetilde L_j$ but supported in $\{|\nu|^2_g\in \supp\varphi\}$; using
polar coordinates $S^*M\x \rr^+_{\sqrt{u}}$ on $T^*M$, we deduce that
there exist some differential operators $L_j'$ of order $2j$ on $T^*M$
such that
\[
\int_{0}^s \int_{S^*M}\widetilde L_jb(m,\sqrt{u}\nu)d\mu_L(m,\nu)du=\int_{|\nu|^2_g\leq s}L'_ja(m,\nu)d\mu_\omega(m,\nu)
\] 
and $L'_0a(m,\nu)= \varphi(|\nu|^2_g)a(m,\nu)$.  Combining this with
\eqref{expsigma} and \eqref{traapsi}, we obtain the desired result
where $L_j$ in the statement of the Theorem corresponds now to
$L'_j+L''_j$.
\end{proof}
%
%

\section{Euclidean near infinity manifolds}
  \label{s:euclidean}

In this section, we assume that $(M,g)$ is a complete Riemannian
manifold such that there exists a compact set $K_0\subset M$ such that
for $\mathcal E:=M\setminus K_0$,
\[
(\mathcal E,g) \textrm{ is isometric to }(\rr^{n+1}\setminus B(0,R), g_{\rm eucl})
\]
where $R>0$, $B(0,R)$ is the Euclidean ball of center $0$ and radius
$R$ and $g_{\rm eucl}$ is the Euclidean metric. We will check that all
the assumptions of Section~\ref{s:general} are satisfied.

\subsection{Geometric assumptions}
  \label{s:euclidean.geometry}

We let $x\in C^\infty(M)$ be an everywhere positive function equal to
$x(m)=|m|^{-1}$ in $\mc{E}$ identified with $\rr^{n+1}\setminus
B(0,R)$, and such that $x\geq R^{-1}$ in $K_0$. (We take it instead of
the function $(1+|m|^2)^{-1/2}$ used in Section~\ref{s:general} for
the model case of $\mathbb R^{n+1}$, to simplify the calculations and
since we no longer need smoothness at zero.)  We shall use the polar
coordinates $m=\omega/ x$ in $\mc{E}$, where $\omega\in \mathbb
S^{n}$.  Assumption~\as{G1} is satisfied by taking the radial
compactification of $M$, i.e. adding the sphere at infinity: the map
$\psi :\rr^{n+1}\setminus B(0,R)\to (0,1/R)\x \mathbb S^n$ defined by
$\psi(m)=(x(m),x(m)m)$ is a diffeomorphism and the radial
compactification of $M$ is obtained by setting $\bbar{M}=M\sqcup
\plM$ where $\plM:=\mathbb S^n$, the smooth structure on
$\bbar{M}$ is the same as before on $M$ but we extend it to $\bbar{M}$
by asking that $\psi$ extends smoothly to the boundary $\plM$
and $\psi(\xi)=(0,\xi)$ if $\xi\in \plM=\mathbb S^n$ (see for
instance~\cite{Me} for more details). In other words, smooth functions
on $\bbar{M}$ are smooth functions on $M$ with an asymptotic expansion
in integer powers of $1/|m|$ to any order near infinity.

Assumption~\as{G2} is clearly satisfied for $\eps_0:=1/2R$ since the
trajectories of the geodesic flow in $x\leq \eps_0$ are simply
$g^t(m,\nu)=(m+t\nu,\nu)$. A point $(m,\nu)\in S^*M$ is directly
escaping in the forward direction in the sense of
Definition~\ref{d:directly-escape} if and only if $x(m)\leq\eps_0$ and
$m\cdot\nu\geq 0$. Now, \as{G3} is satisfied with
$\xi_{+\infty}(m,\nu)=\nu$ for $(m,\nu)\in \mathcal{DE}_+$.

For the assumption~\as{G4}, we define
$$
\begin{gathered}
\widetilde U_\infty=\{x<\eps_0\}\times \plM\subset \overline M\times\plM,\\
\phi_\xi(m)=m\cdot\xi,\ (m,\xi)\in U_\infty.
\end{gathered}
$$
Then $\tau:U_\infty\to S^*M$ from~\eqref{e:tau} maps
each $(m,\xi)\in (\mathbb R^n\setminus B(0,2R))\times \mathbb S^n$
to itself as an element of $S^*(\mathbb R^n\setminus B(0,2R))$.
Assumptions~\as{G4} and~\as{G5} follow immediately.
To see assumption~\as{G6}, we note that for $x(m),x(m')<\varepsilon_0$
and some $\xi\in \mathbb S^n$, we have $\partial_\xi\phi_\xi(m)=\partial_\xi\phi_\xi(m')$
if and only if $m-m'$ is a multiple of $\xi$.

\subsection{Distorted plane waves and analytic assumptions}
  \label{s:euclidean.analysis}

We recall a few well-known facts about scattering theory on
perturbations of $\rr^n$, we refer to~\cite{Me} for a geometric
approach and to \cite{MeZw,HaVa} in a more general setting
(asymptotically Euclidean case).  A plane wave for the flat Laplacian
on $\rr^{n+1}$ is the function, for $\la\in (1/2,2)$,
\begin{equation}
  \label{planewave}
u(\la, \xi;m):=ce^{{i\la\over h} m\cdot\xi} , \quad \xi\in \mathbb S^{n},\
m\in\rr^{n+1},\ c\in \cc.
\end{equation}
This is a semiclassical Lagrangian distribution, its oscillating
phase has level sets given by planes orthogonal to $\xi$. The
continuous spectrum of the Laplacian $\Delta$ associated to the metric
$g$ is the half-line $[0,\infty)$. We will take the resolvent of $h^2\Delta$ to be the $L^2$-bounded operator
\begin{equation}
  \label{e:r-h-eucl}
R_h(\la):=(h^2\Delta-\la^2)^{-1} \textrm{ in } {\rm Im}(\la)<0.
\end{equation}
This admits a continuous extension to $\{\la\not=0, {\rm Im}(\la)\leq 0\}$ as a bounded operator from 
$L^2_{\rm comp}$ to $L^2_{\rm loc}$.
For $\la>0$ we call $R_h(\la)$ the incoming resolvent and $R_h(-\la)$ the outgoing resolvent.
For $\la>0$, $h>0$, and $m\in M$ fixed, the Schwartz kernel $R_h(\la;m,m')$ of the incoming resolvent 
$R_h(\la)$ has an asymptotic expansion along the lines $m'\in m'_0+\rr\xi$ directed by $\xi\in S^n$ given by
$$
R_h(\la;m,\xi/x')\sim_{x'\to 0} (x')^{\ndemi}e^{-\frac{i\la}{hx'}}f(\la,\xi;m)+\mc{O}((x')^{\ndemi+1})
$$
for some smooth function $f$ and the remainder is uniform for $m$ in compact sets 
(see for example~\cite{MeZw,HaVa}). Using this expansion, we define
the \emph{distorted plane wave}  by 
\begin{equation}
  \label{defEeucl}
 E_h(\la,\xi;m):={2i\la h} 
 \Big(\frac{2\pi  h}{i\la}\Big)^\ndemi \lim_{x'\to 0}[(x')^{-n/2}e^{{i\la\over hx'}}R_h(\la; m,\xi/x')], 
\end{equation}
with $\xi\in S^n$ and $\xi/x'\in \mc{E}$. This is a smooth function of $(m,\xi)\in M\x
\mathbb S^{n}$, and in the case of $M=\rr^{n+1}$ it is given by
\eqref{planewave} with $c=1$ (see~\cite[Chapter~1]{Me}).  We shall use
the notation $E_h(\la,\xi)$ for the $C^\infty(M)$ function defined by
$m\mapsto E_h(\la,\xi;m)$ and we notice that
$(h^2\Delta-\la^2)E_h(\la,\xi)=0$ in $M$. One has
$\bbar{E_h(\la,\xi;m)}=E_h(-\la,\xi;m)$ since
$R_h(\la)^*=R_h(-\la)$ for $\lambda\in \mathbb R$, and the decomposition of the
spectral measure in terms of these functions in given as follows: by
Stone's formula, the semiclassical spectral measure is given by
\begin{equation}
  \label{dPih}
d\Pi_h(\la)=\frac{i\la}{\pi}(R_h(\la)-R_h(-\la))\,d\lambda \quad \textrm{ for }\la\in(0,\infty)
\end{equation}  
in the sense that $F(h^2\Delta)=\int_0^\infty F(\la^2)d\Pi_h(\la)$ for
any bounded function $F$; by combining this with the Green's type
formula of~\cite[Lemma~5.2]{HaVa}, we deduce that
\[
d\Pi_h(\la;m,m')=\la^n(2\pi h)^{-n-1}\int_{\mathbb S^n}E_h(\la,\xi;m)\bbar{E_h(\la,\xi;m')}\, d\xi d\lambda.
\]
Here $d\xi$ corresponds to the standard volume form on the sphere
$\mathbb S^n$. The assumptions~\as{A1} and~\as{A2} are then satisfied.
In fact, using~\cite{HaVa}, one can define distorted plane waves and
verify assumptions~\as{A1} and~\as{A2} for the more general case of
scattering manifolds. 

\smallsection{Outgoing/incoming decomposition}
We now construct the decomposition~\eqref{e:e-h-decomposition} of
$E_h$ into the outgoing and incoming parts and verify
assumptions~\as{A3}--\as{A8}. Take $\chi_0\in C^\infty(\bbar{M})$
(thus constant in $\xi$) supported in $\{x<\eps_0\}$ and equal to $1$
near $\{x\leq \eps_0/2\}$, so that assumptions~\as{A3} and~\as{A7} hold,
where we put $\varepsilon_1:=\varepsilon_0/2$. We next put
$$
E_h^0(\lambda,\xi;m):=e^{{i\lambda\over h}m\cdot\xi},\
x(m)<\eps_0,
$$
so that~\as{A4} holds with $b^0\equiv 1$ and~\as{A8} follows. We then
claim that
\begin{equation}
  \label{paramEheucl}
E_h=\chi_0 E_h^0+E_h^1,
\end{equation}
where
$$
E_h^1:=-R_h(\lambda)F_h,\
F_h(\lambda,\xi)=(h^2\Delta-\lambda^2)(\chi_0E_h^0(\lambda,\xi))
=[h^2\Delta,\chi_0]E_h^0(\lambda,\xi).
$$
We can apply $R_h(\lambda)$ to $F_h(\lambda,\xi)$ as the latter lies
in $C_0^\infty(M)$; in fact, $\supp F_h\subset \{\varepsilon_0/2<x<\varepsilon_0\}$.
To show~\eqref{paramEheucl}, note that the incoming resolvent $R_h(\la)$ satisfies
\begin{equation}
  \label{Rhlachi1}
R_h(\la)\chi_1=\chi_0R_h^0(\la)\chi_1-R_h(\la)[h^2\Delta,\chi_0]R_h^0(\la)\chi_1
\end{equation}
if $\chi_1\in C^\infty(\overline M)$ is such that
$\chi_0=1$ on $\supp(\chi_1)$ and $R_h^0(\la)$ is the incoming scattering
resolvent of the free semiclassical Laplacian $h^2\Delta$ on
$\rr^{n+1}$ (we use again the isometry $\mc{E}\simeq
\rr^{n+1}\setminus B(0,R)$). 
To obtain $E_h(\la,\xi)$ from \eqref{Rhlachi1}, we shall use definition \eqref{defEeucl}; 
consider the Schwartz kernels of the operators in \eqref{Rhlachi1}  
and multiply them by $(x')^{-\ndemi}e^{i\la\over hx'}$ in the right variable; since $\chi_1=1$ near infinity, 
one has by the remark following \eqref{defEeucl}
$$
E^0_h(\la,\xi;m)={2i\la h} 
 (\frac{2\pi  h}{i\la})^\ndemi \lim_{x'\to 0}[(x')^{-n/2}e^{{i\la\over hx'}}R^0_h(\la; m,\xi/x')\chi_1(\xi/x')].
$$
Now the Schwartz kernel $\kappa(m'',m')$ of $[h^2\Delta,\chi_0]R_h^0(\la)\chi_1$ is smooth in 
$M\x M$ by ellipticity and compactly supported in the first variable, moreover $R_h(m,m'')$ is in $L^1_{\rm loc}$
in the $m''$ variable for $m\in M$ fixed, thus we get by dominated convergence for fixed $m,\xi$
$$
\begin{gathered} 
\lim_{x'\to 0}\Big({x'}^{-\ndemi}e^{\frac{i\la}{hx'}}\int_MR_h(m,m'')\kappa(m'',\xi/x'){\rm dvol}_M(m'')\Big)=\\
\int_{M}R_h(m,m'')[h^2\Delta,\chi_0]E_h^0(\la,\xi; m''){\rm dvol}_M(m'')
\end{gathered}
$$
and by combining this with \eqref{Rhlachi1}, this proves \eqref{paramEheucl}.

\smallsection{Microlocalization of $E^1_h$}
It remains to verify assumptions~\as{A5} and~\as{A6}. By rescaling $h$
and using that $E_h(\lambda,\cdot)$ depends only on $\lambda/h$, we
may assume that $\lambda=1$.  Fix $\xi$ and take $\chi_2\in
C^\infty(\overline M)$ equal to 1 near $\{x\leq\varepsilon_0\}$, but
supported inside $\mathcal E$. Then
\begin{equation}
  \label{e:euclidean-far}
\chi_2 E^1_h=R^0_h(\lambda)F^0_h,\quad
F^0_h:=(h^2\Delta-\lambda^2)(\chi_2E^1_h)=-F_h+[h^2\Delta,\chi_2]E^1_h.
\end{equation}
The function $F^0_h$ is supported inside $\{x>\varepsilon_0/2\}$ and
$$
\|F^0_h\|_{H^{-1}_h}\leq Ch(1+\|E_h\|_{L^2(\{x\geq\varepsilon_0\})}).
$$
The free resolvent $R^0_h(\lambda)$ is bounded $H^{-1}_{h,\comp}\to L^2_{\loc}$
with norm $\mathcal O(h^{-1})$ by \cite[Proposition~2.1]{Bur}; therefore, for each compact set
$K\subset M$, there exists a constant $C_K$ such that
$$
\|E^1_h\|_{L^2(K)}\leq C_K (1+\|E_h\|_{L^2(\{x\geq\varepsilon_0\})}).
$$ 
This shows~\as{A5}, namely that the function
$$
\widetilde E^1_h:={E^1_h\over 1+\|E_h\|_{L^2(\{x\geq\varepsilon_0\})}}
$$
is $h$-tempered. To prove~\as{A6}, we use semiclassical elliptic estimate
and propagation of singularities (see for example~\cite[Section~4.1]{v}). We have
$$
(h^2\Delta-\lambda^2)\widetilde E^1_h=-\widetilde F_h,\
\widetilde F_h:={F_h\over 1+\|E_h\|_{L^2(\{x\geq\varepsilon_0\})}}.
$$
Now, $F_h$ is a Lagrangian distribution associated to $\{(m,\xi)\mid
m\in\supp(d\chi_0)\}$; therefore,
$$
\WFh(\widetilde F_h)\subset \WFh(F_h)\subset W_\xi,
$$
with $W_\xi\subset S^*M$ defined in~\eqref{e:w-xi}.

Take $(m,\nu)\in\WFh(\widetilde E^1_h)$. By the elliptic estimate,
$(m,\nu)\in S^*M$. Next, if $\gamma(t)=g^t(m,\nu)$, then by
propagation of singularities, either $\gamma(t)\in \WFh(\widetilde
F_h)\subset W_\xi$ for some $t\geq 0$ or $\gamma(t)\in\WFh(\widetilde
E^1_h)$ for all $t\geq 0$.  Now, the free resolvent $R^0_h(\lambda)$
is semiclassically incoming in the following sense: if $f$ is a
compactly supported $h$-tempered family of distributions, then for
each $(m',\nu')\in\WFh(R^0_h(\lambda)f)$, there exists $t\geq 0$ such
that $g^t(m',\nu')\in\supp f$. This can be seen for example from the
explicit formulas for $R^0_h(\lambda)$, see \cite{Me}.
By~\eqref{e:euclidean-far} and since $\supp(F^0_h)\subset
\{x>\varepsilon_0/2\}$, we see that for $(m',\nu')\in\WFh(\widetilde
E^1_h)$, we cannot have $x(m')<\varepsilon_0/2$ and $m'\cdot\nu'\geq
0$. Therefore, if $\gamma(t)\not\in W_\xi$ for all $t\geq 0$, then
$\gamma(t)$ is trapped as $t\to +\infty$; this proves~\as{A6}.

\section{Hyperbolic near infinity manifolds}
  \label{s:ah}

In this section, we verify the assumptions of Section~\ref{s:general}
for certain asymptotically hyperbolic manifolds. Let $(M,g)$ be an
$(n+1)$-dimensional asymptotically hyperbolic manifold as defined in
the introduction. It has a compactification $\bbar{M}=M\cup \plM$ and
the metric can be written in the product form~\eqref{e:k-1}:
\[
g=\frac{dx^2+h(x)}{x^2}
\]
where $x$ is a boundary defining function and $h(x)$ a smooth family
of metrics on $\plM$ defined near $x=0$.  The function $x$ putting
the metric in the form~\eqref{e:k-1} is not unique, and those
functions (thus satisfying $|d\log(x)|_g=1$ near $\plM$) are
called \emph{geodesic boundary defining functions}.  The set of such
functions parametrizes the conformal class of $h(0)$, as shown
in~\cite[Lemma~5.2]{GRL}.  The metric is called
\emph{even} if $h(x)$ is an even function of $x$, this condition is
independent of the choice of geodesic boundary defining function.
A choice of geodesic boundary defining function induces a metric on
$\plM$ by taking $h_0=h(0)=x^2g|_{T\plM}$, and therefore one has a
Riemannian volume form, denoted $d\xi$, on $\plM$ induced by the choice of
$x$.  Any other choice $\hat{x}=e^{\omega}x$ of boundary defining
function induces a volume form
\begin{equation}
  \label{measurexi}
\widehat{d\xi}=e^{n\omega_0}d\xi \quad \textrm{ where }\omega_0=\omega|_{\plM}.
\end{equation}
We will further assume that $M$ has constant sectional curvature $-1$
outside of some compact set, even though some of the assumptions of
Section~\ref{s:general} hold for general asymptotically hyperbolic
manifolds with no simplification provided by
the additional assumption on curvature~-- we will give the proofs in
higher generality where appropriate. 

\subsection{Geometric assumptions}
  \label{s:eisenstein.geometry}

Let $(M,g)$ be an asymptotically hyperbolic manifold. The
assumption~\as{G1} is satisfied. We are now going to prove a Lemma
which implies directly that the assumptions~\as{G2} and~\as{G3} are
satisfied, except that this only proves continuous dependence of
$\xi_{+\infty}$ in $(m,\nu)$ in~\as{G3}. To prove $C^1$ dependence in
a general setting, a bit more analysis would be required, but we shall
later concentrate only on cases with constant curvature near infinity,
in which case the dependence is smooth (see below).
%
%
\begin{lemm}
  \label{geodesicallyconvex}
Let $(M,g)$ an asymptotically hyperbolic manifold. Then there exists
$\eps_0>0$ such that the function $x$ satisfies~\eqref{e:kinda-convex}
and for any unit speed geodesic $\gamma(t)=(m(t),\nu(t))$ with
$x(m(0))\leq \eps_0$ and $\pl_t x(m(t))|_{t=0}\leq 0$, one has the
following: $\pl_t x(m(t)) \leq 0$ for all $t\geq0$ and $m(t)$ converges
in the topology of $\overline M$ to some point $\xi_{+\infty}\in
\plM$. More precisely, the distance with respect to the compactified
metric $\bar{g}=x^2g$ between $m(t)$ and $\xi_{+\infty}$ is bounded by
\[
d_{\bar{g}}( m(t), \xi_{+\infty})\leq Ct^{-1}.
\] 
\end{lemm}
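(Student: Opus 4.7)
My plan is to work in the product coordinates $(x,y)\in[0,\eps_0)\times\plM$ coming from the collar form~\eqref{e:k-1}, and to analyze the geodesic flow via the Hamiltonian $H(x,y,\xi,\eta) = \tfrac12 x^2(\xi^2 + h^{ij}(x)\eta_i\eta_j)$, noting that the unit-speed condition becomes $\xi^2 + h^{ij}\eta_i\eta_j = 1/x^2$. From Hamilton's equations, in particular $\dot x = x^2\xi$, I expect to derive
\[
\ddot x = \frac{2\dot x^2}{x} - x - \frac{x^4}{2}(\partial_xh^{ij})(x)\eta_i\eta_j.
\]
Smoothness of $h(x)$ up to $x=0$ makes $\partial_xh^{ij}$ bounded by a constant multiple of $h^{ij}$ for small $x$, so the last term is $O(x^2)$ on the energy surface; setting $\dot x=0$ then gives $\ddot x = -x + O(x^2) < 0$ once $\eps_0$ is small, which is~\eqref{e:kinda-convex}. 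The same estimate shows $\dot x$ cannot pass from negative to positive, so starting from $\dot x(0)\leq 0$ one obtains $\dot x(t)\leq 0$ for all $t\geq 0$, and consequently $x(t)\leq\eps_0$ throughout.

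To show $x(t)\to 0$ quantitatively I will study $f(t):=1/x(t)$, which is then nondecreasing. A short computation using the $\ddot x$ formula yields $\ddot f = f + O(1)$, so $\ddot f \geq f/2$ once $x$ is small enough. Multiplying by $2\dot f\geq 0$ and integrating produces the comparison $\dot f(t)^2\geq \tfrac12 f(t)^2 - C$, which, after $f$ exceeds a threshold, gives $\dot f\geq c f$ and therefore $x(t)\leq Ce^{-ct}$ for some $c>0$ depending only on the metric.

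The convergence of $y(t)$ and the final distance bound then follow from the unit-speed constraint $|\dot y|^2_{h(x)} = x^2-\dot x^2 \leq x^2$. Since $h(x)\to h(0)$ smoothly, this implies $|\dot y|_{h(0)}\leq Cx(t)\leq C'e^{-ct}$, so $y(t)$ is Cauchy in $(\plM,h(0))$ and converges to some $\xi_{+\infty}\in\plM$ with $d_{h(0)}(y(t),\xi_{+\infty})\leq Ce^{-ct}$. In the compactified metric $\bar g = dx^2+h(x)$, concatenating the radial segment from $(x(t),y(t))$ to $(0,y(t))$ with a path from $(0,y(t))$ to $\xi_{+\infty}$ inside $\plM$ gives
\[
d_{\bar g}(m(t),\xi_{+\infty}) \leq x(t) + C\,d_{h(0)}(y(t),\xi_{+\infty}) = O(e^{-ct}),
\]
which is strictly stronger than the required $O(1/t)$.

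The main technical obstacle is the step from the qualitative convexity $\ddot x<0$ at $\dot x=0$ (which by itself only yields monotonicity) to the quantitative exponential decay of $x(t)$: monotonicity alone does not rule out $x(t)$ tending to a positive limit. The Gronwall-type argument on $f=1/x$ is what closes this gap, and it crucially exploits both the leading $-x$ restoring term in $\ddot x$ and the nonnegativity of the $2\dot x^2/x$ correction, without any assumption of constant curvature near $\plM$.
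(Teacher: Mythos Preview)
Your argument is correct and shares its core with the paper's: both work in the collar coordinates, write Hamilton's equations for $p=x^2(\rho^2+|\theta|^2_{h})$, and arrive at the same differential inequality for $f=1/x$ (in the paper's notation $\dot f=-\rho$ and $\dot\rho=-x^{-1}+O(1)$, i.e.\ $\ddot f=f+O(1)$, exactly your formula). The only difference is in how this inequality is integrated. The paper uses the cruder constant bound $\ddot f\geq 1/(2\eps_0)$ to obtain $\dot f(t)\geq t/(2\eps_0)$ and hence $x(t)\leq \eps_0/(1+t^2/4)$; then $|\dot y|=O(x)=O(t^{-2})$ gives the stated $O(t^{-1})$ distance bound directly. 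Your energy method $\dot f^2\geq \tfrac12 f^2-C$ yields the sharper $x(t)=O(e^{-ct})$, which is the true asymptotic for hyperbolic ends. The one step in your plan that deserves an extra sentence is why $f$ ever exceeds the threshold $\sim\sqrt{C}\sim f(0)$: this follows at once from the paper's quadratic lower bound (or simply from $\ddot f\geq f(0)/2>0$ on an initial interval), after which your Gronwall step applies. So your route is a mild strengthening of the paper's, at the cost of one additional comparison argument.
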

\begin{proof} 
Consider coordinates $(m,\nu)=(x,y;\rho dx+\theta\cdot dy)$ on $T^*M$ near
the boundary $\plM=\{x=0\}$. The geodesic flow is the Hamiltonian flow
of $p/2$, where $p(m,\nu)=x^2(\rho^2+|\theta|^2_{h_m})$; if dots
denote time derivatives with respect to the geodesic flow, we get
\begin{equation}
  \label{eqHp}
\dot{x}=\rho x^2, \quad \dot{\rho}=-x^{-1} p(m,\nu)-x^2\pl_xh_{(x,y)}(\theta,\theta)/2.
\end{equation}
Since $\partial_x h_{(x,y)}$ is smooth up to $x=0$, there exists a constant $C$ such that
$$
|x^2\partial_x h_{(x,y)}(\theta,\theta)/2|\leq C x^2 h_{(x,y)}(\theta,\theta)\leq C p(m,\nu).
$$
Therefore, there exists $\varepsilon_0>0$ such that along any unit speed geodesic, we have
\begin{equation}
  \label{e:goodrho}
x\leq \varepsilon_0\Longrightarrow \dot\rho=-x^{-1}+\mathcal O(1)\leq -x^{-1}/2<0.
\end{equation}
This in particular implies~\eqref{e:kinda-convex}.

Now, let $\gamma(t)=(x(t),y(t);\rho(t),\theta(t))$ be a unit speed geodesic
and assume that $x(0)\leq\varepsilon_0$ and $\dot x(0)\leq 0$.
It follows from~\eqref{e:kinda-convex} that for $t\geq 0$,
we have $\dot x(t)\leq 0$ and thus $x(t)\leq\varepsilon_0$.
(Indeed, for each $s\geq 0$ the minimal value of $x(t)$ on
the interval $[0,s]$ has to be achieved at $t=s$.)
It remains to show that as $t\to +\infty$, $x(t)$ converges to $0$ and
$y(t)$ converges to some $\xi_{+\infty}\in \pl M$. For that, note that
by~\eqref{e:goodrho}, $\dot\rho(t)\leq -\varepsilon_0^{-1}/2$ for $t\geq 0$;
since $\dot x(0)\leq 0$, we have $\rho(0)\leq 0$ and thus
$$
\rho(t)\leq -{\varepsilon_0^{-1}\over 2}t.
$$ 
Setting $u(t):=x(t)^{-1}$, we find $\dot u(t)=-\rho(t)\geq (\varepsilon_0^{-1}/2)t$; therefore,
$$
x(t)\leq {\varepsilon_0\over 1+t^2/4}.
$$
In particular, $x(t)\to 0$ as $t\to +\infty$. Now the equation for $\dot{y}(s)$ tells us that
\[
\dot{y}_i(t)=x\sum_{k=1}^nh^{ki}_{(x,y)}x\theta_k=\mc{O}(x(t))=\mathcal O(t^{-2})
\]
and therefore $|y(t)-y(t')|\leq C/t'$ for any $t>t'>0$, which implies
$\lim_{t\to \infty}m(t)= \xi_{\infty}$ for some $\xi_\infty\in
\plM$ and $|m(t)-\xi_\infty|=\mc{O}(1/t)$.
\end{proof}
%
%
The geometric assumption~\as{G4} is a more complicated one, and we
will restrict ourselves to asymptotically hyperbolic manifolds with
constant curvature $-1$ in a neighbourhood of $\pl M$ and $x$ a
geodesic boundary defining function. Let $\xi\in \plM$, then there exists a neighborhood
$V_{\xi}$ of $\xi$ in $\bbar{M}$, and an isometric diffeomorphism
$\psi_{\xi}$ from $V_{\xi}\cap M$ into the following
neighbourhood $V_{q_0}$ of the north pole $q_0$ in the unit ball
$\mathbb{B}:=\{m\in\rr^{n+1}; |m|<1\}$ equipped with the hyperbolic metric $g_{0}$ 
\begin{equation}
  \label{Uxi0}
V_{q_0}:=\{q\in \mathbb{B}\mid |q-q_0|< 1/4\}, \quad g_{0}=4\frac{dq^2}{(1-|q|^2)^2}
\end{equation} 
where $\psi_{\xi}(\xi)=q_0$ and $|\cdot|$ denotes the
Euclidean length. 
This statement is proved for
instance in \cite[Lemma~3.1]{GuZw}. We shall choose the boundary
defining function on the ball $\mathbb{B}$ to be
\begin{equation}
  \label{definx0}
x_0=2\frac{1-|q|}{1+|q|}.
\end{equation}
and the induced metric $x_0^2g_0|_{\mathbb{S}^n}$ on $\mathbb S^n=\pl\bbar{\mathbb{B}}$ is the canonical one with curvature $+1$. The function $x_0$ can be viewed locally as a boundary defining function (through the chart $\psi_\xi$) near a point $\xi\in \pl\bbar{M}$ but in general there does not exist a global 
geodesic boundary defining function $x$ on $M$ so 
that $x=\psi_\xi^*x_0$ in a whole family of charts $V_\xi$ covering a neighbourhood of $\pl\bbar{M}$.
We define for each $p\in \mathbb S^n$ the Busemann function on $\mathbb B$
\[
\phi^{\mathbb B}_{p}(q)=\log\Big(\frac{1-|q|^2}{|q-p|^2}\Big).
\]
The geodesic trajectory $g^t(q,d \phi^{\mathbb B}_{p}(q))$ generated by the
differential $d\phi^{\mathbb B}_{p}$ converges (in the Euclidean ball topology) to
$p$ and the Lagrangian manifold
\[
\Lambda^{\mathbb B}_{p}:=\{ (q,d\phi^{\mathbb B}_{p}(q))\in S^* {\hh^{n+1}}\mid q\in {\mathbb B}\}
\]
is the stable manifold of the geodesic flow associated to $p$ on
${\mathbb B}$. The level sets of $\phi^{\mathbb B}_p$ are horospheres based at
$p$. We cover a neighbourhood of $\plM$ by finitely many
$V_{\xi_j}$ for some $\xi_j\in\plM$ and take a partition of unity
$\chi_j\in C^\infty(\pl \bbar{M})$ on $\pl \bbar{M}$ with $\chi_j$ supported in $V_{\xi_j}\cap\plM$.
Then there exists $\eps>0$
such that for all $j$ and all $\xi\in\supp \chi_j$, the set
\begin{equation}
  \label{Uxi}
U_{\xi}:=\{m\in \bbar{M}\mid d_{\bar{g}}(m,\xi)<\eps\}
\end{equation}
lies inside $V_{\xi_j}$, where $\bar{g}=x^2g$ is the compactified
metric. Put
$$
U_\infty:=\{(m,\xi)\in M\x\plM\mid m\in U_\xi\}.
$$
Define the function
\begin{equation}
  \label{definitionphixi}
\phi_\xi(m):=\sum_j \chi_j(\xi) \phi^{\mathbb B}_{\psi_{\xi_j}(\xi)}(\psi_{\xi_j}(m)),\
(m,\xi)\in U_\infty.
\end{equation}
Since $\psi_{\xi_j}$ are isometries, each function
$\phi^j_\xi(m):=\phi^{\mathbb B}_{\psi_{\xi_j}(\xi)}(\psi_{\xi_j}(m))$ is
such that $d\phi^j_\xi(m)$ is the unit covector which generates the
unique geodesic in $M$ starting at $m$, staying in $U_\xi$ for
positive times, and converging to $\xi$ (therefore, the difference of
any two functions $\phi^j_\xi$ for different $j$ is a function of
$\xi$ only). Therefore
$\partial_m\phi_\xi(m)=\sum_j\chi_j(\xi)\partial_m\phi^j_\xi(m)$ is
also equal to this unit covector; \as{G4} and~\as{G5} follow. The
dependence of all objects in $m,\xi$ is smooth here. Finally,
\as{G6} can be reduced via $\psi_{\xi_j}$ to the following statement
that can be verified by a direct computation:
if $p\in \mathbb S^n$ and $q,q'\in \mathbb B$,
then $\partial_p\phi^{\mathbb B}_p(q)=\partial_p\phi^{\mathbb B}_p(q')$ if and
only if $q$ and $q'$ lie on a geodesic converging to $p$, and
the matrix $\partial^2_{pq}\phi^{\mathbb B}_p(q)$ has rank $n$.

\subsection{Eisenstein functions and analytic assumptions}
  \label{s:eisenstein.analytic}

Let $(M,g)$ be asymptotically hyperbolic. The Laplacian $\Delta$ on
$(M,g)$ has absolutely continuous spectrum on $[n^2/4,\infty)$ and a
possibly non-empty finite set of eigenvalues in $(0,n^2/4)$.
By~\cite{m-m,Gu}, if $g$ is an even metric\footnote{There is a simpler
proof by Guillop\'e-Zworski~\cite{GuZw} when the curvature is constant
outside a compact set.}, the resolvent of the Laplacian
\[
R(s):=(\Delta-s(n-s))^{-1}  \quad \textrm{ defined in the half plane }{\rm Re}(s)>n/2
\]
admits a meromorphic continuation to the whole complex plane $\cc$,
with poles of finite rank (i.e. the Laurent expansion at each pole
consists of finite rank operators), as a family of bounded operators
\[
R(s): x^NL^2(M)\to x^{-N}L^2(M), \quad \textrm{ if }\,\,\, \Real(s)-n/2+N>0,
\]
moreover it has no poles on the line ${\rm Re}(s)=\ndemi$ except
possibly $s=\ndemi$, as shown by Mazzeo~\cite{Ma}.  
The continuous spectrum for the spectral parameter $s$ correspond to $\Real(s)=n/2$ and we write 
$s=\ndemi+i\la/h$  with $\la>0$ bounded and $h>0$ small for the high-frequency regime.
Let us fix a geodesic boundary defining function $x$ on
$\bbar{M}$. By~\cite{m-m}, the resolvent integral kernel
$R(s;m,m')$ near the boundary $\plM$ has an asymptotic expansion given
as follows: for any $m\in M$ fixed
\[
m'\mapsto R(s;m ,m' )x(m')^{-s} \in  C^\infty(\bbar{M})
\]
and similarly for $m'\in M$ fixed and $m\to \plM$. Since we are interested in high frequency asymptotics, 
we will consider the semiclassical rescaled versions
\begin{equation}
  \label{e:r-h-hyp}
R_h(\lambda):=h^{-2}R(n/2+i\la/ h).
\end{equation}
Note that the physical region $\Real s>n/2$, in which the resolvent is bounded on $L^2$,
corresponds to $\Imag\lambda<0$, which agrees with our convention for Euclidean case,
see~\eqref{e:r-h-eucl}.
%
%
\begin{defi} Let $1/2\leq |\la|\leq 2$ and $h>0$, then 
Eisenstein functions are the functions in $C^\infty(M\x \plM)$ defined for any fixed $\xi\in \plM$
by the following limit of the resolvent kernel at infinity
\begin{equation}
  \label{definitionE}
\begin{gathered}
E_h(\la,\xi;m):=\frac{2i \la h}{C(\la/h)}\lim_{m'\to \xi}
x(m')^{-n/2-i\la/h}R_h(\la;m,m'), \\ 
C(z):=2^{-iz}(2\pi)^{-\ndemi}\frac{\Gamma(\ndemi+iz)}{\Gamma(iz)}.
\end{gathered}
\end{equation}
\end{defi}
%
%
The normalisation constant in \eqref{definitionE} is like the constant
in \eqref{defEeucl} so that in $\mathbb B$, $E_h(\la)$ is a
\emph{horospherical wave} as described below in
\eqref{formulaE_0}. For any $\xi\in \plM$, we will denote by
$E_h(\la,\xi)$ the function $m\mapsto E_h(\la,\xi;m)$, and we observe
that they solve~\eqref{e:e-eq-h}:
$$
(h^2(\Delta-n^2/4)-\la^2)E_h(\la,\xi)=0.
$$
One also has $\bbar{E_h(\la,\xi;m)}=E_h(-\la,\xi;m)$ as an easy
consequence of $R_h(\la)^*=R_h(-\la)$ for $\lambda\in \mathbb R$.  From
its definition, $E_h(\la,\xi)$ depends on the choice of the boundary
defining function $x$, but considering such a change we easily see
from \eqref{measurexi} that the density on $\plM$
\begin{equation}
  \label{densityAEh}
\cjg AE_h(\la,\xi),E_h(\la,\xi)\cjd_{L^2(M)}\,d\xi \quad \textrm{ for }\, A\in \Psi^{\comp}(M)
\end{equation}
is \emph{independent} of $x$. 

Let us recall the decomposition of the spectral measure in terms of these functions. By Stone's formula,
the semiclassical spectral measure is given by
\[
d\Pi_h(\la)=\frac{i\la}{\pi}(R_h(\la)-R_h(-\la))\,d\lambda \quad \textrm{ for }\la\in(0,\infty)
\]
in the sense that $F(h^2(\Delta-n^2/4))=\int_0^\infty
F(\la^2)d\Pi_h(\la)$ for any bounded function $F$ supported in
$(0,\infty)$.  Now we can write (see~\cite{Gu}) for any $m,m'$
\begin{equation}
  \label{greensform}
d\Pi_h(\la;m,m')=\frac{|C(\la/h)|^2}{2\pi h}\int_{\plM}E_h(\la,\xi;m)E_h(-\la,\xi;m')d\xi \,\, d\lambda.
\end{equation}
where $(2\pi h)^n|C(\la/h)|^2\to \lambda^n$ as $h\to 0$
uniformly in $\lambda\in [1/2,2]$. The
assumptions~\as{A1} and~\as{A2} are then satisfied in the general
asymptotically hyperbolic case (without asking the constant curvature
near infinity).

\smallsection{Outgoing/incoming decomposition}
To check assumptions~\as{A3}--\as{A8}, we give a representation of the
Eisenstein functions as sums of the `outgoing' part $E^0_h$ and the
`incoming' part $E^1_h$. We assume constant curvature near
infinity in what follows. The expression for $E_h^{\mathbb B}(\la)$
on hyperbolic space~$\hh^{n+1}$ viewed as a unit ball $\mathbb B$, defined using the boundary defining
function $x_0$ of~\eqref{definx0}, is given by~\cite[Section~2.2]{g-n}
\begin{equation}
  \label{formulaE_0}
\begin{gathered}
E^{\mathbb B}_h(\la,p; q)=\Big(\frac{1-|q|^2}{|q-p|^2}\Big)^{n/2+i\la/ h},\
p\in \mathbb S^n,\ q\in \mathbb B
\end{gathered}
\end{equation}
We thus set $E_h^0(\la,\xi;m)$ to be 
\begin{equation}
\label{definitionEh0}
E_h^0(\la,\xi;m):= e^{(n/2+i\la/h)\phi_\xi(m)},
\end{equation}
where $\phi_\xi$ is the Busemann function defined in
\eqref{definitionphixi}.  Viewing the neighbourhood $U_{\xi}$ as a
subset of one of the $V_{\xi_j}\simeq_{\psi_{\xi_j}} V_{q_0}$ where
$V_{q_0}\subset \mathbb B$ is defined in~\eqref{Uxi0}, the
Laplacian in this hyperbolic chart pulls back to
$\Delta_{\hh^{n+1}}$. Since
$\phi_\xi(m)=\phi^{\mathbb B}_{\psi_{\xi_j}(\xi)}(\psi_{\xi_j}(m))+c_j(\xi)$ for
some function $c_j(\xi)$ independent of $m$, we directly have in
$U_\xi$ ($U_\xi$ is defined in \eqref{Uxi})
\[
(h^2(\Delta-n^2/4)-\la^2)E_h^0(\la;m,\xi)=0.
\]
We let $\chi_0\in C^\infty(\plM\x \bbar{M})$ be a function such that $\chi_0(\xi,\cdot)$ is supported in $U_\xi$, 
equal to $1$ near $\xi$ and smooth in $x^2$.
Therefore we obtain 
\begin{equation}
  \label{PE_0}
\begin{split}
F_h(\la, \xi):=(h^2(\Delta-n^2/4)-\la^2)\chi_0E^0_h(\la,\xi)=[h^2\Delta,\chi_0]E^0_h(\la,\xi)\,\, 
\end{split}
\end{equation}
and we claim that 
\[
\,F_h(\la, \xi)\in x^{\ndemi+2+\frac{i\la}{h}}C^\infty(\bbar{M})
\textrm{ and } \|x^{-1}F_h(\la, \xi)\|_{L^2(M)}=\mc{O}(h)
\]
uniformly in $\xi$. Indeed, this is an elementary calculation since from \eqref{formulaE_0} we see that 
$E^0_h(\la,\xi)\in x^{\ndemi+i\frac{\la}{h}}C^\infty(\bbar{M}\setminus\{\xi\})$
and in geodesic normal coordinates near the boundary 
\[
[\Delta,\chi_0] = -x^2(\pl^2_x\chi_0)-2x(\pl_x\chi_0) x\pl_x +x^2[\Delta_{h(x)},\chi_0]+n(x\pl_x\chi_0)-\demi \Tr_{h(x)}(\pl_xh(x))x^2(\pl_x\chi_0)
\] 
is a first order operator with coefficients vanishing in a neighbourhood of $\xi$.  
We thus correct the error by the incoming resolvent 
$R_h(\lambda)$ by setting  
\begin{equation}
  \label{Epara}
E_h(\la,\xi):=\chi_0E^0_h(\la,\xi)+E^1_h(\lambda,\xi),\,\,\,\, \textrm{ with }\,\, 
E^1_h(\lambda,\xi):=-R_h(\la)F_h(\la,\xi)
\end{equation}
and this makes sense since for $\la\in \rr$, $R_h(\la):x^{\alpha}L^2(M)\to x^{-\alpha}L^2(M)$ for any $\alpha>0$
and $F_h\in xL^2(M)$. We claim that 
%
%
\begin{prop}
  \label{l:eisenstein-dec}
The function $E_h(\la,\xi)$ of \eqref{Epara} is the Eisenstein function defined in \eqref{definitionE} for a 
certain boundary defining function $x$.
\end{prop}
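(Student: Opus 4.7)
I would mimic the Euclidean argument from Section~\ref{s:euclidean.analysis}: derive a local resolvent identity relating $R_h(\la)$ on $M$ to the model resolvent $R_h^{\mathbb B}(\la)$ on hyperbolic space $\mathbb B$, take Schwartz kernels in the second variable, then let $m'\to\xi$ and compare with the defining limit \eqref{definitionE}. The boundary defining function $x$ is to be specified so that it coincides (at leading order near $\plM$) with the pullback of $x_0$ through the charts $\psi_{\xi_j}$, weighted by the partition of unity $\chi_j$ used in \eqref{definitionphixi}; this is the ``certain'' $x$ alluded to in the statement.

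\textbf{Step 1.} First, $u:=\chi_0E^0_h+E^1_h$ solves $(h^2(\Delta-n^2/4)-\la^2)u=0$ on $M$: by the definition of $F_h$ in \eqref{PE_0}, applying the operator to $\chi_0 E^0_h$ gives $F_h$, and $F_h\in x^{n/2+2+i\la/h}C^\infty(\overline M)\subset x^\alpha L^2(M)$ for some small $\alpha>0$, so $R_h(\la)F_h$ is well-defined and is inverted by the operator.

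\textbf{Step 2.} Fix $\xi\in\plM$; working first as if only one chart $\psi_\xi:V_\xi\cap M\to V_{q_0}\subset \mathbb B$ contributes to $\phi_\xi$ (the general case requires carrying the partition of unity through). Since $\psi_\xi$ is an isometry and $\supp\chi_0\subset V_\xi$, one can identify functions on $V_\xi\cap M$ with functions on $V_{q_0}$ and extend them by zero to all of $\mathbb B$. For $f\in C_0^\infty(M)$ supported where $\chi_0=1$, computing $P(\chi_0 R_h^{\mathbb B}(\la)f)=f+[h^2\Delta,\chi_0]R_h^{\mathbb B}(\la)f$ with $P:=h^2(\Delta-n^2/4)-\la^2$, and then applying $R_h(\la)$ on the left, yields the analogue of \eqref{Rhlachi1}:
\begin{equation*}
R_h(\la) f = \chi_0 R_h^{\mathbb B}(\la) f - R_h(\la)[h^2\Delta,\chi_0] R_h^{\mathbb B}(\la) f.
\end{equation*}
Viewing this identity at the level of Schwartz kernels in the $m'$ variable, multiplying by $x(m')^{-n/2-i\la/h}$ and letting $m'\to\xi$: the LHS produces $\frac{C(\la/h)}{2i\la h}E_h(\la,\xi;m)$ by \eqref{definitionE}. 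The first term on the RHS uses the explicit hyperbolic identity $\lim_{q'\to p}x_0(q')^{-n/2-i\la/h}R_h^{\mathbb B}(\la;q,q')=\frac{C(\la/h)}{2i\la h}e^{(n/2+i\la/h)\phi^{\mathbb B}_p(q)}$ (directly verifiable from the formula \eqref{formulaE_0} and the known boundary asymptotics of $R_h^{\mathbb B}(\la)$), producing $\chi_0(m)E^0_h(\la,\xi;m)$. The second term: since $[h^2\Delta,\chi_0]$ is a first-order differential operator supported in an annulus bounded away from $\xi$, the kernel $[h^2\Delta,\chi_0]R_h^{\mathbb B}(\la;\cdot,m')$ depends smoothly on $m'$ near $\xi$ up to the factor $x(m')^{n/2+i\la/h}$; the limit commutes with the outer $R_h(\la)$ and yields $R_h(\la)[h^2\Delta,\chi_0]E^0_h(\la,\xi)=R_h(\la)F_h$. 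Summing gives $E_h=\chi_0 E^0_h-R_h(\la)F_h=u$.

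\textbf{Main obstacle.} The delicate point is justifying that the limit $m'\to\xi$ commutes with the outer $R_h(\la)$ in the correction term. This requires a uniform weighted estimate $R_h(\la):x^\alpha L^2\to x^{-\alpha}L^2$ together with convergence in that weighted topology of $[h^2\Delta,\chi_0]R_h^{\mathbb B}(\la)(x'^{-n/2-i\la/h}\delta_{m'})$ to $[h^2\Delta,\chi_0]E^0_h(\la,\xi)$, which follows from Mazzeo--Melrose meromorphic continuation and the absence of embedded resonances on $\Real s=n/2$ (for generic $\la$). A secondary technicality is reconciling the single-chart identity with the partition-of-unity definition \eqref{definitionphixi}: writing $\chi_0=\sum_j\chi_j(\xi)\chi_0^{(j)}$ with $\chi_0^{(j)}$ supported in $V_{\xi_j}$, one applies the above identity in each chart and sums, so that the limiting outgoing contribution becomes $\sum_j\chi_j(\xi)\chi_0^{(j)}e^{(n/2+i\la/h)\phi^{\mathbb B}_{\psi_{\xi_j}(\xi)}(\psi_{\xi_j}(m))}=\chi_0 E^0_h$; this simultaneously fixes the choice of boundary defining function $x$ (through the relation \eqref{measurexi}) and the induced volume form $d\xi$ for which \eqref{definitionE} yields $u$.
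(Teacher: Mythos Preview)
Your overall strategy---resolvent identity in a hyperbolic chart, then pass to the boundary limit---is exactly the paper's, and Steps~1--2 in a single chart are correct. The gap is in what you call the ``secondary technicality,'' which is in fact the entire content of the proposition.

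The claimed identity
\[
\sum_j\chi_j(\xi)\,\chi_0^{(j)}\,e^{(n/2+i\la/h)\phi^{\mathbb B}_{\psi_{\xi_j}(\xi)}(\psi_{\xi_j}(m))}
\;=\;\chi_0\,E^0_h
\]
is false: the left side is a convex combination of exponentials $e^{(n/2+i\la/h)\phi^j_\xi(m)}$, whereas $E^0_h=e^{(n/2+i\la/h)\phi_\xi(m)}$ with $\phi_\xi=\sum_j\chi_j(\xi)\phi^j_\xi$, and exponentials do not commute with convex combinations. More fundamentally, the boundary limit on the left-hand side of the resolvent identity uses a \emph{single} boundary defining function $x$ on $M$, so summing chart-wise identities computed with the model function $x_0$ does not produce the Eisenstein function for any one $x$.

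The paper's resolution is different and cleaner. Fix an arbitrary geodesic boundary defining function $x$ and write $\psi_{\xi_j}^*x_0=xe^{\omega_j}$. The resolvent identity in chart $j$ gives, after taking the boundary limit with $x$,
\[
E_h(\la,\xi)=\chi_0\,\widetilde E^0_h-R_h(\la)[h^2\Delta,\chi_0]\widetilde E^0_h,
\qquad
\widetilde E^0_h=E^0_h\cdot e^{(n/2+i\la/h)(\omega_j(\xi)-c_j(\xi))},
\]
where $c_j(\xi)=\phi_\xi(m)-\phi^j_\xi(m)$ is independent of $m$. The key observation is that the left-hand side, and $E^0_h$, are chart-independent and $E^0_h$ is nonvanishing; hence $\theta(\xi):=\omega_j(\xi)-c_j(\xi)$ is a globally defined smooth function on $\plM$. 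Replacing $x$ by $xe^{\theta}$ makes $\widetilde E^0_h=E^0_h$, and the displayed identity becomes exactly \eqref{Epara}. Your proposal never isolates this chart-consistency argument, and the suggested ``weighted pullback of $x_0$'' is not a well-defined boundary defining function. The commutation of the limit $m'\to\xi$ with $R_h(\la)$, which you flag as the main obstacle, is comparatively routine (as in the Euclidean case, via dominated convergence using the compact support in $m''$ of $[h^2\Delta,\chi_0]$).
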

\begin{proof}
Let $R_h^{\mathbb B}(\la)$ be the resolvent of the hyperbolic
space (that is, the incoming right inverse to $h^2(\Delta_{\hh^{n+1}}-n^2/4)-\la^2$) in the ball model and let $\chi_1\in
C^\infty(\plM\x\bbar{M})$ be such that $\chi_1(\xi,\cdot)$ is
supported in $U_\xi$ and $\chi_1\chi_0=\chi_1$. Through the
pull-back by $\psi_{\xi_j}$ (for each $j$), the operator
$R_h^{\mathbb B}(\la)$ induces an operator $R_h^j(\lambda)$ on $V_{\xi_j}$;
if $U_\xi\subset V_{\xi_j}$, then we have the resolvent identity
\begin{equation}
  \label{resolventeq}
R_h(\la)\chi_1=\chi_0 R_h^j(\la)\chi_1-R_h(\la)[h^2\Delta,\chi_0]R_h^j(\la)\chi_1
\end{equation} 
for $\la\in\rr$, the composition makes sense as a map
$x^{\alpha}L^2\to x^{-\alpha}L^2$ for any $\alpha>0$.
Let $x$ be a boundary defining function, so in $V_{\xi_j}$, one has
$\psi_{\xi_j}^*x_0=xe^{\omega_j}$ for some function $\omega_j\in
C^\infty(\plM\cap V_{\xi_j})$. Then multiplying
\eqref{resolventeq}Ê by $x^{-n/2-i\la/h}$ on the right, and taking
the restriction of the Schwartz kernels on $M\x \plM$, we have
\[
E_h(\la,\xi)=\chi_0\til{E}_h^0(\la,\xi)-R_h(\la)[h^2\Delta,\chi_0]\til{E}_h^0(\la,\xi)
\]
with $\til{E}_h^0(\la;m,\xi)= \frac{2i\la h}{C(\la/h)}\lim_{m'\to
\xi}(x(m')^{-\ndemi-i\frac{\la}{h}}R_h^j(\la;m,m'))$ a smooth function
of $m\in U_\xi$ and $C(\la/h)$ the
constant in \eqref{definitionE}.   Note that the Schwartz kernels of $R_h^j$ and $R_h^k$ are the same
on the intersection of their domains, therefore $\widetilde E_h^0$ does not depend on
the choice of $j$.
Now, since $E^{\mathbb B}_h(\la;m,\xi)$ in \eqref{formulaE_0} is the Eisenstein
function on $\mathbb B$ for the defining function $x_0$, we deduce
that in $U_\xi\subset V_{\xi_j}$, one has
$\til{E}_h^0(\la,\xi;m)=E_h^0(\la,\xi;m)e^{(\ndemi+i\frac{\la}{h})(\omega_j(\xi)-c_j(\xi))}$.
Here $c_j(\xi)=\phi_\xi(m)-\phi^{\mathbb B}_{\psi_{\xi_j}(\xi)}(\psi_{\xi_j}(m))$.
Since $E_h^0(\la;m,\xi)$ does not vanish, this shows that on any
intersection $\plM\cap V_{\xi_j}\cap V_{\xi_k}$ of the cover of
$\plM$ by the open sets $V_{\xi_j}\cap \plM$, we get
$\omega_j(\xi)-c_j(\xi)=\omega_k(\xi)-c_k(\xi)$ and therefore this
defines a global smooth function $\theta$ on $\plM$.  In its
definition, $E_h(\la,\xi)$ only depends on the first jet of $x$ at
$\plM$ and thus modifying $x$ to be $xe^{\theta}$, this shows
the claim.
\end{proof}
%
%
It follows that~\as{A3} and~\as{A4} are satisfied, with $b^0=e^{{n\over 2}\phi_\xi(m)}$.
Assumption~\as{A8} is then checked by a direct calculation, with the measure $d\xi$ on
$\plM$ corresponding to the choice of the function $x$ in Proposition~\ref{l:eisenstein-dec}.

Assumption~\as{A7} can be reduced, using the isometries $\psi_{\xi_j}$, to the
following statement: if $(q,\nu)\in S^* \hh^{n+1}$ is directly escaping
in the forward direction and converging to some $p\in \mathbb S^n$, then
$|q-p|\leq C x_0(q)$ for some global constant $C$; the latter statement is
verified directly, see Figure~\ref{f:localize}.
%
%
\begin{figure}
\includegraphics{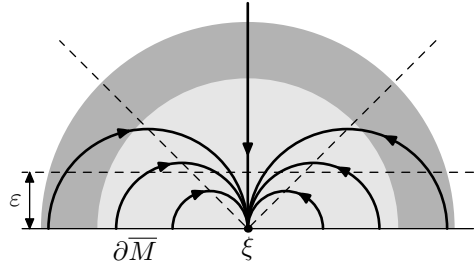}
\caption{Illustration of~(A7) for the half-plane model of $\mathbb H^{n+1}$: the set of points on trajectories converging
to $\xi\in \partial \mathbb H^{n+1}$ with $\dot x<0$ and
$x<\varepsilon$ is the triangle formed by dashed lines, lying
$O(\varepsilon)$ close to $\xi$. For $\varepsilon$ small enough, this
triangle lies inside the lighter shaded region, denoting the set $\{\chi_0=1\}$.}
\label{f:localize}
\end{figure}
%
%

\smallsection{Microlocalization of $E^1_h$}
Finally, assumptions~\as{A5} and~\as{A6} follow, by rescaling $h$ and using
that $E_h(\lambda,\cdot)$ is a function of $\lambda/h$, from
%
%
\begin{prop}\label{WFE}
Let $K_0\subset M$ be a compact set containing a neighborhood of the trapped set.
Assume that $\lambda=1$ and define
\begin{equation}
  \label{tildeE_1}
\widetilde E^1_h(\la,\xi)=\frac{E_h^1(\la,\xi)}{1+\|E_h(\la,\xi)\|_{L^2(K_0)}}.
\end{equation}
Then:

1. $\til{E}^1_h(\la,\xi)$ is $h$-tempered in the sense of \eqref{tempered}.

2. The wavefront set $\WFh(\til{E}^1_h)$ is contained in $S^*M$.

3. If $(m,\nu)\in S^*M$ and $g^t(m,\nu)$ escapes to infinity as $t\to+\infty$
and never passes through the set
$$
W_\xi:=\{(m,\partial_m\phi_\xi(m))\mid m\in \supp(\partial_m\chi_0)\}
$$
for $t\geq 0$, then $(m,\nu)\not\in\WFh(\til E^1_h)$.

Moreover, the corresponding estimates are uniform in $\lambda\in [1/2,2]$ and $\xi\in \pl M$.
\end{prop}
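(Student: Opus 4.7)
The plan is to adapt the Euclidean argument of Section \ref{s:euclidean.analysis} to the asymptotically hyperbolic setting, replacing the explicit formulas for the free resolvent with the microlocal description of the scattering resolvent provided by Vasy's framework \cite{v}. The three parts will be handled in order, and the third will be the technical heart.

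For part 1, I would first observe that $\|\chi_0 E^0_h(\lambda,\xi)\|_{L^2(K)}$ is uniformly bounded for any fixed compact $K \subset M$ and for $\lambda\in [1/2,2]$, $\xi \in \plM$, since $E^0_h = e^{(n/2 + i\lambda/h)\phi_\xi}$ has modulus $e^{n\phi_\xi/2}$, smooth in $(m,\xi)$ on $U_\infty$. Writing $E^1_h = E_h - \chi_0 E^0_h$ then gives $\|\widetilde E^1_h\|_{L^2(K_0)} \leq C$ by the choice of normalization in \eqref{tildeE_1}. To extend this bound to an arbitrary compact subset $U \subset M$ and promote it to higher Sobolev regularity with polynomial loss in $h$, I would propagate $E_h$ itself out of $K_0$ by applying Lemma \ref{l:key} to $E_h$, since $K_0$ is assumed to contain a neighbourhood of the trapped set, so that a geodesic starting in $U$ with unit speed leaves every fixed compact set through $K_0$ in bounded time. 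Together with elliptic regularity (Proposition \ref{l:elliptic}), this yields the required $h$-temperedness of $\widetilde E^1_h$ uniformly in $\xi$ and $\lambda$.

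For part 2, one has the equation $(h^2(\Delta - n^2/4) - \lambda^2)\widetilde E^1_h = -\widetilde F_h$, where $\widetilde F_h := F_h/(1+\|E_h\|_{L^2(K_0)})$ is compactly supported and $\widetilde F_h = \mathcal{O}(h)_{L^2}$ uniformly. The principal symbol $|\nu|_g^2 - \lambda^2$ of the operator on the left is elliptic off the energy surface $\{|\nu|_g = \lambda\}$, which at $\lambda = 1$ is precisely $S^*M$. Moreover $F_h = [h^2\Delta,\chi_0] E^0_h$ is a Lagrangian distribution associated to $W_\xi \subset S^*M$, hence $\widetilde F_h$ is microlocally $\mathcal{O}(h^\infty)$ off $W_\xi$. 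Elliptic regularity then gives $\WFh(\widetilde E^1_h) \subset S^*M$.

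Part 3 is where the hyperbolic setting requires real work, and the main obstacle is that the free AH resolvent has no explicit integral kernel playing the role of $e^{-i\lambda|m-m'|/h}/|m-m'|^{n/2}$. My plan is to invoke the microlocal propagation-of-singularities statement for the incoming resolvent $R_h(\lambda)$ coming from Vasy's Fredholm framework \cite{v} (made possible here by the assumption of constant curvature $-1$ near infinity, which reduces the behaviour at the conformal boundary to a radial point problem for the extended operator). The statement I need is: for $f$ compactly supported and $h$-tempered, if $(m,\nu) \in \WFh(R_h(\lambda)f) \cap S^*M$, then either $(m,\nu)\in\Gamma_-$ or there exists $t \geq 0$ with $g^t(m,\nu) \in \WFh(f)$. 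This is obtained by combining interior propagation along the Hamilton flow of $|\nu|_g^2$ with Vasy's radial point estimates at $\plM$: the ``incoming'' boundary value selects the resolvent for which the radial source lies at the incoming cotangent bundle direction, forbidding singularities from entering from the outgoing direction, so that any wavefront point is forced either to reach $\WFh(f)$ by forward flow or to stay in a compact region forever (i.e.\ to lie in $\Gamma_-$). Applied to $f = \widetilde F_h$ with $\WFh(\widetilde F_h) \subset W_\xi$, this gives exactly the contrapositive of the desired statement; the uniformity in $\lambda \in [1/2,2]$ and $\xi \in \plM$ follows from the uniformity of the estimates in the corresponding radial point calculus.
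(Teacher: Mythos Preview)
Your approach to Parts~2 and~3 is correct and matches the paper: Part~2 is ellipticity, and Part~3 is exactly Vasy's propagation of singularities combined with the radial point estimate at the conformal boundary. However, Part~3 requires $\widetilde E^1_h$ to be $h$-tempered in order for $\WFh$ to make sense, and your argument for Part~1 has a genuine gap.

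The problem is that Lemma~\ref{l:key} goes the wrong direction for what you want: it expresses $\chi E_h$ in terms of $\chi_t E_h$ where $\supp\chi_t$ is \emph{larger} than $\supp\chi$ (by distance $>|t|$), so it controls $E_h$ on small sets in terms of $E_h$ on bigger sets. It cannot be used to push an $L^2(K_0)$ bound outward to an arbitrary compact $U$. Your geometric claim that a unit-speed geodesic starting in $U$ ``leaves every fixed compact set through $K_0$'' is also false: a geodesic in the funnel can run directly to infinity in both directions without ever entering $K_0$. More fundamentally, the resolvent $R_h(\lambda)$ is not known to be polynomially bounded in the trapping setting, and the normalization in~\eqref{tildeE_1} only gives control on $K_0$; one needs an actual mechanism to convert this into a global polynomial bound.

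The paper's mechanism is the following, and it is worth understanding because it is not just a cosmetic variant of your plan. After passing to Vasy's compactified picture on a closed manifold $X$ with complex absorber $Q(\lambda)$ outside $\overline M_{\even}$, the paper introduces a \emph{second} absorber $Q_K\in\Psi^{\comp}(X)$ supported inside $K_0$, chosen so that every forward trajectory either escapes or meets $\{\sigma(Q_K)<0\}$. Then $P_1(\lambda)-iQ(\lambda)-iQ_K$ is semiclassically nontrapping and its inverse satisfies an $\mathcal O(h^{-1})$ bound; applying it to $\widehat E^1_h$ yields
\[
\|\widehat E^1_h\|_{L^2(X)}\leq Ch^{-1}\big(\|\widehat F_h\|_{L^2(X)}+\|Q_K\widehat E^1_h\|_{L^2(X)}\big),
\]
and the second term is bounded by $\|\widehat E^1_h\|_{L^2(K_0)}\leq C$, precisely because of the normalization. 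This is how the control on $K_0$ is leveraged into global temperedness. Once this is in place, your Parts~2 and~3 go through as written.
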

\begin{proof} 
We will use the construction of~\cite{v}. (See also~\cite{v-big};
note however that in that paper $L_+$ and $L_-$ switch places
compared to the notation of~\cite{v} that we are using.)
Let $\overline M_{\even}$
(called $X_{0,\even}$ in~\cite{v})
be the space $\overline M$ with the smooth structure at the boundary
$\plM$ changed so that $x^2$ is the new boundary
defining function. As in~\cite[(3.5)]{v}, introduce the modified Laplacian
$$
P_1(\lambda):=x^{-2}x^{-s}(1+x^2)^{s/4-n/8}(h^2\Delta-s(n-s))(1+x^2)^{n/8-s/4}x^{s},\
s:=n/2+i\lambda/h.
$$
(The conjugation by $(1+x^2)^{s/4-n/8}$ is irrelevant in our case, as
$s/4-n/8=i\lambda/(4h)$ is purely imaginary. In~\cite{v}, it is needed to
show estimates far away in the physical plane, that is for $\Real s\gg 1$.)
Note that we change the sign of $\lambda$ in the conjugation
(in the notation of~\cite{v}, $P_1(\lambda)=P_\sigma$ with $\sigma=-\lambda/h$);
therefore,
our resolvent will be semiclassically incoming, instead of semiclassically
outgoing, for $\lambda>0$.
The operator $P_1$ is smooth up to the boundary of $\overline M_{\even}$; 
as in~\cite[Section~3.5]{v}, we embed $\overline M_{\even}$ as an open set in
a certain compact manifold without boundary $X$, and extend $P_1$
to a differential operator in $\Psi^2(X)$. We also consider the
semiclassical complex absorbing operator $Q(\lambda)\in\Psi^2(X)$ satisfying
the assumptions of~\cite[Section~3.5]{v}; in particular,
$Q(\lambda)$ is supported outside of $\overline M_{\even}\subset X$.
Then $(P_1(\lambda)-iQ(\lambda))^{-1}:C^\infty(X)\to C^\infty(X)$ is a meromorphic
family of operators in $\lambda$, and for $f\in C^\infty(X)$, we have
(see the proof of~\cite[Theorem~5.1]{v})
$$
x^s(1+x^2)^{n/8-s/4}(P_1(\lambda)-iQ(\lambda))^{-1}f|_M=R_h(\lambda)
(1+x^2)^{n/8-s/4}x^s x^2 (f|_M).
$$
Here $R_h(\lambda)$ is the incoming scattering resolvent on $M$. In
principle, depending on the choice of $Q(\lambda)$, the operator
$(P_1(\lambda)-iQ(\lambda))^{-1}$ could have a pole at $\lambda$.
However, as $R(\lambda)$ does not have a pole for $\lambda\in [1/2,2]$,
the terms in the Laurent expansion of
$(P_1(\lambda)-iQ(\lambda))^{-1}$ have to be supported outside of $\overline M_{\even}$
and we can ignore them in the analysis.
%
%
\begin{figure}
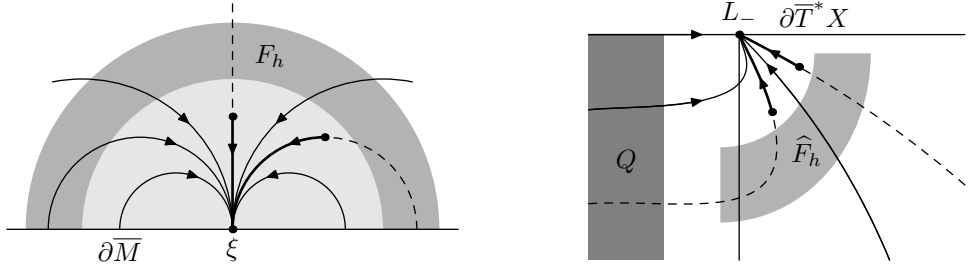

\includegraphics{qeefun.1}
\qquad\qquad
\includegraphics{qeefun.2}
\caption{Left: physical space picture of geodesics converging
to $\xi$. The darker shaded region is the support of $d\chi_0$, and
thus of $F_h$. In the lighter shaded region, $\chi_0=1$.  Right: phase
space picture near $\xi$ after the conjugation of~\cite{v}.  $L_-$ is
the sink consisting of radial points, $Q$ is the complex absorbing
operator, and the shaded region corresponds to the wavefront set of
$\widehat F_h$.  The vertical line hitting $L_-$ is the boundary of
$\overline M_{\even}$, while the horizontal line is the fiber
infinity. In both pictures, we mark two points $(m,\nu)$ satisfying
the assumption of part~3 of Proposition~\ref{WFE} and the forward
geodesics starting at these points.}
\end{figure}
%
%
 
Let $\widehat F_h\in C^\infty(X)$ be any function such that $\widehat F_h=\mc{O}(h)_{H^N_{h}}$
for all $N$, and
$$
F_h=(1+x^2)^{n/8-s/4}x^{s+2}(\widehat F_h|_M).
$$
Such a function exists as $x^{-s}\chi_0E^0_h\in C^\infty(\overline M_{\even}\setminus\xi)$,
$\chi_0\in C^\infty(\overline M_{\even})$, and
$$
F_h=x^{2+s}(1+x^2)^{n/8-s/4}[P_1(s),\chi_0](1+x^2)^{s/4-n/8}x^{-s}E^0_h
$$
is supported away from $\xi$. Define the function
$\widehat E^1_h\in C^\infty(X)$ by
$$
\widehat E^1_h=-{(P_1(\lambda)-iQ(\lambda))^{-1}\widehat F_h\over 1+\|E_h(\lambda,\xi)\|_{L^2(K_0)}}.
$$
Then
$$
\widetilde E^1_h=x^{s}(1+x^2)^{n/8-s/4}\widehat E^1_h|_M.
$$
Consider the map $\iota: T^*M\to T^*X$
given by
$$
\iota( m,\nu)=\bigg(m,\nu
-d\bigg(\ln x(m)-{1\over 4}\ln(1+x(m)^2)\bigg)\bigg),\
m\in M,\
\nu\in T^*_{m}M;
$$
then for an $h$-tempered $u\in C^\infty(X)$,
$$
\WFh(x^s(1+x^2)^{n/8-s/4} u|_M)=\iota^{-1}(\WFh(u)).
$$
Then
\begin{equation}
  \label{e:propagation-rhs}
\WFh((P_1(\lambda)-iQ(\lambda))\widehat E^1_h)\cap T^*M\subset\iota(\WFh(F_h))
\subset\iota(W_\xi).
\end{equation}
Now, as $\|E^0_h\|_{L^2(K_0)}\leq C$ and thus $\|E^1_h\|_{L^2(K_0)}\leq C+\|E_h\|_{L^2(K_0)}$,
we have
$$
\|\widehat E^1_h\|_{L^2(K_0)}\leq C.
$$
Consider an operator $Q_K\in\Psi^{\comp}(X)$ supported in $K_0$ such
that $\sigma(Q_K)\leq 0$ everywhere and each unit speed geodesic
$\gamma(t)$ either escapes as $t\to +\infty$ or passes through the
region $\{\sigma(Q_K)<0\}$ at some positive time.  This is possible
since $K_0$ contains a neighborhood of the trapped set. Then the
operator $P_1(\lambda)-iQ(\lambda)-iQ_K$ satisfies the semiclassical
nontrapping assumptions~\cite[Section~3.5]{v}; therefore, by the
nontrapping estimate~\cite[Theorem~4.8]{v},
$$
\begin{gathered}
\|\widehat E^1_h\|_{L^2(X)}\leq Ch^{-1}\|(P_1(\lambda)-iQ(\lambda)-iQ_K)\widehat E^1_h\|_{L^2(X)}
\\\leq Ch^{-1} \|\widehat F_h\|_{L^2(X)}+Ch^{-1}\|Q_K \widehat E^1_h\|_{L^2(X)}.
\end{gathered}
$$
However, $\|Q_K \widehat E^1_h\|$ is bounded by $\|\widehat E^1_h\|_{L^2(K_0)}$;
therefore, $\|\widehat E^1_h\|_{L^2(X)}=\mc{O}(h^{-1})$ and in particular
$\widehat E^1_h$ is tempered; it follows that $\widetilde E^1_h$ is also tempered.
This proves part~1 of the proposition; part~2 follows by ellipticity
(note that $\WFh(F_h)\subset W_\xi\subset S^*M$).

Now, assume that $(m,\nu)\in S^*M$ satisfies the assumption of part~3
of this proposition.  Then it follows directly
from~\eqref{e:propagation-rhs}, the analysis
of~\cite[Section~2.2]{v}, and the definition of $\iota$, that the
Hamiltonian flow line of $\sigma(P_1)$ starting at $\iota(m,\nu)$
converges to the set $L_-$ of radial points as $t\to +\infty$ and does
not intersect $\WFh((P_1(\lambda)-iQ(\lambda))\widehat E^1_h)$ for
$t\geq 0$.  In a fashion similar to the global argument
of~\cite[Section~4.4]{v} (see also a similar semiclassical outgoing
property of~\cite[Theorem~4.9]{v}), we combine elliptic regularity and
propagation of singularities (see~\cite[Section~4.1]{v}) with the
radial points lemma~\cite[Proposition~4.5]{v} for $L_-$, to get
$\iota(m,\nu)\not\in \WFh(\widehat E^1_h)$. Therefore,
$(m,\nu)\not\in\WFh(\widetilde E^1_h)$ as required.
\end{proof}
%
%

\appendix

\section{Limiting measures for hyperbolic quotients}
  \label{s:k-1}

In this appendix, we give an explicit description of the limiting
measures $\mu_\xi$ in case when $M$ is a hyperbolic quotient
$\Gamma\backslash \hh^{n+1}$, in terms of the group $\Gamma$.
This is a particular case of asymptotically hyperbolic manifolds
discussed in Section~\ref{s:ah}.

\subsection{Convex co-compact groups} 
  \label{s:k-1-groups}

Let $\mathbb B$ be the unit ball in $\rr^{n+1}$, and $\hh^{n+1}$ the 
$(n+1)$-dimensional hyperbolic space, which we view as $\mathbb B$ equipped with the constant negative curvature 
metric $g_{\hh^{n+1}}:=4|dm|^2/(1-|m|^2)^2$. The boundary $\mathbb S^n=\partial \overline{\mathbb B}$
is the sphere of radius $1$, which is also the conformal boundary of $\hh^{n+1}$. 
A \emph{convex co-compact} group $\Gamma$ of isometries of $\hh^{n+1}$ is a
discrete group of hyperbolic transformations (i.e., transformations
having 2 disjoint fixed points on $\overline{\mathbb B}$)
with a compact convex core, and $\Gamma$ is not co-compact. The
convex core is the smallest convex subset in $\Gamma\backslash
\hh^{n+1}$, which can be obtained as follows.  The limit set
$\Lambda_\Gamma$ of the group and the discontinuity set
$\Omega_\Gamma$ are defined by
\begin{equation}
  \label{limitset}
 \Lambda_\Gamma:=\overline{\{\gamma(m)\in \mathbb B; \gamma\in \Gamma\}}
\cap \mathbb S^n\,, \quad \Omega_\Gamma:=\mathbb S^n\setminus \Lambda_\Gamma\end{equation}
where the closure is taken in the closed unit ball $\overline{\mathbb B}$
and $m\in \mathbb B$ is any point (the set $\Lambda_\Gamma$ does not
depend on the choice of $m$).  The group $\Gamma$ acts on the convex
hull of $\Lambda_\Gamma$ (with respect to hyperbolic geodesics) and
the convex core is the quotient space. 

An important quantity is the Hausdorff dimension of $\Lambda_\Gamma$
\begin{equation}
  \label{delta}
\delta:=\dim_{H}\Lambda_{\Gamma} < n
\end{equation} 
which in turn is, by Patterson \cite{Pa} and Sullivan \cite{Su}, the exponent of convergence of Poincar\'e series:
for any $m\in \mathbb B$,
\begin{equation}
\label{patsull}
\sum_{\gamma\in \Gamma} e^{-s d(m,\gamma m)}<\infty \iff s>\delta;
\end{equation}
we henceforth denote by $d(\cdot,\cdot)$ the distance function of the hyperbolic metric on $\mathbb B$.
Notice that the series~\eqref{patsull} is locally uniformly bounded in $m\in \mathbb B$.

The group $\Gamma$ acts properly discontinuously on $\Omega_\Gamma$ as
conformal transformations of the sphere and the quotient space
$\Gamma\backslash \Omega_\Gamma$ is a smooth compact manifold of
dimension $n$. The quotient
$$
M=\Gamma\backslash \hh^{n+1}
$$
is a smooth non-compact manifold equipped with the hyperbolic metric
$g$ induced by $g_{\hh^{n+1}}$, and it admits a smooth
compactification by setting $\bbar{M}=M\cup (\Gamma\backslash
\Omega_\Gamma)$, i.e. with $\plM=\Gamma\backslash \Omega_\Gamma$. Then $M$
is an asymptotically hyperbolic manifold in the sense of Section~\ref{s:ah}, of
constant sectional curvature $-1$.  We shall denote the covering map
by
\[
\pi: \mathbb B \cup \Omega_{\Gamma} \to \bbar{M}.
\]
We refer the reader to~\cite{Ni} for more details and properties of convex co-compact groups.

\subsection{Limiting measures in this setting}

In constant curvature, it turns out that the limiting measure
$\mu_\xi$ exists for all $\xi$ (rather than for Lebesgue almost every
$\xi$ as in Section~\ref{s:general.limiting}), and can be described as
a converging sum over the group. We give an expression below, which is
the same as the one obtained in \cite{g-n} when $\delta<n/2$.

For $\xi\in \mathbb S^n$, we let $\phi_\xi$ be the Busemann function%
\footnote{In Section~\ref{s:ah}, we used the coordinate $q\in\mathbb B$,
$p\in\mathbb S^n$ for certain charts near infinity of $M$, and the notation $\phi^{\mathbb B}_p(q)$ for 
the Busemann function on the ball. This was to avoid confusion with the coordinate $m,\xi$ on $M,\plM$. 
We keep in this appendix the notation $\phi_\xi(m)$ to match the notation of the general setting of the article.}
on the unit ball $\mathbb B$ defined by 
\[\phi_\xi(m)=\log\Big(\frac{1-|m|^2}{|m-\xi|^2}\Big).\]
The map $\Phi$ defined by 
\begin{equation}
  \label{psi}
\Phi: \mathbb B\x \mathbb S^n\to S^*\hh^{n+1}, \quad \Phi: (m,\xi)\mapsto (m,\pl_m\phi_\xi(m))
\end{equation}
gives a diffeomorphism between the unit cotangent bundle $S^*\hh^{n+1}$ and $\mathbb B\x \mathbb S^n$,  and satisfies
\[
\Phi^*d\mu_L=e^{n\phi_{\xi}(m)}\Vol_{\hh^{n+1}}(m)\wedge d\xi, \quad \textrm{ with }\,\, e^{n\phi_{\xi}(m)}
=\Big(\frac{1-|m|^2}{|m-\xi|^2}\Big)^n,
\]
if $d\mu_L$ is the Liouville measure (viewed as a volume form on the unit cotangent bundle) and $d\xi$ the canonical 
measure on $\mathbb S^n$. (This is a more general version of~\as{A8}
for the considered case.)
Any isometry $\gamma$ of $\hh^{n+1}$ acts on both spaces   by 
\[
\begin{gathered}
\gamma.(m,\nu)=(\gamma m, (d\gamma(m)\nu^*)^*), \textrm{ for }(m,\nu)\in S^*\hh^{n+1};\\ 
\quad \gamma.(m,\xi)=(\gamma m,\gamma\xi),\,\, 
\textrm{ for }(m,\xi)\in \mathbb B\x \mathbb S^n,  \end{gathered}
\] 
where $^*$ denotes the map identifying $T^*\hh^{n+1}$ with $T \hh^{n+1}$ through the metric. We have 
$\Phi(\gamma.(m,\xi))=\gamma.\Phi(m,\xi)$ and thus $\Phi$ descends to a map
$\Gamma\backslash (\hh^{n+1}\x \mathbb S^n)\to S^*(\Gamma\backslash \hh^{n+1})$, which we also denote by $\Phi$.

The limiting measure $\mu_\xi$ in the considered case is given by
%
%
\begin{lemm}\label{cccgroup}
Let $M=\Gamma\backslash \hh^{n+1}$ be a quotient of $\hh^{n+1}$ by a
convex co-compact group $\Gamma$ of isometries, let $\mc{F}$ be a
fundamental domain. Then the measure $\mu_{\pi(\xi)}$ of
\eqref{e:mu-xi-def} exists for all $\xi\in \Omega_\Gamma$ and is
described as a converging series by the following expression: if
$\xi\in \Omega_\Gamma\cap\bbar{\mc{F}}$ and $a\in C_0^\infty(S^*M)$,
then
\[
\int_{M} a\, d\mu_{\pi(\xi)}= \int_{\mc{F}}\sum_{\gamma\in \Gamma}a(m,d\phi_{\gamma \xi}(m))
e^{n(\phi_{\gamma\xi}(m)+\log |d\gamma(\xi)|)} \Vol_{\hh^{n+1}}(m)
\]
where $\phi_\xi(m)$ is the Busemann function
on $\mathbb B$ associated to $\xi\in \mathbb S^n$ and $|d\gamma(\xi)|$ is the
Euclidean norm of $d\gamma(\xi)$.
\end{lemm}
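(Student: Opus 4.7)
The plan is to evaluate the limit $\int_{S^*M} a\,d\mu_{\pi(\xi)}=\lim_{t\to+\infty}\int(a\circ g^{-t})\,d\tilde\mu_{\pi(\xi)}$ from \eqref{e:mu-xi-def} by lifting to the universal cover $\mathbb H^{n+1}\simeq\mathbb B$ and then unfolding over the $\Gamma$-orbit. Under the projection $\pi:\mathbb B\to M$, the local Busemann phase $\phi_{\pi(\xi)}$ constructed in \eqref{definitionphixi} pulls back to the ball-model Busemann function $\phi_\xi$, while the symbol $b^0(1,\pi(\xi),\,\cdot\,;0)$ from \as{A4} lifts to $e^{(n/2)\phi_\xi(m)}$; the explicit formula \eqref{e:mu-xi-tilde} therefore shows that $\tilde\mu_{\pi(\xi)}$ lifts to the measure $e^{n\phi_\xi(m)}\Vol_\hh(m)$ on $\Lambda_\xi^\hh\cap V_\infty^+$, where $\Lambda_\xi^\hh:=\{(m,d\phi_\xi(m)):m\in\mathbb B\}$ is the stable Lagrangian.

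This density is exactly the Liouville fiber over $\xi$ under the diffeomorphism $\Phi$ in \eqref{psi}, hence is geodesic-flow invariant; since $g^t(m_0,d\phi_\xi(m_0))\to\xi$ in $\bbar{\mathbb B}$ for every $m_0\in\mathbb B$, the sets $g^{-t}(V_\infty^+)\cap\Lambda_\xi^\hh$ exhaust $\Lambda_\xi^\hh$ monotonically in $t$. Granting the convergence step below, dominated convergence applied to the $\Gamma$-invariant lift of $a\circ g^{-t}$ then gives
\begin{equation*}
\int_{S^*M}a\,d\mu_{\pi(\xi)}=\int_{\mathbb B}(a\circ\pi)(m,d\phi_\xi(m))\,e^{n\phi_\xi(m)}\,\Vol_\hh(m).
\end{equation*}
Partitioning $\mathbb B=\bigsqcup_{\gamma\in\Gamma}\gamma^{-1}\mc{F}$, changing variables $m\mapsto\gamma^{-1}m$ using $\Gamma$-invariance of $\Vol_\hh$, and using that the isometry $\gamma$ sends $(\gamma^{-1}m,d\phi_\xi(\gamma^{-1}m))$ to $(m,d\phi_{\gamma\xi}(m))$ (because the covector pushforward $\gamma_\ast d\phi_\xi|_{\gamma^{-1}m}$ equals $d(\phi_\xi\circ\gamma^{-1})|_m=d\phi_{\gamma\xi}|_m$, as $\phi_\xi\circ\gamma^{-1}-\phi_{\gamma\xi}$ is constant in $m$), the $\Gamma$-invariance of $a\circ\pi$ converts the right-hand side into $\sum_\gamma\int_\mc{F} a(m,d\phi_{\gamma\xi}(m))\,e^{n\phi_\xi(\gamma^{-1}m)}\,\Vol_\hh(m)$. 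The standard Busemann cocycle identity on $\mathbb H^{n+1}$,
\begin{equation*}
\phi_\xi(\gamma^{-1}m)=\phi_{\gamma\xi}(m)+\log|d\gamma(\xi)|,
\end{equation*}
which follows from the Möbius transformation law for $(1-|m|^2)/|m-\xi|^2$, rewrites the density as $e^{n(\phi_{\gamma\xi}(m)+\log|d\gamma(\xi)|)}$, yielding the asserted formula.

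The remaining step, and the main technical point, is absolute convergence of the series for every $\xi\in\Omega_\Gamma$. For $m$ in a fixed compact subset of $\mathbb B$ and $\xi\in\Omega_\Gamma$, the accumulation points of the orbit $\{\gamma^{-1}m:\gamma\in\Gamma\}$ lie in $\Lambda_\Gamma$, so $|\gamma^{-1}m-\xi|$ is bounded below uniformly in $\gamma$; combining $e^{\phi_\xi(m')}=(1-|m'|^2)/|m'-\xi|^2$ with the standard estimate $1-|\gamma^{-1}m|^2\asymp e^{-d(m_0,\gamma m_0)}$ gives
\begin{equation*}
e^{n(\phi_{\gamma\xi}(m)+\log|d\gamma(\xi)|)}=e^{n\phi_\xi(\gamma^{-1}m)}\leq C\,e^{-n\,d(m_0,\gamma m_0)}.
\end{equation*}
Since $\delta=\dim_H\Lambda_\Gamma<n$, the Patterson--Sullivan criterion \eqref{patsull} at exponent $s=n$ gives absolute summability, locally uniformly in $(m,\xi)$; this simultaneously legitimizes the dominated convergence used above, the Fubini exchange of $\sum_\gamma$ and $\int_\mc{F}$, and the pointwise existence of $\mu_{\pi(\xi)}$ for every $\xi\in\Omega_\Gamma$ rather than only for almost every $\xi$, as in Section~\ref{s:general.limiting}. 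The main obstacle I anticipate is the bookkeeping of the cocycle sign and of the conformal factor $|d\gamma(\xi)|^n$, which is what aligns the boundary measure $d\xi$ implicit in \as{A2} with the spherical measure on $\mathbb S^n$ across the $\Gamma$-orbit; once this identity is in hand, the algebraic steps above are routine.
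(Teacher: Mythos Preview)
Your proof is correct and follows essentially the same approach as the paper: lift $\tilde\mu_{\pi(\xi)}$ to the stable leaf $\Lambda_\xi^{\hh}$ with density $e^{n\phi_\xi}\Vol_{\hh}$, use flow invariance of this fiber of Liouville measure to pass to the limit $t\to+\infty$, unfold over $\Gamma$ using the Busemann cocycle identity $\phi_\xi(\gamma^{-1}m)=\phi_{\gamma\xi}(m)+\log|d\gamma(\xi)|$, and justify the dominated convergence and Fubini steps via the Poincar\'e series bound at exponent $n>\delta$. The only cosmetic difference is the order of operations: the paper first periodizes $a$ to $\tilde a=\sum_\gamma a(\gamma\cdot)$ over $S^*\mc{F}$ and then lets $t\to+\infty$ term by term, whereas you first take the $t\to+\infty$ limit to obtain the integral over all of $\mathbb B$ and then partition $\mathbb B=\bigsqcup_\gamma\gamma^{-1}\mc F$; both routes are controlled by the same majorant $\sum_\gamma e^{-n\,d(0,\gamma 0)}<\infty$.
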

\begin{proof} 

We can view $a$ as a compactly supported function on the unit
cotangent bundle $S^*\mc{F}$ over a fundamental domain $\mc{F}\subset
\mathbb B$ and we extend $a$ by $0$ in $S^*\hh^{n+1}\backslash
S^*\mc{F}$ (the resulting function might not be smooth, but it does
not matter here). The flow $g^t$ on $S^*M$ is obtained by projecting
down the geodesic flow $\til{g}^t$ of the cover $S^*\hh^{n+1}$. Let
$\xi\in \Omega_\Gamma\cap\bbar{\mc{F}}$, then small neighbourhoods of
$\pi(\xi)$ in $M$ are isometric through $\pi$ to small neighbourhoods
of $\xi$ in the unit ball $\mathbb B$. By the construction of
the decomposition~\eqref{e:e-h-decomposition} for the asymptotically
hyperbolic case in Section~\ref{s:eisenstein.analytic}, the function
$E_h^0(\la,\pi(\xi);\pi(m))$ is equal to
$e^{(n/2+i\lambda/h)\phi_\xi(m)}$ for $m$ near $\xi$ ($\xi$ being fixed)
and thus $|b^0|^2=e^{n\phi_\xi(m)}$. One has
\[
\int_{M}a(g^{-t}(m,d\phi_\xi(m)))e^{n\phi_{\xi}(m)}\Vol_M(m)=
\int_{\mc{F}}\til{a}(\til{g}^{-t}\Phi(m,\xi))e^{n\phi_{\xi}(m)}\Vol_{\hh^{n+1}}(m)
\]
where $\til{a}(m,\nu):=\sum_{\gamma\in \Gamma}a(\gamma.(m,\nu))$ is
the lift to $S^*\hh^{n+1}$ of the function $a$ on $S^*M$ and
$\Vol_{\hh^{n+1}}(m)$ is the Riemannian measure on $\hh^{n+1}$.  Using
the map $\Phi$ of \eqref{psi}, one can define a map
$\til{g}^t_\xi:\mathbb B\to \mathbb B$ by
\[
\til{g}^t\Phi(m,\xi)=\Phi(\til{g}_\xi^t(m),\xi),
\]
this is a diffeomorphism which preserves the measure $e^{n\phi_\xi(m)} \Vol_{\hh^{n+1}}$. 
By~\cite[Lemma~4]{g-n},
we have $e^{n\phi_\xi(\gamma^{-1}m)}=e^{n\phi_{\gamma\xi}(m)}|d\gamma(\xi)|^{n}$,
but we also have $\gamma.\Phi(m,\xi)=\Phi(\gamma m,\gamma \xi)$. Let $U^+_\infty$
be defined in~\as{G4} and put $U:=\{m\mid (m,\pi(\xi))\in U^+_\infty\}$,
then $U$ lies in a small neighborhood of $\pi(\xi)$ in $M$.
We can identify 
$U$ with a small neighbourhood $\widetilde{U}$ of $\xi$ in $\mc{F}$ and we get
for $\tilde\mu_{\pi(\xi)}$ defined in~\eqref{e:mu-xi-tilde},
\begin{equation}
  \label{tfixed}
\begin{split}
\int_{S^*M} (a\circ g^{-t})\,d\tilde\mu_{\pi(\xi)}=&\int_{U}a(g^{-t}(m,d\phi_\xi(m)))e^{n\phi_{\xi}(m)}\Vol_M(m)\\
=&\int_{\widetilde{U}}\sum_{\gamma\in \Gamma}a(\gamma.\til{g}^{-t}\Phi(m,\xi))
e^{n\phi_{\xi}( m)}\Vol_{\hh^{n+1}}(m)\\
=&\sum_{\gamma\in \Gamma}\int_{\widetilde{U}}a(\Phi(\gamma \til{g}_\xi^{-t}m,\gamma \xi))
e^{n\phi_{\xi}(m)}\Vol_{\hh^{n+1}}(m)\\
=&\sum_{\gamma\in \Gamma}\int_{\gamma\til{g}_\xi^{-t}(\widetilde{U})} a(m,d\phi_{\gamma\xi}(m))
e^{n\phi_{\xi}(\gamma^{-1}m)}\Vol_{\hh^{n+1}}(m).
\end{split}
\end{equation}
We now observe that for all $\gamma\in \Gamma$,
$\lim_{t\to+\infty}\indic_{\gamma\til{g}_\xi^{-t}\widetilde{U}}=1$,
since $\widetilde U$ is a neighbourhood of $\xi$ in $\mathbb B$ containing all
points directly escaping to $\xi$. This achieves the proof by
recalling the definition~\eqref{e:mu-xi-def} of $\mu_{\pi(\xi)}$ and
taking the limit in \eqref{tfixed} and using the dominated convergence
theorem, as there exists $C,C'>0$ such that for all $m$ in the compact
set $\supp(a)$
\[
\sum_{\gamma\in \Gamma} e^{n\phi_{\xi}(\gamma^{-1}m)} = \sum_{\gamma\in \Gamma} 
\Big(\frac{1-|\gamma^{-1} m|}{|\gamma^{-1}m-\xi|^2}\Big)^n\leq C\sup_{m\in\supp(a)} e^{-n\, d(\gamma^{-1}m,0)}\leq C'
\]
by locally uniform (in $m$) convergence of Poincar\'e series \eqref{patsull} at $s=n$.
\end{proof}
%
%

\section{The escape rate}\label{s:escaperate}

Let us discuss the classical escape rate in some particular cases,
following the work of Bowen-Ruelle~\cite{BoRu}, Young~\cite{Yo}, and
Kifer~\cite{Ki}.

\subsection{Escape rate and the pressure of the unstable Jacobian}\label{ap:escape}
  \label{s:escaperate-1}

We consider $(M,g)$ a complete non-compact Riemannian manifold and say that
a compact set $K_0\subset S^*M$  is geodesically convex
if any geodesic trajectory in $S^*M$ which leaves $K_0$ never comes
back:
\begin{equation}\label{geodconvex}
\exists t_1, \exists t_0<t_1,\,\,  g^{t_0}(m,\nu)\in K_0 \textrm{ and } g^{t_1}(m,\nu)\in M\setminus
K_0\Longrightarrow \forall t\geq t_1,\,\, g^t(m,\nu)\in M\setminus K_0.
\end{equation}
A compact set $K_0\subset M$ is said geodesically convex if $\pi^{-1}(K_0)$ is geodesically convex 
where $\pi:S^*M\to M$ is the natural projection.
Let $K_0\subset S^*M$ be a geodesically convex compact set containing a neighborhood of the trapped set $K$.
The examples we
consider are $(M,g)$ which are hyperbolic or Euclidean near infinity,
and $K_0=S^*M\cap \{x\geq \eps_0\}$ with $x,\eps_0$ given in~\as{G2}. The
trapped set from Definition~\ref{d:trapped} can be written as
$$
K=\bigcap_{t\in\rr}g^t(K_0)=\bigcap_{j\in\zz}g^j(K_0)
$$
This is a compact maximal invariant set for the flow $g^t$. We
define the escape rate as in \cite{Yo,Ki} by
\[
Q:=\limsup_{t\to \infty}\frac{1}{t}\log \mu_L(\mathcal T(t)),
\]
with $\mu_L$ the Liouville measure and $\mathcal T(t)$ defined in~\eqref{e:T-t}.
Note that, since $K_0$ is geodesically convex, we have $\mathcal T(t_2)\subset \mathcal T(t_1)$
for $0\leq t_1\leq t_2$. The escape rate is clearly non-positive. 

In this section, we assume that $\mu_L(K)=0$ and write $Q$ in terms of
the topological pressure of the flow, under certain dynamical
assumptions.  More precisely, we assume that the trapped set $K$ is
\emph{uniformly partially hyperbolic}, in the following sense: there
exists $\varepsilon_f>0$ and a splitting of
$T(S^*M)$ over $K$ into continuous subbundles invariant under the flow
\[
T_{z}S^*M=E^{cs}_z\oplus E^u_z, \quad \forall z\in K
\]
such that the dimensions of $E^u$ and $E^{cs}$ are constant on $K$ and for all $\eps>0$, there is $t_0\in \rr$ such that 
\[\forall z\in K, \,\, \forall t\geq t_0,  
\left\{\begin{array}{ll}
\forall v\in E^{u}_z,\,\, |dg_z^t v|\geq e^{\eps_f t}|v|,\\
\forall v\in E^{cs}_z, \,\, |dg^t_zv|\leq  e^{\eps t}|v|.
\end{array}\right.\]
Let $J^u$ be the unstable Jacobian of the flow, defined by  
\[J^u(z):=-\pl_{t}(\det dg^t_z|_{E^u_z})|_{t=0}\]
where $dg^t: E^u_z\to E^u_{g^t_z}$ and
the determinant is defined using the Sasaki metric for choosing orthornormal bases 
in $E^u$.  If $\mu$ is a $g^t$-invariant measure on $K$, one has 
\[
\int_{K}J^u d\mu=-\int_{K}\sum_j \Lambda_j^+ d\mu
\]
where $\Lambda_j^+(z)$ are the positive Lyapunov exponents at a regular
point $z\in K$ counted with multiplicity (regular points are points
where the exponents are well defined, and this is set of full
$\mu$-measure by the Oseledec theorem). It is also direct to see that
$\int_{K}J^u d\mu=-\int_{K}\log \det (dg^1|_{E^u})d\mu$.

The \emph{topological pressure} of a continuous function $\varphi:K\to \rr$ with respect to the flow 
can be defined by the variational formula  
\begin{equation}
  \label{topopressure}
P(\varphi):=\sup_{\mu\in \mc{M}(K)}\bigg(h_{\mu}(g^1)+\int_K \varphi\, d\mu\bigg)
\end{equation}
where $\mc{M}(K)$ is the set of $g^t$-invariant Borel probability
measures and $h_\mu(g^1)$ is the measure theoretic entropy of the flow
at time $1$ with respect to $\mu$. In particular $P(0)$ is the
topological entropy of the flow.

A particular case of uniformly partially hyperbolic dynamics is when
$K$ is \emph{uniformly hyperbolic}, that is when there is a continuous
$g^t$-invariant splitting $E^{cs}=\rr H_p\oplus E^s$ into flow direction ($H_p$ is the
vector field generating the geodesic flow) and stable directions $E^s$
where for $t\geq t_0$
\[
\forall v\in E^{s}_z,\,\, |dg_z^t v|\leq e^{-\eps_f t}|v|.
\]
The flow is said to
be \emph{Axiom A} when the trapped set $K$ is a uniformly hyperbolic set such that 
the periodic orbits of $g^t$ on $K$ are dense
in $K$.

It is proved by Young \cite[Theorem~4]{Yo} that if $K$ is uniformly partially hyperbolic, then  
\begin{equation}
\label{youngformula}
Q=\lim_{t\to \infty}\frac{1}{t}\log \mu_L(\mathcal T(t))=P(J^u).
\end{equation}
In the Axiom A case, the same formula was essentially contained in the
work of Bowen-Ruelle (using the volume lemma \cite[Lemma~4.2 and~4.3]{BoRu}).
Moreover by \cite[Theorem~5]{BoRu}, if the incoming tail
$\Gamma_-$ (which is the union of stable manifolds over the trapped
set) has Liouville measure $0$, then $P(J^u)<0$. Thus we deduce
by~\eqref{e:mes0}
\[
\mu_L(K)=0 \textrm{ and }  g^t \textrm{ is Axiom A } \Longrightarrow P(J^u)<0.
\] 
Young~\cite[Theorem~4]{Yo} gives a lower bound $Q\geq P(-\sum_j \Lambda_j^+)$
which applies without any assumption on $K$ (but we are more
interested in an upper bound).

\subsection{Relation with fractal dimensions in particular cases} 
  \label{s:escaperate-2}

Assume first that the metric has constant curvature $-1$ in a small
neighbourhood of the trapped set $K$ (this includes the case of convex
co-compact hyperbolic quotients studied in Appendix~\ref{s:k-1}).
Then the geodesic flow on $S^*M$ is uniformly hyperbolic on $K$ and
has Lyapunov exponents $0$ (with multiplicity 1) and $\pm 1$ (each
with multiplicity $n$). Therefore, the maximal expansion rate
$\Lambda_{\max}$ from~\eqref{lamax} is equal to 1, one has $J^u(z)=-n$
for all $z\in K$, and (see for example~\cite[Theorem~4]{Fa})
\begin{equation}
P(J^u)=h_{\rm top}(K)-n=(\dim_{H}(K)-1)/2-n
\end{equation} 
where $h_{\rm top}$ is the topological entropy of the flow on $K$, and
$\dim_H(K)\in(0,n)$ is the Hausdorff dimension of $K$ (which is equal
to the Minkowski box dimension in this case).  For convex co-compact
hyperbolic quotients $\Gamma\backslash\hh^{n+1}$ (see
Section~\ref{s:k-1} for definition), one has by Sullivan~\cite{Su2}
\begin{equation}
  \label{deltadef}
\delta:=\dim_{H}(\Lambda_\Gamma)=h_{\rm top}(K)\end{equation} 
where $\Lambda_\Gamma$ is the limit set of the group $\Gamma$ defined in \eqref{limitset}.

If $g$ has negative pinched curvature near the trapped set, then one
still has upper and lower bounds on $P(J^u)$ in terms of $h_{\rm
top}(K)$ and the pinching constant. If the trapped set $K$ is
uniformly hyperbolic, it is also shown in~\cite{Fa} that
$\dim_H(K)\leq 1+2h_{\rm top}(K)/\Lambda_{\max}$. In dimension~$2$
there is an explicit relation between the Hausdorff dimension
$\dim_{H}(K)$ and pressures for Axiom~A cases: if
\[
a^{u}(z)=\lim_{t\to 0}\frac{1}{t}\log\|dg^t|_{E^u}\|>0, \quad a^{s}(z)=\lim_{t\to 0}\frac{1}{t}\log\|Dg^t|_{E^u}\|<0
\]
then Pesin--Sadovskaya~\cite{PeSa} show the following formula
\[
\dim_{H}(K)=1+t^u+t^s, \quad \textrm{ with } P(-t^u a^u)=P(-t^sa^s)=0.
\]
 
\section{Egorov's theorem until Ehrenfest time}
  \label{s:ehrenfest}

In this section, we prove Proposition~\ref{l:ehrenfest}, following the methods
of~\cite{bou-ro}, \cite[Section~5.2]{AnNon}, and~\cite[Theorem~11.12]{e-z}.
See also~\cite[Theorem~7.1]{gabriel}.
Without lack of generality, we assume that $t_0>0$.

\subsection{Estimating higher derivatives of the flow}

First of all, we need to estimate the derivatives of symbols under
propagation for long times. Consider the open set
$$
U_1=\{(m,\nu)\in T^*M\mid m\in U,\ 1-2\varepsilon_e<|\nu|_g<1+2\varepsilon_e\}.
$$
For each $k$, we fix a norm $\|\cdot\|_{C^k(\overline{U_1})}$ for the
space $C^k(\overline {U_1})$ of $k$ times differentiable functions on
$\overline{U_1}$. (The particular choice of the norm does not matter,
as long as it does not depend on $t$.)  The following estimate is an
analogue of~\cite[(5.6)]{AnNon}; we include the proof for the case of
manifolds for the reader's convenience.
%
%
\begin{lemm}
  \label{l:ehrenfest-basic}
Take $\Lambda_1>(1+2\varepsilon_e)\Lambda_{\max}$.  Then for each $k$,
there exists a constant $C(k)$ such that for each $a\in C_0^\infty(\overline{U_1})$
and each $t\in \mathbb R$,
\begin{equation}
  \label{e:ehrenfest-basic}
\|a\circ g^t\|_{C^k(\overline{U_1})}\leq C(k) e^{k\Lambda_1|t|}\|a\|_{C^k(\overline{U_1})}.
\end{equation}
\end{lemm}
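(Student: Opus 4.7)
The plan is to reduce~\eqref{e:ehrenfest-basic} to pointwise bounds on the derivatives of the geodesic flow itself, and then combine them via the Fa\`a di Bruno chain rule. First, cover a neighborhood of the compact set $\overline{U_1}$ by finitely many coordinate charts on $T^*M$; the $C^k$ norm is then equivalent to the sum of the coordinate $C^k$ norms of the components, with constants independent of $t$. Choose an intermediate rate $\Lambda'_1$ with $(1+2\varepsilon_e)\Lambda_{\max} < \Lambda'_1 < \Lambda_1$.

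The first substantive step is the bound $\|Dg^t(z)\| \leq C_1 e^{\Lambda'_1 |t|}$ for $z \in \overline{U_1}$ with $g^t(z) \in \overline{U_1}$. For $z$ with $g^t(z) \notin \supp(a)$, continuity of the flow gives an open neighborhood of $z$ on which $a \circ g^t$ vanishes identically, so I can ignore this case. For the relevant $z$, conservation of $|\nu|_g$ under the geodesic flow combined with geodesic convexity of $\overline U$ keeps the entire trajectory $\{g^s(z) : 0 \leq s \leq t\}$ inside $\pi^{-1}(\overline{U})$ and with $|\nu|_g \in (1-2\varepsilon_e, 1+2\varepsilon_e)$. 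The homogeneity of the geodesic flow lets me rescale to unit covectors at the cost of a time dilation by a factor in $(1-2\varepsilon_e, 1+2\varepsilon_e)$, after which the definition~\eqref{e:lambda-max} of $\Lambda_{\max}$ yields the bound, the gap $\Lambda'_1 - (1+2\varepsilon_e)\Lambda_{\max} > 0$ absorbing both the rescaling factor and the finite-time slack in the limsup.

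Higher derivatives of the flow will be obtained by induction on $k$ using the variational equation. Differentiating $\partial_t g^t = X(g^t)$ (with $X$ the generator of the geodesic flow) $k$ times in the base variable produces a linear inhomogeneous ODE of the form
\[
\partial_t(D^k g^t) = (DX)(g^t) \cdot D^k g^t + B_k(t),
\]
where $B_k(t)$ is a universal polynomial of total weight $k$ in $Dg^t, \ldots, D^{k-1}g^t$ and in the derivatives of $X$ up to order $k+1$ evaluated along the trajectory. The induction hypothesis gives $\|B_k(t)\| \leq C(k) e^{k\Lambda'_1 |t|}$, and Duhamel's formula combined with the already-proven first-derivative bound (which controls the homogeneous propagator by a cocycle argument) yields $\|D^k g^t(z)\| \leq C(k) e^{k\Lambda'_1 |t|}$.

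Finally, the Fa\`a di Bruno formula expresses $D^k(a \circ g^t)(z)$ as a finite sum of terms of the form $((D^j a) \circ g^t)(z) \cdot P_{k,j}(Dg^t(z), \ldots, D^{k-j+1}g^t(z))$, where each $P_{k,j}$ is a universal polynomial whose monomials have total weight $k$ in the derivatives of $g^t$. Inserting the above bounds and using $\Lambda'_1 < \Lambda_1$ yields~\eqref{e:ehrenfest-basic}. The hard part will be the first-derivative bound: the definition of $\Lambda_{\max}$ is a limsup over unit covectors with both endpoints in $U$, so the gap $\Lambda_1 - (1+2\varepsilon_e)\Lambda_{\max} > 0$ is precisely what lets me convert this asymptotic statement into the finite-time, non-unit-energy estimate actually needed here; once this is done, the remainder of the argument is a routine induction plus chain-rule computation.
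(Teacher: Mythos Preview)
Your proof is correct and follows essentially the same strategy as the paper: reduce~\eqref{e:ehrenfest-basic} via the chain rule to bounds $\|D^k g^t\|\leq C(k)e^{k\Lambda_1|t|}$, obtain the $k=1$ case from the definition of $\Lambda_{\max}$, and then induct via the variational equation with Duhamel. The only real difference is presentation: the paper carries out the induction in coordinate-invariant language, writing $XY(a\circ g^t)=(g^t_*X\,g^t_*Y\,a)\circ g^t$, introducing an auxiliary connection $\nabla$ to isolate the second-derivative piece $\nabla_{g^t_*X}g^t_*Y$, and deriving the integral representation
\[
\nabla_{g^t_*X}g^t_*Y=g^t_*(\nabla_XY)+\tfrac12\int_0^t g^{t-s}_*Z_s\,ds,\qquad Z_s=\nabla^2_{g^s_*X\,g^s_*Y}H_p+R_\nabla(H_p,g^s_*X)(g^s_*Y),
\]
which is precisely your Duhamel step written without coordinates. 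Your direct coordinate approach is more elementary; the paper's version avoids chart-patching and makes transparent that only bounds on $\nabla^2 H_p$ and the curvature of $\nabla$ enter. Two minor remarks: the inhomogeneity $B_k$ involves derivatives of $X$ only up to order $k$, not $k+1$; and the intermediate rate $\Lambda'_1$ is not actually needed, since the Duhamel integral $\int_0^t e^{\Lambda_1(t-s)}e^{k\Lambda_1 s}\,ds$ already gives $Ce^{k\Lambda_1 t}$ without loss.
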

\begin{proof}
Without loss of generality, we assume that $t>0$.
We first recall the formula for derivatives of the composition
$b\circ \psi$ of a function~$b\in C^\infty(\mathbb R^d)$ with a
map $\psi:\mathbb R^d\to \mathbb R^d$:
\begin{equation}
  \label{e:composition-formula}
\partial^\alpha(b\circ\psi)=\sum_{\alpha,j}c_{\alpha,j}(\partial_{j_1\dots j_m}b)\circ \psi\cdot
\prod_{l=1}^m \partial^{\alpha_l}\psi_{j_l},
\end{equation}
where $c_{\alpha,j}$ are constants, $j_1,\dots,j_m\in \{1,\dots,d\}$,
and $\alpha_1,\dots,\alpha_m$ are nonzero multiindices whose sum
equals $\alpha$. We see from~\eqref{e:composition-formula} that~\eqref{e:ehrenfest-basic}
is implied by the following estimate on the derivatives of the flow $g^t$
(required to hold in any coordinate system):
\begin{equation}
  \label{e:flow-estimate}
|\alpha|\leq k\Longrightarrow
\sup_{U_1\cap g^{-t}(U_1)}|\partial^\alpha g^t|\leq C_\alpha e^{|\alpha|\Lambda_1t}.
\end{equation}
The converse is also true, which can be seen by substituting
cooordinate functions in place of $a$ in~\eqref{e:ehrenfest-basic}.

To estimate higher derivatives of the flow, we will need several
definitions from differential geometry. For a vector field $X$ on
$\overline{U_1}$, define its pushforward $g^t_*X$ by
$$
X(a\circ g^t)=((g^t_*X) a)\circ g^t,\
a\in C^\infty(g^t(\overline{U_1})).
$$
Then $g^t_* X$ is a vector field on $g^t(\overline{U_1})$. In local coordinates,
we have
$$
(g_*^tX)^j=\sum_l (X^l\partial_l g^t_j)\circ g^{-t}.
$$
Note that since
$g^t=\exp(tH_p/2)$
and $g^t_* H_p=H_p$, we have
\begin{equation}
  \label{e:flow-dir}
\partial_t g^t_* X=-{1\over 2}[H_p,g^t_* X]=-{1\over 2}g^t_*[H_p,X].
\end{equation}
We fix a symmetric affine connection $\nabla$ on $T^*M$.
For vector fields $X$ and $Y$, consider
the differential operator $\nabla^2_{XY}$, acting on functions or on vector
fields, defined as follows: for a function $f$ and a vector field $Z$,
\begin{equation}
  \label{e:nabla-2}
\nabla^2_{XY}f=XYf-(\nabla_XY)f,\quad
\nabla^2_{XY}Z=\nabla_X\nabla_Y Z-\nabla_{\nabla_XY}Z.
\end{equation}
In local coordinates, we have (using Einstein's summation convention)
$$
\begin{gathered}
\nabla^2_{XY}f=X^iY^j(\partial^2_{ij}f-\Gamma_{ij}^l \partial_l f),\\
(\nabla^2_{XY}Z)^m=X^iY^j(\partial^2_{ij}Z^m+\Gamma^m_{j\alpha}\partial_iZ^\alpha
+\Gamma^m_{i\alpha}\partial_j Z^\alpha-\Gamma_{ij}^\alpha \partial_\alpha Z^m\\
+(\partial_i \Gamma_{j\alpha}^m+\Gamma_{i\beta}^m\Gamma_{j\alpha}^\beta
-\Gamma_{ij}^\beta\Gamma_{\alpha\beta}^m)Z^\alpha).
\end{gathered}
$$
Here $\Gamma_{ij}^l$ are the Christoffel symbols of the connection
$\nabla$. The advantage of $\nabla^2_{XY}$ over $XY$ is that the
coefficients of this differential operator at any point depend
(bilinearly) only on the values of $X$ and $Y$ at this point, but not
on their derivatives.

We now return to the proof of~\eqref{e:ehrenfest-basic}. The
estimate~\eqref{e:flow-estimate} for $k=1$ follows directly from the
definition~\eqref{e:lambda-max} of~$\Lambda_{\max}$.  It is then
enough to assume that~\eqref{e:flow-estimate} holds for some $k\geq 1$
and prove the estimate~\eqref{e:ehrenfest-basic} for $k+1$.  It
suffices to show that for any two vector fields $X,Y$ on $T^*M$ and
any $a\in C_0^\infty(U_1)$, we have the estimate
\begin{equation}
  \label{e:eb-internal}
\|XY(a\circ g^t)\|_{C^{k-1}(\overline{U_1})}\leq C e^{(k+1)\Lambda_1 t}\|a\|_{C^{k+1}(\overline{U_1})}.
\end{equation}
The left-hand side of~\eqref{e:eb-internal} is equal to
$\|(g^t_*Xg^t_*Y a)\circ g^t\|_{C^{k-1}(\overline U_1)}$.  We first
claim that
\begin{equation}
  \label{e:eb-internal-2}
\|(\nabla^2_{g^t_*Xg^t_*Y}a)\circ g^t\|_{C^{k-1}(\overline {U_1})}\leq
C e^{(k+1)\Lambda_1 t}\|a\|_{C^{k+1}(\overline{U_1})}.
\end{equation}
Indeed, in local coordinates
\begin{equation}
  \label{e:eb-internal-2.1}
(\nabla^2_{g^t_*Xg^t_*Y}a)\circ g^t=(X^\alpha \partial_\alpha g_i^t)(Y^\beta \partial_\beta g_j^t)
\big((\partial^2_{ij}a-\Gamma_{ij}^l \partial_l a)\circ g^t\big).
\end{equation}
We can now apply~\eqref{e:composition-formula} to get an expression
for any derivative of order no more than $k-1$ of~\eqref{e:eb-internal-2.1}.
The result will involve derivatives of orders $1,\dots,k$ of $g^t$, but
not its $k+1$'st derivative; therefore, we can apply~\eqref{e:flow-estimate}
to get~\eqref{e:eb-internal-2}.

Given~\eqref{e:eb-internal-2} and~\eqref{e:nabla-2}, it is enough to show
\begin{equation}
  \label{e:eb-internal-3}
\|((\nabla_{g^t_*X}g^t_*Y)a)\circ g^t\|_{C^{k-1}(\overline{U_1})}
\leq Ce^{(k+1)\Lambda_1 t}\|a\|_{C^k(\overline{U_1})}.
\end{equation}
The vector field $\nabla_{g^t_*X}g^t_*Y$ involves the second
derivatives of $g^t$, therefore the left-hand side
of~\eqref{e:eb-internal-3} depends on the $k+1$'st derivatives of
$g^t$ and we cannot apply~\eqref{e:flow-estimate} directly. We will
instead use the method of the proof of~\cite[Lemma~2.2]{bou-ro},
computing by~\eqref{e:flow-dir}
$$
\begin{gathered}
\partial_t(g_*^{-t}(\nabla_{g_*^tX}g^t_*Y))
={1\over 2}g_*^{-t}([H_p,\nabla_{g_*^tX}g^t_*Y]
-\nabla_{[H_p,g_*^tX]}g_*^tY
-\nabla_{g_*^tX}[H_p,g^t_*Y])
={1\over 2}g^{-t}_*Z_t,
\end{gathered}
$$
where $Z_t$ is the vector field given by
$Z_t= \nabla^2_{g_*^tXg_*^tY}H_p+R_\nabla(H_p,g_*^tX)(g_*^tY)$.
Here $R_\nabla$ is the curvature tensor of the connection $\nabla$.
Then
\begin{equation}
  \label{e:eb-internal-7}
\nabla_{g^t_*X}g^t_* Y=g^t_*(\nabla_XY)+{1\over 2}
\int_0^tg_*^{t-s}Z_s\,ds.
\end{equation}
We have
$$
\begin{gathered}
\|(g^t_*(\nabla_XY)a)\circ g^t\|_{C^{k-1}(\overline{U_1})}
=\|\nabla_XY(a\circ g^t)\|_{C^{k-1}(\overline {U_1})}\\
\leq C\|a\circ g^t\|_{C^k(\overline {U_1})}\leq Ce^{k\Lambda_1 t}
\|a\|_{C^k(\overline {U_1})}.
\end{gathered}
$$
It is then enough to handle the integral part of~\eqref{e:eb-internal-7}.
The field $Z_s$ depends quadratically on the first derivatives
of $g^s$, but does not depend on its higher derivatives;
therefore, writing an expression for $Z_s$ in local coordinates
similar to~\eqref{e:eb-internal-2.1}, we get for $a\in C_0^\infty(U_1)$,
$$
\|(Z_s a)\circ g^s\|_{C^{k-1}(\overline{U_1})}\leq Ce^{(k+1)\Lambda_1s}\|a\|_{C^k(\overline{U_1})}.
$$
Applying~\eqref{e:ehrenfest-basic} for the $C^k$ norm
(given by the induction hypothesis) and using the geodesic convexity of $U$,
we get
$$
\begin{gathered}
\int_0^t \|((g_*^{t-s}Z_s) a)\circ g^t\|_{C^{k-1}(\overline{U_1})}\,ds
=\int_0^t\|(Z_s(a\circ g^{t-s}))\circ g^s\|_{C^{k-1}(\overline{U_1})}\,ds\\\leq
C\int_0^t e^{(k+1)\Lambda_1 s}\|a\circ g^{t-s}\|_{C^k(\overline{U_1})}\,ds\leq
C\int_0^t e^{(k+1)\Lambda_1 s}e^{k\Lambda_1(t-s)}\|a\|_{C^k(\overline {U_1})}\,ds\\\leq
Ce^{(k+1)\Lambda_1t}\|a\|_{C^k(\overline{U_1})}
\end{gathered}
$$
and the proof is finished.
\end{proof}
%
%

\subsection{Proof of Proposition~\ref{l:ehrenfest}}

The proof of Proposition~\ref{l:ehrenfest} is based on repeatedly
applying the following corollary of
Lemma~\ref{l:ehrenfest-basic}. The functions $b^{(j)}$ below
will be the remainders in the formula for the commutator
$[h^2\Delta,A^{(j)}(t)]$, while the functions $c^{(j)}$ will be the errors
arising from multiplying our operators by $X_1$ and $X_2$.
%
%
\begin{prop}\label{l:ehrenfest-basic-2}
Take $\Lambda_1>(1+2\varepsilon_e)\Lambda_{\max}$.
Fix $t_0>0$ and let $\varphi\in C_0^\infty(U_1)$ satisfy $|\varphi|\leq 1$.
Assume that $a_0\in C^\infty(T^*M)$ and for each
$j\geq 0$, $b^{(j)}(t)\in C^\infty([0,t_0]\times T^*M)$,
and $c^{(j)}\in C^\infty(T^*M)$,
with support contained in some $j$-independent compact set.
For $j\geq 0$, define $a^{(j)}\in C^\infty([0,t_0]\times T^*M)$
inductively as the solutions to the equations
$$
\begin{gathered}
a^{(0)}(0)=a_0,\
a^{(j+1)}(0)=\varphi\cdot a^{(j)}(t_0)+c^{(j+1)};\\
\partial_t a^{(j)}(t)={1\over 2}H_p a^{(j)}(t)+b^{(j)}(t).
\end{gathered}
$$
Then for each $k$, and each $j$, we have (bearing in mind that each
$a^{(j)}$ is supported inside some $j$-independent compact set and
thus its $C^k$ norm is well-defined up to a constant)
$$
\begin{gathered}
\sup_{t\in [0,t_0]}\|a^{(j)}(t)\|_{C^k(T^*M)}\leq C(k) \big(e^{jk\Lambda_1t_0}\|a_0\|_{C^k}\\
+\max_{0\leq i\leq j}e^{(j-i)k\Lambda_1t_0}(\sup_{t\in[0,t_0]}\|b^{(i)}(t)\|_{C^k}+\|c^{(i)}\|_{C^k})\big),
\end{gathered}
$$
where $C(k)$ is a constant independent of $j$.
\end{prop}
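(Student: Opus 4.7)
The plan is to reduce everything to a one‑step Duhamel estimate on $[0,t_0]$, combine it with the exponential bound of Lemma~\ref{l:ehrenfest-basic}, and then iterate the resulting linear recursion in $j$. All of the $a^{(j)}$, $b^{(j)}(t)$, $c^{(j)}$ are supported in a fixed, $j$-independent compact subset of $U_1$ (this propagates from one step to the next because $\supp\varphi$ is fixed and the flow is applied only for time $t_0$), so their $C^k$ norms on $T^*M$ and on $\overline{U_1}$ are comparable up to a constant depending only on $k$, and Lemma~\ref{l:ehrenfest-basic} is applicable.

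First, on each interval $[0,t_0]$ I solve the inhomogeneous transport equation $\pl_t a = \tfrac{1}{2}H_p a + b^{(j)}(t)$ by Duhamel's formula
$$
a^{(j)}(t) = a^{(j)}(0)\circ g^t + \int_0^t b^{(j)}(s)\circ g^{t-s}\,ds,
$$
using the identity $\pl_t(f\circ g^t)=\tfrac{1}{2}(H_pf)\circ g^t$. Fixing an auxiliary $\Lambda_1'\in((1+2\varepsilon_e)\Lambda_{\max},\Lambda_1)$ and applying Lemma~\ref{l:ehrenfest-basic} with exponent $\Lambda_1'$ to each term (bearing in mind that $t_0$ is fixed), I obtain a constant $C_1(k)$ such that
$$
\sup_{t\in[0,t_0]}\|a^{(j)}(t)\|_{C^k}
\leq C_1(k)\,e^{k\Lambda_1't_0}\bigl(\|a^{(j)}(0)\|_{C^k}+\sup_{s\in[0,t_0]}\|b^{(j)}(s)\|_{C^k}\bigr).
$$

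Next, setting $A_j:=\|a^{(j)}(0)\|_{C^k}$, $B_j:=\sup_s\|b^{(j)}(s)\|_{C^k}$, $C_j:=\|c^{(j)}\|_{C^k}$, the transition rule $a^{(j+1)}(0)=\varphi\cdot a^{(j)}(t_0)+c^{(j+1)}$ and the Leibniz rule (with $\|\varphi\|_{C^k}$ an absolute constant) give the one-step recursion
$$
A_{j+1}\leq \gamma\,(A_j+B_j)+C_{j+1},\qquad \gamma:=C_2(k)\,e^{k\Lambda_1't_0}.
$$
Iterating yields $A_j\leq \gamma^jA_0+\sum_{i=0}^{j-1}\gamma^{j-i}(B_i+C_{i+1})$, and substituting this back into the Duhamel estimate produces the desired bound on $\sup_{t\in[0,t_0]}\|a^{(j)}(t)\|_{C^k}$ as a sum of contributions from $a_0$, the $b^{(i)}$ and the $c^{(i)}$, each weighted by $\gamma^{j-i}$.

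The one subtle point — and the only place I expect any obstacle — is converting the weights $\gamma^{j-i}=C_2(k)^{j-i}e^{(j-i)k\Lambda_1't_0}$ into the single prefactor $C(k)\,e^{(j-i)k\Lambda_1t_0}$ required by the statement. This is handled by the strict inequality $\Lambda_1>\Lambda_1'$: writing
$$
C_2(k)^{j-i}e^{(j-i)k\Lambda_1't_0}=\exp\bigl((j-i)kt_0\bigl(\Lambda_1'+\tfrac{\log C_2(k)}{kt_0}\bigr)\bigr)
$$
and choosing $\Lambda_1'$ close enough to $(1+2\varepsilon_e)\Lambda_{\max}$ so that $\Lambda_1'+\log C_2(k)/(kt_0)\leq \Lambda_1$ (possible since $k$ and $t_0$ are fixed and $C_2(k)$ depends only on $k$), the whole $j$-dependent multiplicative constant collapses into a single factor $C(k)$ in front, yielding the stated bound.
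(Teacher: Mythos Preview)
Your Duhamel formula and the shape of the recursion are fine, but the ``absorption'' step at the end does not work as stated, and this is exactly the place where the paper's argument is genuinely different from yours.

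Your recursion produces the prefactor $C_2(k)^{\,j-i}$, and you propose to kill it by choosing $\Lambda_1'$ with $\Lambda_1'+\log C_2(k)/(kt_0)\leq\Lambda_1$. But $\Lambda_1$, $t_0$, and $\varphi$ are all \emph{given} in the hypotheses, and $\Lambda_1'$ must stay strictly above $(1+2\varepsilon_e)\Lambda_{\max}$; hence $\Lambda_1-\Lambda_1'$ is bounded above by the fixed number $\Lambda_1-(1+2\varepsilon_e)\Lambda_{\max}$. Your constant $C_2(k)$ contains at least a Leibniz factor comparable to $\|\varphi\|_{C^k}$ (and also the constant $C_1(k)$ from Lemma~\ref{l:ehrenfest-basic}, which itself may depend on $\Lambda_1'$, creating a circularity). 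There is no reason whatsoever that $\log C_2(k)/(kt_0)\leq\Lambda_1-(1+2\varepsilon_e)\Lambda_{\max}$: take $k=1$, or $\varphi$ with steep cutoffs, or $\Lambda_1$ only slightly larger than the threshold, or $t_0$ small, and the inequality fails. So the $j$-dependence of the constant does not collapse.

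The paper's proof avoids this by \emph{not} iterating a one-step $C^k$ bound. Instead it unravels the recursion into a closed formula
\[
a^{(j)}(0)=\varphi^{(j)}\cdot(a_0\circ g^{jt_0})+\sum_{i}\varphi^{(j-i)}\cdot(\text{flow of }b^{(i)},c^{(i)}),
\qquad \varphi^{(j)}:=\prod_{0\le m<j}\varphi\circ g^{mt_0},
\]
and bounds $\|\varphi^{(j)}\|_{C^k}$ directly. The point is that when you apply Leibniz to the $j$-fold product, at most $k$ of the $j$ factors receive a nontrivial derivative; the remaining factors are bounded pointwise by $|\varphi|\leq 1$ and contribute nothing. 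This is precisely why the hypothesis $|\varphi|\leq 1$ is there, and it is what your one-step scheme throws away: by collapsing to a single $C^k$ norm at each step you in effect apply $\|\varphi\|_{C^k}$ to every factor, differentiated or not. The paper's count yields only a polynomial-in-$j$ combinatorial prefactor times $e^{jk\Lambda_1' t_0}$, and a polynomial in $j$ is easily absorbed into the gap $\Lambda_1-\Lambda_1'$; your scheme yields an exponential-in-$j$ prefactor, which is not.

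If you want to salvage the iterative approach, you must track all the norms $\|a^{(j)}(t_0)\|_{C^m}$ for $0\le m\le k$ simultaneously (a vector-valued recursion), so that in the Leibniz step $\partial^\beta\varphi\cdot\partial^{\alpha-\beta}a^{(j)}(t_0)$ you pair the $\|\varphi\|_{C^{|\beta|}}$ loss with the smaller growth rate $e^{(k-|\beta|)\Lambda_1' t_0}$ of $\|a^{(j)}(t_0)\|_{C^{k-|\beta|}}$. Done carefully this reproduces the paper's bound, but it is no longer a scalar linear recursion and is essentially equivalent to the closed-form computation.
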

\begin{proof}
We can write
$$
a^{(j)}(t)=a^{(j)}(0)\circ g^t+\int_0^t b^{(j)}(s)\circ g^{t-s}\,ds.
$$
Since $t_0$ is fixed, it is enough to estimate the derivatives
of $a^{(j)}(0)$. Define
$$
\varphi^{(j)}=\prod_{0\leq m<j} (\varphi\circ g^{mt_0});
$$
applying the Leibniz rule to $\varphi^{(j)}$, estimating each
nontrivial derivative of $\varphi\circ g^{mt_0}$ by
Lemma~\ref{l:ehrenfest-basic}, using that $|\varphi|\leq 1$ and
absorbing the (polynomial in $l$) number of different terms in the
Leibniz formula into the exponential by increasing $\Lambda_1$
slightly, we get $\|\varphi^{(j)}\|_{C^k}=\mathcal
O(e^{jk\Lambda_1t_0})$.  Now,
$$
\begin{gathered}
a^{(j)}(0)=\varphi^{(j)}\cdot(a_0\circ g^{jt_0})
+\sum_{i=0}^{j-1}\varphi^{(j-i)}\int_0^{t_0} b^{(i)}(s)\circ g^{(j-i)t_0-s}\,ds\\
+\sum_{i=1}^j \varphi^{(j-i)}\cdot (c^{(i)}\circ g^{(j-i)t_0}).
\end{gathered}
$$
Here we put $\varphi^{(0)}=1$.
We can now apply Lemma~\ref{l:ehrenfest-basic} again to get
the required estimate.
\end{proof}
%
%
We are now ready to prove Proposition~\ref{l:ehrenfest}. Fix a quantization
procedure $\Op_h$ on $M$; our symbols will be supported in a certain
compact set (in fact, no more than distance $t_0$ to the set $U$) and
we require that the corresponding operators be compactly supported.
Put $\Lambda_1=\Lambda'_0$.
Let $l$ satisfy~\eqref{e:ehrenfest-l}. We will construct the operators
$$
A^{(j)}_m(t)=\Op_h\bigg(\sum_{0\leq m'\leq m}a^{(j)}_{m'}(t)\bigg),\ 0\leq t\leq t_0,\
0\leq j\leq l,\
m\geq 0,
$$
Here the symbols $a^{(j)}_m$ will be supported in a fixed compact subset of $T^*M$
and satisfy the derivative bounds
\begin{equation}
  \label{e:ehrenfest-ders}
\sup_{t\in [0,t_0]}\|a^{(j)}_m(t)\|_{C^k}\leq C(k,m)h^{(1-2\rho_j)m-\rho_j k}.
\end{equation}
with the constants $C(k,m)$ independent on $j$ and $\rho_j$ defined
by~\eqref{e:rho-j}. The operators $A^{(j)}_m(t)$ will satisfy the
relations
\begin{equation}
  \label{e:ehrenfest-eqns}
\begin{gathered}
A^{(0)}_m(0)=A+\mathcal O(h^\infty)_{\Psi^{-\infty}},\\
A^{(j+1)}_m(0)=X_2 A^{(j)}_m(t_0)X_1+\Op_h(c^{(j)}_m)+
\mathcal O(h^\infty)_{\Psi^{-\infty}},\\
hD_t A^{(j)}_m(t)={1\over 2}[h^2\Delta,A^{(j)}_m(t)]+{h\over i}\Op_h(b^{(j)}_m(t))
+\mathcal O(h^\infty)_{\Psi^{-\infty}},
\end{gathered}
\end{equation}
where the symbols $b^{(j)}_m(t)$ and $c^{(j)}_m$ are supported in some
fixed compact set and satisfy bounds
\begin{equation}
  \label{e:ehrenfest-ders-2}
\sup_{t\in[0,t_0]}\|b^{(j)}_m(t)\|_{C^k},\|c^{(j)}_m\|_{C^k}\leq C(k,m)h^{(1-2\rho_j)(m+1)-\rho_j k},
\end{equation}
with the constants $C(k,m)$ again independent on $j$.

We construct the symbols $a^{(j)}_m$ iteratively, by requiring
that they solve the equations
$$
\begin{gathered}
a^{(0)}_m(0)=\delta_{m0}\cdot a_0,\
a^{(j+1)}_m(0)=\varphi a^{(j)}_m(t_0)-c^{(j)}_{m-1},\\
\partial_t a^{(j)}_m(t)={1\over 2}H_p a^{(j)}_m(t)-b^{(j)}_{m-1}(t).
\end{gathered}
$$
Here $A=\Op_h(a_0)+\mathcal O(h^\infty)_{\Psi^{-\infty}}$ and we put
$b^{(j)}_{-1}=c^{(j)}_{-1}=0$.  The function $\varphi\in C_0^\infty(U_1)$ is equal to
$\sigma(X_1)\sigma(X_2)\psi(|\nu|)$, where $\psi\in
C_0^\infty(1-2\varepsilon_e,1+2\varepsilon_e)$ is such that
$\psi(|\nu|)=1$ near $\WFh(A)$. We use the fact that the function
$|\nu|$ is invariant under the geodesic flow. The estimate~\eqref{e:ehrenfest-ders}
follows immediately from~\eqref{e:ehrenfest-ders-2} and Proposition~\ref{l:ehrenfest-basic-2}.
As for the equations~\eqref{e:ehrenfest-eqns}
and the bounds~\eqref{e:ehrenfest-ders-2}, they follow from~\eqref{e:ehrenfest-ders} and the
following commutator formula:
$$
[h^2\Delta,\Op_h(a)]={h\over i}\Op_h(H_pa)+\Op_h(b)+\mathcal O(h^\infty)_{\Psi^{-\infty}},\
b=\mathcal O(h^{2-2\rho}\|a\|_{S_\rho})_{S_\rho},
$$
true for any $\rho<1/2$ and any $a\in S^{\comp}_\rho$.

Now, consider the asymptotic sums
$$
a^{(j)}(t)\sim \sum_{m\geq 0} a^{(j)}_m(t)
$$
and define the operators $A^{(j)}(t)=\Op_h(a^{(j)}(t))$.
By~\eqref{e:ehrenfest-eqns}, these operators satisfy
$$
\begin{gathered}
A^{(0)}(0)=A+\mathcal O(h^\infty)_{\Psi^{-\infty}},\
A^{(j+1)}(0)=X_2 A^{(j)}(t_0)X_1+\mathcal O(h^\infty)_{\Psi^{-\infty}},\\
hD_t A^{(j)}(t)={1\over 2}[h^2\Delta,A^{(j)}(t)]+\mathcal O(h^\infty)_{\Psi^{-\infty}}.
\end{gathered}
$$
We then have
$$
(X_2U(t_0))^lA(U(-t_0)X_1)^l=A^{(l)}(0)+\mathcal O(h^\infty)_{L^2\to L^2}.
$$
It remains to recall that $a^{(l)}(0)\in S^{\comp}_{\rho_l}$ uniformly
in $l$.  The principal symbol and microlocal vanishing statements
follow directly from the procedure used to construct the symbols
$a^{(j)}_m$.

\section{Proof of quantum ergodicity in the semiclassical setting}
  \label{s:qe}

In this section, we illustrate how our methods yield a proof of the following
integrated quantum ergodicity statement in the semiclassical setting:
%
%
\begin{theo}
  \label{t:qe}
Let $(M,g)$ be a compact Riemannian manifold of dimension $d$ and
assume that the geodesic flow $g^t$ on $M$ is ergodic with respect to
the Liouville measure $\mu_L$ on the unit cotangent bundle $S^*M$. For
each $h>0$, let $(e_j)_{j\in \mathbb N}$ be an orthonormal basis of
eigenfunctions of $h^2\Delta$ with eigenvalues $\lambda_j^2$. Then for
each semiclassical pseudodifferential operator $A\in\Psi^0(M)$, we
have
\begin{equation}
  \label{e:qe-int-2}
h^{d-1}\sum_{\lambda_j\in [1,1+h]} \bigg|\langle A e_j,e_j\rangle_{L^2(M)}
-{1\over \mu_L(S^*M)}\int_{S^*M} \sigma(A)\,d\mu_L\bigg|\to 0\text{ as }h\to 0.
\end{equation}
\end{theo}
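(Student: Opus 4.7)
The strategy is the classical Shnirelman--Zelditch--Colin de Verdi\`ere argument, implemented efficiently via the Hilbert--Schmidt bound of Lemma~\ref{l:h-s-estimate}, which replaces the trace-class machinery of~\cite{h-m-r}. First, I will reduce to the case $A\in\Psi^{\comp}(M)$ with $\int_{S^*M}\sigma(A)\,d\mu_L=0$. Decomposing any $A\in\Psi^0(M)$ as $A_2+A_1$ with $A_2$ compactly microlocalized and $\WFh(A_1)\cap S^*M=\emptyset$, the elliptic parametrix gives $A_1=A'_1(h^2\Delta-1)+\mc{O}(h^\infty)_{\Psi^{-\infty}}$ for some $A'_1\in\Psi^{-2}$; since $\lambda_j^2-1=\mc{O}(h)$ on the window, this makes $|\langle A_1e_j,e_j\rangle|=\mc{O}(h)$, and by Weyl's law the total contribution to \eqref{e:qe-int-2} is $\mc{O}(h)$. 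Subtracting from $\sigma(A_2)$ its Liouville average (a constant, legitimate since $M$ is compact) yields a mean-zero symbol; the constant piece contributes exactly the target average to each $\langle A e_j,e_j\rangle$ modulo $\mc{O}(h)$.

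Next, fix $T>0$ and form the time-averaged operator
$$
\langle A\rangle_T:={1\over T}\int_0^T U(-t)AU(t)\,dt.
$$
Since $U(t)e_j=e^{it\lambda_j^2/(2h)}e_j$, one has $\langle Ae_j,e_j\rangle=\langle\langle A\rangle_Te_j,e_j\rangle$ for every $T$. By Egorov's theorem (Proposition~\ref{l:egorov}), $\langle A\rangle_T\in\Psi^{\comp}(M)$ with principal symbol equal to the flow-average $T^{-1}\int_0^T\sigma(A)\circ g^{-t}\,dt$; compactness of $M$ keeps the wavefront set bounded uniformly in $t\in[0,T]$, so $\langle A\rangle_T$ stays in the non-exotic class $\Psi^{\comp}_0$.

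The core estimate then combines Cauchy--Schwarz with Lemma~\ref{l:h-s-estimate}. Writing $N(h):=\#\{j:\lambda_j\in[1,1+h]\}$, Weyl's law gives $h^{d-1}N(h)=\mc{O}(1)$, so
$$
\bigg(h^{d-1}\!\!\sum_{\lambda_j\in[1,1+h]}\!|\langle\langle A\rangle_Te_j,e_j\rangle|\bigg)^{\!2}\!\leq Ch^{d-1}\|\langle A\rangle_T\Pi_{[1,1+h]}\|_{\HS}^2\leq C'\|\sigma(\langle A\rangle_T)\|_{L^2(S^*M)}^2+\mc{O}_T(h),
$$
where the first inequality uses Cauchy--Schwarz together with $\sum_j|\langle Be_j,e_j\rangle|^2\leq\|B\Pi_{[1,1+h]}\|_{\HS}^2$, and the second is Lemma~\ref{l:h-s-estimate}.

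Finally, by von Neumann's mean ergodic theorem on $L^2(S^*M,\mu_L)$ for the unitary group $a\mapsto a\circ g^{-t}$, ergodicity of the geodesic flow forces $\sigma(\langle A\rangle_T)\to 0$ in $L^2(S^*M)$ as $T\to\infty$. Letting first $h\to 0$ and then $T\to\infty$ yields \eqref{e:qe-int-2}. The single nontrivial analytic input — and the would-be main obstacle — is the Hilbert--Schmidt bound in the $\mc{O}(h)$ spectral window, which in the standard approach is extracted from the local Weyl law proved by Duistermaat--Guillemin and Petkov--Robert; here it is supplied cleanly by Lemma~\ref{l:h-s-estimate}, after which everything else is soft microlocal bookkeeping plus the abstract ergodic theorem.
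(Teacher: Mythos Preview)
Your proof is correct and follows essentially the same route as the paper's: reduce to compactly microlocalized $A$ with mean-zero symbol, replace $A$ by its quantum time average $\langle A\rangle_T$ (which leaves $\langle Ae_j,e_j\rangle$ unchanged), apply Cauchy--Schwarz in $j$ together with the Hilbert--Schmidt bound of Lemma~\ref{l:h-s-estimate} in the $\mathcal O(h)$ window, and conclude by the von Neumann mean ergodic theorem. The only cosmetic differences are the sign convention in the time average (you use $U(-t)AU(t)$, the paper uses $U(t)AU(-t)$) and that the paper packages the Hilbert--Schmidt step as a separate lemma (Lemma~\ref{l:qe-key}) before also extracting from it the upper Weyl bound~\eqref{e:upper-weyl}.
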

%
%
A more general version of Theorem~\ref{t:qe} was proved
in~\cite{h-m-r}, in particular relying on the result
of~\cite{d-g,PeRo} on $o(h)$ remainders for the Weyl law when the
closed geodesics form a set of measure zero.  The purpose of this
Appendix is to provide a shorter proof. Theorem~\ref{t:qe} is
formulated here for the semiclassical Laplacian for simplicity of
notation, but it applies to any self-adjoint semiclassical
pseudodifferential operator $P(h)$ with compact resolvent on a compact
manifold, if the Hamiltonian flow of the principal symbol $p$ of
$P(h)$ has no fixed points and is ergodic on the energy surface
$p^{-1}(0)$ and we take eigenvalues in the interval $[0,h]$.

The key component of our proof is the following estimate:
%
%
\begin{lemm}
  \label{l:qe-key}
Let $M$ be as in Theorem~\ref{t:qe}. Then for each $A\in\Psi^0(M)$, we have
\begin{equation}
  \label{e:qe-key}
h^{d-1}\sum_{\lambda_j\in [1,1+h]}\|Ae_j\|_{L^2(M)}^2\leq (C\|\sigma(A)\|_{L^2(S^*M)}
+\mathcal O(h))^2.
\end{equation}
Here $\|\sigma(A)\|_{L^2(S^*M)}$ is the $L^2$ norm of the restriction
of $\sigma(A)$ to $S^*M$ with respect to the Liouville measure.  The
constant in $\mathcal O(h)$ depends on $A$, but the constant $C$ does
not.
\end{lemm}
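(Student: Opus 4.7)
The plan is to identify the left-hand side of~\eqref{e:qe-key} with a Hilbert--Schmidt norm and apply Lemma~\ref{l:h-s-estimate}. Setting $\Pi:=\indic_{[1,(1+h)^2]}(h^2\Delta)$, orthonormality of $\{e_j\}$ gives
$$
\sum_{\lambda_j\in[1,1+h]}\|Ae_j\|_{L^2}^2=\|A\Pi\|_{\HS}^2,
$$
and since $[1,(1+h)^2]\subset[1,1+3h]$ for small $h$, this is dominated by $\|A\,\indic_{[1,1+3h]}(h^2\Delta)\|_{\HS}^2$. This latter quantity is precisely the object estimated in Lemma~\ref{l:h-s-estimate}, equation~\eqref{e:h-s-estimate-1} (after taking an adjoint so the projector appears on the left), provided the operator is compactly microlocalized.

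The only obstacle is that the hypothesis of Lemma~\ref{l:h-s-estimate} requires $A\in\Psi^{\comp}_\rho(M)$, while the theorem's $A$ lives in $\Psi^0(M)$ and may extend to fiber infinity. I handle this exactly as in the proof of Proposition~\ref{l:avg-bdd}(2): fix $\psi\in C_0^\infty((1/2,2))$ with $\psi\equiv 1$ near $1$, set $\chi(m,\nu):=\psi(|\nu|_g^2)$, and split $A=A_0+A_1$ with $A_0:=\Op_h(\chi)A\in\Psi^{\comp}(M)$ and $A_1:=(I-\Op_h(\chi))A$. Then $\sigma(A_0)|_{S^*M}=\sigma(A)|_{S^*M}$ while $\WFh(A_1)\cap S^*M=\emptyset$. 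Applying~\eqref{e:h-s-estimate-1} to $A_0^*\in\Psi^{\comp}(M)$, which is automatically compactly supported since $M$ is compact, and using adjoint-invariance of the $\HS$-norm together with $\sigma(A_0^*)=\overline{\sigma(A_0)}+\mathcal{O}(h)$, yields
$$
h^{(d-1)/2}\|A_0\,\indic_{[1,1+3h]}(h^2\Delta)\|_{\HS}\le C\|\sigma(A)\|_{L^2(S^*M)}+\mathcal{O}(h),
$$
with $C$ independent of $A$ (the wavefront set of $A_0$ stays in a fixed compact set determined by $\chi$).

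For $A_1$, the standard elliptic parametrix for $h^2\Delta-1$ (which is elliptic on $\WFh(A_1)$) produces $Q\in\Psi^{-2}(M)$ and $R=\mathcal{O}(h^\infty)_{\Psi^{-\infty}}$ with $A_1=Q(h^2\Delta-1)+R$; consequently $\|A_1 e_j\|_{L^2}\le|\lambda_j^2-1|\,\|Q\|_{L^2\to L^2}+\mathcal{O}(h^\infty)=\mathcal{O}(h)$ uniformly for $\lambda_j\in[1,1+h]$. Combined with the Weyl upper bound $\#\{j:\lambda_j\in[1,1+h]\}=\mathcal{O}(h^{1-d})$, this gives $h^{d-1}\sum_j\|A_1e_j\|^2=\mathcal{O}(h^2)$, absorbable into the $\mathcal{O}(h)$ remainder. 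The triangle inequality for $\|\cdot\|_{\HS}$ combines the $A_0$ and $A_1$ contributions to yield~\eqref{e:qe-key}. The main point of this approach, compared with~\cite{h-m-r}, is that only the $\mathcal{O}(h^{1-d})$ upper Weyl bound (which follows from a trivial G\r ading comparison with the free Laplacian) is used, rather than the $o(h)$ Weyl remainder from~\cite{d-g,PeRo}; thus no nondegeneracy hypothesis on closed geodesics is required.
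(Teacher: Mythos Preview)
Your approach is the same as the paper's: rewrite the left-hand side as the Hilbert--Schmidt norm $h^{d-1}\|A\Pi_{[1,1+h]}\|_{\HS}^2$, apply~\eqref{e:h-s-estimate-1} (to the adjoint) for the compactly microlocalized part, and dispose of the part with wavefront set away from $S^*M$ by ellipticity.

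There is one logical wrinkle worth flagging. You invoke the sharp upper Weyl bound $\#\{j:\lambda_j\in[1,1+h]\}=\mathcal O(h^{1-d})$ to sum the contributions $\|A_1e_j\|^2=\mathcal O(h^2)$. In the paper this bound is~\eqref{e:upper-weyl}, and it is \emph{derived from} Lemma~\ref{l:qe-key} by setting $A=1$; using it inside the proof is circular as the paper is organized. Your parenthetical justification (``G\aa rding comparison with the free Laplacian'') does not apply on a closed manifold. The paper sidesteps this entirely: it uses Proposition~\ref{l:elliptic} (with $P=h^2\Delta-\lambda_j^2$, elliptic on $\WFh(A_1)$ uniformly for $\lambda_j\in[1,1+h]$) to get $\|A_1e_j\|_{L^2}=\mathcal O(h^\infty)$, so that any crude polynomial bound on the eigenvalue count suffices. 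Your one-step parametrix $A_1=Q(h^2\Delta-1)+R$ only yields $\mathcal O(h)$ per term and hence forces the sharp count. The cleanest repair is either to iterate the parametrix (or quote Proposition~\ref{l:elliptic} directly) to recover $\mathcal O(h^\infty)$, or to observe that $A_1\Pi=A_1\chi'(h^2\Delta)\Pi$ for $\chi'\in C_0^\infty$ equal to $1$ near $1$, and that $A_1\chi'(h^2\Delta)=\mathcal O(h^\infty)_{\Psi^{-\infty}}$ has $\mathcal O(h^\infty)$ Hilbert--Schmidt norm on the compact manifold $M$; no eigenvalue counting is then needed at all.
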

\begin{proof}
Assume first that $A$ is compactly microlocalized. We can rewrite the
left-hand side of~\eqref{e:qe-key} as the square of the
Hilbert--Schmidt norm of $h^{(d-1)/2}A\Pi_{[1,1+h]}$, where
$\Pi_{[1,1+h]}=\indic_{[1,(1+h)^2]}(h^2\Delta)$ is a spectral projector. It
can then be estimated using the local theory of semiclassical Fourier
integral operators, by~\eqref{e:h-s-estimate-1} (applied
to the adjoint of the operator in interest).

To handle the case of a general $A$, it remains to note that if
$\WFh(A)\cap S^*M=\emptyset$, then the left-hand side
of~\eqref{e:qe-key} is $\mathcal O(h^\infty)$, as each $Ae_j$ is
$\mathcal O(h^\infty)$ by the elliptic estimate
(Proposition~\ref{l:elliptic}; see also the proof of Proposition~\ref{l:avg-bdd}).
\end{proof}
%
%
Putting $A$ equal to the identity in~\eqref{e:qe-key}, we get the following
upper Weyl bound:
\begin{equation}
  \label{e:upper-weyl}
\#\{j\mid \lambda_j\in [1,1+h]\}\leq Ch^{1-d}.
\end{equation}
We can now prove Theorem~\ref{t:qe}. Take $A\in\Psi^0(M)$; by
subtracting a multiple of the identity operator and applying the
ellipticity estimate, we may assume that $A$ is compactly
microlocalized and
\begin{equation}
  \label{e:avg-0}
\int_{S^*M}\sigma(A)\,d\mu_L=0.
\end{equation}
Define the quantum average
$$
\langle A\rangle_T={1\over T}\int_0^T U(t)AU(-t)\,dt.
$$
Here $U(t)=e^{ith\Delta/2}$ is the semiclassical Schr\"odinger propagator.
By Egorov's theorem (Proposition~\ref{l:egorov}), for any fixed $T$ the
operator $\langle A\rangle_T$ lies in $\Psi^0$, modulo an $\mathcal O(h^\infty)_{L^2\to L^2}$
remainder, and its principal symbol is
$$
\sigma(\langle A\rangle_T)=\langle \sigma(A)\rangle_T:={1\over T}\int_0^T \sigma(A)\circ g^t\,dt.
$$
Note that for each $j$, we have $U(t)e_j=e^{it\lambda_j/(2h)}$ and
thus $\langle\langle A\rangle_T e_j,e_j\rangle=\langle A
e_j,e_j\rangle$.  Using Cauchy--Schwarz inequality in $j$ and the
bounds~\eqref{e:qe-key} and~\eqref{e:upper-weyl}, we get
$$
\begin{gathered}
h^{d-1}\sum_{\lambda_j\in[1,1+h]}|\langle Ae_j,e_j\rangle|
=h^{d-1}\sum_{\lambda_j\in [1,1+h]}|\langle \langle A\rangle_Te_j,e_j\rangle|
\\\leq h^{d-1}\sum_{\lambda_j\in [1,1+h]}\|\langle A\rangle_T e_j\|_{L^2}
\leq C\bigg(h^{d-1}\sum_{\lambda_j\in [1,1+h]}\|\langle A\rangle_T e_j\|_{L^2}^2\bigg)^{1/2}
\\\leq C\|\langle\sigma(A)\rangle_T\|_{L^2(S^*M)}+\mathcal O_T(h).
\end{gathered}
$$
However, by~\eqref{e:avg-0} and the von Neumann ergodic
theorem~\cite[Theorem~15.1]{e-z}, we have $\|\langle
\sigma(A)\rangle_T\|_{L^2(S^*M)}\to 0$ as $T\to \infty$. Therefore,
for each $\varepsilon>0$ we can choose $T$ large enough so that the
left-hand side of~\eqref{e:qe-int-2} is bounded by
$\varepsilon/2+\mathcal O(h)$.  Then for $h$ small enough, it is
bounded by $\varepsilon$; since the latter was chosen arbitrarily
small, we get~\eqref{e:qe-int-2}.

\smallsection{Acknowledgements}
We would like to thank Viviane Baladi, Dima Jakobson, Frederic Naud,
St\'ephane Nonnenmacher, Steve Zelditch, and Maciej Zworski for useful
discussions and providing references on the subject. We additionally
thank St\'ephane Nonnenmacher for explaining some estimates on higher
derivatives of the flow (forming the basis for
Lemma~\ref{l:ehrenfest-basic}), and the anonymous referees for their
interest in this work and suggesting many improvements.
S.D. would also like to thank the DMA of
Ecole Normale Sup\'erieure where part of this work was done. S.D. was
partially supported by NSF grant DMS-0654436. C.G. is supported by ANR
grant ANR-09-JCJC-0099-01.


\def\arXiv#1{\href{http://arxiv.org/abs/#1}{arXiv:#1}}

\end{document}